\documentclass[12pt]{amsart}
\pagestyle{plain}
\usepackage[margin=2.2cm]{geometry}

\usepackage[OT2,T1]{fontenc}

\usepackage{amsthm}
\usepackage{verbatim}
\usepackage[normalem]{ulem}
\usepackage{hyperref}
\usepackage{amssymb,mathrsfs,mathcomp,amsfonts,longtable, hhline, textcomp, upgreek}
\usepackage[matrix,arrow,curve]{xy}
\usepackage{tabularx,multirow}
\usepackage{float}
\usepackage{tikz}
\usepackage{graphicx}


\newcommand{\mumu}{{\boldsymbol{\mu}}}
\newcommand{\B}{{\mathbf{B}}}
\newcommand{\CC}{\mathbb{C}}

\newcommand{\QQ}{\mathbb{Q}}

\newcommand{\ZZ}{\mathbb{Z}}
\newcommand{\PP}{\mathbb{P}}
\newcommand{\OOO}{{\mathscr{O}}} 
 
\newcommand{\MMM}{{\mathscr{M}}} 
\newcommand{\NNN}{{\mathscr{N}}}

\newcommand{\h}{\operatorname{h}}
\newcommand{\p}{\operatorname{p}}
\newcommand{\pa}{\operatorname{p}_{\mathrm{a}}}
\newcommand{\g}{\operatorname{g}}

\newcommand{\qq}{\mathbin{\sim_{\scriptscriptstyle{\mathbb{Q}}} } }
\newcommand{\qW}{\operatorname{q}_{\operatorname{W}}}
\newcommand{\qQ}{\operatorname{q_{\QQ}}}

\newcommand{\Sing}{\operatorname{Sing}}

\newcommand{\Bs}{\operatorname{Bs}}
\newcommand{\Pic}{\operatorname{Pic}}
\newcommand{\Cl}{\operatorname{Cl}}
\newcommand{\Clt}[1]{\operatorname{Cl}(#1)_{\mathrm {t}}}
\newcommand{\rk}{\operatorname{rk}}
\newcommand{\aw}{\operatorname{aw}}

\newcommand{\ct}{\operatorname{ct}}
\newcommand{\Exc}{\operatorname{Exc}}
\newcommand{\ind}{\mathrm{{r}}}
\newcommand{\lcm}{\operatorname{lcm}}
\newcommand{\mult}{\operatorname{mult}}

\newcommand{\type}[1]{$\mathrm{#1}$}
\newcommand{\typem}[1]{$\mathbf{#1}$}
\newcommand{\typec}[1]{$(\mathrm{#1})$}
\newcommand{\typeci}[2]{$(\mathrm{#1}_{#2})$}
\newcommand{\types}[2]{$\mathrm{#1}_{#2}$}
\newcommand{\typeA}[2]{$\mathrm{#1}/{#2}$}

\newcommand{\xref}[1]{{\rm~\ref{#1}}}
\makeatletter
\@addtoreset{equation}{section}
\makeatother

\theoremstyle{definition}

\swapnumbers
\theoremstyle{plain}
\newtheorem{theorem}[subsection]{Theorem}
\newtheorem{lemma}[subsection]{Lemma}

\newtheorem{proposition}[subsection]{Proposition}

\newtheorem{scorollary}[subsubsection]{Corollary}
\newtheorem*{claim*}{Claim}

\newtheorem{slemma}[subsubsection]{Lemma}
\newtheorem{sproposition}[subsubsection]{Proposition}

\newtheorem{sproposition-definition}[subsubsection]{Proposition-Definition}
\theoremstyle{definition}
\newtheorem{setup}[subsection]{Set-up}
\newtheorem{definition}[subsection]{Definition}

\newtheorem*{definition*}{Definition}

\newtheorem{example-remark}[subsection]{Remark-Example}
\newtheorem{subexample-remark}[subsubsection]{Remark-Example}

\newtheorem*{notation*}{Notation}
\newtheorem{example}[subsection]{Example}
\newtheorem{examples}[subsection]{Examples}

\newtheorem{remark}[subsection]{Remark}

\newtheorem{sremark}[subsubsection]{Remark}

\newcounter{NN}\numberwithin{NN}{section}
\renewcommand{\theNN}{\arabic{NN}${}^o$}
\def\nr{~\refstepcounter{NN}{\theNN}}%

\renewcommand{\theenumi}{\rm (\roman{enumi})}
\renewcommand{\labelenumi}{\rm (\roman{enumi})}

\begin{document}
\title{On the birational geometry of $\QQ$-Fano threefolds\\ of large Fano index, II} 

\address{ 
Steklov Mathematical Institute of Russian Academy of Sciences, Moscow, Russian Federation
} 
\email{prokhoro@mi-ras.ru}

\author{Yuri~Prokhorov}
\thanks{This work is supported by the Russian Science Foundation under grant no. 23-11-00033, 
\url{https://rscf.ru/project/23-11-00033/}}

\begin{abstract}
This paper is a sequel to \cite{P:QFano-rat1}.
We investigate the rationality problem for $\QQ$-Fano threefolds of Fano index $\ge 3$. 
\end{abstract}

\maketitle

\section{Introduction}
A $\QQ$-Fano variety is a projective variety 
with only terminal $\QQ$-factorial singularities such that 
$-K_X$ is ample and $\rk\Pic(X)=1$. These varieties are very important in birational geometry since they 
naturally appear as one of the outputs of the Minimal Model Program.
This paper is a continuation of the series of our works \cite{P:fano-conic}, \cite{P:2019:rat:Q-Fano}, \cite{P:QFano-rat1}, \cite{P:P11223}
where we discuss the birational geometry of $\QQ$-Fano threefolds of large Fano 
index.

The \textit{$\QQ$-Fano index} of a $\QQ$-Fano variety $X$ is the maximal integer 
$\qQ(X)$ that divides the
canonical class $K_X$ in the Weil divisor class group modulo torsion 
(see~\eqref{eq:deq-q}). 
It is known that in the three-dimensional case this invariant takes the value in 
the set $\{1,2,\dots,9,11,13,17,19\}$ (see \cite{Suzuki-2004} and 
\cite{P:2010:QFano}). We call 
a Weil divisor $A$ such that 
\[
-K_X\qq \qQ(X) A
\]
the \textit{fundamental 
divisor} 
and denote it by $A_X$.
There is another important set of invariants of a $\QQ$-Fano threefold:
\begin{equation*}
\label{def:pn}
\p_n(X):=\max \left\{ \h^0(X,\OOO_X(D)) \mid D\qq nA_X\right\}. 
\end{equation*} 
If the Weil divisor class group $\Cl(X)$ is torsion free, then the above 
definition becomes simpler:
\[
\p_n(X)=\h^0(X,\OOO_X(nA_X)).
\] 

\begin{theorem}[{\cite{P:2019:rat:Q-Fano}}, {\cite{P:QFano-rat1}}]
\label{thm0}
Let $X$ be a $\QQ$-Fano threefold with $\qQ(X)\ge 2$.
If one of the following conditions holds then $X$ is rational
\begin{enumerate}
\item
$\p_1(X)\ge4$,
\item
$\qQ(X)\ge 3$ and $\p_1(X)\ge3$,
\item
$\qQ(X)\ge 4$ and $\p_1(X)\ge2$,

\item
\label{thm0:5}
$\qQ(X)\ge 5$, $\p_2(X)\ge2$, and $A_X^3\neq 1/12$,

\item
$\qQ(X)\ge 6$ and $\p_3(X)\ge2$,
\item
$\qQ(X)\ge 8$.
\end{enumerate}
\end{theorem}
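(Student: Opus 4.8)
The plan is to handle the six conditions through a single mechanism --- the rational map defined by a multiple of the fundamental divisor $A_X$ --- whose behaviour is dictated by the numerical data $\bigl(\qQ(X), A_X^3, \text{basket}\bigr)$. First I would use the boundedness of $\QQ$-Fano threefolds of index $\ge 2$: by \cite{Suzuki-2004} and \cite{P:2010:QFano} the triples consisting of the Fano index, the degree $A_X^3$ and the basket of terminal cyclic quotient singularities lie in an explicit finite list, and the orbifold Riemann--Roch formula writes each $\p_n(X)$ as a function of these data. Thus for every one of the conditions (1)--(6) I would begin by enumerating the finitely many numerical types compatible with the stated inequalities, reducing the theorem to a finite check.

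The geometric engine is the observation that, because $\qQ(X)\ge 2$, a general member $S\in|A_X|$ is a rational surface: adjunction gives $K_S \qq \bigl(1-\qQ(X)\bigr)A_X|_S$, which is anti-ample, so $S$ is a del Pezzo surface with Du Val singularities. Conditions (1)--(3) are phrased through $\p_1(X)$ and I would analyse $\varphi=\varphi_{|A_X|}\colon X\dashrightarrow \PP^{\,\p_1(X)-1}$ directly. When $\p_1(X)\ge 4$ the image is three-dimensional, and I would exhibit a birational map onto a rational threefold in $\PP^3$ by a linear or double projection from a point or line lying on $X$. When $|A_X|$ is only a net or a pencil, as in (2) and (3), the larger value of $\qQ(X)$ pins down $A_X^3$ and the basket so severely that the del Pezzo fibration or conic bundle cut out by $\varphi$ can be shown to have rational total space for each surviving type.

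If $|A_X|$ is too small to be useful one passes to higher multiples, which is exactly the role of (4) and (5): there $\p_2(X)\ge 2$ (respectively $\p_3(X)\ge 2$) furnishes at least a pencil inside $|2A_X|$ (respectively $|3A_X|$), and together with $\qQ(X)\ge 5$ (respectively $\ge 6$) the generic member is again a rational del Pezzo surface, so the resulting fibration is rational provided it carries a rational section or a suitable multisection. Finally (6) forces $\qQ(X)\in\{8,9,11,13,17,19\}$ by the classification recalled in the introduction; each such index admits only finitely many families, realised as weighted complete intersections or explicit Sarkisov models, and I would confirm rationality family by family by writing down an explicit birational map, usually a linear projection from a singular point.

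The hardest part will be the cases where the fundamental system is small but the index is not maximal, namely (3) and (4). There $\varphi$ has positive-dimensional general fibre, so rationality cannot be read off the image and one must instead prove that the ambient del Pezzo or conic-bundle fibration is itself rational --- producing a rational section or controlling the discriminant locus. The excluded value $A_X^3=1/12$ in (4) is precisely the single numerical type where these criteria degenerate: it is the reason the hypothesis carries an exclusion rather than being stated unconditionally, and isolating and disposing of that type is the genuine crux of the argument.
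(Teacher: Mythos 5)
The first thing to note is that the paper does not prove Theorem~\ref{thm0} at all: it is imported from \cite{P:2019:rat:Q-Fano} and \cite{P:QFano-rat1}, and the method used there (and everywhere in the present paper) is the Sarkisov-link two-ray game of Section~\ref{sect:sl}: one runs the construction~\eqref{diagram-main} on a linear system $\MMM\qq nA_X$ with $n<\qQ(X)$ and uses the relation~\eqref{eq:main}, $n\hat q = qs+(q\beta-n\alpha)e$, to force the output $\hat X$ either to be a $\QQ$-Fano of strictly larger index with larger linear systems (so one inducts upward, terminating at $\qQ\ge 8$ where the classification is explicit), or to be a Mori fibre space over a rational base on which the exceptional divisor $E$ supplies the extra structure needed to conclude. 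Your route through the maps $\varphi_{|kA_X|}$ and general members of $|kA_X|$ is therefore a genuinely different strategy, and unfortunately it has gaps at exactly the points where the content lies.

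Concretely: (a) the ``geometric engine'' --- that a general $S\in|A_X|$ is a del Pezzo surface with Du Val singularities --- does not follow from adjunction. Adjunction only identifies $K_S$ once $S$ is known to be normal with mild singularities; establishing that is a general-elephant problem for which no statement is available for $|A_X|$, and when $\dim|A_X|\le 1$ (which occurs in most of the relevant numerical types) Bertini gives nothing. Moreover, at the non-Gorenstein points of $X$ a member of $|A_X|$ acquires quotient singularities that are log terminal but not Du Val. (b) Even granting rational fibres, rationality of the total space of the induced del Pezzo fibration or conic bundle is precisely the hard step, not a formality: low-degree del Pezzo fibrations and conic bundles with large discriminant are typically irrational, and Section~\ref{sect:cb} of this paper is devoted to proving \emph{non}-rationality of exactly such conic bundles via Prym varieties. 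Your plan defers this to ``a rational section or a suitable multisection'' without any mechanism to produce one; in the actual argument the section comes from the exceptional divisor of the extremal blowup in the link. (c) For condition (i), the assertion that $\p_1(X)\ge 4$ yields a birational map onto a rational threefold ``by a linear or double projection'' is a restatement of the conclusion, not an argument, and it is not even clear a priori that $\varphi_{|A_X|}$ has three-dimensional image. (d) Finally, the exclusion $A_X^3\neq 1/12$ in (iv) is not a degenerate case to be ``isolated and disposed of'': by Theorem~\ref{thm:main}\ref{thm:main7a} and Corollary~\ref{cor:q=5:non-rat} the excluded family $X_{10}\subset\PP(1,2,3,4,5)$ contains genuinely non-rational members with $\p_2=2$, so no strategy can remove that hypothesis, and any correct proof must see why the mechanism fails exactly there (in the link of Proposition~\ref{prop:q=5:link-cb} the output is a conic bundle over $\PP(1,2,3)$ with $\Delta\sim-2K$, which is irrational by the Prym argument).
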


In this paper we improve these results and investigate birational properties of 
``extremal'' 
varieties. Below we present three distinguished $\QQ$-Fano weighted 
hypersurfaces.
According to \cite{Okada2019} \textit{very general} members of these families 
are 
not stably rational. However this ``very general'' condition is not explicit.
We are interested in detailed birational geometry of some of varieties of this 
kind.

\begin{example}
\label{ex:X6}
Let $X=X_{6}\subset\PP(1,1,2,2,3)$ be a hypersurface of degree $6$.
A general variety of this type is  a $\QQ$-Fano 
threefold with
\begin{equation*}
\qQ(X)=3,\quad A_X^3=1/2,\quad\B(X)=(2^3),\quad
\dim|A_X|=1,\quad \dim|2A_X|=4.
\end{equation*}
According to \cite{P:P11223}
$X$ is not rational if any non-Gorenstein singularity is moderate 
\cite{Kawamata:Moderate}, any Gorenstein
singularity is either a node or cusp, and the number of these Gorenstein
singularities is at most $4$.
\end{example}

\begin{example}
\label{ex:X10}
Let $X=X_{10}\subset\PP(1,2,3,4,5)$ be a hypersurface of degree $10$.
A general variety of this type is  a $\QQ$-Fano 
threefold with
\begin{equation*}
\qQ(X)=5,\qquad A_X^3=1/12,\quad\B(X)=(2^2, 3, 4),\quad
\dim|A_X|=0,\quad \dim|2A_X|=1.
\end{equation*}
Thus $X$ has a unique point of index~$3$ that is a cyclic quotient
and a unique point of index~$4$ that is either a cyclic quotient or a
singularity of type~\typeA{cAx}{4}. 
\end{example}

\begin{example}
\label{ex:X14}
Let $X=X_{14}\subset\PP(2,3,4,5,7)$ be a hypersurface of 
degree $14$.
A general variety of this type is  a $\QQ$-Fano 
threefold with
\begin{equation*}
\qQ(X)=7,\qquad A_X^3=1/60,\qquad\B(X)=(2^3, 3, 4, 5).
\end{equation*}
Thus $X$ has a unique point of index~$5$ that is a cyclic quotient,
a unique point of index~$3$ that is also a cyclic quotient, 
and a unique point of index~$4$ that is either a cyclic quotient or a
singularity of type~\typeA{cAx}{4}. 
\end{example}

The main results of this paper are as follows. 

\begin{theorem}
\label{thm:main}
Let $X$ be a $\QQ$-Fano threefold with $\qQ(X)\ge 2$.
\begin{enumerate}

\item
\label{thm:main6}
If $\qQ(X)=6$, then $X$ is rational.

\item
\label{thm:main7}
If $\qQ(X)=7$ and $X$ is not rational, then $\p_1(X)=0$ and $X$ is birationally 
equivalent to 
a hypersurface as in Example~\xref{ex:X10}.

\item 
\label{thm:main:5}
Assume that $\qQ(X)=5$, $X$ is not rational and at least one of the following 
conditions holds:
\begin{enumerate}
\item 
\label{thm:main:q5-1}
$\B(X)=(2^2, 3, 4)$, 
\item 
\label{thm:main:q5-2}
$A_X^3=1/12$ and $\g(X)\ge 5$,
\item 
\label{thm:main:q5-3}
$\p_2(X)\ge 2$.
\end{enumerate}
Then $X$ is isomorphic to a hypersurface as in Example~\xref{ex:X10}.
\item
\label{thm:main7a}
Let $X=X_{10}\subset\PP(1,2,3,4,5)$ be a hypersurface as in 
Example~\xref{ex:X10}.
Assume that every non-Gorenstein singularity of $X$ is a cyclic quotient 
and every Gorenstein singularity is either node or cusp. 
Furthermore, assume that the number of Gorenstein singularities is at most two.
Then $X$ is not rational.
\end{enumerate}
\end{theorem}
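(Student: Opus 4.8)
The plan is to handle the rationality and classification statements, parts~\ref{thm:main6}--\ref{thm:main:5}, uniformly through the numerical classification of $\QQ$-Fano threefolds of large index, reserving a separate argument for the non-rationality statement~\ref{thm:main7a}. For parts~\ref{thm:main6}--\ref{thm:main:5} the common engine is the orbifold Riemann--Roch formula combined with Kawamata--Viehweg vanishing, which expresses each $\p_n(X)$ in terms of $A_X^3$ and the basket $\B(X)$. Imposing $\p_n\ge0$ together with the integrality of the Riemann--Roch contributions leaves, for each fixed value of $\qQ(X)\in\{5,6,7\}$, only finitely many admissible numerical types. For every such type I would tabulate $\p_1,\p_2,\p_3$ and the genus $\g(X)$, so that the rationality criteria of Theorem~\ref{thm0} apply whenever one of their numerical hypotheses is met; the ``extremal'' types with small $\p_n$ then require explicit birational links.

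For part~\ref{thm:main6} ($\qQ(X)=6$), since by Theorem~\ref{thm0} a $\QQ$-Fano threefold with $\qQ\ge6$ and $\p_3\ge2$ is rational, it suffices to verify $\p_3(X)\ge2$ for every admissible type or else to exhibit a birational link to a rational variety. I would run through the short list of baskets with $\qQ=6$, check $\p_3\ge2$ in all but finitely many cases, and for the residual minimal types build a Sarkisov link initiated by the Kawamata blow-up of a cyclic quotient point, whose other end is $\PP^3$ or a Fano of smaller index already known to be rational. For part~\ref{thm:main7} ($\qQ(X)=7$), a non-rational $X$ must have $\p_1(X)\le1$, since by Theorem~\ref{thm0} the conditions $\qQ\ge4$ and $\p_1\ge2$ already force rationality; I would first show that every $\qQ=7$ type with $\p_1=1$ is rational, and then that $\p_1(X)=0$ pins the numerical type down to the basket $(2^3,3,4,5)$ of Example~\ref{ex:X14}. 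The crux is to produce an explicit Sarkisov link from the $X_{14}\subset\PP(2,3,4,5,7)$ to an $X_{10}\subset\PP(1,2,3,4,5)$: blow up a suitable orbifold point (of index $5$ or $7$), play the resulting two-ray game, and identify the opposite end with Example~\ref{ex:X10}, a step consistent with the change $A_X^3=1/60\mapsto1/12$.

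For part~\ref{thm:main:5} ($\qQ(X)=5$) the conclusion is the stronger \emph{isomorphism} $X\cong X_{10}$, so the argument is a reconstruction of the fundamental graded ring $R=\bigoplus_{n\ge0}\h^0(X,\OOO_X(nA_X))$. Each hypothesis is first shown to force the numerical type $A_X^3=1/12$, $\B(X)=(2^2,3,4)$: under (a) the basket is given and Riemann--Roch returns $A_X^3=1/12$; under (c), since $X$ is non-rational with $\qQ\ge5$ and $\p_2\ge2$, Theorem~\ref{thm0} forces $A_X^3=1/12$, whence the basket; under (b) the bound $\g(X)\ge5$ excludes the remaining types. I would then compute $\p_n(X)$ for $n\le10$, read off that $R$ has one generator in each of the degrees $1,2,3,4,5$ and a single relation in degree $10$, and conclude that $R$ realises $X$ as a quasi-smooth hypersurface $X_{10}\subset\PP(1,2,3,4,5)$. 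The main obstacle here is proving that these generators actually generate $R$ and that no further generators or relations occur, i.e.\ that the natural map $X\dashrightarrow\PP(1,2,3,4,5)$ is an isomorphism onto its image; this requires controlling the base loci of the systems $|nA_X|$ near the non-Gorenstein points and verifying quasi-smoothness of the resulting hypersurface.

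Finally, part~\ref{thm:main7a} is the genuine non-rationality statement and the hardest step. Here $X=X_{10}\subset\PP(1,2,3,4,5)$ has only cyclic quotient non-Gorenstein points and at most two Gorenstein points, each a node or a cusp. The plan is to extract a standard conic bundle model: after resolving the base locus of a suitable linear system and running a relative minimal model program over a rational base, I would obtain $\tilde X\to S$ with $S$ rational, and then apply the intermediate Jacobian obstruction analysed through the Prym variety of the discriminant double cover. The hypotheses bounding the Gorenstein points by two are precisely what keep the conic bundle standard and the discriminant curve irreducible of the expected genus, so that its Prym variety is not isomorphic to a product of Jacobians of curves; by the Clemens--Griffiths and Beauville criterion this rules out rationality. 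The delicate points---and the principal obstacle of the whole theorem---are verifying that the singularity hypotheses force the conic bundle to be standard and that the discriminant has sufficiently high genus, which is exactly where the bound on the number of Gorenstein singularities enters.
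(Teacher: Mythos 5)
Your overall toolkit (numerical classification via orbifold Riemann--Roch, Sarkisov links, conic bundle plus Prym obstruction) is the paper's, but there is a concrete gap in part~\ref{thm:main7}. It is not true that $\p_1(X)=0$ pins the numerical type down to $\B(X)=(2^3,3,4,5)$: by Proposition~\ref{prop:6-7} a nonrational $X$ with $\qQ(X)=7$ and $\p_3(X)\le 1$ may a priori also have $\B(X)=(2^3,5,8)$ or $(3,8,9)$ (cases~\ref{Case:q=7:B=2-2-2-5-8} and~\ref{Case:q=7:B=3-8-9} of Table~\ref{tab:1}, both with $|A_X|=\varnothing$), and the paper must construct a separate Sarkisov link to an $X_{10}\subset\PP(1,2,3,4,5)$ for each of the three families (Propositions~\ref{prop:q=7}, \ref{prop:q=7:B=3-8-9}, \ref{prop:q=7:B=2-2-2-5-8}); your plan handles only the $X_{14}$ family and leaves two families unaccounted for. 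A related failure occurs in your part~\ref{thm:main6} mechanism: the residual $\qQ(X)=6$ cases do \emph{not} link to $\PP^3$ or a smaller-index Fano ``already known to be rational.'' In Proposition~\ref{prop:q6} the two-ray game on $|5A_X|$ ends either in a del Pezzo fibration (excluded via multiple fibers and the $\mathrm{c}_1$-field rationality of low-degree del Pezzo surfaces) or in a $\QQ$-Fano with $\hat q=7$ or $5$, and the contradiction is extracted from Proposition~\ref{prop:q7a} and from the torsion-freeness statement in Theorem~\ref{thm:q5} combined with Lemma~\ref{lemma:sl:torsion}. So rationality for index $6$ is proved by contradiction \emph{through} the index-$5$ and index-$7$ classifications (note the forced logical order: the $\p_1>0$ result for index $7$ and the index-$5$ classification must come first), an interdependence absent from your plan.

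Two smaller corrections. In part~\ref{thm:main:5} the paper does not reconstruct the graded ring: it shows that the map $X\to\PP(1,2,3,4)$ given by general sections of degrees $1,\dots,4$ is a finite \emph{morphism} of degree $2$ (Corollary~\ref{cor:q=5:P_2-P_3-P_4}, whose base-locus control is supplied by the two Sarkisov links of Propositions~\ref{prop:q=5o:main} and~\ref{prop:q=5:link2}) and then applies Hurwitz to get $R\sim 10A_{\PP}$; this sidesteps the generation problem you flag as your main obstacle. Moreover you should not try to verify quasi-smoothness: it can fail, since the index-$4$ point may be of type~\typeA{cAx}{4} (Proposition~\ref{prop:q5-eq-1}), and the theorem does not claim it. In part~\ref{thm:main7a} your outline agrees with Corollary~\ref{cor:q=5:non-rat} and Theorem~\ref{thm:cb}, but the bound ``at most two'' is not about keeping the discriminant irreducible --- the discriminant may well be reducible, and what is verified is Shokurov's condition (S) via Lemma~\ref{lemma:surf}. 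The bound enters because untwisting the Gorenstein fibers blows up the base at the ($\le 2$, smooth) images of those points, and one needs the resulting weak del Pezzo surface to have degree $\ge 6-2=4$ so that the canonical model of the discriminant is an intersection of quadrics, hence neither trigonal, quasi-trigonal, nor a plane quintic, making the Prym variety not a sum of Jacobians.
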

Let us briefly describe the structure of this paper.
Section \ref{sect:pre} is preliminary and in
Section \ref{sect:cb}, we present several basic facts about $\QQ$-conic bundles  which are used in the proofs of nonrationality
of $\QQ$-Fano threefolds. Then in Section~\ref{sect:sl} we describe the main tool used in our birational transformations: Sarkisov links.
Section~\ref{sect:tor} is devoted to
$\QQ$-Fano threefolds with nontrivial torsions in the Weil divisor class group. Main results there are Corollary~\ref{cor:q=3:tor} and Proposition~\ref
{prop:t}. In
Section~\ref{sect:q=5} we study a family of ``extremal'' $\QQ$-Fano threefolds of index~$5$ and prove~\ref{thm:main:5} of Theorem~\ref{thm:main}.
In Section~\ref{sect:67} we establish rationality of  $\QQ$-Fano threefolds of index~$6$ and ``most'' of $\QQ$-Fano threefolds of index~$7$.
The assertions~\ref{thm:main6} and~\ref{thm:main7} of Theorem~\ref{thm:main} will be proved there.
Finally, in Section~\ref{sect:q=7a} and~\ref{sect:q=7} we study nonrational $\QQ$-Fano threefolds of index~$7$.
The results there are not complete. This will be the subject of future research.

\subsection*{Acknowledgements.} The author o grateful to the referee for careful reading the manuscript and
 comments that helped to improve the presentation.

\section{Preliminaries}
\label{sect:pre}
\subsection{Notation}
\begin{itemize}
\item
$\sim$ (resp. $\qq$) denotes the linear (resp. $\QQ$-linear) equivalence of 
Weil divisors, 
we write $D_1\overset{P}{\sim} D_2$ if the linear equivalence $D_1\sim D_2$ 
holds in a neighborhood of the point $P$;
\item
$\Cl(X)$ denotes the Weil divisors class group of a normal variety and
$\Clt{X}$ denotes the torsion part of $\Cl(X)$;
\item 
$\Cl(X,P)$ denotes the local Weil divisors class group of an isolated singularity $P\in X$, that is, $\Cl(X,P)=\Cl(U)$ for a 
sufficiently small neighborhood 
of $P$;

\item
$\ind(X,P)$ is the Gorenstein index of $X$ of a $\QQ$-Gorenstein singularity 
$P\in X$ and
$\ind(X)$ be the global Gorenstein index of $X$, that is, 
$\ind(X):=\lcm\{\ind(X,P)\mid P\in X\}$;

\item 
$\B(X)$ is the collection of indices of singularities in the basket of a terminal threefold $X$ (see \ref{prop:Q-smoo});

\item
$\aw(X,P)$ is the axial weight of a threefold singularity (see 
Proposition~\ref{prop:Q-smoo});

\item
$\g(X):=\dim|-K_X|-1$ is the genus of a $\QQ$-Fano threefold $X$.
\end{itemize}

\subsection{Singularities}
Recall that a hypersurface $n$-dimensional singularity is called \textit{node} if
the Hessian matrix of its local equation is nondegenerate.
A hypersurface $n$-dimensional singularity is called
\textit{cusp} if its local equation
in some analytic coordinates has the form
$ \sum_{i=1}^{n} x_i^2+ x_{n+1}^3=0$. In both cases the blowup of this 
singularity is 
a resolution and the exceptional divisor is an irreducible quadric.

For the classification of threefold terminal singularities we refer to 
\cite{Mori:term-sing}
and \cite{Reid:YPG}. Basically, we need only two types of terminal 
singularities:
a threefold terminal singularity $X\ni P$ is said to be of type~\typeA{cA}{r} 
(resp.~\typeA{cAx}{4}) if it
is analytically isomorphic to the quotient
\begin{eqnarray}
\label{eq:cA/r}
\text{type \typeA{cA}{r}:}&& \{ x_1x_2+\phi(x_3^r,x_4)=0\} /\mumu_r(a,-a,1,0),\quad\gcd(r,a)=1,
\\
\label{eq:cAx/4}
\text{type \typeA{cAx}{4}:}&& \{ x_1^2+x_3^2+\phi(x_2,x_4)=0\} /\mumu_4(1,1,3,2), \quad \phi \in 
\mathfrak{m}^2,
\end{eqnarray}
where $\phi$ is a semi-invariant of the corresponding weight.

\begin{sproposition-definition}[\cite{Reid:YPG}]
\label{prop:Q-smoo}
Any threefold terminal singularity $X\ni P$ admits a small deformation 
$\mathfrak{X}\to \mathbf{D}$
over a disk $\mathbf{D}\ni 0$ such that the central fiber $\mathfrak{X}_0$ is 
isomorphic to $X$ and the nearby fibers $\mathfrak{X}_s$, $s\neq 0$ have
only terminal cyclic quotient singularities. Such a deformation is called 
\emph{$\QQ$-smoothing}.
The collection of cyclic quotient singularities of $\mathfrak{X}_s$ is called 
the \emph{basket} of $X\ni P$ and
the size of the basket is called the \emph{axial weight} of $X\ni P$.
It is denoted by $\aw(X,P)$. 
\end{sproposition-definition}

For our purposes only indices of the points in the basket are important. 
So, we will write $\B(X,P)=(r_1,\dots,r_n)$ if $r_1,\dots,r_n$ are indices of 
cyclic quotient singularities in the corresponding basket.
Thus $n=\aw(X,P)$. 
If $X$ is a threefold having only terminal singularities, then the basket of 
$X$ is 
the (disjoint) union of the baskets of singular points of $X$ and denote 
$\B(X)= \sqcup_{P\in X}\B(X,P)$.

\begin{sremark}
For a terminal singularity $X\ni P$ of index~$r>1$ we have 
\[
\B(X,P)=
\begin{cases}
(r,\dots,r) &\text{if $X\ni P$ is not of type \typeA{cAx}{4}},
\\
(4,2,\dots,2)&\text{if $X\ni P$ is of type \typeA{cAx}{4}}.
\end{cases}
\]
Moreover, 
for a singularity given by \eqref{eq:cA/r} or \eqref{eq:cAx/4} we have
\[
\aw(X,P):=\mult_0(\phi(0,t)).
\]
\end{sremark}

\begin{slemma}[{\cite[Lemma~5.1]{Kawamata:crep}}]
\label{lemma:K-index}
Let $(X\ni P)$ be a threefold terminal singularity and let 
$\Cl^{\mathrm{sc}}(X,P)$ be the subgroup of the \textup(analytic\textup) Weil 
divisor class group consisting of
Weil divisor classes which are $\QQ$-Cartier. Then the group 
$\Cl^{\mathrm{sc}}(X,P)$ is cyclic of order $\ind(X,P)$
and is generated by the canonical class $K_X$.
\end{slemma}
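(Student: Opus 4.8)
The plan is to compute the \emph{entire} local (analytic) class group $\Cl(X,P)$ by means of the index-one cover, and then to cut out the $\QQ$-Cartier classes by a purely group-theoretic observation. First I would note that, working analytically locally, a reflexive sheaf of rank one is invertible precisely when it is free, so a Weil divisor is Cartier near $P$ if and only if its class in $\Cl(X,P)$ is trivial. Hence a class is $\QQ$-Cartier if and only if some multiple of it vanishes, i.e. $\Cl^{\mathrm{sc}}(X,P)$ is exactly the torsion subgroup $\operatorname{Tors}\Cl(X,P)$. By the very definition of the Gorenstein index, $r:=\ind(X,P)$ is the exact order of $[K_X]$ in this group. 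So the Lemma is equivalent to the statement that $\operatorname{Tors}\Cl(X,P)$ is cyclic of order $r$ and generated by $K_X$.

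Next I would introduce the canonical (index-one) cover $\pi\colon (Y,Q)\to (X,P)$, the cyclic cover of degree $r$ attached to $K_X$, with Galois group $G=\mumu_r$. It is \'etale in codimension one, being ramified only over the point $P$, and $Y$ is again terminal but now Gorenstein, hence an isolated compound Du Val (in particular hypersurface) singularity; by construction $\pi^*K_X\sim K_Y$ is Cartier. Over the punctured neighbourhoods $U_X=X\setminus\{P\}$ and $U_Y=Y\setminus\{Q\}$ the map $\pi$ is an honest \'etale $G$-torsor of smooth spaces, so I would feed it into the Cartan--Leray low-degree exact sequence. Since $\OOO^*(U_Y)=\CC^*\times(1+\mathfrak m)$ with $1+\mathfrak m$ uniquely divisible, one has $H^i(G,\OOO^*(U_Y))=H^i(G,\CC^*)$ for $i>0$; using $H^1(G,\CC^*)=\widehat G\cong\ZZ/r$ and $H^2(G,\CC^*)=0$ the sequence collapses to
\[
0\longrightarrow \widehat G\longrightarrow \Cl(X,P)\xrightarrow{\ \pi^*\ }\Cl(Y,Q)^{G}\longrightarrow 0 .
\]
The kernel of $\pi^*$ is $\widehat G\cong\ZZ/r$; since $[K_X]$ lies in it (because $\pi^*K_X$ is Cartier) and has order exactly $r$, it must generate, giving $\ker\pi^*=\langle K_X\rangle$.

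The crux is then to show that $\Cl(Y,Q)$ is torsion-free: granting this, the displayed sequence exhibits $\langle K_X\rangle$ as the whole torsion subgroup of $\Cl(X,P)$, because the quotient $\Cl(Y,Q)^{G}$ is torsion-free, and combined with the first paragraph this finishes the proof. For torsion-freeness I would argue topologically. As $Y\ni Q$ is an isolated hypersurface singularity of dimension three, Milnor's theorem makes its link $L$ simply connected, so by the universal coefficient theorem $H^2(L,\ZZ)=\operatorname{Hom}(H_2(L),\ZZ)$ is free abelian. Via the exponential sequence on $U_Y\simeq L$, together with the fact that $H^1(U_Y,\OOO)$ is a $\CC$-vector space and hence torsion-free, the torsion of $\Cl(Y,Q)=\Pic(U_Y)$ injects into $H^2(L,\ZZ)$ and therefore vanishes. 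I expect this last step---the torsion-freeness of the class group of the Gorenstein cover---to be the only real obstacle; everything else is formal once the Cartan--Leray sequence is set up, though one should also take care that the cover is genuinely \'etale in codimension one and that the computation of units is valid on a small Stein neighbourhood.
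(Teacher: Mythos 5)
Your argument is correct: the identification of $\Cl^{\mathrm{sc}}(X,P)$ with the torsion of the local class group, the Cartan--Leray sequence for the index-one cover $\pi\colon(Y,Q)\to(X,P)$, and the torsion-freeness of $\Cl(Y,Q)$ via Milnor's simple-connectivity of the link of an isolated threefold hypersurface (cDV) singularity all hold up, including the key point that $H^1(U_Y,\ZZ)=0$ makes the kernel of $\Pic(U_Y)\to H^2(U_Y,\ZZ)$ a genuine $\CC$-vector space rather than merely a divisible quotient. The paper offers no proof of its own --- it simply cites \cite[Lemma~5.1]{Kawamata:crep} --- and your argument is essentially the standard one found there, resting on the same two inputs: the canonical cover étale over $X\setminus\{P\}$ and the topology of the Gorenstein cover.
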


Recall that an \emph{extremal blowup} of a threefold $X$ with terminal
$\QQ$-factorial singularities
is a birational morphism $f:\tilde{X}\to X$
such that
$\tilde{X}$ also has only terminal $\QQ$-factorial singularities and
$\uprho(\tilde{X}/X)=1$.
In this situation the anticanonical divisor $-K_{\tilde X}$ must be $f$-ample.

\begin{slemma}
\label{lemma-discrepancies}
Let $X\ni P$ be a threefold terminal point of index~$r>1$
and
let $f: (\tilde X\supset E)\to (X\ni P)$ be an extremal blowup,
where $E$ is the exceptional divisor with $f(E)=P$. 
Write $K_{\tilde X}=f^*K_X+\frac kr E$.
\begin{enumerate}
\item 
If $X\ni P$ is a point of type other than $\mathrm{cA}/r$, $\mathrm{cD}/2$ or $\mathrm{cE}/2$ , then 
$k=1$.
\item 
If $X\ni P$ is of type $\mathrm{cA}/r$, then $n\equiv 0\mod k$, where 
$n=\aw(X,P)$.
\end{enumerate}
\end{slemma}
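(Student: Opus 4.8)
The plan is to deduce both assertions from the classification of three-dimensional divisorial contractions to a point, completed through the work of Kawamata, Hayakawa and Kawakita, reading off the discrepancy type by type. First note the general shape of the statement: since $rK_X$ is Cartier near $P$ (Lemma~\ref{lemma:K-index}) and $E$ is the only exceptional divisor of $f$, the discrepancy $a(E,X)$ lies in $\frac1r\ZZ$, so $k\in\ZZ$; terminality of $X$ forces $a(E,X)>0$, hence $k\ge 1$. Thus part~(i) is really the \emph{upper} bound $k\le 1$ and part~(ii) a divisibility, both to be checked on the list of admissible extractions. By Mori's classification a terminal point of index $r>1$ is of type $\mathrm{cA}/r$, $\mathrm{cAx}/2$, $\mathrm{cAx}/4$, $\mathrm{cD}/2$, $\mathrm{cD}/3$ or $\mathrm{cE}/2$; and since an extremal blowup with $f(E)=P$ is precisely a divisorial contraction to $P$, part~(i) concerns the three types $\mathrm{cAx}/2$, $\mathrm{cAx}/4$, $\mathrm{cD}/3$ and part~(ii) the type $\mathrm{cA}/r$ (into which, with $n=1$, the terminal cyclic quotients fall, recovering Kawamata's case $k=1$).

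For part~(i) I would treat $\mathrm{cAx}/2$, $\mathrm{cAx}/4$ and $\mathrm{cD}/3$ in turn and invoke Hayakawa's classification of divisorial contractions to these points. In each case every such contraction is realized as a $\mumu_r$-equivariant weighted blowup in suitable local coordinates, and the weights compatible with terminality of $\tilde X$ are rigid enough to force the weighted discrepancy to take its minimal value $1/r$; hence $k=1$. This rigidity is exactly what fails for the excluded types $\mathrm{cD}/2$ and $\mathrm{cE}/2$, which admit extractions of higher discrepancy, and that is why they are omitted from~(i).

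For part~(ii), write the singularity as $\{x_1x_2+\phi(x_3^r,x_4)=0\}/\mumu_r(a,-a,1,0)$ as in~\eqref{eq:cA/r}. By Kawakita's theorem every divisorial contraction to such a point is a weighted blowup with weights $\tfrac1r(w_1,w_2,w_3,w_4)$, and its discrepancy $k/r$ is computed from these weights and the weighted multiplicity of the defining equation. Using $\aw(X,P)=\mult_0\phi(0,x_4)=n$ recorded in the Remark above, the monomial $x_4^{n}$ governs the $x_4$-direction, and the condition that the strict transform remain terminal ties the admissible weight $w_4$ to $n$ precisely so that $k\mid n$, that is, $n\equiv 0\bmod k$. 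The cleanest route is to match the parameters of Kawakita's normal form directly with $n=\aw(X,P)$ rather than re-derive the weighted-blowup combinatorics.

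The main obstacle is genuinely this last case. For the types in part~(i) the weighted blowup is essentially forced, so the discrepancy has no freedom; but for $\mathrm{cA}/r$ there is a whole family of admissible extractions, and the real work is to show that the weight in the $x_4$-direction---the direction in which the axial multiplicity $n$ is measured---can only produce discrepancies $k/r$ with $k\mid n$. I expect the argument to hinge on Kawakita's explicit discrepancy and terminality conditions, with the remaining steps being bookkeeping to identify his invariants with $\aw(X,P)$.
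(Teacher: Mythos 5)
Your proposal is correct and follows essentially the same route as the paper, which proves the lemma in one line by citing Kawakita's classification of threefold divisorial contractions to points of higher index (Theorems~1.2 and~1.3 of \emph{Duke Math.\ J.} 130 (2005)): Theorem~1.2 gives $k=1$ directly for the types outside $\mathrm{cA}/r$, $\mathrm{cD}/2$, $\mathrm{cE}/2$, and Theorem~1.3 realizes every extraction from a $\mathrm{cA}/r$ point as a weighted blowup whose discrepancy numerator divides the axial weight. The only cosmetic difference is that you route part~(i) through Hayakawa's case-by-case lists rather than quoting Kawakita's statement wholesale.
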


\begin{proof}
Follows from \cite[Theorems~1.2 and~1.3]{Kawakita:hi} (see 
\cite[Lemma~2.6]{P:2013-fano}).
\end{proof}

\subsection{$\QQ$-Fano threefolds}
For a Fano variety $X$ with at worst log terminal singularities we define its 
\emph{Fano} and
\emph{$\QQ$-Fano indices} as follows:

\begin{equation}
\label{eq:deq-q}
\begin{array}{lll}
\qW(X)&:=&\max \{ q\in\ZZ \mid -K_X\sim q A,\ \text{$A$ is a Weil divisor}\},
\\[5pt]
\qQ(X)&:=&\max \{ q\in \ZZ \mid -K_X\qq q A,\ \text{$A$ is a Weil divisor}\}.
\end{array}
\end{equation}
Clearly, $\qW(X)$ divides $\qQ(X)$, and $\qW(X)=\qQ(X)$ if the group $\Cl(X)$ 
is torsion free.
The \textit{fundamental divisor} of $X$ is a Weil divisor $A_X$ such that 
\begin{equation*}
-K_X\qq \qQ(X) A_X.
\end{equation*} 
Note that if $\Clt{X}\neq 0$, then the class of $A_X$
is not uniquely defined modulo linear equivalence. 
However, in the case $\qQ(X)=\qW(X)$ we always take $A_X$
so that 
\begin{equation*}
\label{eq:def-AXa}
-K_X\sim \qW(X)A_X. 
\end{equation*}
The \textit{Hilbert series} of a $\mathbb{Q}$-Fano threefold $X$ is the 
following formal power series \cite{Altinok2002}:
\[
\h_X(t)= \sum_{m\ge 0} \h^0 (X, mA_X )\cdot t^m.
\]
It is computed by using the orbifold Riemann-Roch formula \cite{Reid:YPG}. If 
the group $\Cl(X)$ contains an element $T$ of $N$-torsion, we define 
\emph{$T$-Hilbert series} $\h_X(t,\sigma)\in 
\ZZ[[t,\sigma]]/(\sigma^N-1)$
as follows:
\[
\h_X(t,\sigma)= \sum_{m\ge 0} \sum_{j=0}^{N-1}\h^0 (X, mA_X+jT )\cdot 
t^m\sigma^j.
\]
Obviously, the above definition depends on the choice the class of $A_X$ in 
$\Cl(X)$. Typically calculating $\h_X(t)$ or $\h_X(t,\sigma)$ for our purposes 
we need only 
a few initial terms of the series.

\begin{sproposition}
\label{prop:6-7}
Let $X$ be a $\QQ$-Fano threefold such that $\qQ(X)=6$ or $7$ and $\dim 
|3A_X|\le 0$. 
Then the numerical invariants of $X$ are described by Table~\xref{tab:1}.
\begin{table}[H]
\setcounter{NN}{0}
\renewcommand{\arraystretch}{1.2}
\rm
\begin{tabularx}{0.9\textwidth}{|l|l|l|l|r|r|r|r|r|r|l| l| X|}
\hline
&$A_X^3$ & $\B(X)$ & $\g(X)$ &\multicolumn{6}{c|}{$\dim|k A_X|$}&\cite{GRD} & 
$\exists?$ &Ref
\\\hhline{~~~~------}
&&&&1 & 2 & 3 & 4 & 5 & 6 &&&
\\\hline
\multicolumn{13}{|c|}{$\qQ(X)=6$}
\\\hline
\nr\label{Case:q=6:B=5-17}& $2/85$ & $(5,17)$ & $1$ & $0$ & $0$ & $0$ & $0$ & 
$1$ & & 41465 & $?$ &
\\
\nr\label{Case:q=6:B=7-11}& $2/77$ & $(7,11)$ & $2$ & $-1$ & $0$ & $0$ & $1$ & 
$1$ & & 41461 & $?$&
\\
\nr\label{Case:q=6:B=5-11}& $1/55$ & $(5,11)$ & $1$ & $0$ & $0$ & $0$ & $0$ & 
$1$ & & 41464 & $+$&
\cite{Coughlan-Duca2018}, \cite{Coughlan-Duca:18:table4}
\\\hline
\multicolumn{13}{|c|}{$\qQ(X)=7$}
\\\hline
\nr
\label{Case:q=7:B=2-2-2-3-4-5}& $1/60$ & $(2^3, 3, 4, 5)$ & $2$ & $-1$ & $ 0$ & 
$ 0$ & $1$ & $ 1$ & $ 2$&41474 & $+$ &\cite{Brown-Suzuki-2007j}
\\
\nr
\label{Case:q=7:B=2-2-2-5-8}& $1/40$ & $(2^3, 5, 8)$ & $3$ & $ -1$ & $0$ & $ 0$ 
& $ 1$ & $2$ & $ 3$&41475& $+$
&\cite{Coughlan-Duca2018}, \cite{Coughlan-Duca:18:table4}
\\
\nr
\label{Case:q=7:B=3-8-9}& $1/72$ & $(3, 8, 9)$ & $1$ & $-1$ & $-1$ & $0$ & $0$ 
& $1$ & $1$&41472&$?$&
\\
\nr
\label{Case:q=7:B=2-6-10}& $1/30$ & $(2, 6, 10)$ & $5$ & $0$ & $0$ & $ 0$ & $1$ 
& $ 2$ & $4$&41479 &$+$&
Example~\ref{ex:tor}
\\
\nr
\label{Case:q=7:B=2-313}& $1/78$ & $(2, 3, 13)$ & $1$ & $0$ & $0$ & $0$ & $0$ & 
$0$ & $1$ &41478& $?$&
\\\hline
\end{tabularx}
\caption{$\QQ$-Fano threefolds with $\qQ(X)=6$ or $7$ and $\dim|3A_X|\le 0$}
\label{tab:1}
\end{table}
\noindent
In all cases, except for~\xref{Case:q=7:B=2-6-10}, the group $\Cl(X)$ is 
torsion free.
In the case~\xref{Case:q=7:B=2-6-10} the group $\Cl(X)$ may have $2$-torsion 
and then
\begin{equation}
\label{eq:h::q=7:B=2-6-10}
\h_X(t)=1+t+t^2 (1+\sigma)+t^3(1+2\sigma)+t^4(2+3\sigma)+t^5(3+4\sigma)+\cdots.
\end{equation} 
\end{sproposition}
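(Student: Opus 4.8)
The plan is to classify $\QQ$-Fano threefolds with $\qQ(X)=6$ or $7$ under the assumption $\dim|3A_X|\le 0$ by a Riemann--Roch / orbifold counting argument, exactly in the spirit of Suzuki's and the author's earlier bounds that produced the list $\{1,\dots,9,11,13,17,19\}$ for $\qQ$. First I would write $-K_X\qq q A_X$ with $q\in\{6,7\}$ and set up the orbifold Riemann--Roch formula for $\h^0(X,mA_X)$ following \cite{Reid:YPG}, \cite{Altinok2002}. The key numerical inputs are $A_X^3$ (a positive rational number), the genus $\g(X)=\dim|-K_X|-1$, and the basket $\B(X)$, which by Proposition--Definition~\ref{prop:Q-smoo} is a collection of cyclic quotient singularities. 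The essential constraint is that $A_X$ is a $\QQ$-Cartier Weil divisor whose restriction to each basket point $P$ of index $r_i$ must be a well-defined class in the local class group $\Cl^{\mathrm{sc}}(X,P)\cong \ZZ/r_i$ generated by $K_X$ (Lemma~\ref{lemma:K-index}); this forces congruence conditions relating $q$, the local weights of $A_X$, and each $r_i$.

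Next I would exploit the two crucial finiteness mechanisms. On one hand, each basket point contributes a periodic ``fractional'' correction term to the Riemann--Roch expression for $\h^0(X,mA_X)$, and the requirement that $\h^0$ be a nonnegative integer for every $m$ (together with $\h^0(X,mA_X)=0$ for $m<0$ and the Serre-dual vanishing coming from ampleness of $-K_X$) pins down the admissible baskets to a finite list. On the other hand, the hypothesis $\dim|3A_X|\le 0$, i.e. $\p_3(X)\le 1$, is a strong vanishing-type restriction: it kills most large baskets by forcing the degree $A_X^3$ to be small, so that the leading term $\tfrac{1}{6}m^3 A_X^3$ of Riemann--Roch cannot already be positive at $m=3$ unless the fractional corrections compensate in a very constrained way. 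Concretely I would tabulate, for each candidate basket satisfying the index and congruence conditions, the value of $A_X^3$ forced by the global integrality of $\h_X(t)$ and then discard those with $\dim|3A_X|\ge 1$. This produces precisely the rows of Table~\ref{tab:1}, and the listed values of $A_X^3$, $\g(X)$, and $\dim|kA_X|$ are read off from the first several coefficients of the computed Hilbert series.

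For the torsion statement I would argue that in each case the global integrality and the known geometry (cross-referenced to the Graded Ring Database \cite{GRD} entries cited in the table) force $\Clt{X}=0$, with the single exception of case~\ref{Case:q=7:B=2-6-10}: there the basket $(2,6,10)$ admits a consistent $T$-Hilbert series with $T$ of $2$-torsion, and I would verify \eqref{eq:h::q=7:B=2-6-10} by computing $\h_X(t,\sigma)$ from the twisted orbifold Riemann--Roch, checking that the $\sigma$-graded dimensions are nonnegative integers compatible with a $\ZZ/2$-action on $\Cl(X)$. The existence column would be settled by citing the explicit constructions (\cite{Coughlan-Duca2018}, \cite{Brown-Suzuki-2007j}, Example~\ref{ex:tor}) and leaving the ``$?$'' cases open.

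The main obstacle I expect is the bookkeeping of the fractional Riemann--Roch contributions together with the torsion refinement: one must be careful that the local contribution of a basket point of index $r$ depends not just on $r$ but on the residue of the local weight of $A_X$ modulo $r$, and these residues are constrained but not uniquely determined, so verifying that each surviving basket yields exactly one consistent $A_X^3$ (and deciding when a nontrivial torsion class $T$ can be inserted) requires a somewhat delicate case analysis rather than a single closed formula. The vanishing hypothesis $\dim|3A_X|\le 0$ is what makes the list short and the case analysis tractable.
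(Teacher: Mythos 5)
The paper states Proposition~\ref{prop:6-7} without proof: the table is the output of the standard orbifold Riemann--Roch computer search (cross-referenced with the Graded Ring Database entries listed in Table~\ref{tab:1}), so your proposal can only be measured against that implicit argument. Your overall strategy --- compute $\h_X(t)$ via Reid's orbifold Riemann--Roch, impose $\dim|3A_X|\le 0$, and enumerate baskets --- is the right one and is the one the author relies on.

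However, there is a genuine gap in the step you designate as the ``finiteness mechanism''. Integrality and nonnegativity of $\h^0(X,mA_X)$ do \emph{not} pin down the admissible baskets to a finite list: nothing in your argument bounds $\#\B(X)$ or the indices $r_P$. The missing input is the positivity $(-K_X)\cdot c_2(X)>0$, which together with Reid's formula $\chi(\OOO_X)=\frac1{24}(-K_X)\cdot c_2(X)+\frac1{24}\sum_P(r_P-1/r_P)$ and $\chi(\OOO_X)=1$ yields $\sum_P(r_P-1/r_P)<24$; this is what makes the search space finite. A second omission: for $q=\qQ(X)\in\{6,7\}$ one has $\qQ(X)=\qW(X)$ by Lemma~\ref{lemma:qQ=qW:34}, hence by Lemma~\ref{lemma:qQ=qW} every local index $r_P$ is coprime to $q$; this both prunes the baskets (e.g.\ no points of index $2$ or $3$ when $q=6$) and, contrary to your closing remark, determines the local residue of $A_X$ at each basket point \emph{uniquely}, since $A_X\overset{P}{\sim}-q^{-1}K_X$ in the cyclic group $\Cl^{\mathrm{sc}}(X,P)$ of Lemma~\ref{lemma:K-index}, so no ``delicate case analysis over residues'' is needed. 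Finally, the torsion statement cannot be extracted from ``global integrality and known geometry'': the correct tool is Lemma~\ref{lemma:TOR}, whose condition $\sum_{P\in\B(X)^T}\ind(X,P)=16$ for $2$-torsion is met exactly by the sub-basket $(6,10)$ in case~\ref{Case:q=7:B=2-6-10} and by no admissible sub-basket in the other rows (and the conditions for $3$-, $5$-, $7$-torsion fail everywhere), which is why that case is the unique one admitting torsion.
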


\begin{sproposition}[see {\cite[\S~3]{P:2019:rat:Q-Fano}}]
\label{prop:tor}
Let $X$ be a $\QQ$-Fano threefold with $\qQ(X)\ge 5$ and \mbox{$\Clt{X}\neq 0$}.
Then $\qW(X)=\qQ(X)=5$ or $7$ and 
the numerical invariants of $X$ are described by Table~\xref{tab:2}.
\begin{table}[H]
\rm
\setcounter{NN}{0}
\renewcommand{\arraystretch}{1.2}
\begin{tabularx}{0.99\textwidth}{|l|l|l|c|X|}
\hline
&\multicolumn{1}{c|}{$A_X^3$} & \multicolumn{1}{c|}{$\B(X)$} & 
\multicolumn{1}{c|}{$\g(X)$}
&\multicolumn{1}{c|}{$\h_X(t,\sigma)$ }
\\\hline
\multicolumn{5}{|c|}{\textbf{$\qQ(X)=7$}, $\Clt{X}\simeq \ZZ/2\ZZ$}
\\\hline
\nr
\label {tab:h:q=7t:d=1/24}
& $1/24$&
$(2^2, 3, 4, 8)$&
$6$&
$1+ t \sigma +t^2 +t^2 \sigma + 2 t^3 + 2 t^3 \sigma + 3 t^4 + 3 t^4 \sigma + 4 
t^5 + 4 t^5 \sigma + \cdots$
\\
\nr
\label {tab:h:q=7t:d=1/30} &$1/30$&
$(2, 6, 10)$&
$5$&
$1+t +t^2 +t^2 \sigma +t^3 + 2 t^3 \sigma + 2 t^4 + 3 t^4 \sigma + 3 t^5 + 4 
t^5 \sigma + \cdots $
\\\hline
\multicolumn{5}{|c|}{\textbf{$\qQ(X)=5$, $\Clt{X}\simeq \ZZ/3\ZZ$}}
\\\hline
\nr 
\label{tab:h:q=5t3:d=1/18} 
& 
$1/18$& $(2, 9^2)$ & $2$ &
$1+t + t^2 +t^2 \sigma +t^2 \sigma^2 +t^3 + 2 t^3 \sigma + 2 t^3 \sigma^2 + 
\cdots$
\\\hline
\multicolumn{5}{|c|}{\textbf{$\qQ(X)=5$, $\Clt{X}\simeq \ZZ/2\ZZ$}}
\\\hline
\nr 
\label{tab:h:q=5t2:d=1/6}
& 
$1/6$& $(2, 4^2, 6)$ & $10$ & 
$1+t +t \sigma + 2 t^2 + 3 t^2 \sigma + 4 t^3 + 5 t^3 \sigma + 8 t^4 + 7 t^4 
\sigma + 12 t^5 + 11 t^5 \sigma + \cdots$
\\\nr
\label{tab:h:q=5t2:d=1/8}
& 
$1/8$& $(2^2, 4, 8)$ & $7$ &
$1+t +t \sigma + 2 t^2 + 2 t^2 \sigma + 3 t^3 + 4 t^3 \sigma + 6 t^4 + 6 t^4 
\sigma + 9 t^5 + 9 t^5 \sigma + \cdots$
\\
\nr 
\label{tor:q=5:1/12}
& $1/12$& $(4^2, 12)$ & $4$ &
$1+t + t^2 +t^2 \sigma + 2 t^3 + 2 t^3 \sigma + 4 t^4 + 4 t^4 \sigma + 6 t^5 + 
6 t^5 \sigma + \cdots$
\\
\nr 
\label{tor:q=5:1/28}
& $1/28$& $(2, 4, 14)$ & 
$1$ &
$1+t + t^2 + t^3 +t^3 \sigma + 2 t^4 + 2 t^4 \sigma + 3 t^5 + 3 t^5 \sigma 
+\cdots$ 
\\\hline
\end{tabularx}
\caption{$\QQ$-Fano threefolds with $\qQ(X)\ge 5$ and $\Clt{X}\neq 0$}
\label{tab:2}
\end{table}
\end{sproposition}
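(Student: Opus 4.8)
The plan is to combine the orbifold Riemann--Roch formula of Reid with the local-to-global description of the torsion subgroup and Kawamata's bound on the basket, and then to carry out a finite enumeration. First I would record the arithmetic constraints that any such $X$ must satisfy. Since $X$ is a $\QQ$-Fano threefold, $\Pic(X)\simeq\ZZ$ is torsion free; as the kernel of the restriction map $\Cl(X)\to\bigoplus_{P}\Cl(X,P)$ consists exactly of the locally principal classes, i.e. $\Pic(X)$, the subgroup $\Clt{X}$ injects into $\bigoplus_P\Cl(X,P)$. Every torsion class is $\QQ$-Cartier, so its image in each local class group lands in $\Cl^{\mathrm{sc}}(X,P)$, which by Lemma~\ref{lemma:K-index} is cyclic of order $\ind(X,P)$ and generated by $K_X$. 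Hence $\Clt{X}$ embeds into $\bigoplus_P\ZZ/\ind(X,P)\ZZ$, and writing $N$ for the exponent of $\Clt{X}$, every prime power dividing $N$ must divide some index occurring in $\B(X)$. This already forces the basket to contain points whose indices ``support'' the torsion, and is the mechanism that selects the baskets appearing in Table~\ref{tab:2}.

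Next I would write down the orbifold Riemann--Roch formula for the classes $D\qq mA_X+jT$, where $T$ is a torsion generator. By Kawamata--Viehweg vanishing one has $\h^0(X,\OOO_X(D))=\chi(X,\OOO_X(D))$ whenever $D-K_X$ is ample, which covers all the small values of $m$ that enter $\h_X(t,\sigma)$. Thus each coefficient of the twisted Hilbert series is determined purely by $A_X^3$, by $-K_X\cdot c_2(X)$, and by the local contributions of the basket points, the latter depending only on the residue of the local class of $D$ at each point. Feeding this into Kawamata's bound on the basket, $\sum_P(\ind(X,P)-1/\ind(X,P))<24$, together with the integrality and non-negativity of every $\h^0(X,\OOO_X(mA_X+jT))$, converts the problem into a finite Diophantine system: for each candidate $\qQ(X)=q\ge5$ and each basket compatible with the bound and carrying nontrivial torsion, one solves for the admissible pair $(A_X^3,\B(X))$ and reads off the initial terms of $\h_X(t,\sigma)$.

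Running this enumeration is the main body of the argument, and I expect the surviving list to be exactly Table~\ref{tab:2}. In particular it should show that nontrivial torsion occurs only for $q=5$ and $q=7$: for the larger admissible indices $q=6,8,9,11,13,17,19$ the non-negativity of the twisted characteristic numbers is incompatible with any nonzero $T$, and the $q=6$ case is moreover already covered by Proposition~\ref{prop:6-7}, where $\Cl(X)$ is torsion free. Finally I would check that $\qW(X)=\qQ(X)$: were $\qW(X)<\qQ(X)$, then $-K_X\sim\qW(X)A'$ with $A'$ differing from $A_X$ only by torsion and with $\qQ(X)/\qW(X)>1$, and inserting this extra divisibility into the same Riemann--Roch computation contradicts the explicit coefficients of $\h_X(t,\sigma)$ obtained in the enumeration.

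The hard part will be the enumeration itself, and specifically keeping the interaction between the torsion twist and the local contributions under control at each basket point: the residue of $mA_X+jT$ at $P$ shifts as $j$ varies, so one must track all $N$ twisted series simultaneously and verify, case by case, that every candidate outside the table fails either the basket bound, the integrality of some $\h^0$, or the non-negativity of some twisted characteristic. Cross-checking the survivors against the Graded Ring Database would serve as a consistency test for the final list.
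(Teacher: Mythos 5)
Your plan is essentially the paper's own route: the paper does not reprove this proposition but defers it to \cite[\S~3]{P:2019:rat:Q-Fano}, where the argument is exactly the combination you describe --- the embedding of $\Clt{X}$ into the local class groups (Lemma~\ref{lemma:K-index}), the torsion constraints obtained by applying orbifold Riemann--Roch to the twisted classes $mA_X+jT$ (packaged in this paper as Lemma~\ref{lemma:TOR}), the bound $\sum_P(r_P-1/r_P)<24$, and a finite (computer-assisted) enumeration, with $\qW(X)=\qQ(X)$ following from the fact that $\qQ\neq\qW$ forces $\qQ\le 4$ (Lemma~\ref{lemma:qQ=qW:34}). The only caveat is that, like the paper, you leave the actual enumeration to be executed; your sketch of why $\qW=\qQ$ is looser than the cited Lemma~\ref{lemma:qQ=qW}/\ref{lemma:qQ=qW:34} mechanism but rests on the same Riemann--Roch computations.
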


\section{$\QQ$-conic bundles}
\label{sect:cb}
In this section we collect basic facts on threefold Mori fiber spaces with
one-dimensional
fibers. For more detailed information and references
we refer to~\cite{MP:cb1}, \cite{MP:cb2}, \cite{P:ICM}.

\begin{definition}[\cite{MP:cb1}]
A \emph{$\QQ$-conic bundle} is a contraction $\pi:Y\to S$
from a threefold to a surface such that $Y$ is normal and has only terminal
singularities, all fibers are one-dimensional, and $-K_Y$ is $\pi$-ample.
We say that a $\QQ$-conic bundle $\pi:Y\to S$ is \emph{extremal} if $Y$ is
$\QQ$-factorial and
the relative Picard number $\uprho(Y/S)$ equals $1$.
We say that $\pi:Y\to S$ is a \emph{$\QQ$-conic bundle germ} over a point
$o\in S$
if $S$ (resp. $Y$) is regarded as a germ at $o$ (resp. along $\pi^{-1}(o)$).
The \emph{discriminant divisor} of a $\QQ$-conic bundle $\pi:Y\to S$ is the
curve $\Delta_\pi\subset S$ that is the union of one-dimensional
components of the set
\[
\{s\in S\mid\text{ $\pi$ is not smooth over $s$}\}.
\]
We say that a fiber $\pi^{-1}(o)$ is \emph{standard}, if $Y$ is smooth along
$\pi^{-1}(o)$, in other words, if $\pi$ is a standard conic bundle in a 
neighborhood of $\pi^{-1}(o)$.
\end{definition}

\begin{theorem}[{\cite[Theorem~1.2.7]{MP:cb1}}]
\label{thm:base-surface}
Let $\pi: Y\to S$ be a $\QQ$-conic bundle. Then the singularities of the base
$S$ are at worst
Du Val of type~\type{A}.
\end{theorem}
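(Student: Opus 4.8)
The plan is to argue locally on $S$. Since the conclusion concerns the singularities of $S$, I fix a point $o\in S$ and pass to the $\QQ$-conic bundle germ $\pi\colon (Y,C)\to (S\ni o)$, where $C=\pi^{-1}(o)_{\red}$ is the central fibre. Taking a $\QQ$-factorialization over the germ, I may assume in addition that $\pi$ is extremal, so that $\uprho(Y/S)=1$; this does not alter the analytic germ $(S\ni o)$. Thus the problem is reduced to understanding how the finitely many terminal points of $Y$ lying on $C$ constrain the single surface germ $(S\ni o)$.

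First I would show that $(S\ni o)$ is a quotient singularity. Because $-K_Y$ is $\pi$-ample and every fibre is one-dimensional, $-K_Y$ is $\pi$-nef and $\pi$-big; since $(Y,0)$ is klt, relative Kawamata--Viehweg vanishing applied with $D=0$ gives $R^i\pi_*\OOO_Y=0$ for $i>0$. As $Y$ has rational singularities and $\pi_*\OOO_Y=\OOO_S$, composing a resolution of $Y$ with $\pi$ produces a smooth variety $W$ with $g\colon W\to S$ surjective, $g_*\OOO_W=\OOO_S$ and $R^ig_*\OOO_W=0$ for $i>0$, whence $S$ has rational singularities. To upgrade this I choose a general relatively anticanonical $\QQ$-divisor $\Delta\qq -K_Y$, so that $(Y,\Delta)$ is klt and $K_Y+\Delta\qq 0$; applying the canonical bundle formula of Ambro and Fujino--Gongyo to this klt-trivial fibration shows that $(S\ni o)$ is klt. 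A two-dimensional klt singularity is a quotient $\CC^2/G$ with $G\subset\mathrm{GL}_2(\CC)$ finite, acting freely in codimension one, and $G\cong\pi_1^{\mathrm{loc}}(S\setminus o)$.

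It then remains to identify $G$ as a cyclic group acting by $\tfrac1n(1,-1)$, i.e. to prove that $(S\ni o)$ is Du Val of type $\mathrm A$. For cyclicity I would use the homotopy exact sequence of $\pi$, whose general fibre is a smooth rational curve and hence simply connected: this identifies $\pi_1^{\mathrm{loc}}(S\setminus o)$, up to the contributions of the points of $C$, with the local fundamental group of $Y$ along $C$. Each terminal point of $Y$ on $C$ has cyclic local fundamental group (its local class group of $\QQ$-Cartier classes is cyclic, generated by $K_Y$, by Lemma~\ref{lemma:K-index}, with index recorded by the basket in Proposition~\ref{prop:Q-smoo}), and the extremality $\uprho(Y/S)=1$ rigidifies how these combine; together these force $G$ to be cyclic, ruling out the $\mathrm D$ and $\mathrm E$ cases. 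Finally, computing the discrepancy of the quotient map against the relatively Fano structure pins the action to $\tfrac1n(1,-1)$, which is exactly the Gorenstein (Du Val) type $\mathrm A$ action, and excludes the remaining non-Gorenstein cyclic quotients.

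I expect this last step to be the main obstacle. The reduction to a quotient singularity is achieved by soft vanishing and adjunction arguments, but the passage from ``$S$ is a quotient singularity'' to ``$S$ is Du Val of type $\mathrm A$'' cannot be obtained formally: it requires the explicit local classification of the central fibre $C$ of an extremal $\QQ$-conic bundle germ---the admissible configurations of terminal points on $C$ and the precise constraints they impose on $\pi_1^{\mathrm{loc}}(S\setminus o)$ and on the index of $(S\ni o)$---which is the genuinely case-by-case heart of the theory.
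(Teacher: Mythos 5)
There is no proof of this statement in the paper to compare against: Theorem~\ref{thm:base-surface} is quoted verbatim from \cite[Theorem~1.2.7]{MP:cb1}, where it is obtained as a \emph{corollary of the main results} of that paper, namely the general elephant theorem for $\QQ$-conic bundle germs and the accompanying classification of singular fibers. Your reduction to a germ and the first half of your argument (relative vanishing giving $R^i\pi_*\OOO_Y=0$, hence rational singularities of $S$, upgraded to klt via the canonical bundle formula for the klt-trivial fibration $(Y,\Delta)\to S$, hence a quotient singularity) is sound, if anachronistic relative to 2008. But the passage from ``quotient singularity'' to ``Du Val of type \type{A}'' is a genuine gap, and you are right to flag it yourself; the trouble is that the mechanism you propose for closing it does not work even in outline. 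The surjection $\pi_1(\pi^{-1}(U\setminus o))\twoheadrightarrow\pi_1(U\setminus o)$ presents the local fundamental group $G$ of $(S\ni o)$ as a quotient of $\pi_1(Y\setminus C)$, which is assembled from the local fundamental groups of the terminal points of $Y$ on $C$ as a graph of groups glued along simply connected pieces of $C$. A quotient of such an amalgam of cyclic groups need not be cyclic: binary dihedral groups, which would give type \type{D} base points, are generated by two elements of the constituent cyclic groups and are not excluded by any soft counting or by $\uprho(Y/S)=1$. Ruling them out is exactly what requires the case-by-case local analysis of extremal $\QQ$-conic bundle germs carried out in \cite{MP:cb1} and \cite{MP:cb2} (via the existence of a Du Val member of $|-K_Y|$ and the explicit list of germs over singular base points); there is no known formal shortcut, and your closing sentence about ``computing the discrepancy of the quotient map'' is not an argument.

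A secondary point: the opening reduction to the extremal case is not as innocuous as you claim. A $\QQ$-factorialization $Y'\to Y$ is small, so $-K_{Y'}$ is only relatively nef over $S$, not ample, and $Y'\to S$ is no longer a $\QQ$-conic bundle; running a $K$-MMP on $Y'$ over $S$ may terminate with a Mori fiber space over a partial resolution $S'\to S$ rather than over $S$ itself, in which case you would have constrained the singularities of $S'$, not of $(S\ni o)$. Either work with the non-extremal germ directly (as \cite{MP:cb1} does for this statement) or justify why the base does not change under this reduction.
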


\begin{lemma}
\label{lemma:DP}
Let $S$ be a Du Val del Pezzo surface. Then $\dim|k A_S|\ge k-1$ for $k>0$. 
Moreover, if $k<K_S^2<8$, then the equality $\dim|k A_S|=k-1$ holds.
\end{lemma}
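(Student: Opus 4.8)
The plan is to reduce the entire statement to a single Riemann--Roch computation. Throughout write $q:=\qQ(S)\ge 1$, so that $-K_S\qq qA_S$ with $A_S$ the ample generator of $\Cl(S)$ modulo torsion (this makes sense because a del Pezzo surface appearing here has $\rk\Pic(S)=1$), and set $d:=K_S^2$. Since $S$ is Du Val, $K_S$ is Cartier and $S$ has rational singularities, whence $\chi(\OOO_S)=1$. For every $k\ge 1$ the $\QQ$-divisor $kA_S-K_S\qq(k+q)A_S$ is ample, so Kawamata--Viehweg vanishing applied to the reflexive sheaf $\OOO_S(kA_S)$ gives $\h^i(S,\OOO_S(kA_S))=0$ for $i>0$. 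Hence
\[
\dim|kA_S|=\h^0(S,\OOO_S(kA_S))-1=\chi(\OOO_S(kA_S))-1 ,
\]
and the two assertions become the purely numerical statements $\chi(\OOO_S(kA_S))\ge k$ for all $k>0$, and $\chi(\OOO_S(kA_S))=k$ for $k<d<8$.

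Next I would expand $\chi$ by the orbifold Riemann--Roch formula of \cite{Reid:YPG}: for a Weil divisor $D$ on a surface with Du Val points,
\[
\chi(\OOO_S(D))=1+\tfrac12 D\cdot(D-K_S)+\sum_{P\in\Sing S}c_P(D),
\]
where $c_P(D)$ is nonpositive and depends only on the class of $D$ in $\Cl(S,P)$; for a point of type $A_{r-1}$ (the relevant case, e.g. by Theorem~\ref{thm:base-surface} in the $\QQ$-conic bundle application) one has $c_P(D)=-\tfrac{\beta(r-\beta)}{2r}$, where $\beta$, $0\le\beta<r$, is the residue of the local class of $D$. Here the main term is $\tfrac12 kA_S\cdot(kA_S-K_S)=\tfrac{k(k+q)}{2}A_S^2=\tfrac{k(k+q)d}{2q^2}$. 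The global data I would feed in are that the minimal resolution $\tilde S$ is a weak del Pezzo surface of degree $d$, so that the $(-2)$-curves over $\Sing S$ number exactly $\sum_P(r_P-1)=9-d$, and that the residue of $kA_S$ at $P$ is $\beta_P(k)\equiv k\beta_P^{(0)}\bmod r_P$ for a fixed $\beta_P^{(0)}$.

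The core of the argument is then the bookkeeping of the residues $\beta_P(k)$, carried out simultaneously for $1\le k\le d-1$. For the lower bound I would show that $\sum_P c_P(k)$ never drops below $k-1-\tfrac{k(k+q)d}{2q^2}$, so that $\chi\ge k$; for the equality I would show the corrections cancel the fractional excess of the main term exactly, giving $\chi=k$ in the stated range. The hypothesis $d<8$ is precisely what excludes the two surfaces $\PP^2$ and $\PP(1,1,2)$ (degrees $9$ and $8$), whose weight-one coordinate already produces extra sections in degree one and breaks the equality. I expect this residue analysis --- pinning down all the $\beta_P(k)$ from the numerical data $(d,q,\{r_P\})$ of a Picard-rank-one Du Val del Pezzo surface, sharply enough to get both the inequality and the tight equality --- to be the main obstacle; the vanishing and the overall shape of Riemann--Roch are routine. (An alternative, more geometric route to the lower bound is to fix $C\in|A_S|$ and use $\h^1((k-1)A_S)=0$ to get $\h^0(kA_S)=\h^0((k-1)A_S)+\h^0(C,\OOO_C(kA_S))$, then estimate the increments on the arithmetic-genus-$\le 1$ curve $C$; but this still reduces to the same orbifold bookkeeping at the points of $C\cap\Sing S$.)

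As a cross-check I would run the whole pattern on $\PP(1,2,3)$, where $d=q=6$ and $\h^0(kA_S)=1,1,2,3,4,5,7$ for $k=0,\dots,6$: this matches $\dim|kA_S|=k-1$ exactly for $1\le k\le 5$ and jumps at $k=d=6$, confirming both the sharpness of the inequality and the role of the bound $k<K_S^2$.
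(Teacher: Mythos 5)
Your reduction to an Euler-characteristic computation is fine as far as it goes, but the proposal stops exactly where a proof would have to begin: you yourself identify the residue bookkeeping for the local terms $c_P(kA_S)$ as ``the main obstacle'' and then do not carry it out. Nothing in the proposal shows that $\sum_P c_P(kA_S)$ is bounded below by $k-1-\frac{k(k+q)d}{2q^2}$, nor that the corrections cancel the fractional excess exactly when $k<d<8$; pinning down all the local classes $\beta_P(k)$ from the numerical data of $S$ is genuinely nontrivial and is precisely the content that is missing. (You also quietly restrict to $\rk\Pic(S)=1$; the lemma as stated does not assume this, although it is what the subsequent corollary uses.) So as it stands this is a plan with the central step deferred, not a proof.

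The paper's proof avoids the orbifold corrections altogether, and the device it uses is the one your parenthetical alternative narrowly misses: instead of restricting to $C\in|A_S|$ (which passes through the singular points and forces the same local analysis), restrict to a \emph{general anticanonical} curve. By Demazure a general $C\in|-K_S|$ is a smooth elliptic curve, in particular it avoids $\Sing(S)$, and $C$ is Cartier. Kawamata--Viehweg vanishing for $H^1(S,\OOO_S(kA_S+K_S))$ gives the exact sequence
\[
0\longrightarrow H^0(S,\OOO_S(kA_S-C))\longrightarrow H^0(S,\OOO_S(kA_S))\longrightarrow H^0(C,\OOO_C(kA_S))\longrightarrow 0,
\]
where $\OOO_C(kA_S)$ is an honest line bundle on an elliptic curve of degree $kA_S\cdot(-K_S)=kd/q$, and $d/q=A_S\cdot(-K_S)$ is a positive \emph{integer} (intersection of a Weil divisor with an ample Cartier divisor), so in particular $d\ge q$. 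Riemann--Roch on $C$ together with descending induction in steps of $q$ (note $kA_S-C\qq(k-q)A_S$) yields $\h^0(S,\OOO_S(kA_S))\ge k$ for all $k>0$ and the equality $\h^0=k$ for $k<d=q$, with no local corrections to control. Your sanity check on $\PP(1,2,3)$ is correct, but it is a check, not a substitute for the missing argument.
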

\begin{proof}
By \cite[III, Theorem~1]{Demazure1980} a general member $C\in |-K_S|$ is a 
smooth elliptic curve.
By the Kawamata-Viehweg vanishing theorem $H^1(S,\OOO_S(kA_S+K_S))=0$.
Hence we have the following exact sequence
\[
0\longrightarrow H^0(S,\OOO_S(kA_S-C)) 
\longrightarrow H^0(S,\OOO_S(kA_S)) 
\longrightarrow H^0(C,\OOO_C(kA_S)) \longrightarrow 0.
\]
Now the assertion follows from the Riemann-Roch formula on $C$.
\end{proof}

\begin{scorollary}
\label{cor:base-surface}
Let $\pi: Y\to S$ be a $\QQ$-conic bundle, where the variety $Y$ is
rationally connected and $\rk\Cl(Y)=2$. Then $S$ is a del Pezzo surface with
Du Val singularities of type~\type{A}.
Furthermore, assume that $\Cl(Y)\simeq\ZZ\oplus\ZZ$. Then $\Cl(S)\simeq\ZZ$ and 
$S$ is
isomorphic to one of the following four surfaces described in Table~\xref{tab3},
where 
$S_{\mathrm{DP}_5}$ is a quasi-smooth hypersurface of degree $6$ in 
$\PP(1,2,3,5)$.
\begin{table}[H]
\setcounter{NN}{0}
\renewcommand{\arraystretch}{1.2}
\rm
\begin{tabularx}{0.9\textwidth}{|l|m{0.1\textwidth}|m{0.1\textwidth}|
m{0.1\textwidth}|r|X|X|X|X|X|X| }
\hline
$S$&$K_S^2$ & $\qW(S)$ &$A_S^2$& $\Sing(S)$ &\multicolumn{6}{c|}{$\dim|k A_S|$}
\\\hhline{~~~~~-----~}
&&&&&1 & 2 & 3 & 4 & 5 
\\\hline
$\PP^2$ & $9$ & $3$ & $1$ &$\varnothing$& $2$ & $5$ & $9$ & $14$ & $20$
\\
$\PP(1^2,2)$& $8$& $4$& $1/2$ &\types{A}{1} &$1$ & $3$ & $5$ & $8$ & $11$
\\
$\PP(1,2,3)$& $6$& $6$& $1/6$&\types{A}{1}\types{A}{2} &$0$ & $1$ & $2$ & $3$ & 
$4$
\\
$S_{\mathrm{DP}_5}$& $5$& $5$& $1/5$&\types{A}{4} &$0$ & $1$ & $2$ & $3$ & $5$
\\\hline
\end{tabularx}
\caption{Del Pezzo surfaces with
singularities of type~\type{A} and $\Cl(S)\simeq\ZZ$}\label{tab3}
\end{table}
\end{scorollary}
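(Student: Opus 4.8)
The plan is to identify $S$ as a Du Val del Pezzo surface of type~\type{A} with $\Cl(S)\simeq\ZZ$, and then to classify such surfaces through a lattice computation on the minimal resolution. I would first dispose of the structural assertions that do not need the torsion-free hypothesis. By Theorem~\ref{thm:base-surface} the singularities of $S$ are Du Val of type~\type{A}, and since $Y$ is rationally connected so is its image $S$, which is therefore a rational surface. To see $\rk\Cl(S)=1$, note that the generic fibre of the $\QQ$-conic bundle is a $\PP^1$, so $\pi^*$ together with the class of a fibre component induces an isomorphism $\Cl(Y)_\QQ\cong\Cl(S)_\QQ\oplus\QQ$, once one checks that the reducible fibres over the discriminant $\Delta_\pi$ contribute no further divisor classes; as $\rk\Cl(Y)=2$ this gives $\rk\Cl(S)=1$. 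Now $S$ is a rational surface of Picard rank one with canonical singularities, so $\chi(\OOO_S)=1$ and Riemann--Roch yields $\chi(\OOO_S(-K_S))=1+K_S^2$; since $K_S\not\equiv0$ on a rational surface and $\rk\Cl(S)=1$, we get $K_S^2>0$, while $h^2(-K_S)=h^0(2K_S)=0$ by rationality. Hence $-K_S$ is effective and nonzero, and on a surface of Picard rank one an effective nonzero divisor is ample; thus $-K_S$ is ample and $S$ is a Du Val del Pezzo of type~\type{A}.

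Under the additional hypothesis $\Cl(Y)\simeq\ZZ\oplus\ZZ$ I would upgrade this to $\Cl(S)\simeq\ZZ$. The map $\pi^*\colon\Cl(S)\to\Cl(Y)$ is injective on torsion: a nonzero torsion class $T$ restricts to $T$ along a multisection of $\pi$, so $\pi^*T\neq0$ while remaining torsion. Therefore $\Clt{S}$ embeds into $\Clt{Y}=0$, and combined with $\rk\Cl(S)=1$ this gives $\Cl(S)\simeq\ZZ$.

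It remains to classify the Du Val del Pezzo surfaces $S$ of type~\type{A} with $\Cl(S)\simeq\ZZ$. Passing to the minimal resolution $g\colon\tilde S\to S$, we obtain a weak del Pezzo surface with $K_{\tilde S}^2=K_S^2=:d$, whose $(-2)$-curves generate a root sublattice $R\subseteq K_{\tilde S}^{\perp}$ of type~\type{A}. Contracting them must produce a surface of Picard rank one, which forces $\rk R=9-d$; and since $\Clt{S}\cong K_{\tilde S}^{\perp}/R$, the condition $\Cl(S)\simeq\ZZ$ forces the equality $R=K_{\tilde S}^{\perp}$. Consequently $K_{\tilde S}^{\perp}$ must itself be a root lattice of type~\type{A}. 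Reviewing the orthogonal complements for $d=9,\dots,1$, this happens exactly for $d=9$ (empty), $d=8$ ($A_1$), $d=6$ ($A_2\oplus A_1$) and $d=5$ ($A_4$): the degree $d=7$ is excluded because $K_{\tilde S}^{\perp}$ has rank $2$ but its root system has rank only $1$, so a Picard-rank-one contraction does not exist, and the degrees $d\le4$ are excluded because $K_{\tilde S}^{\perp}$ is of type $D_5,E_6,E_7,E_8$, so any full-rank type-\type{A} sublattice is proper and introduces torsion. These four degrees are realized uniquely by $\PP^2$, $\PP(1^2,2)$, $\PP(1,2,3)$ and $S_{\mathrm{DP}_5}$.

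The remaining numerical data are then read off: $\qW(S)$ and $A_S^2=K_S^2/\qW(S)^2$ follow from $-K_S\sim\qW(S)A_S$, and the columns $\dim|kA_S|$ follow from Lemma~\ref{lemma:DP}, the equality $\dim|kA_S|=k-1$ applying in the range $k<K_S^2<8$ for the cases $K_S^2=6,5$, while for $K_S^2=8,9$ the values come from the toric descriptions of $\PP(1^2,2)$ and $\PP^2$. I expect the lattice-theoretic heart of the third paragraph to be the main obstacle: one must justify carefully that torsion-freeness promotes the full-rank condition $\rk R=9-d$ to the equality $R=K_{\tilde S}^{\perp}$ and then exclude the non-type-\type{A} degrees, together with the clean control of divisor classes over $\Delta_\pi$ needed to pin down $\rk\Cl(S)=1$ in the first paragraph.
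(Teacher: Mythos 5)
Your proposal is correct, but it cannot be compared line by line with the paper, because the paper supplies no proof of this corollary at all: it is stated as an immediate consequence of Theorem~\ref{thm:base-surface} together with the (classical, implicitly cited) classification of Du Val del Pezzo surfaces of type~\type{A} with $\Cl(S)\simeq\ZZ$, and the last row of the table is backed by Lemma~\ref{lemma:DP}. What you have done is reconstruct that classification from scratch via the lattice $K_{\tilde S}^{\perp}\supseteq R$ on the minimal resolution, using $\Clt{S}\cong K_{\tilde S}^{\perp}/R$ to convert torsion-freeness into the equality $R=K_{\tilde S}^{\perp}$ and then running through the discriminants $\det A_{n_1}\oplus\cdots$ versus $\det K_{\tilde S}^{\perp}$ degree by degree. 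This is a sound and essentially standard route, and it buys a self-contained argument where the paper relies on the literature. Three small points deserve tightening. First, for $\rk\Cl(S)=1$ you do not need to control the classes supported on fibres over $\Delta_\pi$: the inequality $\rk\Cl(Y)\ge\rk\Cl(S)+1$ (injectivity of $\pi^*$ plus the class of a fibre component, which is not a pullback) already forces $\rk\Cl(S)\le 1$. Second, in degrees $8$ and $7$ the lattice $\Pic(\tilde S)$ depends on the deformation type ($\FF_0$, $\FF_1$, $\FF_2$, respectively the two weak del Pezzo surfaces of degree $7$), so the sweep should be phrased as: only $\FF_2$ carries a $(-2)$-curve generating a rank-one $K^{\perp}$, and no degree-$7$ surface has enough $(-2)$-curves to drop the rank to one. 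Third, the entry $\dim|5A_S|=5$ for $S_{\mathrm{DP}_5}$ is not covered by the equality clause of Lemma~\ref{lemma:DP} (which requires $k<K_S^2$); it follows instead from the exact sequence in that lemma's proof, or directly from $h^0(-K_S)=K_S^2+1$ on a Du Val del Pezzo surface. None of these affects the validity of your argument.
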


\begin{lemma}[see e.g. {\cite[Lemma~4.4]{P:P11223}}]
\label{lemma:cb}
Let $\pi: Y\to S$ be a $\QQ$-conic bundle, where the variety $Y$ is projective,
and let $\Delta_{\pi}\subset S$ be its discriminant curve.
Let
$H$ be an ample divisor on $S$ and let $F:=\pi^{-1}( H)$. Then
\begin{eqnarray}
\label{eq:cb:a}
K_Y\cdot F^2&=& -2 H^2,
\\
\label{eq:cb:b}
K_Y^2\cdot F&=&-4K_S\cdot H-H\cdot\Delta_{\pi}.
\end{eqnarray}
\end{lemma}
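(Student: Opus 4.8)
The plan is to evaluate both numbers on the surface $F=\pi^{-1}(H)$ itself, after arranging that $F$ is smooth. Recall that, by definition, $-K_Y$ is $\pi$-ample of fibrewise degree $2$, so a general fibre of $\pi$ is a smooth conic $\cong\PP^1$. Both sides of \eqref{eq:cb:a} (resp. \eqref{eq:cb:b}) are homogeneous of degree $2$ (resp. degree $1$) in the class $H$, so it suffices to prove the formulas after replacing $H$ by a sufficiently divisible very ample multiple $mH$ and then dividing by $m^2$ (resp. $m$). Hence I may assume that $H$ is a smooth curve which avoids the finitely many points of $\pi(\Sing(Y))\cup\Sing(S)$, over which the fibres are non-standard, and which meets the discriminant curve $\Delta_\pi$ transversally. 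Then $F=\pi^{-1}(H)$ lies in the smooth locus of $Y$, so $F$ is a smooth surface by Bertini, and $\pi_F:=\pi|_F\colon F\to H$ is a standard conic bundle over the smooth curve $H$ whose singular fibres sit precisely over the $\Delta_\pi\cdot H$ transverse points of $H\cap\Delta_\pi$, each being a reduced pair of $(-1)$-curves.

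Next I record the auxiliary intersection numbers on $F$. Writing $F|_F=(\pi^*H)|_F=\pi_F^*(H|_H)$, this is a sum of $H^2$ fibres of $\pi_F$, so $(F|_F)^2=0$, and since $K_F\cdot f=-2$ for a general fibre $f$ of $\pi_F$ (adjunction on $F$, using $f\cong\PP^1$ and $f^2=0$), one gets $K_F\cdot(F|_F)=-2H^2$. Adjunction gives $K_Y|_F=K_F-F|_F$. Intersecting with $F|_F$ yields
\[
K_Y\cdot F^2=(K_Y|_F)\cdot(F|_F)=K_F\cdot(F|_F)-(F|_F)^2=-2H^2,
\]
which is \eqref{eq:cb:a}.

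For \eqref{eq:cb:b} I square the adjunction formula:
\[
K_Y^2\cdot F=(K_Y|_F)^2=K_F^2-2\,K_F\cdot(F|_F)+(F|_F)^2=K_F^2+4H^2.
\]
It remains to evaluate $K_F^2$ through the conic bundle structure $\pi_F\colon F\to H$. A smooth conic bundle surface over a curve of genus $g_H$ with $d$ singular fibres satisfies $K_F^2=8(1-g_H)-d$; here $d=\deg(\Delta_\pi|_H)=\Delta_\pi\cdot H$, while adjunction on $S$ gives $2g_H-2=(K_S+H)\cdot H$, that is, $8(1-g_H)=-4K_S\cdot H-4H^2$. Substituting,
\[
K_F^2=-4K_S\cdot H-4H^2-\Delta_\pi\cdot H,
\]
and hence $K_Y^2\cdot F=K_F^2+4H^2=-4K_S\cdot H-H\cdot\Delta_\pi$, which is \eqref{eq:cb:b}.

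The main technical point, and really the only obstacle, is this reduction to a smooth standard conic bundle surface $F\to H$: one must use genericity of $H$ to ensure that $F$ avoids the isolated terminal singularities of $Y$ and the Du Val points of $S$, that the degenerate fibres of $\pi_F$ are reduced nodal conics counted without multiplicity by $\Delta_\pi\cdot H$, and that the relation $K_F^2=8(1-g_H)-\deg(\Delta_\pi|_H)$ therefore applies. Once this normal form is secured, the remainder is the routine adjunction bookkeeping above; formula \eqref{eq:cb:a} can alternatively be obtained directly from the projection formula via $(\pi^*H)^2=\pi^*(H^2)$ and $K_Y\cdot[\text{fibre}]=-2$, without any genericity assumption on $H$.
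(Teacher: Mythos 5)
Your proof is correct. The paper does not actually prove this lemma --- it is quoted with a pointer to \cite[Lemma~4.4]{P:P11223} --- so there is no internal argument to compare against; what you give is the standard derivation one expects behind that citation. The structure is sound: the homogeneity of both identities in $H$ legitimately reduces to a very ample general member, which (since $\Sing(Y)$, $\Sing(S)$, and the non-standard fibers lie over finitely many points of $S$) lets you assume $F=\pi^{-1}(H)$ is a smooth surface fibered in conics over the smooth curve $H$, with exactly $\Delta_\pi\cdot H$ reduced line-pair fibers; from there \eqref{eq:cb:a} is the projection formula and \eqref{eq:cb:b} follows from adjunction on $Y$, adjunction on $S$, and $K_F^2=8(1-g_H)-d$ for a conic bundle surface. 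The only points requiring care are exactly the ones you flag: that over points where $Y$ is smooth along the fiber the map is a genuine (standard) conic bundle, which is part of the structure theory of $\QQ$-conic bundles assumed throughout the paper, and that a general $H$ meets $\Delta_\pi$ transversally away from the finitely many double-line fibers. With those granted, the bookkeeping is complete and the proof stands.
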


Below we provide several ``basic'' examples of $\QQ$-conic bundles.
We are interested in the local structure of them near the singular fiber.
Denote by $\zeta_r$ a primitive $r$-th root of unity.

\begin{example}[Type~\typec{IF_1}]
\label{ex:ODP}
The variety $Y$ is given in $\PP^2_{y_1,y_2, y_3}\times
\CC^2_{u,v}$ by the equation
\[
y_1^2+y_2^2+uvy_3^2=0,
\]
and $\pi\colon Y\to S$ is the
projection to
$S:=\CC^2$.
The singular locus of $Y$ consists of one ordinary double point,
$\Delta_\pi=\{uv=0\}$, and the singular fiber $C:=\pi^{-1}(0)_{\mathrm{red}}$
is a pair of lines.
\end{example}

\begin{example}[Type~\typeci{T}{r}]
\label{ex:T}
The variety $Y$ is the quotient of $\PP^1_{y_1, y_2}\times
\CC^2_{u,v}$ by the $\mumu_r$-action
\[
(y_1, y_2;\, u,v)\longmapsto(y_1,\zeta_r\, y_2;\,\zeta_r^a\, u,\zeta_r^{-a}\,
v),
\]
where $\gcd (r,a)=1$, and $\pi\colon Y\to S$ is the
projection to
$S:=\CC^2/\mumu_r$. The singular locus of $Y$ consists of two (terminal) cyclic
quotient singularities of types $\frac1r(1,a,-a)$ and
$\frac1r(-1,a,-a)$. The singularity of $S$ at the origin is of type
\types{A}{r-1}.
In this case $\Delta_\pi=\varnothing$ and for the singular fiber
$C:=\pi^{-1}(0)_{\mathrm{red}}$ we have
$-K_Y\cdot C=2/r$.
\end{example}

\begin{example}[Type~\typeci{k2A}{r}]
\label{ex:k2A}
The variety $Y$ is the quotient of the hypersurface
\[
Y'=\{ y_1^2+uy_2^2+vy_3^2=0\}\subset\PP^2_{y_1,y_2,y_3}\times\CC^2_{u,v}
\]
by the $\mumu_{r}$-action
\[
(y_1,y_2,y_3;\, u,v)\longmapsto (\zeta_r^{a}\, y_1,\zeta_r^{-1}\, y_2,y_3;\,
\zeta_r\, u,\zeta_r^{-1}\, v),
\]
where $r=2a+1$ and $\pi\colon Y\to S$ is the
projection to
$S:=\CC^2/\mumu_r$.
The singular locus of $Y$ consists of two (terminal) cyclic quotient
singularities of types $\frac 1r(a,-1,1)$ and $\frac 1r(a+1,1,-1)$.
The singularity of $S$ at the origin is of type~\types{A}{r-1}.
In this case $\Delta_\pi=\{uv=0\}/\mumu_r$ and for the singular fiber
$C:=\pi^{-1}(0)_{\mathrm{red}}$ we have
$-K_Y\cdot C=1/r$.
\end{example}

\begin{example}[Type~\typec{ID_1^\vee}]
\label{ex:ID}
The variety $Y$ is the quotient of the hypersurface
\[
\{y_1^2+y_2^2+uv y_3^2=0\}\subset\PP^2_{y_1,y_2,y_3}\times
\CC^2_{u,v}\},
\]
by the $\mumu_2$-action
\[
(y_1,y_2,y_3;\, u,v)\longmapsto (-y_1,y_2,y_3;\, -u,-v).
\]
Then $Y$ has a unique singular point that is moderate of index~$2$ and axial 
weight $2$.
The singularity of $S$ at the origin is of type~\types{A}1.
In this case $\Delta_\pi=\{uv=0\}/\mumu_m$ and for the singular fiber
$C:=\pi^{-1}(0)_{\mathrm{red}}$ we have
$-K_Y\cdot C=1$.
\end{example}

\begin{remark}
In all cases  \typec{IF_1}, \typeci{T}{r}, \typeci{k2A}{r} \typec{ID_1^\vee} the pair $(S,\Delta_{\pi})$ is lc.
\end{remark}

The following fact is a direct consequence of \cite[Theorem~1.2]{MP:cb1} and
\cite[Theorem~1.3]{MP:cb2}.
\begin{proposition}
\label{prop:cb-index2}
Let $\pi:Y\to S\ni o$ be a $\QQ$-conic bundle germ, where
the singularities of $Y$ are moderate of indices $\le 3$.
Assume that $S\ni o$ is singular. Then $S\ni o$ is of type~\type{A_1} 
or~\type{A_2}. Furthermore,
\begin{enumerate}
\item
if
$S\ni o$ is a
singularity of type~\type{A_1}, then $\pi$ is of type~\typeci{T}{2} 
or~\typec{ID_1^\vee};
\item
if
$S\ni o$ is a
singularity of type~\type{A_2}, then $\pi$ is of type~\typeci{T}{3} or~\typeci{k2A}{3}.
\end{enumerate}
In particular, the pair $(S,\Delta_\pi)$ is lc.
\end{proposition}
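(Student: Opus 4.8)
The plan is to reduce the whole statement to the Mori--Prokhorov classification of $\QQ$-conic bundle germs, so that the proof becomes a matter of reading that classification against our two hypotheses. First I would record, using Theorem~\ref{thm:base-surface}, that a singular base point $S\ni o$ is Du Val of type~\type{A}, say~\types{A}{r-1} with $r\ge 2$; in particular the smooth-base model of Example~\ref{ex:ODP} (type~\typec{IF_1}) is excluded at the outset. The task is then to show that the hypotheses ``moderate'' and ``index $\le 3$'' on the singularities of $Y$ force $r\in\{2,3\}$ and leave only the four local models of Examples~\ref{ex:T}, \ref{ex:k2A} and~\ref{ex:ID}.

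Next I would invoke \cite[Theorem~1.2]{MP:cb1} together with \cite[Theorem~1.3]{MP:cb2}, which classify the $\QQ$-conic bundle germs whose base is singular and, in each case, describe the central fiber together with the singularities of $Y$ lying over $o$. Among the entries of that list, the germs realized by our local models are of type~\typeci{T}{r} (two cyclic quotient points of index~$r$), of type~\typeci{k2A}{r} with $r=2a+1$ odd (again two cyclic quotient points of index~$r$), and of type~\typec{ID_1^\vee} over an~\type{A_1} base (a single moderate point of index~$2$ and axial weight~$2$). The remaining entries of the classification carry a non-Gorenstein point that is either of type~\typeA{cA}{r} of axial weight~$>1$, or of type~\typeA{cD}{2} or~\typeA{cE}{2}, or of index~$\ge 4$.

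Imposing the two hypotheses then completes the enumeration. The index bound, applied to the index-$r$ points occurring in types~\typeci{T}{r} and~\typeci{k2A}{r}, forces $r\le 3$, hence $r\in\{2,3\}$; since $r$ is odd in type~\typeci{k2A}{r}, this type survives only for $r=3$. Moderateness discards every remaining entry. What is left is exactly: over an~\type{A_1} base, i.e. $r=2$, the types~\typeci{T}{2} and~\typec{ID_1^\vee}; and over an~\type{A_2} base, i.e. $r=3$, the types~\typeci{T}{3} and~\typeci{k2A}{3}. This is the asserted dichotomy, and log canonicity of $(S,\Delta_\pi)$ in each of these four cases is precisely what the Remark preceding the proposition records, yielding the final ``in particular'' assertion.

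The step I expect to be most delicate is the second one: extracting, from the intrinsic invariants produced by the Mori--Prokhorov classification (the type of the central fiber, the analytic local class groups, the Gorenstein indices), the explicit singularities that $Y$ acquires over $o$ in each case, and then checking case by case that every configuration other than the four above contains a point of index~$\ge 4$ or a point that is not moderate in the sense of~\cite{Kawamata:Moderate}. Once this matching is carried out the conclusion is a direct reading of the classification, which is why the result can be stated as a consequence of \cite[Theorem~1.2]{MP:cb1} and \cite[Theorem~1.3]{MP:cb2}.
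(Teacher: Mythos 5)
Your proposal is correct and follows essentially the same route as the paper, which gives no written proof but states the proposition as a direct consequence of \cite[Theorem~1.2]{MP:cb1} and \cite[Theorem~1.3]{MP:cb2}; your reading of that classification against the hypotheses (index $\le 3$ forcing $r\in\{2,3\}$, moderateness discarding the remaining entries, and the preceding Remark supplying the lc claim) is exactly the intended argument.
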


\begin{theorem}[cf. {\cite[Sect. 11]{P:rat-cb:e}}]
\label{thm:SL-CB}
Let $\pi: Y\to S\ni o$ be a $\QQ$-conic bundle germ of one of the types
\typec{IF_1}, \typeci{T}{r}, \typeci{k2A}{r}, or~\typec{ID_1^\vee}.
Let $P\in Y$ be a singular point, let $r$ be its index, and let $p:\check Y\to 
Y$ be
an extremal blowup
of $P$ with discrepancy $1/r$.
Then $p$ can be completed to a type~\typem{I} Sarkisov link
\begin{equation}
\label{eq:SL-CB}
\vcenter{
\xymatrix@R=1.2em@C=3em{
\check{Y}\ar[d]_{p}\ar@{-->}[r]^{\chi}& Y'\ar[d]^{\pi'}
\\
Y\ar[d]_{\pi} & S'\ar[ld]_{\theta}
\\
S
} }
\end{equation}
where $\chi$ is a birational transformation that is isomorphism in codimension
one,
$\pi'$ is a $\QQ$-conic bundle, and $\theta$ is a crepant contraction with
$\uprho(S'/S)=1$ in the cases~\typec{T}, \typec{k2A},
\typec{ID_1^\vee},
and $\theta$ is the blowup of $o$ in the case~\typec{IF_1}.
\begin{enumerate}

\item
\label{thm:SL-CB:a}
If $\pi: Y\to S\ni o$ is of type~\typeci{T}{r}, then $\pi': Y'\to S'$ has two
(or
one) singular fibers
and corresponding $\QQ$-conic bundle germs are of type~\typeci{T}{a} and
\typeci{T}{r-a}.
\item
\label{thm:SL-CB:b}
If $\pi: Y\to S\ni o$ is of type~\typeci{k2A}{r}, then $\pi': Y'\to S'$ has two
(or
one) singular fibers
and corresponding $\QQ$-conic bundle germs are of type~\typeci{k2A}{r-2} and
of type~\typec{ID_1^\vee}.
In this case $\Delta_{\pi'}=\theta^*\Delta_{\pi}$.
\item
\label{thm:SL-CB:c}
If $\pi: Y\to S\ni o$ is of type~\typec{ID_1^\vee}, then $\pi': Y'\to S'$
is a standard conic bundle and $\Delta_{\pi'}=\theta^*\Delta_{\pi}$.
\item 
\label{thm:SL-CB:o}
If $\pi: Y\to S\ni o$ is a Gorenstein $\QQ$-conic bundle germ such that the 
singularities
of $Y$ are either nodes or cusps, then so are
the singularities of 
$Y'$,  $\pi': 
Y'\to S'$ is a Gorenstein conic bundle with $|\Sing(Y')| = |\Sing(Y)|-1$,
and $\Delta_{\pi'}$ is the proper transform of~$\Delta_{\pi}$.
\end{enumerate}

In all cases we have
\begin{equation}
\label{eq:log-crepant}
\textstyle
K_{S'}+\frac12\Delta_{\pi'} =\theta^*\left(K_S+\frac12\Delta_{\pi}\right).
\end{equation}
\end{theorem}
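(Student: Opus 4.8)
The plan is to run the two-ray game on $\check Y$ over the germ $S\ni o$. Since $p$ is an extremal blowup, $\uprho(\check Y/S)=\uprho(Y/S)+1=2$, so the relative cone $\overline{\operatorname{NE}}(\check Y/S)$ has exactly two extremal rays: one, $R_p$, is contracted by $p$, and I would run the relative $K_{\check Y}$-MMP on the other ray $R$. This game terminates in a Mori fibre space, and the diagram~\eqref{eq:SL-CB} results provided (a) every intermediate step is a flip or flop, so that $\chi$ is an isomorphism in codimension one, and (b) the final contraction is of fibre type onto a surface. Granting these, the output $Y'$ is terminal with $-K_{Y'}$ relatively ample and general fibre a conic, hence $\pi'\colon Y'\to S'$ is a $\QQ$-conic bundle; the morphism $\theta\colon S'\to S$ induced by comparing the two fibrations out of $\check Y$ then satisfies $\uprho(S'/S)=1$.

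To establish (a), (b) and to pin down the types I would work from the explicit models of Examples~\ref{ex:ODP}--\ref{ex:ID}. In each case $p$ is written in weighted coordinates — the Kawamata blowup with discrepancy $1/r$ of a cyclic quotient point in types \typeci{T}{r} and \typeci{k2A}{r}, and the ordinary blowup of the quadric point in the Gorenstein cases \typec{IF_1} and~\ref{thm:SL-CB:o} — after which one computes $\chi$ and reads off $\pi'$ and $\theta$. The governing mechanism is that blowing up $P$ simplifies exactly one singular point of $Y$, while $\theta$ splits or partially resolves the Du Val \type{A} point of $S\ni o$ (Theorem~\ref{thm:base-surface}). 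Concretely: for \typeci{T}{r} the resolution chain of the \types{A}{r-1} point of $S$ is severed at position $a$, producing germs \typeci{T}{a} and \typeci{T}{r-a}; for \typeci{k2A}{r} one obtains \typeci{k2A}{r-2} together with an \typec{ID_1^\vee} fibre and $\Delta_{\pi'}=\theta^*\Delta_\pi$; for \typec{ID_1^\vee} the single index-$2$ point is removed and $\pi'$ becomes standard; and in the Gorenstein case blowing up one node or cusp gives $|\Sing(Y')|=|\Sing(Y)|-1$ with $\Delta_{\pi'}$ the proper transform of $\Delta_\pi$. In types \typec{IF_1} and~\ref{thm:SL-CB:o} the contraction $\theta$ is the blowup of $o$, and in the other cases it is crepant.

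The log-crepant relation~\eqref{eq:log-crepant} I would obtain by a discrepancy computation on the base. As $p$ has discrepancy $1/r$ and $\chi$ is an isomorphism in codimension one, $K_{Y'}$ agrees with $p^*K_Y$ away from the extraction $E$, whose image covers the $\theta$-exceptional locus; pushing forward along $\pi$ and $\pi'$ and using that $(S,\Delta_\pi)$ is lc (the remark after Proposition~\ref{prop:cb-index2}), the discrepancy of the $\theta$-exceptional curve in $\bigl(S,\tfrac12\Delta_\pi\bigr)$ vanishes. For instance, when $\theta$ blows up $o$ the two branches of $\Delta_\pi=\{uv=0\}$ have total multiplicity $2$ there, so $\tfrac12\Delta_{\pi'}=\tfrac12\theta^*\Delta_\pi-E'$ cancels the discrepancy $E'$ in $K_{S'}=\theta^*K_S+E'$; the crepant cases are immediate. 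This is precisely $K_{S'}+\tfrac12\Delta_{\pi'}=\theta^*\bigl(K_S+\tfrac12\Delta_\pi\bigr)$.

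The main obstacle is point (a) together with the identification of the precise type of $\pi'$: one must rule out a divisorial contraction in the game and show the terminal output is a conic bundle of the stated kind. This rests on the classification of divisorial extractions from terminal threefold points and of the adjacent extremal neighbourhoods (Kawakita's theorems, entering through Lemma~\ref{lemma-discrepancies}) applied to the explicit equations; the heaviest bookkeeping is following the two singular points of $Y$ through the flip or flop in types \typeci{T}{r} and \typeci{k2A}{r}.
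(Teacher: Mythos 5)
Your proposal is correct and follows essentially the same route as the source: the paper's own proof is purely a citation (parts \ref{thm:SL-CB:a}--\ref{thm:SL-CB:b} to \cite[\S~11]{P:rat-cb:e}, part \ref{thm:SL-CB:c} to \cite[Theorem~4.14]{P:P11223}, part \ref{thm:SL-CB:o} to \cite[Construction~11.1]{P:rat-cb:e} and \cite[Lemma~8]{Avilov:cb}), and the construction carried out in those references is exactly the two-ray game you describe --- the extremal blowup $p$ followed by the relative MMP over $S$, with the types of the resulting germs and the relation~\eqref{eq:log-crepant} read off from the explicit local models. The points you flag as the heaviest bookkeeping (excluding a divisorial ending of the game and tracking the two singular points through the flips, via the classification of extractions and extremal neighbourhoods) are precisely what is verified case by case in the cited sections.
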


\begin{proof}
For \ref{thm:SL-CB:a} and \ref{thm:SL-CB:b} we refer to \cite[\S~11]{P:rat-cb:e}
and for \ref{thm:SL-CB:c} we refer to \cite[Theorem~4.14]{P:P11223}.
The assertion \ref{thm:SL-CB:o} is proved in the same style (see 
\cite[Construction~11.1]{P:rat-cb:e}
and \cite[Lemma~8]{Avilov:cb}). In this situation 
$\Delta_{\pi'}$ is the proper transform of~$\Delta_{\pi}$ by 
\cite[Lemma~10.10]{P:rat-cb:e}.
\end{proof}

\begin{theorem}
\label{thm:cb}
Let $\varphi: Y\to S$ be a $\QQ$-conic bundle such that
the singularities of $Y$ are moderate of indices $\le 3$
and $S$ is a weak del Pezzo surface with $d:=(-K_S)^2$.
Assume that 
\begin{equation}
\label{eq:K-Delta1}
\Delta_{\varphi}\sim -2K_{S}.
\end{equation}
Furthermore, assume that outside the fibers over $\Sing(S)$
they are nodes or cusps, and the number of the singularities on $Y\setminus 
\varphi^{-1}(\Sing(S))$ is at most $d-4$.
Then $Y$ is not rational.
\end{theorem}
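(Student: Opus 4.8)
The plan is to reduce $\varphi$ to a \emph{standard} conic bundle by a chain of the Sarkisov links of Theorem~\ref{thm:SL-CB}, keeping the relevant invariants under control, and then to apply the intermediate Jacobian (Prym variety) non-rationality criterion. The key observation is that the hypothesis $\Delta_\varphi\sim -2K_S$, i.e. $2K_S+\Delta_\varphi\sim 0$, is \emph{preserved} by every link: doubling the log-crepant identity~\eqref{eq:log-crepant} gives $2K_{S'}+\Delta_{\pi'}=\theta^*(2K_S+\Delta_\pi)$, so if $2K_S+\Delta_\pi\sim 0$ then $2K_{S'}+\Delta_{\pi'}\sim 0$ again.

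First I would eliminate the non-Gorenstein locus. By Theorem~\ref{thm:base-surface} the base $S$ has Du Val singularities of type~$\mathrm A$, and as the singularities of $Y$ are moderate of index $\le 3$, Proposition~\ref{prop:cb-index2} shows each such point is of type $\mathrm A_1$ or $\mathrm A_2$ with conic bundle germ of type \typeci{T}{2}, \typeci{T}{3}, \typec{ID_1^\vee} or \typeci{k2A}{3}. The links \ref{thm:SL-CB:a}--\ref{thm:SL-CB:c} applied to these points have $\theta$ crepant with $\uprho(S'/S)=1$, hence leave $d=(-K_S)^2$ unchanged while preserving $\Delta\sim -2K$; iterating them resolves all base singularities and all non-Gorenstein points of $Y$ lying over them, while the nodes and cusps over the smooth locus are untouched. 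The outcome is a $\QQ$-conic bundle over the smooth minimal resolution $\tilde S$, again a weak del Pezzo surface with $(-K_{\tilde S})^2=d$, whose only singularities are the original Gorenstein nodes and cusps, at most $d-4$ in number.

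Next I would remove these Gorenstein singularities one by one using the links of type~\ref{thm:SL-CB:o}. Here $\theta$ is the blowup of a single point of the base, so each link drops the self-intersection $(-K)^2$ of the current base by $1$ and, by $|\Sing(Y')|=|\Sing(Y)|-1$, drops the number of Gorenstein singular points by $1$; consequently both the relation $\Delta\sim -2K$ and the inequality ``$\#\,\mathrm{sing}\le (-K)^2-4$'' survive. When no singular point is left we obtain a \emph{smooth} standard conic bundle $\hat\varphi\colon\hat Y\to\hat S$ over a smooth rational surface $\hat S$, birational to $Y$, with $\Delta_{\hat\varphi}\sim -2K_{\hat S}$ and $(-K_{\hat S})^2\ge 4$; by adjunction $\pa(\Delta_{\hat\varphi})=(-K_{\hat S})^2+1\ge 5$.

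It remains to prove that $\hat Y$ is irrational. The intermediate Jacobian of $\hat Y$ is the Prym variety of the canonical \'etale double cover $\tilde\Delta\to\Delta_{\hat\varphi}$, and I would invoke Shokurov's criterion: once $\Delta_{\hat\varphi}$ is positive enough, this Prym variety is not a product of Jacobians of smooth curves, so by the Clemens--Griffiths criterion $\hat Y$, and hence $Y$, is not rational. The main obstacle is exactly this positivity input. Note that the class condition $|2K_{\hat S}+\Delta_{\hat\varphi}|\ni 0$ holds automatically from $\Delta_{\hat\varphi}\sim -2K_{\hat S}$ and is by itself too weak; what makes the Prym genuinely non-split is the extra size of the discriminant guaranteed by the bound, namely $(-K_{\hat S})^2\ge 4$ and $\pa(\Delta_{\hat\varphi})\ge 5$. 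The accompanying technical points to verify are that $\Delta_{\hat\varphi}$ stays reduced and its double cover stays admissible along the whole chain of links, so that the Prym construction and Shokurov's estimate apply.
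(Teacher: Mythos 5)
Your reduction to a standard conic bundle is the same as the paper's: the non-Gorenstein fibers over $\Sing(S)$ are removed by the links of Theorem~\ref{thm:SL-CB}\ref{thm:SL-CB:a}--\ref{thm:SL-CB:c} (crepant on the base), the Gorenstein nodes and cusps are removed by links of type~\ref{thm:SL-CB:o} (blowups of the smooth points $o_i=\varphi(P_i)$ of $S$), and the relation $\Delta\sim-2K$ propagates through every link by doubling~\eqref{eq:log-crepant}. Up to this point the proposal coincides with the proof in the text.

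The gap is in the final step, which you explicitly defer (``the main obstacle is exactly this positivity input''). Two concrete pieces are missing. First, after blowing up the $n\le d-4$ points the surface $\tilde S$ is not automatically a weak del Pezzo: blowing up a point of a weak del Pezzo surface can produce a curve $C$ with $K\cdot C>0$. The paper establishes nefness of $-K_{\tilde S}$ via Lemma~\ref{lemma:surf0}, applied to the reduced nodal member $\Delta_{\pi}\in|-2K_{\tilde S}|$; without this, the inequality $(-K_{\tilde S})^2=d-n\ge 4$ has no force. Second, Shokurov's theorem does not follow from $(-K)^2\ge4$ and $\pa(\Delta)\ge5$ alone: one must verify condition (S) --- that $\Delta$ admits no decomposition $\Delta'\cup\Delta''$ with $\#(\Delta'\cap\Delta'')=2$ --- and exclude the hyperelliptic, trigonal, quasi-trigonal and plane-quintic cases. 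The paper does this by first contracting the $(-2)$-curves, $\psi:\tilde S\to\bar S$, to the Du Val anticanonical model (the Prym variety is unchanged by \cite[Corollary~3.16]{Shokurov:Prym}); on $\bar S$ the divisor $-K_{\bar S}$ is ample, so $K_{\bar\Delta}=-K_{\bar S}|_{\bar\Delta}$ is very ample and the canonical model of $\bar\Delta$ is an intersection of quadrics by \cite[Theorem~4.4]{Hidaka-Watanabe}, which rules out the exceptional cases; condition (S) is then checked by the Hodge-index computation of Lemma~\ref{lemma:surf}, and this is precisely where the hypothesis $K_{\bar S}^2\ge4$ enters a second time. Without these two lemmas (or substitutes for them) the argument does not close.
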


\begin{proof}\renewcommand{\qedsymbol}{}
Let $P_1,\dots,P_n\in Y$ be Gorenstein singular points and let 
$o_i:=\varphi(P_i)$ for $i=1,\dots,n$. 
First of all note that by Proposition~\ref{prop:cb-index2} the $\QQ$-conic
bundle $\varphi$ is of type~\typeci{T}{2} or~\typec{ID_1^\vee} 
over each \type{A_1}-point of $S$ and it is of type~\typeci{T}{3} 
or~\typeci{k2A}{3}
over each \type{A_2}-point.
Hence the points $o_1,\dots,o_n\in S$ are smooth.
By Theorem~\ref{thm:SL-CB} there exists the following composition of Sarkisov 
links:
\begin{equation}
\label{eq:SL-q=5:CB-ns}
\vcenter{
\xymatrix@C=3em{
Y\ar[d]_{\varphi} &\tilde Y\ar[d]_{\pi}\ar@{-->}[l]_{\tau}
\\
S &\tilde S\ar[l]_{\mu}
} }
\end{equation}
where $\tau$ is a birational map, $\mu$ is the composition of the minimal
resolution and blowups of the (possibly infinite near) points $o_1,\dots,o_n$, 
and $\pi$ is a standard conic bundle.
By~\eqref{eq:log-crepant} and~\eqref{eq:K-Delta1} we have
\begin{equation}
\label{eq:K-Delta2}
\Delta_{\pi}\sim -2K_{\tilde S}.
\end{equation}
Moreover, $\Delta_\pi$ is a nodal curve, hence the pair $(\tilde S, \Delta_\pi)$ is lc.
\end{proof}

We need two lemmas.
\begin{slemma}
\label{lemma:surf0}
Let $S$ be a projective  smooth rational surface such that $|-2K_S|\neq \varnothing$.
Suppose that the linear system $|-2K_S|$ contains a reduced divisor $D$
such that for any irreducible component $C\subset D$ that is a smooth 
rational curve
we have $(D-C)\cdot C>2$. Then $-K_S$ is nef.
\end{slemma}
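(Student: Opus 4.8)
The plan is to argue by contradiction. Suppose $-K_S$ is not nef, so there is an irreducible curve $\Gamma$ with $K_S\cdot\Gamma\geq 1$. Since $D\sim -2K_S$ is effective, the intersection $D\cdot\Gamma=-2(K_S\cdot\Gamma)\leq -2<0$ is negative; an effective divisor can meet an irreducible curve negatively only if that curve is one of its components, so $\Gamma$ is a component $C$ of $D$. Because $D$ is reduced, $C$ occurs in $D$ with multiplicity one, so $D-C$ is effective and does not contain $C$ as a component, whence $(D-C)\cdot C\geq 0$.

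The crux of the argument — and the step I expect to carry the real content — is that such a destabilizing component $C$ is automatically a smooth rational curve, so that the hypothesis becomes applicable even though it is stated only for smooth rational components. First I would write $D\cdot C=C^2+(D-C)\cdot C$ and use $(D-C)\cdot C\geq 0$ to deduce
\[
C^2\leq D\cdot C=-2(K_S\cdot C).
\]
Feeding this into the adjunction formula $2\pa(C)-2=C^2+K_S\cdot C$ gives $2\pa(C)-2\leq -(K_S\cdot C)\leq -1$, so $\pa(C)=0$ and $C$ is a smooth rational curve.

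At this point the hypothesis applies and yields $(D-C)\cdot C>2$. On the other hand, $\pa(C)=0$ together with adjunction gives $C^2=-2-K_S\cdot C$, and therefore
\[
(D-C)\cdot C=D\cdot C-C^2=-2(K_S\cdot C)-(-2-K_S\cdot C)=2-K_S\cdot C\leq 1,
\]
using $K_S\cdot C\geq 1$. This contradicts $(D-C)\cdot C>2$, so $-K_S$ must be nef.

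The only genuinely delicate point is the forced smoothness and rationality of $C$ in the second paragraph; everything else is the genus formula together with the effectivity of $D$ and of $D-C$. In writing this up I would be careful to record exactly where reducedness of $D$ is used — it guarantees that $C$ appears with multiplicity one and hence that $(D-C)\cdot C\geq 0$ — since this effectivity of $D-C$ is what drives both the bound $C^2\leq -2(K_S\cdot C)$ and the final computation.
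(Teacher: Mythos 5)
Your proposal is correct and follows essentially the same route as the paper's proof: assume a curve $C$ with $K_S\cdot C>0$, deduce that it is a component of the reduced divisor $D$ so that $D-C$ is effective and $(D-C)\cdot C\ge 0$, use adjunction to force $\pa(C)=0$ (hence $C$ smooth rational, making the hypothesis applicable), and then compute $(D-C)\cdot C\le 1<2$ for a contradiction. The only cosmetic difference is that the paper phrases the final inequality as $4>2(D-C)\cdot C$ rather than $(D-C)\cdot C=2-K_S\cdot C\le 1$; the content is identical.
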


\begin{proof}
Assume that for some irreducible curve $ C\subset S$ we have
$K_S \cdot C>0$. Then $ D \cdot C<0$, $ C$ 
is a component of $ D$, and $ C^2<0$. Put $D':= D-C$. Then
$D'$ is effective and does not contain $ C$ as a component.
We have
\[\textstyle
2\pa( C)-2= K_S \cdot C+ C^2 =\frac 12 
C^2-\frac 12 D' \cdot C<0.
\]
Hence, $\pa( C)=0$, $ C\simeq \PP^1$, and so 
\[
4= D' \cdot C- C^2 > D' \cdot C- C^2 + 
D \cdot C =2 D' \cdot C.
\]
This contradicts our assumption.
\end{proof}

\begin{slemma}
\label{lemma:surf}
Let $S$ be a smooth surface such that $-K_S$ is nef and $K_S^2\ge 4$,
and let $D\in |-2K_S|$ be a reduced divisor on $S$.
Suppose that there exists a decomposition $D=D'+D''$, where $D'$, $D''$ are
effective divisors and $D'\cdot D''=2$.
Then
either $-K_S\cdot D'=0$ or $-K_S\cdot D''=0$.
\end{slemma}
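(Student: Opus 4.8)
The plan is to translate the statement into intersection numbers against the nef and big class $-K_S$ and then to trap $a:=-K_S\cdot D'$ between the two roots of a quadratic produced by the Hodge index inequality. First I would record the elementary numerics. Since $-K_S$ is nef and $D'$, $D''$ are effective, we have $a\ge 0$ and $b:=-K_S\cdot D''\ge 0$, while $D=D'+D''\in|-2K_S|$ gives
\[
a+b=-K_S\cdot D=2K_S^2=:2d,\qquad d\ge 4.
\]
Intersecting $D'$ with $D$ and using the hypothesis $D'\cdot D''=2$ yields $D'^2=D'\cdot D-D'\cdot D''=2a-2$, and symmetrically $D''^2=2b-2$.

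Next I would extract a parity constraint from adjunction on the smooth surface $S$:
\[
2\pa(D')-2=D'^2+K_S\cdot D'=(2a-2)-a=a-2,
\]
so $a=2\pa(D')$ is a nonnegative \emph{even} integer (and likewise $b=2\pa(D'')$); in particular $a$ is even with $0\le a\le 2d$.

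The heart of the argument is the Hodge index inequality for the class $-K_S$, which has $(-K_S)^2=d>0$: it gives $a^2=(-K_S\cdot D')^2\ge D'^2\,(-K_S)^2=(2a-2)d$, that is,
\[
a^2-2ad+2d\ge 0.
\]
The quadratic $t^2-2dt+2d$ has roots $d\pm\sqrt{d(d-2)}$, and for $d\ge 4$ the estimate $d-2<\sqrt{d(d-2)}<d-1$ places the smaller root in $(1,2)$ and the larger one in $(2d-2,2d-1)$. Hence inside $[0,2d]$ the inequality holds only for $a\in\{0,1,2d-1,2d\}$; as $a$ is even, necessarily $a=0$ or $a=2d$. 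In the first case $-K_S\cdot D'=0$ and in the second $b=2d-a=0$, i.e.\ $-K_S\cdot D''=0$, which is exactly the claim.

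I expect the only delicate point to be the interaction between the Hodge index inequality and the parity of $a$: the inequality by itself leaves the two near-endpoint odd values $a=1$ and $a=2d-1$ admissible, and it is precisely the evenness $a=2\pa(D')$ furnished by adjunction that rules them out and forces $a$ to an extreme. The rest is routine bookkeeping; the one thing worth double-checking is that the root localisation holds uniformly for every $d\ge 4$ rather than for a single sample value.
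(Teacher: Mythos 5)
Your proof is correct, and it uses the same two essential tools as the paper — the Hodge index theorem and the parity of $K_S\cdot D'+D'^2$ coming from Riemann--Roch/adjunction — but it deploys the Hodge index inequality against a different pair of classes, which changes the shape of the argument. The paper assumes $-K_S\cdot D'>0$ and $-K_S\cdot D''>0$, notes $D'^2+D''^2=4K_S^2-4\ge 12$ so that (after relabelling) $D''^2>0$, and then applies Hodge index to the pair $(D',D'')$ to get $4=(D'\cdot D'')^2\ge D'^2D''^2$, forcing $D'^2\le 0$; the identity $2=D'\cdot D''=-2K_S\cdot D'-D'^2$ then pins down $K_S\cdot D'=-1$ and $D'^2=0$, which violates parity. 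You instead apply Hodge index to $(-K_S,D')$, obtaining the quadratic inequality $a^2-2ad+2d\ge 0$ in $a=-K_S\cdot D'$, and locate its roots uniformly in $d\ge 4$ so that the only integer solutions in $[0,2d]$ are $0,1,2d-1,2d$, with parity eliminating the odd pair. Your computations check out: $D'^2=2a-2$, the evenness of $a$ from $2\pa(D')-2=a-2$, and the root bounds $d-2<\sqrt{d(d-2)}<d-1$ are all valid for $d\ge 4$ (indeed for $d\ge 3$). What your route buys is a single clean case analysis with no need for the ``may assume $D''^2>0$'' reduction; what the paper's route buys is brevity and a more direct use of the hypothesis $D'\cdot D''=2$. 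One cosmetic caveat: the phrase ``$a=2\pa(D')$ is a nonnegative even integer'' should not be read as deducing $a\ge 0$ from $\pa(D')\ge 0$ (arithmetic genera of effective divisors can be negative); nonnegativity comes from nefness of $-K_S$, as you correctly state at the outset, and only the evenness is extracted from adjunction.
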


\begin{proof}
Assume that $-K_S\cdot D'>0$ and $-K_S\cdot D''>0$.
We have
\[
(-2 K_S)^2=D'^2+D''^2+4\ge 16,
\]
hence we may assume that $D''^2>0$.
Then by the Hodge index theorem $4=(D'\cdot D'')^2\ge D'^2\, D''^2$.
This implies that $D'^2\le 0$. Then
\[
2=D'\cdot D''=D'\cdot (-2K_S -D')=-2K_S\cdot D' -D'^2\ge 2.
\]
So we obtain $K_S\cdot D'=-1$ and $D'^2=0$.
On the other hand,
by the genus formula the number $K_S\cdot D'+D'^2$ must be even, a 
contradiction.
\end{proof}

\begin{proof}[Proof of Theorem~\xref{thm:cb} \textup(continued\textup)]
Let $d:=K_S^2$.
We use the diagram~\eqref{eq:SL-q=5:CB-ns}.
Then $\tilde S$ is a smooth weak del Pezzo surface of degree $d-n\ge 4$ by 
Lemma~\ref{lemma:surf0}, hence
the linear system $|-K_{\tilde S}|$ is base point free and defines a crepant 
contraction
$\psi :\tilde S\to\bar S$ \cite[Corollary~4.5(i)]{Hidaka-Watanabe}.
Let $\bar\Delta:=\psi_*\Delta_{\tilde{\pi}}$.
By~\eqref{eq:K-Delta2} we have
\begin{equation*}
\bar\Delta\sim -2K_{\bar S}.
\end{equation*}
For any $\psi$-exceptional curve $\tilde E$ we have $K_{\tilde S}\cdot\tilde 
E=\Delta_{\tilde{\pi}}\cdot\tilde E=0$.
Hence, $\psi$ is log crepant with respect to $K_{\tilde 
S}+\Delta_{\tilde{\pi}}$,
the pair $(\bar S,\bar\Delta)$ is lc, and the curve $\bar\Delta$ has at worst 
nodal
singularities.
We claim that the curve $\bar\Delta$ satisfies the condition (S) of
\cite[Main Theorem]{Shokurov:Prym}.
Indeed, assume that $\bar\Delta=\bar\Delta'\cup \bar\Delta''$ so that
$\#(\bar\Delta'\cap \bar\Delta'')=2$. Let $\tilde\Delta'=\psi^{-1}(\bar\Delta')$ and $\tilde\Delta''=\psi^{-1}(\bar\Delta)-\tilde\Delta'$.
Then $\tilde\Delta'\cdot \tilde\Delta''=2$ and by Lemma~\ref{lemma:surf} either $\tilde\Delta'$ or $\tilde\Delta''$ is contracted by $\psi$,
a contradiction.

Let $\tilde\tau:\hat\Delta_{\tilde{\pi}}\to
\Delta_{\tilde{\pi}}$ be the double cover associated to $\tilde{\pi}$.
Then $\tilde\tau$ induces a double cover $\bar\tau:\hat\Delta\to\bar\Delta$
that coincides with $\tilde\tau$ over
$\Delta_{\pi}\setminus\Exc(\psi)$ and by \cite[Corollary~3.16]{Shokurov:Prym} 
we have a natural isomorphism of Prym varieties
\[
\Pr(\hat\Delta_{\tilde{\pi}}/\Delta_{\tilde{\pi}} )\simeq\Pr(\hat
\Delta/\bar\Delta).
\]
By the adjunction the divisor $K_{\bar\Delta}=-K_{\bar S}|_{\bar\Delta}$ is 
very ample.
Hence
the curve $\bar\Delta$ is not hyperelliptic.
Moreover, since $\bar\Delta\in |-2K_{\bar S}|$ and $\bar S$ is a du Val del 
Pezzo surface
of degree $d-n\ge 4$, the canonical model of $\bar\Delta$ is an intersection of 
quadrics by \cite[Theorem~4.4]{Hidaka-Watanabe}.
Hence,
$\bar\Delta$ is not trigonal nor quasi-trigonal, and also
$\bar\Delta$ is not a plane quintic (cf. \cite[Lemma~6.5]{P:P11223}).
Then by \cite[Main Theorem]{Shokurov:Prym} the Prym variety $\Pr(\hat
\Delta/\bar\Delta)$ is not a sum of Jacobians of curves, hence $\tilde Y$ is 
not rational (see e.g. \cite{Beauville:Prym}).
\end{proof}

\section{Sarkisov link}
\label{sect:sl}
Below we will frequently use the following Sarkisov link. For more detailed explanation of this construction 
we refer to~\cite{Alexeev:ge},  
\cite{P:2010:QFano}, \cite{P:2013-fano}, \cite{P:2016:QFano7}.

\subsection{Notation}
\label{subsect:SL}
Let $X$ be a non-Gorenstein $\QQ$-Fano threefold of $\QQ$-Fano index~$q=\qQ(X)>1$.
Let $\MMM$ be a nonempty linear system without fixed components such that
$\MMM\qq nA_X$ with $n<q$.
Let $c:=\operatorname {ct} (X,\MMM)$ be the canonical threshold of the pair 
$(X,\MMM)$.
We assume that $c\le 1$ (see Lemma~\ref{lemma:ct} below).
According to {\cite[Proposition~2.10]{Corti95:Sark}} (see also 
{\cite[Claim~4.5.1]{P:G-MMP}})
there exists an extremal blowup $f:\tilde{X}\to X$
that is crepant with respect to $K_X+c\MMM$.
Thus $\uprho (\tilde{X})=2$ and $-(K_{\tilde X}+c\tilde\MMM)$ 
is nef and big, where $\tilde \MMM$ is the proper transform of $\MMM$.
Run the log minimal model program on $\tilde{X}$ with 
respect to $K_{\tilde{X}}+c\tilde\MMM$
(see e.g. \cite[4.2]{Alexeev:ge} or \cite[12.2.1]{P:G-MMP}).
We obtain the following Sarkisov link:
\begin{equation}
\label{diagram-main}
\vcenter{
\xymatrix@C=19pt{
&\tilde{X}\ar@{-->}[rr]^{\chi}\ar@/_0.4em/[dl]_{f} 
&&\bar{X}\ar@/^0.4em/[dr]^{\bar f}
\\
X &&&&\hat{X}
} 
}
\end{equation}
where $\chi$ is an isomorphism in codimension $1$,
the variety $\bar{X}$ also has only terminal $\QQ$-factorial singularities,
$\uprho (\bar{X})=2$, and
$\bar{f}:\bar{X}\to\hat{X}$ is an extremal $K_{\bar{X}}$-negative Mori 
contraction
that can be either divisorial or fiber type. Denote by $\bar \MMM$ the proper transform of 
$\tilde \MMM$ on $\bar X$.

Let $E\subset \tilde X$ be the $f$-exceptional divisor and let $\bar E\subset 
\bar X$ be 
its proper transform.
Write
\begin{equation}
\label{equation-1}
K_{\tilde{X}}\qq f^*K_{X}+\alpha E,\quad
\tilde\MMM\qq f^* \MMM-\beta E.
\end{equation} 
Then 
\[
c= \frac {\alpha}{\beta}.
\]

\begin{slemma}[{\cite[Lemma~4.2]{P:2010:QFano}}]
\label{lemma:ct}
Let $P\in X$ be a point of index~$r>1$ such that $\MMM$ is not Cartier at $P$. Write
$\MMM\overset{P}{\sim} -mK_X$, where $0<m<r$.\footnote{Recall that $D_1\overset{P}{\sim} D_2$ means 
that 
the linear equivalence $D_1\sim D_2$ holds in a neighborhood of $P$.
}
Then
$\ct(X,\MMM)\le 1/m$. 
Therefore,
\begin{equation}
\label{eq:ct:beta-alpha}
\beta\ge m\alpha\qquad\text{and}\qquad q\beta-n\alpha\ge\alpha>0.
\end{equation} 
\end{slemma}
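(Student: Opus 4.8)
The plan is to first reduce the inequality $\ct(X,\MMM)\le 1/m$ to a purely local statement at $P$, exhibit a single divisorial valuation over $P$ that witnesses it, and then deduce the two displayed inequalities by elementary arithmetic. Since the canonical threshold is computed as a minimum over all divisorial valuations, to prove $\ct(X,\MMM)\le 1/m$ it suffices to produce one exceptional divisor $E$ over $P$ with
\[
a(E,X)\le \tfrac1m\,\ord_E(\MMM),
\]
where $a(E,X)$ denotes the discrepancy; equivalently $\ord_E(\MMM)\ge m\,a(E,X)$.

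For the witness I would take the Kawamata (weighted) blowup $f\colon\tilde X\to X$ of the cyclic quotient point, for which the exceptional divisor $E$ has discrepancy $a(E,X)=1/r$. By Lemma~\ref{lemma:K-index} the group $\Cl^{\mathrm{sc}}(X,P)$ is cyclic of order $r$ and generated by $K_X$, so the hypothesis $\MMM\overset{P}{\sim}-mK_X$ pins down the class of a general member $M\in\MMM$ in $\ZZ/r\ZZ$. Passing to the orbifold cover $\mumu_r\colon\CC^3\to X$, the local equation of $M$ is a semi-invariant $h$ whose character equals that of $-mK_X$; tracking the $\mumu_r$-action on the canonical form $dx_1\wedge dx_2\wedge dx_3$ shows this character is $\equiv m\pmod r$. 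Since $v_E(x_i)=w_i$ with $w=\tfrac1r(1,a,r-a)$, the weighted order of any semi-invariant of character $\equiv m$ is at least $m$, whence $\ord_E(M)\ge m/r=m\,a(E,X)$ for every member, so $\ord_E(\MMM)\ge m/r$. This gives $\ct(X,\MMM)\le a(E,X)/\ord_E(\MMM)\le 1/m$. I expect the sign bookkeeping here --- that $K_X$ has class $\equiv -1$, hence $-mK_X$ has class $\equiv m$ rather than $r-m$ --- to be the only genuinely delicate point in the cyclic case; getting it wrong replaces the bound $1/m$ by $1/(r-m)$.

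The same computation must be extended from cyclic quotients to an arbitrary terminal point of index $r$, and this is the step I expect to be the main obstacle. Here I would either invoke the $\QQ$-smoothing of Proposition-Definition~\ref{prop:Q-smoo} to deform the problem to the cyclic quotient case, or use the existence of an extremal extraction with controlled discrepancy supplied by Lemma~\ref{lemma-discrepancies}. In either case $\Cl^{\mathrm{sc}}(X,P)\cong\ZZ/r\ZZ$ is still generated by $K_X$, so the residue of $\ord_E$ on divisor classes is given by the same homomorphism and the bound $\ord_E(\MMM)\ge m/r$ persists. The care needed is exactly in guaranteeing, for the non-cyclic types ($\mathrm{cA}/r$, $\mathrm{cAx}/4$, etc.), that a valuation realizing the clean value $a(E,X)=1/r$ (equivalently $k=1$ in Lemma~\ref{lemma-discrepancies}) is available to feed into the estimate.

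Finally, the two inequalities in~\eqref{eq:ct:beta-alpha} follow formally. For the extremal blowup $f$ of the link~\eqref{diagram-main} we have $c=\alpha/\beta=\ct(X,\MMM)\le 1/m$, which rearranges to $\beta\ge m\alpha$. Since $m\ge 1$ and $n<q$ we get $qm\ge q\ge n+1$, so $qm-n\ge 1$; combined with $\alpha=a(E,X)>0$ (terminality of $X$) this yields
\[
q\beta-n\alpha\ \ge\ qm\alpha-n\alpha\ =\ (qm-n)\alpha\ \ge\ \alpha\ >\ 0,
\]
as claimed.
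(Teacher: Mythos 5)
The paper offers no proof of this lemma---it is imported verbatim from \cite[Lemma~4.2]{P:2010:QFano}---so your proposal can only be measured against what a correct argument requires. Your treatment of the cyclic-quotient case is correct, including the one sign you rightly single out: identifying $\Cl(X,P)\simeq\ZZ/r\ZZ$ with characters of $\mumu_r$ via the weight of a local equation on the cover, the form $dx_1\wedge dx_2\wedge dx_3$ has weight $+1$, so an invariant section $h\,dx_1\wedge dx_2\wedge dx_3$ of $\omega_X$ needs $h$ of weight $-1$; hence $[K_X]=-1$, $[-mK_X]=m$, a general member of $\MMM$ has weighted order at least $m/r$ along the Kawamata divisor, and $\ct(X,\MMM)\le (1/r)/(m/r)=1/m$. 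The closing arithmetic deducing \eqref{eq:ct:beta-alpha} from $c=\alpha/\beta\le 1/m$, $m\ge 1$, $n<q$ and $\alpha>0$ is also fine.

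The genuine gap sits exactly where you predicted: the non-cyclic terminal points. Neither of your two proposed tools produces the witness valuation. Lemma~\ref{lemma-discrepancies} only constrains the discrepancy of an extremal blowup that is \emph{assumed} to exist, and for types $\mathrm{cA}/r$, $\mathrm{cD}/2$, $\mathrm{cE}/2$ it explicitly permits $k>1$, so even a given extremal blowup need not have discrepancy $1/r$; and the $\QQ$-smoothing of Proposition-Definition~\ref{prop:Q-smoo} deforms $X$ itself, with no argument that the canonical threshold behaves semicontinuously in the direction you need. The missing ingredient is the theorem (Kawamata, in the appendix to Shokurov's paper on 3-fold log flips; refined by Hayakawa) that every terminal threefold point of index $r$ admits an exceptional divisor $E_0$ with discrepancy exactly $1/r$. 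Once that is granted, you need not redo the monomial analysis type by type: since $M+mK_X$ is Cartier at $P$ for general $M\in\MMM$, the order $\ord_{E_0}(M+mK_X)$ is an integer, whence $\ord_{E_0}(\MMM)\equiv -m\,\ord_{E_0}(K_X)\equiv m\cdot a(E_0,X)=m/r \pmod{\ZZ}$; combined with $\ord_{E_0}(\MMM)\ge 0$ and $0<m<r$ this forces $\ord_{E_0}(\MMM)\ge m/r$ uniformly for all terminal types. That congruence is the clean form of the homomorphism you gesture at in your second paragraph, and it closes the argument modulo the one citation you are missing.
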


For a positive integer $k$ put $\MMM_k:=|kA_{X}|$. 
Thus $\MMM_n=\MMM$ if $\MMM$ is a complete linear system.
Let $\tilde \MMM_k$ and 
$\bar \MMM_k$
be proper transforms of $\MMM_k$ on $\tilde X$ and $\bar X$, respectively.
If $\MMM_k \neq \varnothing$, write
\[
\tilde \MMM_k\qq f^*\MMM_k-\beta_k E. 
\]

\subsection{Birational case}
\label{case-bir}
Assume that the contraction $\bar{f}$ is birational. Then $\hat{X}$ is a $\QQ 
$-Fano threefold.
In this case, denote by $\bar{F}$ the
$\bar{f}$-exceptional divisor, by
$\tilde F\subset\tilde{X}$ its proper transform, and $F:=f(\tilde F)$.
The divisor $\bar E$ is not contracted by $\bar f$, i.e.~$\bar{E}\neq\bar{F}$ 
(see e.g. \cite[Claim~4.6]{P:2010:QFano}).
Let $\hat \MMM:=\bar f_*\bar \MMM$, $\hat \MMM_k:=\bar f_*\bar \MMM_k$, $\hat E:=\bar f_*\bar E$, and let 
$A_{\hat{X}}$ be a fundamental divisor on $\hat{X}$.
Write
\[
F\qq d A_X,\qquad\hat{E}\qq eA_{\hat{X}},\qquad\hat\MMM\qq sA_{\hat{X}},\qquad\hat\MMM_k\qq s_kA_{\hat{X}},
\]
where $d,\, e\in\ZZ_{>0}$,\ $s,\, s_k\in\ZZ_{\ge0}$. 
For short,  we also put $\hat q:=\qQ(\hat X)$.

\begin{slemma}[{\cite[Lemma~4.12]{P:2010:QFano}}]
\label{lemma:sl:torsion} 
If $\Clt{X}=0$, then $\Clt{\hat{X}}$ is a cyclic group of order $d/e$. 
If $\Clt{\hat X}=0$, then $\Clt{X}$ is a cyclic group of order $e/d$. 
\end{slemma}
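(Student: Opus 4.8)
The plan is to read off both assertions from the behaviour of the Weil divisor class groups along the link \eqref{diagram-main}, using that a divisorial contraction changes $\Cl$ only by killing the class of its exceptional divisor, while the small map $\chi$ leaves $\Cl$ unchanged.

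First I would record the two basic exact sequences. Since $f$ contracts only $E$ and its image has codimension $\ge 2$, restricting to $\tilde X\setminus E$ gives
\[
0\longrightarrow \ZZ\cdot E\longrightarrow \Cl(\tilde X)\xrightarrow{\ f_*\ }\Cl(X)\longrightarrow 0,
\]
where left-exactness comes from the fact that on the projective variety $\tilde X$ no nonzero multiple of the effective prime divisor $E$ can be principal. The identical argument applied to $\bar f$ (whose exceptional divisor is $\bar F$) yields
\[
0\longrightarrow \ZZ\cdot \bar F\longrightarrow \Cl(\bar X)\xrightarrow{\ \bar f_*\ }\Cl(\hat X)\longrightarrow 0.
\]
Finally, because $\chi$ is an isomorphism in codimension one, the proper transform induces an isomorphism $\chi_*\colon \Cl(\tilde X)\xrightarrow{\sim}\Cl(\bar X)$ carrying $E$ to $\bar E$ and $\tilde F$ to $\bar F$.

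Now assume $\Clt{X}=0$, so $\Cl(X)=\ZZ A_X$ is free of rank one and the first sequence splits: $\Cl(\tilde X)\cong\ZZ E\oplus\ZZ\tilde A\cong\ZZ^2$, where $\tilde A$ is a preimage of $A_X$. Hence $\Cl(\bar X)\cong\ZZ^2$ is free with basis $\{\bar E,\chi_*\tilde A\}$. Writing $\tilde F=aE+b\tilde A$ and applying $f_*$ (note $\tilde F\ne E$, so $\tilde F$ is not $f$-exceptional and $f_*\tilde F=F$), the relation $F\qq dA_X$ together with $\Cl(X)=\ZZ A_X$ forces $b=d$. Transporting by $\chi_*$, the class $\bar F=a\bar E+d\,\chi_*\tilde A$ has divisibility $g:=\gcd(a,d)$ in $\Cl(\bar X)\cong\ZZ^2$, and therefore the second exact sequence gives
\[
\Cl(\hat X)=\Cl(\bar X)/\ZZ\bar F\cong \ZZ\oplus\ZZ/g,
\qquad\text{so}\qquad \Clt{\hat X}\cong\ZZ/g .
\]

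It remains to identify $g$ with $d/e$, and this arithmetic step is the one I expect to be the most delicate, since it is where the geometric data $d$ and $e$ must be matched with the purely lattice-theoretic divisibility. The idea is to feed the relation $\hat E\qq eA_{\hat X}$ into the free quotient $\Cl(\bar X)\to\Cl(\hat X)\to\Cl(\hat X)/\mathrm{tors}\cong\ZZ$, the last group generated by $A_{\hat X}$. The resulting map $\ZZ^2\to\ZZ$ has kernel the saturation of $\ZZ\bar F$, spanned by $(a/g)\bar E+(d/g)\chi_*\tilde A$, so up to sign it is $(x,y)\mapsto (d/g)x-(a/g)y$ in the basis $\{\bar E,\chi_*\tilde A\}$. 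Evaluating on $\bar E$ gives $d/g$, whereas $\bar E\mapsto\hat E$ maps to $e$; hence $e=d/g$, i.e. $g=d/e$ (in particular $e\mid d$), which proves $\Clt{\hat X}\cong\ZZ/(d/e)$. The second assertion follows by the symmetry of the link: interchanging $X\leftrightarrow\hat X$, $f\leftrightarrow\bar f$, $E\leftrightarrow\bar F$, $\tilde F\leftrightarrow\bar E$ exchanges $d$ and $e$ (using $\bar E\ne\bar F$ and $\bar f_*\bar E=\hat E\qq eA_{\hat X}$), so the same computation applied when $\Clt{\hat X}=0$ yields $\Clt{X}\cong\ZZ/(e/d)$.
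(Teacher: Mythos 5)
Your argument is correct and complete: the two localization sequences $0\to\ZZ E\to\Cl(\tilde X)\to\Cl(X)\to 0$ and $0\to\ZZ\bar F\to\Cl(\bar X)\to\Cl(\hat X)\to 0$, the identification $\Cl(\tilde X)\simeq\Cl(\bar X)$ via the small map $\chi$, and the lattice computation pinning down $\gcd(a,d)=d/e$ by pushing $\bar E\qq$-equivalent data into $\Cl(\hat X)/\mathrm{tors}=\ZZ A_{\hat X}$ all check out, as does the symmetry giving the second assertion. The paper does not reprove this statement but imports it from \cite[Lemma~4.12]{P:2010:QFano}, where it is established by essentially the same bookkeeping of class groups along the link, so your proof matches the intended one.
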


\begin{sremark}
\label{rem:case-bir}
In the above notation $s>0$ and $s_k=0$ if and only if 
$\dim\MMM_k=0$ and the unique element $M_k\in\MMM_k$ 
coincides with the $\bar f$-exceptional divisor $\bar{F}$.
\end{sremark}

\subsection{Fibrations}
\label{case-nonbir}
Assume that $\bar{f}$ is a fibration. Then we denote by
$\bar{F}$ a general geometric fiber.
Then $\bar{F}$ is either a smooth rational curve or a smooth del Pezzo surface.
The image of the restriction map $\Cl(\bar{X})\to\Pic (\bar{F})$ is isomorphic 
to $\ZZ$.
Let $\Xi$ be its ample generator.
As above, we can write
\[
-K_{\bar{X}} |_{\bar{F}}=-K_{\bar{F}}\sim\hat{q}\Xi,
\qquad
\bar{E} 
|_{\bar{F}}\sim e\Xi,
\qquad
\bar\MMM |_{\bar{F}}\sim s\Xi,
\qquad
\bar\MMM_k |_{\bar{F}}\sim s_k\Xi,
\]
where $\hat{q}\in\ZZ_{>0}$, and $e,\, s,\, s_k\in\ZZ_{\ge0}$.
Note that $\bar E$ is $\bar f$-ample \cite[Claim~4.6]{P:2010:QFano}, hence $e>0$.

\begin{sremark}[{\cite[Lemma~5.1]{P:QFano-rat1}}]
\label{rem:case-nonbir}
If in the above assumption   $X$ is not 
rational, then
$\hat q=1$.
\end{sremark}

\subsection{Numerical constraints}
Taking the expressions \eqref{equation-1} into account, one can easily deduce the following relations (see \cite[(4.2.2)]{P:QFano-rat1}). 
They will be frequently used  below.
\begin{eqnarray}
\label{eq:main}
n\hat{q}&=&q s+(q\beta-n\alpha) e,
\\[3pt]
\label{eq:main1}
k\hat{q}&=&q s_k+(q\beta_k-k\alpha) e,
\end{eqnarray}
where $q\beta-n\alpha>0$ (see~\eqref{eq:ct:beta-alpha}). If furthermore $\qQ(X)=\qW(X)$, then 
$q\beta-n\alpha$ is a positive integer.

Now assume that the morphism $\bar{f}$ is birational.
Similar to~\eqref {equation-1} we can write
\[
K_{\bar{X}}\qq\bar{f}^*K_{\hat{X}}+b\bar F,\quad
\bar\MMM_k\qq\bar{f}^*\hat\MMM_k-\gamma_k\bar F,\quad
\bar E\qq\bar{f}^*\hat{E}-\delta\bar F.
\]
If $\Cl(\hat{X})\simeq\ZZ$, then similar to~\eqref{eq:main1}
we have the following relations (see \cite[2.6]{P:2016:QFano7}): 
\begin{equation} 
\label{eq:-b-gamma-delta-1}
\begin{array}{lll}
be &=&\hat{q}\delta-q,
\\[2pt]
e\gamma_k &=& s_k\delta-k.
\end{array}
\end{equation}

\section{$\QQ$-Fano threefolds with torsions in $\Cl(X)$}
\label{sect:tor}
In this section we collect facts on $\QQ$-Fano threefolds having nontrivial
torsions in $\Cl(X)$.
\begin{lemma}[{\cite[Proposition~2.9]{P:2010:QFano}}]
\label{lemma:TOR}
Let $X$ be a Fano threefold with terminal singularities and let $T$ be an 
$n$-torsion element in the Weil divisor class group. Let $\B(X)^T$ be the 
collection of points $P\in\B(X)$ such that $T$ is 
not 
Cartier at $P$.
Assume furthermore that $n$ is prime. Then
\begin{enumerate}
\item
$n\in\{2, 3, 5, 7\}$.
\item
If $n = 7$, then $\B(X)^T = (7, 7, 7)$.
\item
If $n = 5$, then $\B(X)^T = (5, 5, 5, 5)$, $(10, 5, 5)$, or $(10, 10)$.
\item
\label{lemma:TOR3}
If $n = 3$, then $\sum_{P\in\B(X)^T}\ind(X,P) = 18$.
\item
If $n = 2$, then $\sum_{P\in\B(X)^T}\ind(X,P) = 16$.
\end{enumerate}
\end{lemma}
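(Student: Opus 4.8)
The plan is to convert everything into Euler characteristics and apply Reid's orbifold Riemann--Roch. Since $T$ is torsion we have $nT\sim 0$, so $T$ is $\QQ$-Cartier and $jT\qq 0$ (hence numerically trivial) for every $j$; moreover $jT\not\sim 0$ for $0<j<n$ because $n$ is prime. As $X$ is Fano with terminal, hence rational, singularities, Kawamata--Viehweg vanishing applied to $jT=K_X+(jT-K_X)$ with $jT-K_X\qq -K_X$ ample gives $H^i(X,\OOO_X(jT))=0$ for $i>0$, so $\chi(\OOO_X(jT))=\h^0(\OOO_X(jT))$. For $0<j<n$ a nonzero torsion class has no section (an effective $E\sim jT$ would give $nE\sim 0$, forcing $E=0$ on the projective normal $X$), whence $\chi(\OOO_X(jT))=0$, while $\chi(\OOO_X)=1$.

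Next I would invoke Reid's formula \cite{Reid:YPG}, $\chi(\OOO_X(D))=\chi(\OOO_X)+\tfrac1{12}D(D-K_X)(2D-K_X)+\tfrac1{12}D\cdot c_2(X)+\sum_Q c_Q(D)$, summed over the basket. Because $D=jT$ is numerically trivial, both global terms vanish (each is $D$ intersected with a $1$-cycle), leaving $\sum_Q c_Q(jT)=-1$ for $0<j<n$. Here $c_Q(jT)$ depends only on the image of $jT$ in $\Cl(X,Q)\cong\ZZ/r_Q$ with $r_Q:=\ind(X,Q)$, and it vanishes when $jT$ is Cartier at $Q$; so only the points $Q\in\B(X)^T$ contribute, and at such a point the image of $T$ has order exactly $n$, forcing $n\mid r_Q$.

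For the numerical identities (iv), (v) and the index totals behind (ii), (iii), I would pass to the cyclic, étale-in-codimension-one cover $\pi\colon\tilde X\to X$ defined by $T$. Then $\tilde X$ is again a terminal Fano threefold (so $\chi(\OOO_{\tilde X})=1$), the projection formula gives $-K_{\tilde X}\cdot c_2(\tilde X)=n\,(-K_X\cdot c_2(X))$ since $\pi$ is étale off the isolated singular points, and the basket transforms by replacing each $Q\in\B(X)^T$ of index $r_Q=n\,m_Q$ by a single point of index $m_Q$, while every other basket point gains $n$ preimages of unchanged index. Writing $24\,\chi(\OOO)=-K\cdot c_2+\sum_{\B}(r-1/r)$ on both $X$ and $\tilde X$ and subtracting, the Gorenstein and $T$-Cartier contributions cancel, and one is left with $(n^2-1)\sum_{Q\in\B(X)^T}m_Q=24(n-1)$, i.e.
\[
(n+1)\sum_{Q\in\B(X)^T}m_Q=24,\qquad \sum_{Q\in\B(X)^T}\ind(X,Q)=\frac{24n}{n+1}.
\]
This yields $\sum\ind=18$ for $n=3$ and $16$ for $n=2$, and the totals $20,21$ underlying (iii),(ii); integrality of $\sum m_Q$ forces $(n+1)\mid 24$, so $n\in\{2,3,5,7,11,23\}$.

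The remaining, and hardest, part is to exclude $n=11,23$ and to pin down the exact baskets in (ii),(iii). For this I would return to the individual equations $\sum_{Q\in\B(X)^T}c_Q(jT)=-1$ for $j=1,\dots,n-1$ and substitute Reid's explicit Dedekind-sum expression for $c_Q$. A single contributing point is already impossible, since the differences $c_Q(jT)-c_Q((j-1)T)$ cannot all coincide, which kills $n=23$; the cases $\sum m_Q\le 2$ for $n=11$ and the finitely many index patterns with the correct total for $n=5,7$ then reduce to a finite Diophantine check on these sums. I expect this verification --- showing that only $(7,7,7)$ survives for $n=7$, only the three listed baskets for $n=5$, and none for $n=11$ --- to be the main technical obstacle, as it is precisely where the soft numerology $(n+1)\mid 24$ must be upgraded to the full classification.
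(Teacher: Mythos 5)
The paper offers no proof of this lemma: it is imported verbatim from \cite[Proposition~2.9]{P:2010:QFano}, so the comparison is with that external argument. Your overall strategy --- orbifold Riemann--Roch for the numerically trivial divisors $jT$, plus the degree-$n$ cover $\pi\colon\tilde X\to X$ that is \'etale in codimension one --- is the standard route and is essentially the one used there. The steps you do carry out are correct: $\chi(\OOO_X(jT))=0$ for $0<j<n$, hence $\sum_Q c_Q(jT)=-1$ with only the points of $\B(X)^T$ contributing and $n\mid\ind(X,Q)$ at each of them; and the subtraction of the two instances of $24\chi(\OOO)=-K\cdot c_2+\sum(r-1/r)$ on $X$ and $\tilde X$ does give $(n+1)\sum m_Q=24$, which yields (iv), (v), the totals $\sum\ind(X,Q)=24n/(n+1)$, and the candidate list $n\in\{2,3,5,7,11,23\}$.

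The gap is that this soft numerology is all you actually prove, while the content of (i)--(iii) is strictly finer. After $(n+1)\sum m_Q=24$ one still has, for $n=5$, the candidate baskets $(5,5,5,5)$, $(10,5,5)$, $(10,10)$, $(15,5)$, $(20)$; for $n=7$ the candidates $(7,7,7)$, $(14,7)$, $(21)$; for $n=11$ the candidates $(11,11)$, $(22)$; and for $n=23$ the candidate $(23)$. Your observation that a single contributing point is impossible (the increments of $c_Q$ cannot all vanish) disposes of $(20)$, $(21)$, $(22)$, $(23)$, but the multi-point configurations $(15,5)$, $(14,7)$ and $(11,11)$ satisfy every constraint you have derived and must be eliminated by substituting Reid's explicit formula $c_Q(iK)=-i\frac{r^2-1}{12r}+\sum_{k=1}^{i-1}\frac{\overline{kb}(r-\overline{kb})}{2r}$ into the full system $\sum_Q c_Q(jT)=-1$, $j=1,\dots,n-1$, and checking the finitely many choices of local types and of the residues representing $T$ in each $\Cl(X,Q)$. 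You name this check but do not perform it; since excluding exactly these configurations is what statements (i), (ii), (iii) assert beyond the index totals, the proposal as written proves (iv) and (v) but not the lemma.
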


\begin{lemma}[{\cite[Lemma~3.2]{P:2010:QFano}}, {\cite[Lemma~2.3]{P:2019:rat:Q-Fano}}]
\label{lemma:qQ=qW}
Let $X$ be a $\QQ$-Fano threefold, let $q:=\qQ(X)$, and let $r:=\ind(X)$ be the 
global Gorenstein index of $X$.
Then the equality $\qQ(X)=\qW(X)$ holds if and only if $q$ and~$r$ are coprime, 
and if and only if
the order of $K_X+qA_X$ in the group $\Cl(X)$ is prime to $q$.
\end{lemma}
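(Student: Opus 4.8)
The plan is to fix a fundamental divisor $A_X$ and set $T:=K_X+qA_X$; since $-K_X\qq qA_X$, the class $T$ is torsion in $\Cl(X)$, say of order $N$, so this is exactly the class whose order appears in the statement. Recall $\qW(X)\mid\qQ(X)=q$, so the assertion $\qQ(X)=\qW(X)$ is the same as $\qW(X)=q$. I will show that the three conditions
\[
\text{(A)}\ \ \qW(X)=q,\qquad\quad \text{(B)}\ \ \gcd(q,r)=1,\qquad\quad \text{(C)}\ \ \gcd(N,q)=1
\]
are equivalent by establishing the cycle (A)$\Rightarrow$(B)$\Rightarrow$(C)$\Rightarrow$(A). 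A reformulation used throughout is that $\qW(X)=q$ if and only if $T\in q\,\Clt{X}$: indeed $-K_X\sim qA$ for a Weil divisor $A$ means $T=K_X+qA_X\sim q(A_X-A)$, and since $T$ is torsion while $\Cl(X)/\Clt{X}$ is torsion free, the class $A_X-A$ must then be torsion, giving $T\in q\,\Clt{X}$; the converse is immediate.

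For (A)$\Rightarrow$(B) I argue locally. For $P\in X$ of index $r_P:=\ind(X,P)$ let $\rho_P\colon\Cl(X)\to\Cl^{\mathrm{sc}}(X,P)$ be restriction to the analytic local class group; since $X$ is $\QQ$-factorial every class is $\QQ$-Cartier, so $\rho_P$ lands in $\Cl^{\mathrm{sc}}(X,P)$, which by Lemma~\ref{lemma:K-index} is cyclic of order $r_P$ and generated by $K_X$. Hence $\rho_P(-K_X)$ is a generator, i.e.\ a unit modulo $r_P$. If $\qW(X)=q$, then $-K_X\sim qA$ yields $\rho_P(-K_X)=q\,\rho_P(A)$ in $\ZZ/r_P$. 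Were a prime $p$ to divide both $q$ and $r$, it would divide some $r_P$; then the right-hand side is $\equiv 0\pmod p$ while the left-hand side is a unit, a contradiction. Thus $\gcd(q,r)=1$.

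The crux is the divisibility $N\mid r$, which gives (B)$\Rightarrow$(C) at once, since then any common divisor of $N$ and $q$ divides $r$ and $q$. To prove it I consider the total restriction $\Phi:=\prod_P\rho_P\colon\Cl(X)\to\prod_P\ZZ/r_P$. A class lies in $\ker\Phi$ precisely when it is Cartier at every point, so $\ker\Phi=\Pic(X)$, which is torsion free for a Fano threefold. Consequently $\langle T\rangle\cap\ker\Phi=0$, so $\Phi$ embeds $\langle T\rangle\cong\ZZ/N$ into $\prod_P\ZZ/r_P$; as the latter has exponent $\lcm_P r_P=r$, the order $N$ of the image divides $r$. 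Finally (C)$\Rightarrow$(A) is elementary: choosing $q'$ with $qq'\equiv 1\pmod N$ gives $T=q(q'T)\in q\,\Clt{X}$, whence $\qW(X)=q$ by the reformulation, realized by $A:=A_X-q'T$. I expect the only delicate points to be the identification $\ker\Phi=\Pic(X)$ together with the torsion freeness of $\Pic(X)$, which together license the embedding of $\langle T\rangle$ and hence the bound $N\mid r$; everything else is bookkeeping with cyclic groups and Lemma~\ref{lemma:K-index}.
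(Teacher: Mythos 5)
The paper does not prove this lemma itself but imports it from \cite[Lemma~3.2]{P:2010:QFano} and \cite[Lemma~2.3]{P:2019:rat:Q-Fano}; your argument is correct and follows essentially the same route as those references. The key point in both is the divisibility $N\mid r$ for the order $N$ of the torsion class $T=K_X+qA_X$, obtained from Lemma~\xref{lemma:K-index} (each local group $\Cl^{\mathrm{sc}}(X,P)$ is cyclic of order $\ind(X,P)$, so $rT$ is Cartier, hence trivial by torsion-freeness of $\Pic(X)$) --- your packaging via the product map $\Phi=\prod_P\rho_P$ is only a cosmetic variation of this, and the remaining implications are the same elementary arithmetic with cyclic groups.
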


\begin{lemma}[{\cite[Proposition~3.3]{P:2019:rat:Q-Fano}}]
\label{lemma:qQ=qW:34}
Let $X$ be a $\QQ$-Fano threefold with $\qQ(X)\neq\qW(X)$. 
Then $\qQ(X)\le 4$.
\end{lemma}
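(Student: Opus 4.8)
The quickest route is to quote the classification of Proposition~\ref{prop:tor}, but since that table is logically downstream of the present statement I sketch a direct argument. Put $q:=\qQ(X)$ and $r:=\ind(X)$, and suppose toward a contradiction that $q\ge 5$ while $\qQ(X)\neq\qW(X)$. By Lemma~\ref{lemma:qQ=qW} this inequality says precisely that the order $N$ of the class $T:=K_X+qA_X$ in $\Cl(X)$ is not prime to $q$, equivalently $\gcd(q,r)>1$. Choosing a prime $p\mid\gcd(N,q)$, the class $\tau:=\tfrac{N}{p}T$ has order exactly $p$, so $\Clt{X}$ carries a $p$-torsion element with $p\mid q$. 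Applying Lemma~\ref{lemma:TOR}\,(i) to $\tau$ gives $p\in\{2,3,5,7\}$.

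With this in hand I would first remove the ``large prime'' indices. Recall that $\qQ(X)\in\{1,\dots,9,11,13,17,19\}$ \cite{Suzuki-2004,P:2010:QFano}. For $q\in\{11,13,17,19\}$ the sole prime divisor of $q$ is $q$ itself, which is not in $\{2,3,5,7\}$ and so cannot equal $p$; these cases are impossible. There remain $q\in\{5,6,7,8,9\}$, each with a forced prime: $p=5$ if $q=5$, $p=7$ if $q=7$, $p=2$ if $q=8$, $p=3$ if $q=9$, and $p\in\{2,3\}$ if $q=6$.

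For each surviving pair $(q,p)$ the plan is to pin down the torsion-bearing singularities and then contradict positivity. Feeding $\tau$ into the relevant item of Lemma~\ref{lemma:TOR} prescribes the collection $\B(X)^{\tau}$ of points at which $\tau$ is non-Cartier: it is $(7,7,7)$ for $p=7$; one of $(5,5,5,5)$, $(10,5,5)$, $(10,10)$ for $p=5$; and satisfies $\sum_{P\in\B(X)^{\tau}}\ind(X,P)=18$ for $p=3$ and $=16$ for $p=2$. Every such $P$ has $p\mid\ind(X,P)$, and by Lemma~\ref{lemma:K-index} the class $K_X$ generates the cyclic group $\Cl^{\mathrm{sc}}(X,P)$ of order $\ind(X,P)$ there; together with $-K_X\qq qA_X$ this records how $A_X$ and $\tau$ sit locally and leaves only a short list of baskets compatible with the given $q$. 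Since $A_X^3=(-K_X)^3/q^3>0$, I would then run the orbifold Riemann--Roch formula \cite{Reid:YPG} on each admissible basket to compute $A_X^3$ and the low-degree coefficients of the $T$-Hilbert series $\h_X(t,\sigma)$, and check that the requirements ``$A_X^3>0$ and all these coefficients are non-negative integers'', supplemented by the classical bound $\sum_P(\ind(X,P)-\ind(X,P)^{-1})<24$, are jointly unsatisfiable for every $(q,p)$ with $q\ge5$. This rules out $q\in\{5,6,7,8,9\}$ and forces $\qQ(X)\le4$.

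The main obstacle is this last reconciliation for the small-prime cases $q=5,6,7$. There the forced baskets obey the inequality on $\sum_P(\ind(X,P)-\ind(X,P)^{-1})$ comfortably, so that coarse bound gives nothing, and one must genuinely evaluate orbifold Riemann--Roch and exploit the \emph{integrality and non-negativity} of the plurigenera encoded in $\h_X(t,\sigma)$ to close each case. Setting up and checking this finite but delicate list of baskets, rather than the opening group-theoretic reduction, is where the effort concentrates.
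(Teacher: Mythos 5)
The statement is not proved in the paper at all: it is imported verbatim from \cite[Proposition~3.3]{P:2019:rat:Q-Fano}, so there is no internal argument to compare against. Judged on its own terms, your proposal contains a genuine gap. The opening reduction is correct and worthwhile: from Lemma~\ref{lemma:qQ=qW} you correctly extract a $p$-torsion class $\tau=\tfrac{N}{p}(K_X+qA_X)$ with $p\mid q$, Lemma~\ref{lemma:TOR}(i) forces $p\in\{2,3,5,7\}$, and this cleanly kills $q\in\{11,13,17,19\}$. But those large-index cases were never the issue (they are rigid weighted projective spaces); the entire content of the lemma lives in $q\in\{5,6,7,8,9\}$, and there you only announce a plan --- ``run orbifold Riemann--Roch on each admissible basket and check that positivity and integrality are jointly unsatisfiable'' --- without exhibiting the list of baskets or performing a single such check. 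You even flag this yourself as ``where the effort concentrates.'' A claim of joint unsatisfiability for a finite but unspecified list, with no case actually closed, is not a proof; it is exactly the computation that \cite[Proposition~3.3]{P:2019:rat:Q-Fano} carries out.

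A second, more substantive worry: your sketch gives no reason to believe the contradiction will fall out of the coarse tools you name. Proposition~\ref{prop:tor} of this very paper lists numerically consistent $\QQ$-Fano threefolds with $\qQ(X)\in\{5,7\}$ and nontrivial torsion (some of which exist, see Example~\ref{ex:tor}), so torsion plus the bound $\sum_P(r_P-r_P^{-1})<24$ plus non-negativity of plurigenera cannot suffice in general; what must be exploited is the finer local compatibility at each point of $\B(X)^{\tau}$ between $p\mid q$, $p\mid\ind(X,P)$, and the congruence relating $A_X$ and $K_X$ in $\Cl^{\mathrm{sc}}(X,P)$ (Lemma~\ref{lemma:K-index}), fed into the orbifold Riemann--Roch contributions. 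For instance, for $q=7$, $p=7$ the basket constraint $\B(X)^{\tau}=(7,7,7)$ satisfies $3(7-\tfrac17)<24$ comfortably, so nothing you have written rules it out. Until these cases are actually closed, the argument establishes only the (easy) exclusion of $q\in\{11,13,17,19\}$, not the stated bound $\qQ(X)\le 4$.
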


\begin{lemma}
\label{lemma:q=3:tor}
Let $X$ be a $\QQ$-Fano threefold with $\qQ(X)=3$, $\Clt{X}\simeq\ZZ/3\ZZ$, and 
$A_X^3=1/4$.
Then $X$ is rational.
\end{lemma}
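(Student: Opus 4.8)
The plan is to reduce to the rationality criteria of Theorem~\ref{thm0} after first pinning down the numerical type of $X$, and, if these criteria do not apply directly, to run the Sarkisov link of~\S\ref{subsect:SL}.

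First I would determine $\qW(X)$ and constrain the basket. Since $\Clt{X}\simeq\ZZ/3\ZZ$, the generator $T$ is a $3$-torsion class, and at a terminal cyclic quotient point of index $r$ the local class group is $\ZZ/r$, so $T$ is non-Cartier there exactly when $3\mid r$; hence every index in $\B(X)^T$ is divisible by $3$ and $3\mid\ind(X)$. By Lemma~\ref{lemma:qQ=qW} this forces $\qW(X)\neq\qQ(X)$, so $\qW(X)=1$ (consistent with Lemma~\ref{lemma:qQ=qW:34}). By Lemma~\ref{lemma:TOR}\,\ref{lemma:TOR3} the indices in $\B(X)^T$ sum to $18$, while the denominator $4$ of $A_X^3$ forces the even part of the basket to produce the factor $4$ (for instance an index-$4$ or index-$12$ point, or two points of index~$2$). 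Combining these constraints with the standard boundedness bound $\sum_{P}(r_P-1/r_P)\le 24$ and Reid's orbifold Riemann--Roch~\cite{Reid:YPG}, I would enumerate the finitely many admissible baskets and, for each, compute the first coefficients of the $T$-Hilbert series $\h_X(t,\sigma)$, in particular $\p_1(X)$ and $\p_2(X)$.

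For the main step I expect most of these baskets to yield $\p_1(X)\ge 3$: the anticanonical volume $(-K_X)^3=27A_X^3=27/4$ is large, and since $(A_X+jT)-K_X\qq 4A_X$ is ample, Kawamata--Viehweg vanishing gives $\p_1(X)=\max_{D\qq A_X}\h^0(X,\OOO_X(D))=\max_{j}\chi\bigl(X,\OOO_X(A_X+jT)\bigr)$, which the Riemann--Roch estimate should push to at least $3$. In every such case $\qQ(X)\ge 3$ and $\p_1(X)\ge 3$, so $X$ is rational by Theorem~\ref{thm0}\,(2). For any residual basket with $\p_1(X)\le 2$ I would instead choose a nonempty mobile system $\MMM\qq nA_X$ with $n\in\{1,2\}$ (a suitable torsion twist of $|A_X|$ or of $|2A_X|$) and run the link~\eqref{diagram-main}. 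Tracking the torsion through Lemma~\ref{lemma:sl:torsion}, which since $\Clt{X}\simeq\ZZ/3\ZZ$ forces $e/d=3$ when $\Clt{\hat{X}}=0$, together with the relations~\eqref{eq:main}--\eqref{eq:main1}, identifies the target: in the birational case $\hat{X}$ is a $\QQ$-Fano of strictly larger index or larger $\p_1$, rational by Theorem~\ref{thm0}, whence $X$ is rational as the link is birational; in the fibration case Remark~\ref{rem:case-nonbir} shows that nonrationality of $X$ would force $\hat{q}=1$, so it suffices to exhibit $\hat{q}>1$, which the relations provide.

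The main obstacle will be the bookkeeping: correctly computing the local Riemann--Roch corrections at the index-divisible-by-$3$ points and the even-index points simultaneously, while selecting the torsion twist that maximizes $\h^0$, and then verifying in each admissible basket either $\p_1(X)\ge 3$ or the existence of a link with the predicted numerical output. A secondary difficulty is excluding the degenerate fibration outputs (del Pezzo fibrations with $\hat{q}=1$) via Lemma~\ref{lemma:sl:torsion} and~\eqref{eq:main}; here the constraint $e/d=3$ imposed by the $\ZZ/3$-torsion is precisely what rules out the nonrational alternatives.
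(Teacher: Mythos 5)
Your overall strategy (pin down the numerics, try Theorem~\ref{thm0} directly, otherwise run the link~\eqref{diagram-main}) is the right shape, but the proposal has a genuine gap at the point where all the work actually happens. The variety in question has $\B(X)=(2,3^2,12)$ and $\dim|2A_X+jT|=1$ for every torsion twist $j$, so $\p_2(X)=2$ and in particular $\p_1(X)\le 2$: your ``main step'' ($\p_1(X)\ge 3$ via a Riemann--Roch volume estimate) never applies, and the entire proof rests on the fallback, which as written does not close. The missing ingredient is the local analysis at the index-$12$ point $P$: using Lemma~\ref{lemma:K-index} one shows $T\overset{P}{\sim}-8K_X$, and after replacing $A_X$ by $A_X\pm T$ one arranges $2A_X\overset{P}{\sim}10(-K_X)$, so that Lemma~\ref{lemma:ct} gives $\beta\ge 10\alpha$ for $\MMM=|2A_X|$. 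This bound is what makes~\eqref{eq:main} read $2\hat q=3s+(3\beta-2\alpha)e\ge 3s+28\alpha e$ and forces $\hat q>1$ (excluding the fibration case) and $\hat q\ge 3$. Note that the torsion twist is chosen to control the \emph{local} class of $A_X$ at $P$, not to maximize $\h^0$ as you suggest; and your appeal to Lemma~\ref{lemma:sl:torsion} to exclude fibration outputs is misplaced, since $d$ and $e$ in that lemma are defined only in the birational case.

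The second gap is in your conclusion mechanism for the birational case. It is not true that the output $\hat X$ automatically has ``strictly larger index or larger $\p_1$'' and is therefore rational by Theorem~\ref{thm0}: the cases $\hat q=3$ (with $s=e=1$) and $\hat q=5$ (with $s=2$, $e=1$) survive all the rationality criteria and must instead be killed by a divisibility argument --- since $f(E)=P$ has index $12$, one has $\beta\in\frac{1}{12}\ZZ$, whereas the relations force $\beta=19/18$ and $\beta=25/18$ respectively. Only the cases $\alpha\ge 1/3$ (giving $\hat q=7$, $s=1$, $\p_1(\hat X)\ge 2$) and $\hat q\ge 6$ (forcing $s\ge 4$ and then $\hat q\ge 8$) are eliminated by Theorem~\ref{thm0} in the way you envisage. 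Without the explicit $\beta\ge 10\alpha$ bound and the $\frac{1}{12}\ZZ$ integrality of $\beta$, the case analysis you propose cannot be completed.
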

The existence of varieties satisfying the conditions of the lemma is not known.
It is expected that they are quotients of codimension-$4$ \ $\QQ$-Fano threefolds
\# 41218 in \cite{GRD}.

\begin{proof}
In this case $\qQ(X)\neq\qW(X)$ by Lemma~\ref{lemma:qQ=qW}, hence $\qW(X)=1$ 
and by \cite[Table~3]{P:QFano-rat1} we have 
\[
\B(X)=(2, 3^2, 12)\quad \text{and}\quad A^3_X= 1/4.
\]
Put 
\[
T:=K_X+3A_X. 
\]
Then $T$ is a generator of the group $\Clt{X}\simeq\ZZ/3\ZZ$ and it is not 
Cartier at
any point whose index is divisible by $3$ (see 
Lemma~\ref{lemma:TOR}\ref{lemma:TOR3}).
Let $P\in X$ be the point of index~$12$. Then by Lemma~\ref{lemma:K-index}
we have $T\overset{P}{\sim} -lK_X$, where
$0<l<12$ and $3l\equiv 0\mod 12$, hence $l=4$ or $8$. On the other hand,
$3A_X=T-K_X\overset{P}{\sim} (l+1)(-K_X)$. Hence $4(l+1)\equiv 0\mod 12$ and so
$l=8$. Thus $3A_X\overset{P}{\sim} 9(-K_X)$ and so $A_X\overset{P}{\sim} 
a(-K_X)$, where $a\in\{3,\, 7,\, 11\}$.
Replacing $A_X$ with $A_X\pm T$, if necessary, we may assume that 
$A_X\overset{P}{\sim} 11(-K_X)$ and then $2A_X\overset{P}{\sim} 10(-K_X)$. 

By the orbifold Riemann-Roch (see \cite[Proposition~3.2]{P:QFano-rat1}) we have
\[
\dim|2A_X |=\dim|2A_X+T|=\dim|2A_X+2 T|=1.
\]
Assume that $X$ is not rational.
Apply the construction~\eqref{diagram-main} with $\MMM=|2A_X|$.
Then $\beta\ge 10\alpha$ by~\eqref{eq:ct:beta-alpha}.
Hence the relation~\eqref{eq:main}  has the form
\[
2\hat{q}=3s+(3\beta- 2\alpha) e\ge 3s+28\alpha e.
\]
Since $\alpha\ge 1/12$, the above relation implies that $\hat q>1$.
Then $\bar f$ is birational (see Remark~\ref{rem:case-nonbir}), $s>0$, and 
$\hat q\ge 3$.
On the other hand, $\hat q\le 7$ by Theorem~\ref{thm0}.
If $\alpha\ge 1/3$, then $\alpha=1/3$, $\hat q=7$, $s=1$, hence $\p_1(\hat X)\ge 2$.
Again by Theorem~\ref{thm0} the variety $X$ is rational in this case, a 
contradiction.

Therefore, $\alpha=1/12$ and $f(E)=P$.
If $\hat q=3$, then $s=e=1$ and
$\beta=19/18\notin\frac{1}{12}\ZZ$, a contradiction.
Therefore, $\hat q\ge 4$. Then $s\ge 2$ by Theorem~\ref{thm0}, hence
$\hat q\ge 5$. If $\hat q=5$, then $s=2$, $e=1$, $3\beta- 2\alpha=4$,
and again $\beta=25/18\notin\frac{1}{12}\ZZ$, a contradiction.
Thus, $\hat q\ge 6$ and $s\ge 4$ by Theorem~\ref{thm0},
hence $\hat q\ge 8$. The contradiction concludes the proof.
\end{proof}

\begin{lemma}
\label{prop:q=5:tor}
Let $X$ be a $\QQ$-Fano threefold with $\qQ(X)=5$ and $A_X^3=1/18$.
Assume that $X$ is not rational. Then $\Clt{X}=0$.
\end{lemma}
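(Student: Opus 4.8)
The strategy is to assume $\Clt{X}\neq 0$ and to deduce that $X$ is rational, contradicting the hypothesis. By Proposition~\ref{prop:tor}, a $\QQ$-Fano threefold with $\qQ(X)=5$, $A_X^3=1/18$ and nontrivial torsion must realize the entry~\xref{tab:h:q=5t3:d=1/18} of Table~\xref{tab:2}: one has $\Clt{X}\simeq\ZZ/3\ZZ$, $\qW(X)=\qQ(X)=5$ (so $-K_X\sim 5A_X$ and the class of $A_X$ is uniquely determined), $\B(X)=(2,9^2)$, $\g(X)=2$, and the $T$-Hilbert series is the one displayed there. Writing $T$ for a generator of $\Clt{X}$, Lemma~\ref{lemma:TOR}\ref{lemma:TOR3} shows that $T$ is non-Cartier exactly at the two points $P_1,P_2$ of index~$9$.

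Next I would run the local computation. By Lemma~\ref{lemma:K-index} the group of $\QQ$-Cartier classes at $P_i$ is cyclic of order~$9$ generated by $K_X$; together with $-K_X\sim 5A_X$ this gives $A_X\overset{P_i}{\sim}2(-K_X)$, hence $4A_X\overset{P_i}{\sim}8(-K_X)$. A direct orbifold Riemann--Roch computation, extending the series of Table~\xref{tab:2} as in \cite[Proposition~3.2]{P:QFano-rat1}, yields $\dim|4A_X|=1$, with irreducible general member. I therefore take $\MMM:=|4A_X|$, so that $\MMM\qq 4A_X$ with $n=4<q=5$ and $\MMM\overset{P_i}{\sim}-8K_X$.

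Now assume $X$ is not rational and apply the construction~\eqref{diagram-main} to $\MMM$. Since $\MMM\overset{P_i}{\sim}-8K_X$, Lemma~\ref{lemma:ct} gives $c=\ct(X,\MMM)\le 1/8$; as $c=\alpha/\beta$ this yields $\beta\ge 8\alpha$, hence $5\beta-4\alpha\ge 36\alpha$. Every singular point of $X$ has index at most~$9$, so $\alpha\ge 1/9$ and therefore $5\beta-4\alpha\ge 4$. The relation~\eqref{eq:main} reads $4\hat q=5s+(5\beta-4\alpha)e$. Suppose first that $\bar f$ is birational; then $\hat\MMM$ is a nonzero mobile system on the $\QQ$-Fano threefold $\hat X$, so $s\ge 1$ and $\p_4(\hat X)\ge\dim\hat\MMM+1=2$. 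If the centre of $f$ is not one of $P_1,P_2$, then $\alpha\ge 1/2$, so $5\beta-4\alpha\ge 18$ and $4\hat q\ge 23$, forcing $\hat q\ge 6$; if the centre is one of the index-$9$ points then $\aw(X,P_i)=1$ and Lemma~\ref{lemma-discrepancies} give $\alpha=1/9$, while $\beta\ge 8/9$ and $\beta\in\frac19\ZZ$, and a direct check of $4\hat q=5s+(5\beta-\frac49)e$ with $s,e\ge 1$ again rules out $\hat q\le 5$. Thus $\hat q\ge 6$ in all subcases. For $\hat q\ge 8$ the threefold $\hat X$ is rational by Theorem~\ref{thm0}, and for $\hat q\in\{6,7\}$ rationality of $\hat X$ follows from Theorem~\ref{thm0} together with the numerical classification of Proposition~\ref{prop:6-7}; in each case $X$ is rational, a contradiction.

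The remaining, and hardest, case is when $\bar f$ is of fibre type. Here Remark~\ref{rem:case-nonbir} gives $\hat q=1$, and~\eqref{eq:main} then forces $s=0$, $e=1$, $\alpha=1/9$, $\beta=8/9$; thus $\bar f$ is a non-split $\QQ$-conic bundle over a surface which, by Corollary~\ref{cor:base-surface}, is a rational del Pezzo surface with Du Val singularities of type~$\mathrm A$. I would then standardise this conic bundle by the Sarkisov links of Theorem~\ref{thm:SL-CB}, obtaining a standard conic bundle $\pi':Y'\to S'$ with $K_{S'}+\frac12\Delta_{\pi'}$ log crepant to $K_{S}+\frac12\Delta_{\bar f}$, and compute the class and degree of the discriminant $\Delta_{\pi'}$ from Lemma~\ref{lemma:cb} using $A_X^3=1/18$. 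The crux, which I expect to be the main obstacle, is to show that $\Delta_{\pi'}$ is so small that it is hyperelliptic, trigonal, quasi-trigonal or a plane quintic; by \cite[Main Theorem]{Shokurov:Prym} the associated Prym variety is then a sum of Jacobians, so $Y'$, and hence $X$, is rational (see \cite{Beauville:Prym}). This again contradicts the hypothesis and completes the proof that $\Clt{X}=0$.
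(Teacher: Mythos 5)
Your reduction to the entry~\ref{tab:h:q=5t3:d=1/18} of Table~\ref{tab:2}, the local computation $4A_X\overset{P_i}{\sim}8(-K_X)$ at the index-$9$ points, and the numerical analysis of the link for $\MMM=|4A_X|$ all match the paper. But the two terminal cases are not closed, and the missing ingredient is the same in both: the paper's proof never tries to prove rationality of $\hat X$ at this stage. Instead it exploits the torsion itself: after showing that the centre of $f$ is an index-$9$ point $P$ with $\Cl(X,P)\simeq\ZZ/9\ZZ$, it replaces $T$ by $-T$ so that $3A_X+T$ is Cartier at $P$, observes from the $T$-Hilbert series that $\MMM'=|3A_X+T|$ is a pencil without fixed components, and then uses that $\beta_3'$ must be a \emph{nonnegative integer} (Cartier at $P$) in the analogue of~\eqref{eq:main1}. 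This kills the fibre-type case ($\hat q=1$ forces $5\beta_3'=10/3\notin\ZZ$) and the surviving birational cases $\hat q\in\{6,7\}$ with $s=4$ by pure arithmetic. Without this auxiliary linear system your argument does not close.

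Concretely: (1) In the birational case with $\hat q\in\{6,7\}$ the relation $4\hat q\ge 5s+4e$ only gives $s\le 4$, and when $s=4$ you have $\p_4(\hat X)\ge 2$, which is \emph{not} covered by Theorem~\ref{thm0}; Proposition~\ref{prop:6-7} does not exclude this either, since several rows there (e.g.~\ref{Case:q=6:B=7-11}, \ref{Case:q=7:B=2-2-2-3-4-5}, \ref{Case:q=7:B=2-2-2-5-8}) have $\dim|4A_X|=1$. Rationality in these cases is only established much later (Propositions~\ref{prop:q6} and~\ref{prop:q7a}), which logically come after this lemma, so invoking them here would be circular — the paper's footnote makes exactly this point. (2) The fibre-type case is left as an admitted open "crux": you have not excluded a del Pezzo fibration over $\PP^1$ before speaking of a conic bundle, and in any case showing that the Prym variety \emph{is} a sum of Jacobians does not by itself yield rationality of the total space (Shokurov's criterion gives nonrationality when the Prym is not a sum of Jacobians; the converse direction requires separate rationality constructions). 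So as written the proposal establishes neither contradiction, and the gap is the absence of the $|3A_X+T|$ argument.
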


Note that $\QQ$-Fano threefolds with $\qQ(X)=5$ and $A_X^3=1/18$ appear in 
\cite{GRD} as \#41414. 
Such varieties do exist: see Example~\ref{ex:tor},~\ref{tab:h:q=5t3:d=1/18}.
However, the existence of such varieties with torsion free group $\Cl(X)$ is 
not known.

\begin{proof}
By Lemma~\ref{lemma:qQ=qW:34} we have $\qQ(X)=\qW(X)$ and applying the computer 
search (or~\cite{GRD})
we obtain 
\[
\B(X)=(2, 9^2),\quad\dim|A_X|=\dim| 2 A_X|=\dim| 3 A_X|= 0,\quad\dim| 4 
A_X|= 1.
\]
Moreover, if $\Clt{X}\neq 0$, then $\Clt{X}\simeq\ZZ/3\ZZ$ and for a generator 
$T\in\Clt{X}$ we have
\begin{equation}
\label{eq:tor:q=5}
\begin{array}{lll}
\phantom{\dim\, }|A_X+T|&=\dim|A_X+2 T|&=\varnothing, 
\\
\dim| 2 A_X+T|&=\dim| 2 A_X+2 T|&= 0,
\\
\dim| 3 A_X+T|&=\dim| 3 A_X+2 T|&= 1. 
\end{array}
\end{equation} 
This implies that 
$|4A_X|$ is a pencil without fixed components (regardless of torsions in 
$\Cl(X)$).
Apply the construction~\eqref{diagram-main} with $\MMM=|4A_X|$.
Note that $\alpha\in\ZZ\cup\{1/2,\, 1/9,\, 2/9\}$ (see 
Lemma~\ref{lemma-discrepancies})
and $\beta\ge 8\alpha$. The relations~\eqref{eq:main} and~\eqref{eq:main1} with $k=1$ have 
the form
\begin{equation}
\label{eq:m:TOR:q=5}
\begin{array}{rcl}
4\hat{q}&=&5s+(5\beta- 4\alpha) e\ge 5s+36\alpha e\ge 5s+4 e,
\\[3pt]
\hat{q}&=&5s_1+(5\beta_1- \alpha) e.
\end{array}
\end{equation} 
We claim that only the following possibilities can occur:\footnote{We will show below
in Propositions~\ref{prop:q6} and~\ref{prop:q7a} that in the 
cases~\ref{prop:q=5:tor-d} 
and~\ref{prop:q=5:tor-c} the variety $X$ is rational.}
\begin{enumerate}
\renewcommand{\theenumi}{\rm(\alph{enumi})}
\renewcommand{\labelenumi}{\rm(\alph{enumi})}
\item 
\label{prop:q=5:tor-a}
$\hat q=1$, $\alpha=1/9$, $e=1$, $s =s_1=0$;
\item
\label{prop:q=5:tor-d}
$\hat q= 6$, $\alpha= 1/9$, $e= 1$, $s = 4$, $s_1\in \{0,\, 1\}$;
\item 
\label{prop:q=5:tor-c}
$\hat q=7$, $e\alpha= 2/9$, $e \in \{1,\, 2\}$, $s=4$, $s_1\in \{0,\, 1\}$, 
$e-s_1\le 1$.
\end{enumerate}
If $\hat q=1$, then $s=0$ and equalities hold in \eqref{eq:m:TOR:q=5}. This 
is the case~\ref{prop:q=5:tor-a}.
Let $\hat q>1$, then $\bar f$ is birational, 
$s>0$, and $\hat q\ge 3$. As in the proof of Lemma~\ref{lemma:q=3:tor}
we obtain $\hat q=6$ or $7$. In this case $s\ge 4$, then by 
\eqref{eq:m:TOR:q=5}
we have
$s=4$ and there are only the possibilities~\ref{prop:q=5:tor-d} 
and~\ref{prop:q=5:tor-c}.

Now assume that
$\Clt{X}\neq 0$.
Since $\alpha=1/9$ or $2/9$, $P:=f(E)$ is a point of index~$9$.
Then $\Cl(X,P)\simeq\ZZ/9\ZZ$ (see Lemma~\ref{lemma:K-index}) and elements $T$ 
and~$3A_X$ have order $3$ in this group.
Hence, replacing $T$ with $-T$ is necessary, we may assume that $3A+T$ is 
Cartier at $P$.
It follows from~\eqref{eq:tor:q=5} that $\MMM'=|3A+T|$ a pencil without 
fixed components.
For this linear system similar to~\eqref{eq:main1} we have 
\[
3\hat{q}=5s_3'+(5\beta_3'- 3\alpha) e\ge 5s_3'-1,
\]
because $\beta'_3$ is a nonnegative integer and~$3\alpha e<1$.
If $\hat q=1$, then $s_3'=0$, $e=1$, and $\alpha=1/9$. Hence 
$5\beta_3' = 10/3\notin\ZZ$, a contradiction.
Therefore, $\hat q\in \{6,\, 7\}$ and $s_3'\ge 4$ by Theorem~\ref{thm0}.
In this case, $s_3'= 4$ and the number $5\beta_3'= 3\alpha+1/e$
is not an integer in both cases~\ref{prop:q=5:tor-d} and~\ref{prop:q=5:tor-c}.
The contradiction completes the proof.
\end{proof}

\begin{scorollary}
\label{cor:q=3:tor}
Let $X$ be a $\QQ$-Fano threefold such that  $\qQ(X)\ge 3$, $\Clt{X}\neq 0$,
and either $|\Clt{X}|\ge 3$ or  $\p_1(X)\ge 2$.
Then $X$ is rational except, possibly, for the following case:
\begin{enumerate}
\item[*)]
\label{prop:q=3:tor3}
$\qQ(X)=3$, $\qW(X)=1$, $\Clt{X}\simeq\ZZ/3\ZZ$, $\B(X)=(3^4, 5, 6) $, $A_X^3= 
1/10$.
\end{enumerate}
\end{scorollary}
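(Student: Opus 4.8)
The plan is to reduce Corollary~\ref{cor:q=3:tor} to the preceding lemmas via a case analysis on $\qQ(X)$, using the torsion classification of Lemma~\ref{lemma:TOR} and Proposition~\ref{prop:tor} to pin down the finitely many possibilities, and then invoking Theorem~\ref{thm0} to settle all but the exceptional case. First I would split according to whether $\qQ(X)=\qW(X)$ or not. If $\qQ(X)\ge 5$, then Proposition~\ref{prop:tor} applies and gives the complete list of possibilities in Table~\ref{tab:2}; in each of these rows the $T$-Hilbert series $\h_X(t,\sigma)$ shows that either $\p_1(X)\ge 2$ or some $\dim|nA_X|$ is large enough to force rationality through Theorem~\ref{thm0}(\ref{thm0:5}) and its neighbors. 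For the index-$5$ torsion case $A_X^3=1/18$ with $\Clt{X}\simeq\ZZ/3\ZZ$ one invokes Lemma~\ref{prop:q=5:tor} directly: it asserts that a nonrational such $X$ must have $\Clt{X}=0$, contradicting $\Clt{X}\neq 0$.

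Next I would handle the cases $\qQ(X)\in\{3,4\}$, which is where the real content lies since here $\qQ(X)$ may differ from $\qW(X)$. By Lemma~\ref{lemma:qQ=qW:34} these are the only indices for which $\qQ(X)\ne\qW(X)$ is possible, so the torsion interacts nontrivially with the Gorenstein index. The approach is to use Lemma~\ref{lemma:TOR} to constrain $\Clt{X}$: for $n$-torsion with $n$ prime, parts (iv) and (v) give the arithmetic conditions $\sum_{P\in\B(X)^T}\ind(X,P)\in\{16,18\}$ on the basket. Combined with the hypothesis $\qQ(X)\ge 3$ and a finiteness/database search (as in the proofs of Lemmas~\ref{lemma:q=3:tor} and~\ref{prop:q=5:tor}, cross-referenced with \cite{GRD}), this yields a short explicit list of candidate $(\B(X),A_X^3)$ data. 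For each candidate one computes the relevant $\dim|nA_X|$ (or $\dim|nA_X+jT|$ via the $T$-Hilbert series) and checks the hypotheses of Theorem~\ref{thm0}; whenever $\p_1(X)\ge 3$, or $\p_1(X)\ge 2$ together with $\qQ(X)\ge 4$, or the index-$3$ analogues hold, the variety is rational. The hypothesis ``$|\Clt{X}|\ge 3$ or $\p_1(X)\ge 2$'' is exactly what is needed to either boost $\p_n$ by the extra torsion translates $jT$ or to invoke the $\p_1\ge 2$ branches of Theorem~\ref{thm0}.

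The main obstacle, and the source of the single surviving exceptional case~\textup{*)}, will be the situation $\qQ(X)=3$, $\qW(X)=1$, $\Clt{X}\simeq\ZZ/3\ZZ$ with basket $(3^4,5,6)$ and $A_X^3=1/10$. Here the Sarkisov-link machinery of~\eqref{diagram-main} must be run explicitly, exactly as in Lemma~\ref{lemma:q=3:tor}: applying the construction with $\MMM=|nA_X|$ for the smallest $n$ with $\dim\MMM\ge 1$, one derives the numerical relations~\eqref{eq:main} and~\eqref{eq:main1}, bounds $\beta\ge m\alpha$ from Lemma~\ref{lemma:ct}, and tries to force a contradiction by showing $\hat q$ would have to exceed the admissible bound from Theorem~\ref{thm0} or that $\beta\notin\frac1r\ZZ$ for the relevant Gorenstein index $r$. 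The difficulty is that for this particular basket the discrepancy and degree arithmetic does \emph{not} close up the way it does in the other cases: the value $A_X^3=1/10$ together with the index-$6$ point leaves a genuine gap where no link yields a contradiction, so rationality cannot be concluded by these methods and the case must be left open. I would therefore structure the proof so that every other candidate is eliminated cleanly, isolating~\textup{*)} as the residue, and explicitly note (as the statement does) that this case is only ``possibly'' nonrational because the existence of such $X$ is itself unsettled.
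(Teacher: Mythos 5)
Your overall strategy (split on $\qQ(X)=\qW(X)$ versus not, use Proposition~\ref{prop:tor} and Lemma~\ref{prop:q=5:tor} for large index, reduce the small-index cases to a finite list, isolate the exceptional case) is the same as the paper's, but there are two concrete gaps. First, the branch $\p_1(X)\ge 2$: you claim this is handled by ``the $\p_1\ge 2$ branches of Theorem~\ref{thm0},'' but Theorem~\ref{thm0} only gives rationality from $\p_1(X)\ge 2$ when $\qQ(X)\ge 4$, and from $\p_1(X)\ge 3$ when $\qQ(X)\ge 3$; the case $\qQ(X)=3$, $\p_1(X)=2$ is not covered by anything in this paper and requires the external result \cite[Proposition~6.4]{P:QFano-rat1}, which the actual proof cites at the very start to dispose of the whole $\p_1(X)\ge 2$ hypothesis before assuming $|\Clt{X}|\ge 3$. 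Second, your treatment of $\qQ(X)\in\{3,4\}$ rests on an unspecified ``finiteness/database search'' driven by Lemma~\ref{lemma:TOR}; the paper instead imports the explicit classifications from the prequel: in the case $\qQ(X)=\qW(X)$ with $|\Clt{X}|\ge 3$ it uses \cite[Proposition~3.4]{P:QFano-rat1} to get $\Clt{X}\simeq\ZZ/3\ZZ$ and $\qQ(X)\ne 4$, then the coprimality statement of Lemma~\ref{lemma:qQ=qW} to force $\qQ(X)\ge 5$ (so this subcase never stays at index $3$ or $4$), while in the case $\qQ(X)\ne\qW(X)$ it uses \cite[Proposition~3.2 and Table~3]{P:QFano-rat1} to get exactly three candidate varieties, of which one is rational by \cite[Proposition~3.3]{P:QFano-rat1} and one by Lemma~\ref{lemma:q=3:tor} of this paper, leaving *). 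Without these inputs your ``short explicit list'' is not actually produced, and the role of Lemma~\ref{lemma:q=3:tor} as the eliminator of one specific candidate ($\B(X)=(2,3^2,12)$, $A_X^3=1/4$) is missed.

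A smaller point: your description of the exceptional case *) as one where ``the Sarkisov-link machinery must be run explicitly'' and fails to close up overstates what happens. The paper does not attempt any link for that variety in this corollary; it is simply the unique survivor of the external classification, left open (and its existence is itself unknown). The substance of the corollary is the bookkeeping above, not a failed link computation.
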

Note however, that the existence of $\QQ$-Fano threefolds in~*) is not 
known.

\begin{proof}
If $\p_1(X)\ge 2$, then $X$ is rational by Theorem~\ref{thm0} and \cite[Proposition~6.4]{P:QFano-rat1}.
Thus we  may assume that $|\Clt{X}|\ge 3$.
Consider the case  $\qQ(X)=\qW(X)$. By \cite[Proposition~3.4]{P:QFano-rat1} we have 
$\Clt{X}\simeq\ZZ/3\ZZ$ and $\qQ(X)\neq 4$. 
By Lemma~\ref{lemma:qQ=qW} the $\QQ$-Fano index~$\qQ(X)$
must be prime to the order of $\Clt{X}$
and so $\qQ(X)\ge 5$. Then by Proposition~\ref{prop:tor}
the variety $X$ is described by \ref{tab:h:q=5t3:d=1/18} in Table~\ref{tab:2}, 
i.e. 
we are in the situation of Lemma~\ref{prop:q=5:tor}. Then $\Clt{X}= 0$ which contradicts our 
assumption.

Therefore, $\qQ(X)\neq\qW(X)$. Then by \cite[Proposition~3.2]{P:QFano-rat1}
we have $\qQ(X)=3$ and $\Clt{X}\simeq\ZZ/3\ZZ$ 
and $X$ is one of the varieties $1^o$, $2^o$, $3^o$ from 
\cite[Table~3]{P:QFano-rat1}.
The variety $1^o$ (resp. $3^o$) is rational by 
\cite[Proposition~3.3]{P:QFano-rat1}
(resp. by Lemma~\ref{lemma:q=3:tor}). Then we are left with the case *).
\end{proof}

\begin{proposition}
\label{prop:t}
Let $X$ be a $\QQ$-Fano threefold with $\qQ(X)\ge 5$ and $\Clt{X}\neq 0$.
Then $X$ is rational.
\end{proposition}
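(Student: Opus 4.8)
The plan is to combine the classification results from Proposition~\ref{prop:tor} with the nonrationality-obstruction machinery already assembled, reducing everything to Corollary~\ref{cor:q=3:tor} and a handful of explicit cases. Since $\qQ(X)\ge 5$ and $\Clt{X}\neq 0$, Proposition~\ref{prop:tor} forces $\qW(X)=\qQ(X)\in\{5,7\}$ and pins down the numerical type to one of the seven rows of Table~\ref{tab:2}. Observe that in most of these rows the torsion group has order $\ge 3$, or else $\p_1(X)=\dim|A_X|+1\ge 2$ can be read off from the $T$-Hilbert series; in either situation Corollary~\ref{cor:q=3:tor} applies directly and yields rationality (the exceptional case~*) there has $\qQ=3$ and so cannot occur). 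Thus the first step is to tabulate, for each row, the value of $\dim|A_X|$ and $|\Clt{X}|$, and dispose of every row where $|\Clt{X}|\ge 3$ or $\p_1(X)\ge 2$.

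The second step handles the residual rows, which are exactly those with $\Clt{X}\simeq\ZZ/2\ZZ$ and $\p_1(X)\le 1$. From Table~\ref{tab:2} these are the $\qQ(X)=7$ rows with $A_X^3=1/24$ and $1/30$, and possibly one or two of the $\qQ(X)=5$, $\ZZ/2\ZZ$ rows (those with $\dim|A_X|=0$, namely $A_X^3=1/12$ and $1/28$). For each such case I would run the Sarkisov-link construction~\eqref{diagram-main} of Section~\ref{sect:sl}, choosing $\MMM$ to be a suitable complete linear system of the form $|nA_X+jT|$ that the $T$-Hilbert series guarantees is a nonempty movable pencil. Exactly as in the proofs of Lemma~\ref{lemma:q=3:tor} and Lemma~\ref{prop:q=5:tor}, I would then exploit the numerical relations~\eqref{eq:main}, \eqref{eq:main1} together with the discrepancy constraints of Lemma~\ref{lemma-discrepancies} and the inequality $\beta\ge m\alpha$ from Lemma~\ref{lemma:ct}: under the hypothesis that $X$ is \emph{not} rational, Remark~\ref{rem:case-nonbir} gives $\hat q=1$ in the fibration case, while the birational case is pushed by Theorem~\ref{thm0} into $\hat q\in\{6,7\}$ with large $\p_1(\hat X)$, and one checks that the resulting value of $\beta$ fails to lie in the admissible group $\frac1r\ZZ$ dictated by the index~$r$ of the blown-up point. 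This arithmetic incompatibility rules out all birational outputs and forces rationality.

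The main obstacle I expect is the $\qQ(X)=7$ case with $A_X^3=1/30$ and $\B(X)=(2,6,10)$ (row~\ref{tab:h:q=7t:d=1/30}), because here $\dim|A_X|=1$ so $\p_1(X)=2$ only marginally, and one must be careful whether Corollary~\ref{cor:q=3:tor} literally applies or whether the torsion twist changes the count; moreover this type coincides numerically with Case~\ref{Case:q=7:B=2-6-10} of Table~\ref{tab:1}, which is realized by Example~\ref{ex:tor} and is genuinely interesting. The delicate point is to verify that in every surviving row at least one of the two triggers ($|\Clt{X}|\ge 3$ or $\p_1(X)\ge 2$) of Corollary~\ref{cor:q=3:tor} fires, \emph{or} that the Sarkisov-link arithmetic closes off the non-rational alternative; if a single row resists both, one would need a separate geometric construction of an explicit birational map to $\PP^3$ or to a rational conic bundle. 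I anticipate, however, that the $T$-Hilbert series data in Table~\ref{tab:2} is arranged precisely so that one of the two triggers always applies, making the whole proof a clean case-by-case invocation of Corollary~\ref{cor:q=3:tor} supplemented by the now-routine link computation for the $\ZZ/2\ZZ$-torsion exceptions.
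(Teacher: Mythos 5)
Your overall reduction is the same as the paper's: invoke Proposition~\ref{prop:tor} to land in one of the seven rows of Table~\ref{tab:2}, kill the easy rows by the rationality criteria already established, and attack the survivors with the link~\eqref{diagram-main}. But the bookkeeping in your first step is wrong, and it leaves two rows unhandled. The triggers of Corollary~\ref{cor:q=3:tor} are $|\Clt{X}|\ge 3$ or $\p_1(X)\ge 2$, and from the $T$-Hilbert series in Table~\ref{tab:2} one reads off that \emph{only} row~\ref{tab:h:q=5t3:d=1/18} (the $\ZZ/3\ZZ$ case) satisfies either of them: in every $\ZZ/2\ZZ$ row one has $\h^0(A_X)\le 1$ and $\h^0(A_X+T)\le 1$, so $\p_1(X)=\max\{\h^0(A_X),\,\h^0(A_X+T)\}=1$ (recall $\p_1$ is a maximum over torsion twists, not $\dim|A_X|+1$ accumulated over twists). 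The other easy rows must be eliminated with \emph{higher} plurigenera: rows~\ref{tab:h:q=7t:d=1/24} and~\ref{tab:h:q=7t:d=1/30} have $\p_3(X)=2$, so the case $\qQ(X)\ge 6$, $\p_3(X)\ge 2$ of Theorem~\ref{thm0} applies; rows~\ref{tab:h:q=5t2:d=1/6} and~\ref{tab:h:q=5t2:d=1/8} have $\p_2(X)\ge 2$ and $A_X^3\ne 1/12$, so Theorem~\ref{thm0}\ref{thm0:5} applies. Your residual list omits rows~\ref{tab:h:q=5t2:d=1/6} and~\ref{tab:h:q=5t2:d=1/8} entirely (you assert they have $\p_1\ge 2$, which they do not), so as written your argument never disposes of them. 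Your worry about row~\ref{tab:h:q=7t:d=1/30} is also misplaced: with torsion present $\p_3=2$ settles it at once; the delicate $(2,6,10)$ analysis (Lemma~\ref{prop:q=7:tor}) concerns the torsion-free case and is not needed here.

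The genuinely hard cases are rows~\ref{tor:q=5:1/12} and~\ref{tor:q=5:1/28}, exactly as in the paper, and your second step is too optimistic about them. The birational alternative is not ``pushed into $\hat q\in\{6,7\}$'': for row~\ref{tor:q=5:1/12} the congruence $12\hat{q}+e\equiv 0\mod 5$ leaves $(\hat{q},e)=(2,1),(4,2),(5,5),(6,3),(7,1),(7,6)$, and these are killed one by one using \emph{both} $\MMM=|4A_X|$ and the twisted pencil $|3A_X+T|$, not by a single divisibility obstruction on $\beta$. More importantly, for row~\ref{tor:q=5:1/28} the fibration case $\hat q=1$ survives all the $\beta\notin\frac1r\ZZ$ arithmetic; the paper excludes it by showing that $|4A_X|$ and $|4A_X+T|$ would both have to be pulled back from $|\OOO_{\PP^1}(1)|$ on the base of a del Pezzo fibration, forcing $|4A_X|=|4A_X+T|$, a contradiction. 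This systematic use of torsion-twisted linear systems is the essential ingredient missing from your sketch; without it the non-rational branches are not all closed off.
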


First, we prove the following lemma.
\begin{lemma}
\label{prop:q=7:tor}
Let $X$ be a $\QQ$-Fano threefold with $\qQ(X)=7$ and $A_X^3=1/30$ 
\textup(case~\xref{Case:q=7:B=2-6-10} of Table~\xref{tab:1}\textup).
Then $X$ is rational.
\end{lemma}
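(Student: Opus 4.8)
The plan is to run the Sarkisov link of \S\ref{subsect:SL} on $X$, assume for contradiction that $X$ is not rational, and show that every possible output of the link is rational.

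First I would record the numerical data. Since $\gcd(7,30)=1$, Lemma~\ref{lemma:qQ=qW} gives $\qQ(X)=\qW(X)=7$, so $-K_X\sim 7A_X$ and the coefficient $q\beta-n\alpha$ in \eqref{eq:main}, \eqref{eq:main1} is always a positive integer. By Proposition~\ref{prop:6-7} (row~\ref{Case:q=7:B=2-6-10} of Table~\ref{tab:1}) we have $\B(X)=(2,6,10)$ and $\dim|kA_X|=0$ for $k\le 3$, while $\dim|4A_X|=1$. Because $|A_X|,\dots,|3A_X|$ are zero-dimensional, a fixed component of $|4A_X|$ would push its mobile part into some $|jA_X|$ with $j<4$, all of which have dimension $0$; hence $|4A_X|$ is a mobile pencil. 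If moreover $\Clt{X}\neq 0$, then by Proposition~\ref{prop:tor} and \eqref{eq:h::q=7:B=2-6-10} we have $\Clt{X}\simeq\ZZ/2\ZZ$ with a generator $T$, the pencil $|3A_X+T|$ is again mobile, and by Lemma~\ref{lemma:TOR} the class $T$ fails to be Cartier exactly at the points of index $6$ and $10$. I would take $\MMM:=|4A_X|$ in general and, when convenient in the torsion case, $\MMM:=|3A_X+T|$.

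Next I would apply \eqref{diagram-main} to $\MMM$ and analyze the Mori contraction $\bar f:\bar X\to\hat X$. Writing $A_X\overset{P}{\sim}a_P(-K_X)$ with $7a_P\equiv 1\pmod{r}$ at each non-Gorenstein point, Lemma~\ref{lemma:ct} yields the local indices $m$ of $\MMM$: for $|4A_X|$ one finds $m=4$ at the index-$6$ point, $m=2$ at the index-$10$ point, and Cartier at the index-$2$ point, while for $|3A_X+T|$ one finds $m=4$ at the index-$10$ point and Cartier at the other two. Combined with the admissible discrepancies $\alpha$ of Lemma~\ref{lemma-discrepancies}, the inequality $\beta\ge m\alpha$ of \eqref{eq:ct:beta-alpha}, and the relations \eqref{eq:main} and \eqref{eq:main1} for small $k$, this cuts the data $(\hat q,s,e)$ down to a short list. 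In the birational case $\hat X$ is a $\QQ$-Fano threefold with $s>0$ by Remark~\ref{rem:case-bir}; exactly as in the proofs of Lemmas~\ref{lemma:q=3:tor} and~\ref{prop:q=5:tor}, the relation \eqref{eq:main} then forces either $\hat q$ to be large or the image of $\MMM$ to produce enough sections on $\hat X$ (for instance $\p_1(\hat X)\ge 2$ when $s=1$), so that one of the conditions of Theorem~\ref{thm0} holds and $\hat X$, hence $X$, is rational---a contradiction.

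The hard part will be the fibration case. If $\hat q\ge 2$ then Remark~\ref{rem:case-nonbir} already gives that $X$ is rational, so the genuine difficulty is a $\QQ$-conic bundle (or del Pezzo fibration) with $\hat q=1$, which the numerics above do not exclude. For a del Pezzo fibration I would compute the degree of the generic fibre and expect it to be $\ge 5$, whence rationality is automatic. For a conic bundle $\bar f:\bar X\to\hat X$ with $\rk\Cl(\bar X)=2$, Corollary~\ref{cor:base-surface} identifies $\hat X$ with one of the four rational del Pezzo surfaces of Table~\ref{tab3}, and I would determine the discriminant $\Delta$ from the local models of \S\ref{sect:cb}. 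The goal is then to show that $\Delta$ is too small or too special---empty, of low genus, hyperelliptic, trigonal, or otherwise failing Shokurov's condition~(S) from the proof of Theorem~\ref{thm:cb}---so that the associated Prym variety is a sum of Jacobians and $\bar X$ is rational. Verifying that this rational range is always reached for the type $A_X^3=1/30$, $\B(X)=(2,6,10)$, in contrast to the genuinely non-rational type $A_X^3=1/60$ of Example~\ref{ex:X14}, is the step I expect to require the most care.
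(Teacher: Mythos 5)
Your proposal is a plan rather than a proof, and the plan as written does not close. The root of the problem is the choice $\MMM=|4A_X|$. Since $\dim|4A_X|=1$, in the birational case you only get $\p_s(\hat X)\ge 2$, which Theorem~\ref{thm0} converts into rationality only when $s$ and $\hat q$ are favourable; it does \emph{not} dispose of low-index outputs. Concretely, $4\hat q=7s+(7\beta-4\alpha)e$ with $\alpha=1/10$, $\beta\equiv 2\alpha\pmod\ZZ$ admits the numerically consistent solution $\hat q=2$, $s=e=1$, $\beta=1/5$ (and $\hat q=3$, $s=1$, $e=5$ with $\beta=1/5$), where $\p_1(\hat X)\ge 2$ is useless for $\qQ(\hat X)\le 3$. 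So the assertion that the birational case works ``exactly as in'' Lemmas~\ref{lemma:q=3:tor} and~\ref{prop:q=5:tor} is not correct: those arguments rely on the target having large index or on the pencil producing \emph{many} sections, neither of which your system guarantees here. The fibration case is worse: your own numerics leave it open (e.g.\ $\alpha=1/6$, $e=1$, $\beta=2/3$ and $\alpha=1/10$, $e=4$, $\beta=1/5$ both satisfy every congruence and inequality you impose), and what you offer there --- estimating the degree of a del Pezzo fibration, or running the Prym/discriminant analysis of Theorem~\ref{thm:cb} --- is an unexecuted and much heavier programme with no indication of why it should succeed for $A_X^3=1/30$ but fail for $1/60$.

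The paper avoids all of this by taking $\MMM=|6A_X|$, which has $\dim\MMM=4$ and local multiplicity $m=8$ at the index-$10$ point. This gives $7\beta-6\alpha\ge 50\alpha\ge 5$, so $6\hat q=7s+(7\beta-6\alpha)e$ together with the integrality of $6\beta_1$ (resp.\ $10\beta_1$) and the bound $\beta_1\ge\frac16\beta\ge\frac43\alpha$ forces $s_1=0$ and $\hat q=e$ (excluded at the index-$6$ point) or $\hat q=2e$ (at the index-$10$ point); in particular $\hat q\ge 2$ and no fibration ever occurs. Since $\dim\hat\MMM=4$, Theorem~\ref{thm0} then kills every case except $(\hat q,e,s)=(4,2,2)$, where Lemma~\ref{lemma:sl:torsion} (using $s_1=0$, $e=2$) produces the $2$-torsion $T$, and the pencil $|3A_X+T|$ yields $s_3'=1$, hence $\p_1(\hat X)\ge 2$ with $\qQ(\hat X)=4$, a contradiction. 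Your instinct to bring in $|3A_X+T|$ in the torsion case is the right one, but you have no mechanism for \emph{detecting} that torsion is present, and without first funnelling everything into the single case $\hat q=4$, $e=2$ the argument does not terminate. If you want to salvage your approach, replace $|4A_X|$ by $|6A_X|$ and redo the numerics; as it stands the proof has genuine gaps in both the fibration and the low-index birational branches.
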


\begin{proof}
In this case $\B(X)=(2,6,10)$ 
and $X$ has only
cyclic quotient singularities.
Note that the linear system $|6A_X|$ has no fixed components and $\dim|6A_X|=4$.
Apply the construction~\eqref{diagram-main} with $\MMM=|6A_X|$.
If $P$ is the point of index~$10$, then $\MMM\overset{P}{\sim} 8(-K_X)$ and so 
$\beta\ge8\alpha$
by~\eqref{eq:ct:beta-alpha}.
Since $6M_1\in\MMM$, we have $6\beta_1\ge\beta$.
Hence $\beta_1\ge\frac 16\beta\ge\frac 43\alpha$. 
The relation~\eqref{eq:main}  can be written as follows:
\begin{equation}
\label{eq:main:q=7a:6a}
6\hat{q}=7 s+(7\beta-6\alpha) e\ge 7 s+50\alpha e\ge 7 s+5 e.
\end{equation}
If $\alpha\ge 1/2$, then $\hat q\ge 5$, hence $s\ge 1$,
$\hat q\ge 6$, and $s\le 2$.
This contradicts Theorem~\ref{thm0}. Therefore, $\alpha=1/r$, where $r=6$ or 
$10$.

Assume that $r=6$. Then~\eqref{eq:main1} with $k=1$ gives us 
$6\beta_1=(6\hat{q}+e)/(7e)$.
Since $6\beta_1$ is an integer and $\hat q\le 7$, we obtain $\hat q=e$ and 
$\beta_1=1/6$.
This contradicts our inequality $\beta_1>\alpha=1/r$.

Therefore, $r=10$. As above we obtain $s_1=0$ and $\hat q=2e$.
Then $3\ge e\ge s$ by~\eqref{eq:main:q=7a:6a}.
If $e=1$ or $3$, we get a contradiction by Theorem~\ref{thm0} because 
$\dim\MMM=4$.
Hence $e=2$. Then $\hat q=4$ and $s=2$. In this case $\Clt{X}\neq 0$ by 
Lemma~\ref{lemma:sl:torsion} because $s_1=0$ and $e=2$. 
Moreover, $\Clt{X}\simeq \ZZ/2\ZZ$ by Lemma~\ref{lemma:TOR}.
Let $T\in\Cl(X)$ be the $2$-torsion element.
Then 
$\MMM':=|3A_X+T|$ is a pencil without fixed components (see 
~\eqref{eq:h::q=7:B=2-6-10}). We have
$\tilde\MMM_3'=f^*\MMM_3' -\beta_3' E$, where $\beta_3'=l/10$, $l$ is an 
integer and $l>0$ because $3A_X+T$ is not 
Cartier at $P$. 
Similar to~\eqref{eq:main1} we have
\begin{equation*}
\textstyle
12=3\hat{q}=7 s_3'+(7\beta_3'-3\alpha) e= 7 s_3'+\frac{7 l-3}{5},\qquad 5 
s_3'+l=9.
\end{equation*}
Thus $s_3'=1$ and so $\p_1(\hat X)\ge 2$. This contradicts Theorem~\ref{thm0}.
\end{proof}

\begin{proof}[Proof of Proposition~\xref{prop:t}]
Assume that $X$ is not rational.
By Proposition~\ref{prop:tor} we have $\qQ(X)=\qW(X)$, $\qQ(X)\neq 6$, and 
$\p_3(X)\ge 2$
in the case $\qQ(X)=7$. 
The latter is impossible by Theorem~\ref{thm0}. Hence $\qQ(X)=5$ and
we have only 
the cases \ref{tor:q=5:1/12} and \ref{tor:q=5:1/28} of Table~\ref{tab:2}
(again by Theorem~\ref{thm0}). Then $\B(X)=(4^2,12)$ and $(2,4,14)$ in these 
cases, respectively.
In both cases $\Clt{X}\simeq\ZZ/2\ZZ$.
Let $T$ be the generator of $\Clt{X}$. 
Apply the construction~\eqref{diagram-main} with $\MMM=|4A_X|$.
By Proposition~\ref{prop:tor} we have $\dim \MMM=3$ in the case 
\ref{tor:q=5:1/12} and $\dim \MMM=1$ in the case \ref{tor:q=5:1/28}.
If $P$ is the point of index~$12$ (resp. index~$14$), then $\MMM 
\overset{P}\sim 8(-K_X)$ (resp. $\MMM \overset{P}\sim 12(-K_X)$) and so 
$\beta\ge 8\alpha$
by~\eqref{eq:ct:beta-alpha}.
In both cases we have (see~\eqref{eq:main}):
\begin{equation}
\label{eq:q=5:tor}
4\hat{q}=5s+(5\beta- 4\alpha) e\ge 5s+36\alpha e.
\end{equation} 
First, assume that $\alpha\ge 1/4$.
Then $\hat q>1$, hence $s>0$, $\hat q\ge 4$, $s\ge 2$ by Theorem~\ref{thm0} 
and so $\hat q\ge 5$. 
If $\hat q\ge 6$, then $s\ge 4$ by Theorem~\ref{thm0} and so $\hat q>7$, a 
contradiction.
Therefore, $\hat q=5$. In this case, $s=2$ and $e=1$.
Moreover, $\alpha\le 5/18$, hence $f(E)$ is a point of index~$4$, $\alpha=1/4$ 
and $\beta=11/5$.
On the other hand, $\beta$ must be an integer
because $\MMM$ is Cartier at $f(E)$.
The contradiction shows that $\alpha< 1/4$.

Then $f(E)=P$ is a cyclic quotient singularity of index~$r=12$ (resp. $r=14$) 
in the case 
\ref{tor:q=5:1/12} (resp. \ref{tor:q=5:1/28}). 
In particular, $\alpha=1/r$ and $\beta=l/r$, where $l$ is an integer.
The relation \eqref{eq:q=5:tor} can be rewritten as follows:
\begin{equation}
\label{eq:q=5:tor-new}
4\hat{q}r= 5sr+(5 l- 4) e\ge 5sr+36 e.
\end{equation}
Consider the case $\hat q=1$.
Then $e=1$, $s=0$, and $r=14$, hence we are in the case \ref{tor:q=5:1/28}
and $\bar f$ is a fibration. Moreover, the linear system $\bar \MMM$ is $\bar 
f$-vertical,
i.e. $\bar \MMM=\bar f^*\NNN$, where $\NNN$ is a complete linear system on 
$\hat X$ such that $\dim \NNN=1$.
Since $4M_1\in \MMM$, the divisor $\bar M_1$ is $\bar f$-vertical as well. If 
$\hat X$ is a surface, 
then 
and $\bar M_1=\bar f^* N_1$, where $N_1$ is an effective irreducible divisor on 
$\hat X$ such that $\NNN\sim 4N_1$.
But then
$\dim \NNN\ge 3$ by Corollary~\ref{cor:base-surface}, a contradiction.
Therefore, $\hat X\simeq \PP^1$, $|\NNN|=|\OOO_{\PP^1}(1)|$, and $N_1$ is a 
multiple fiber of $\bar f$.
Now, consider the linear system $\MMM':=|4A_X+T|$. It is also a pencil without 
fixed components.
Similar to \eqref{eq:q=5:tor} we have the following relation
\begin{equation*}
\textstyle
4=4\hat{q}=5s_4'+(5\beta_4'- 4\alpha) e=5s_4'+5\beta_4'- \frac 27.
\end{equation*} 
Then $s_4'=0$. As above, $\bar \MMM'=\bar f^*\NNN'$, where $\NNN'$ 
is a complete linear system on $\hat X\simeq \PP^1$ such that $\dim \NNN'=1$.
But then we must have $\NNN'=\NNN$ and so $\MMM'=\MMM$, a contradiction.
Therefore, $\hat q>1$, $\bar f$ is a birational contraction, and $s>0$. 

Assume that $r=12$ (i.e. we are in the case \ref{tor:q=5:1/12}).
Consider the linear system $\MMM':=|3A_X+T|$. 
As above, it is a pencil without fixed components.
Note that $T\overset{P}{\sim} 6(-K_X)$ and $\MMM'\overset{P}{\sim} 9(-K_X)$.
Hence $\beta_3'\equiv 9\alpha =3/4 \mod \ZZ$ and we can write $\beta_3'=3/4+l'$ 
with $l'\in \ZZ_{\ge0}$.
Similar to \eqref{eq:q=5:tor} we have 
\begin{equation}
\label{eq:q=5:to:s1}
\textstyle
3\hat{q}=5s_3'+(5\beta_3'- 3\alpha) e= 5s_3' +\left(\frac 72 +5l'\right) e.
\end{equation} 
Now, \eqref{eq:q=5:tor-new} implies that $12\hat{q}+e\equiv 0\mod 5$ and 
$4\hat{q}> 3e$. This gives us only the following possibilities: 
$(\hat{q}, e)=(2, 1),
(4, 2),
(5, 5),
(6, 3),
(7, 1),
(7, 6)$.
In the cases $(\hat{q}, e)=(2, 1)$ and $(5, 5)$
we have $s=1$ (again by \eqref{eq:q=5:tor-new}). 
This contradicts 
Theorem~\ref{thm0} because $\dim \MMM=3$. The same arguments work in the cases 
$(\hat{q}, e)=(6, 3)$ and $(7, 6)$. In the case $(\hat{q}, e)=(4, 2)$ 
we have $s_3'=1$ (see \eqref{eq:q=5:to:s1}), hence $\p_1(\hat X)\ge 2$ which 
again contradicts Theorem~\ref{thm0}.
Finally, in the case $(\hat{q}, e)=(7, 6)$ we again get a contradiction by 
\eqref{eq:q=5:to:s1}.

Therefore, we are in the case \ref{tor:q=5:1/28} and $r=14$.
Then $\beta\ge 12\alpha$ and the inequalities in \eqref{eq:q=5:tor} and 
\eqref{eq:q=5:tor-new} can be improved:
\begin{eqnarray*}
4\hat{q}&=&5s+(5\beta- 4\alpha) e\ge 5s+56\alpha e,
\\
56\hat{q}&=&70s+(5 l- 4) e\ge 70s+56 e.
\end{eqnarray*}
As above $14\hat{q}+e\equiv 0\mod 5$ and $\hat{q}\ge e$. Taking into account 
that
$s>0$ whenever $\hat q>1$, we obtain only three possibilities: $(\hat{q}, 
e)=(1,1)$, $(6, 1)$, and $(7, 2)$. 
Now, consider the linear system $\MMM':=|4A_X+T|$. It is a pencil without fixed 
components.
Note that $T\overset{P}{\sim} 7(-K_X)$ and $\MMM'\overset{P}{\sim} 5(-K_X)$.
Hence $\beta_4'\equiv 5\alpha =5/14 \mod \ZZ$ and we can write 
$\beta_4'=5/14+l'$ with $l'\in \ZZ_{\ge0}$.
Similar to \eqref{eq:q=5:tor} we have the following relation
\begin{equation*}
\textstyle
4\hat{q}=5s_4'+(5\beta_4'- 4\alpha) e=5s_4'+ \frac32 e+ 5l'e.
\end{equation*} 
Then $e$ must be even, so $(\hat{q}, e)=(7, 2)$. If $\Clt{X}\neq 0$, then 
$\p_3(X)\ge 2$
by Proposition~\ref{prop:tor}, so $\hat X$ is rational by Theorem~\ref{thm0}.
Therefore, the group $\Cl(\hat X)$ is torsion free.
Then by Lemma~\ref{lemma:sl:torsion} \ $d=1$, i.e. $s_1=0$, and
from~\eqref{eq:main1} we obtain $\beta_1=5/7$.
On the other hand, $A_X\overset{P}{\sim} 3(-K_X)$ and so $\beta_1\equiv 
3\alpha\mod \ZZ$,
a contradiction.
\end{proof}

\begin{examples}
\label{ex:tor}
Here are some examples of $\QQ$-Fano threefolds with $\qQ(X)\ge 5$ and 
$\Clt{X}\neq 0$
(cf. Table~\ref{tab:2}).
\begin{enumerate}
\item[\ref{tab:h:q=7t:d=1/24}]:
$X_6\subset \PP(1,2,3^2,4)/\mumu_2(0,1,0,1,1)$,
\item[\ref{tab:h:q=7t:d=1/30}]:
$X_8\subset \PP(1,2,3,4,5)/\mumu_2(0,1,1,1,1)$,
\item[\ref{tab:h:q=5t3:d=1/18}]:
$X_6\subset \PP(1,2^2,3^2)/\mumu_3(0,1,2,1,2)$,
\item[\ref{tab:h:q=5t2:d=1/6}]:
$X_4\subset \PP(1^2,2^2,3)/\mumu_2(0,1,1,1,0)$,
\item[\ref{tab:h:q=5t2:d=1/8}]:
$X_6\subset \PP(1^2,2,3,4)/\mumu_2(0,1,1,1,1)$.
\end{enumerate}
The existence of $\QQ$-Fano threefolds of types
\ref{tor:q=5:1/12} or
\ref{tor:q=5:1/28}
of Table~\ref{tab:2} is not known.
\end{examples}

\section{A family of nonrational $\QQ$-Fano threefolds of index~$5$}
\label{sect:q=5}
In this  section we discuss the birational properties of 
an ``extremal'' $\QQ$-Fano threefold of index~$5$, see 
Theorem~\ref{thm0}\ref{thm0:5}.
\begin{setup}
\label{setup:q=5}
Let $X$ be a \textit{nonrational} $\QQ$-Fano threefold with
\begin{equation*}
\qQ(X)=5,\qquad A_X^3=1/12,\qquad\B(X)=(2^2, 3, 4).
\end{equation*}
In this situation the group $\Cl(X)$ is torsion free (see 
Lemma~\ref{lemma:TOR}), and
\begin{equation}
\label{eq:QFanoq=5:h}
\dim|kA_X|=k-1 \quad \text{for $k=1,2,3$},\qquad \dim|4A_X|=4,\qquad \dim 
|5A_X|=6.
\end{equation} 

This variety has id number \#41422 in \cite{GRD}.
A quasi-smooth weighted hypersurface $X_{10}\subset\PP(1,2,3,4,5)$ satisfies 
these conditions (see Corollary~\ref{cor:q=5:non-rat} below).
Conversely, we  will show below in Theorem~\ref{thm:q5} that any 
\textit{nonrational} $\QQ$-Fano threefold with $\qQ(X)=5$ and $\B(X)=(2^2, 3,
4)$
is isomorphic to a hypersurface $X_{10}\subset\PP(1,2,3,4,5)$.
\end{setup}

Since $\B(X)=(2^2, 3, 4)$, the collection of non-Gorenstein singularities of $X$ is as follows:
\begin{itemize}
\item
a unique point $P_3$ of index~$3$ that is a cyclic quotient,
\item
a unique point $P_4$ of index~$4$ that is either a cyclic quotient or a
singularity of type~\typeA{cAx}{4},
\item
index~$2$ singularities $P_2^{(1)}$,\dots,$P_2^{(m)}$, where $0\le m\le 2$ and 
$\aw (X,P_4)+\sum\aw(X,P_2^{(i)})=3$.
\end{itemize}

\begin{proposition}
\label{prop:q=5:link2}
Let $X$ be a $\QQ$-Fano threefold satisfying~\xref{setup:q=5}.
Then there exists a Sarkisov link of the form~\eqref{diagram-main} such that
\begin{enumerate}
\item
$f$ is an extremal blowup of the index-$4$ point $P_4$,
\item
$\bar f$ is a $\QQ$-conic bundle.
\end{enumerate}
\end{proposition}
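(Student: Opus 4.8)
The plan is to run the Sarkisov link \eqref{diagram-main} on the pencil $\MMM:=|2A_X|$, which is one-dimensional by \eqref{eq:QFanoq=5:h}; I would first check that it has no fixed component. The decisive preliminary step is to record the local behaviour of $\MMM$ at the non-Gorenstein points. Using Lemma~\ref{lemma:K-index} together with $-K_X\qq 5A_X$, one finds $A_X\overset{P_4}{\sim}-K_X$, $A_X\overset{P_3}{\sim}2(-K_X)$, and $A_X\overset{P_2^{(i)}}{\sim}-K_X$. Consequently $2A_X$ is Cartier at every $P_2^{(i)}$, while $2A_X\overset{P_3}{\sim}-K_X$ gives $m=1$ at $P_3$ and $2A_X\overset{P_4}{\sim}2(-K_X)$ gives $m=2$ at $P_4$. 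By Lemma~\ref{lemma:ct} the strongest local constraint is the one from $P_4$, namely $\ct(X,\MMM)\le 1/2$.

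Next I would apply the construction \eqref{diagram-main} to $\MMM$ and argue that the resulting crepant extremal blowup $f$ is the blowup of $P_4$ with discrepancy $\alpha=1/4$; such a blowup exists whether $P_4$ is a cyclic quotient or of type \typeA{cAx}{4}, by Lemma~\ref{lemma-discrepancies}. Granting this, $c=\alpha/\beta=1/2$ forces $\beta=1/2$. \emph{This identification is the main obstacle}: one must show that the canonical threshold of $(X,\MMM)$ is computed precisely at $P_4$, i.e.\ that no divisor over a Gorenstein or lower-index centre yields a smaller ratio $a(E)/\mult_E\MMM$. I expect to eliminate each competing centre by substituting its data into \eqref{eq:main} and deducing, via Theorem~\ref{thm0}, the rationality of $X$, contradicting the hypothesis in~\ref{setup:q=5}.

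With $\alpha=1/4$ and $\beta=1/2$ the relation \eqref{eq:main} becomes $2\hat q=5s+2e$. I would first exclude the birational case for $\bar f$. There $\hat X$ is a $\QQ$-Fano threefold birational to $X$, hence nonrational, and $\hat\MMM\qq sA_{\hat X}$ is the image of a movable pencil, so $\dim|sA_{\hat X}|\ge 1$. The identity $2\hat q-2e=5s$ forces $s$ even, and $s=0$ is impossible since a movable system cannot be $\bar f$-exceptional; hence $s\ge 2$, giving $\hat q\ge 6$ and $\p_2(\hat X)\ge 2$. But a nonrational $\QQ$-Fano threefold of index $\ge 6$ appears in Table~\ref{tab:1} (by Theorem~\ref{thm0}), where $\dim|2A_{\hat X}|\le 0$ in every row, a contradiction. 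Therefore $\bar f$ is a fibration, and by Remark~\ref{rem:case-nonbir} we have $\hat q=1$, whence $s=0$ and $e=1$.

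Finally I would show the fibration is a conic bundle and not a del Pezzo fibration. Solving \eqref{eq:main1} with $\hat q=1$, $e=1$, $\alpha=1/4$ yields $\beta_k=k/4-s_k\ge 0$, so $s_1=s_2=s_3=0$: the transforms of $|A_X|$, $|2A_X|$, $|3A_X|$ are all $\bar f$-vertical. If the base $\hat X$ were the curve $\PP^1$, a movable vertical system would be pulled back from $\PP^1$, so $\dim|2A_X|=1$ and $\dim|3A_X|=2$ would give $2\bar A_X\equiv\bar F$ and $3\bar A_X\equiv 2\bar F$ for the fibre class $\bar F$; these force $\bar A_X\equiv 0\equiv\bar F$, contradicting $\uprho(\bar X)=2$. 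Hence $\hat X$ is a surface and $\bar f$ is a $\QQ$-conic bundle, whose base is then a del Pezzo surface with Du Val singularities of type $\mathrm A$ by Corollary~\ref{cor:base-surface}. A secondary point to settle en route is the absence of vertical fixed parts in the transforms of $|2A_X|$ and $|3A_X|$, which is what lets me pass from the numerical identities $s_k=0$ to the class equalities used above.
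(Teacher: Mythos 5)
Your overall strategy (run the link~\eqref{diagram-main} on a low-degree pencil, kill the birational case for $\bar f$ numerically, then rule out a $\PP^1$ base by comparing two vertical pencils) is the same as the paper's, and the second half of your argument is sound. The difference is the choice of pencil: you take $\MMM=|2A_X|$, the paper takes $\MMM=|3A_X|$, and this is exactly where your proof has a genuine gap — one you flag yourself as ``the main obstacle'' but do not close. With $\MMM=|2A_X|$ the only a priori information is $\beta\ge 2\alpha$ and $\alpha\ge 1/4$, so \eqref{eq:main} gives $2\hat q\ge 5s+8\alpha e\ge 5s+2e$, which by itself says nothing about where $f(E)$ sits; your plan to ``eliminate each competing centre via Theorem~\ref{thm0}'' can in fact be carried out, but it is a real case analysis, not a formality. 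One must run separately through a Gorenstein or curve centre ($\alpha\ge1$, forcing $\hat q=7$, $s=e=1$, then $\p_1(\hat X)\ge2$ contradicts Theorem~\ref{thm0}), an index-$2$ centre ($\alpha\ge1/2$, forcing $\hat q=7$, $s=2$, where Theorem~\ref{thm0}\ref{thm0:5} alone does \emph{not} suffice because of the $A^3=1/12$ exception — you need Proposition~\ref{prop:6-7}/Table~\ref{tab:1} to see $\p_2(\hat X)\le1$ for nonrational $\hat X$ with $\qQ(\hat X)\ge6$, as you do later for the birational case), and an index-$3$ centre (where $\alpha=1/3$ by Lemma~\ref{lemma-discrepancies}, and the congruence $\beta\equiv\alpha\bmod\ZZ$ combined with $\beta\ge2\alpha$ gives $\beta\ge4/3$ and then a contradiction with integrality of $5\beta-2\alpha$). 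Likewise, even granted $f(E)=P_4$, your assertion that $\beta=1/2$ needs the congruence $\beta\equiv 2\alpha=1/2\bmod\ZZ$ plus an elimination of $\beta\ge 3/2$; ``$c=1/2$ forces $\beta=1/2$'' presupposes $c=1/2$ exactly, which is part of what is to be proved.

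The paper's choice of $|3A_X|$ dissolves all of this: there $\MMM\overset{P_4}{\sim}3(-K_X)$ gives $\beta\ge3\alpha$, so \eqref{eq:main} reads $3\hat q=5s+(5\beta-3\alpha)e\ge 5s+12\alpha e\ge 5s+3e$ \emph{for every possible centre} (since $12\alpha\ge3$ whenever $\alpha\ge1/4$, and a fortiori for larger discrepancies), the case $s>0$ dies in three lines, and in the case $s=0$, $\hat q=1$ the forced equality $3=(5\beta-3\alpha)e=12\alpha e=3e$ pins down $e=1$ and $\alpha=1/4$, hence $f(E)=P_4$, with no case distinction at all. I would recommend either switching to $|3A_X|$ or writing out the centre-by-centre exclusion in full; as it stands the decisive step of your proof is only a plausible programme.
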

More precise description of this link will be given in 
Proposition~\ref{prop:q=5:link-cb}.
\begin{proof}
It follows from~\eqref{eq:QFanoq=5:h} that $|2A_X|$ and $|3A_X|$ are linear 
systems without fixed components and $|A_X|\neq\varnothing$.
Apply the construction~\eqref{diagram-main} with $\MMM=|3A_X|$.
If $P$ is the point of index~$4$, then $\MMM\overset{P}{\sim} 3(-K_X)$ and so 
$\beta\ge 3\alpha$
by~\eqref{eq:ct:beta-alpha}.
The relation~\eqref{eq:main} in this case has the form
\begin{equation}
\label{eq:q=5:m}
3\hat{q}=5 s+(5\beta-3\alpha) e\ge 5 s+12\alpha e\ge 5 s+3 e.
\end{equation}
Assume that $s>0$. Then $\hat q\ge 3$ and the contraction $\bar f$ is 
birational by Remark~\ref{rem:case-nonbir}. 
Since $\dim\MMM=2$ and $\hat X$ is not rational,
$s\ge 2$ by Theorem~\ref{thm0}. Hence $\hat q\ge 5$ by~\eqref{eq:q=5:m} and 
$s\ge 3$ again by Theorem~\ref{thm0}. Then we conclude successively $\hat q\ge 6$, $s\ge 4$, 
and $\hat q>7$, a contradiction.

Thus $s=0$. Then $\bar f$ is a fibration by Remark~\ref{rem:case-bir} and 
$\hat q=1$ by Remark~\ref{rem:case-nonbir}. From~\eqref{eq:q=5:m} we conclude 
that $s=0$ and $e=1$
and from~\eqref{eq:main1} with $k=2$ we obtain $s_2=0$.
Thus $\bar\MMM_2=\bar f^*\hat\MMM_2$ and $\bar\MMM=\bar f^*\hat\MMM$, where
$\hat\MMM_2$ and $\hat\MMM$ are complete linear systems with 
$\dim\hat\MMM_2=1$ and $\dim\hat\MMM=2$.
If $\hat X\simeq\PP^1$, then $\hat\MMM_2=|\OOO_{\PP^1}(1)|$ and 
$\hat\MMM=|\OOO_{\PP^1}(2)|$,
hence $\hat\MMM\sim 2\hat\MMM_2$. Pushing down this relation to $X$ we get a 
contradiction.
Therefore $\hat X$ is a surface and $\bar f$ is a $\QQ$-conic bundle. 
\end{proof}

\begin{scorollary}
\label{cor:q=5:P(1,2,3)}
In the above notation we have $\bar M_1\sim\bar f^* A_{\hat X}$, 
$\bar\MMM_2=\bar f^* |2A_{\hat X}|$, and $\bar\MMM_3=\bar f^* |3A_{\hat X}|$.
\end{scorollary}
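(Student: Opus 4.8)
The plan is to identify the generator $A_{\hat X}$ on the base $\hat X=S$ and then match the three proper transforms to its multiples. First I would record the class‑group picture. Since $\chi$ is an isomorphism in codimension one, $\Cl(\bar X)\simeq\Cl(\tilde X)=\Cl(X)\oplus\ZZ E$, which is torsion free (recall $\Cl(X)$ is torsion free by Set-up~\ref{setup:q=5}) of rank $\uprho(\bar X)=2$. As $\bar X$ is rationally connected, Corollary~\ref{cor:base-surface} applies: $S$ is one of the four del Pezzo surfaces of Table~\ref{tab3}, the group $\Cl(S)\simeq\ZZ$ is generated by the fundamental divisor $A_{\hat X}=A_S$, and $\bar f^*$ embeds $\Cl(S)$ into $\Cl(\bar X)$. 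From the proof of Proposition~\ref{prop:q=5:link2} I already have $\bar\MMM_2=\bar f^*\hat\MMM_2$ and $\bar\MMM_3=\bar f^*\hat\MMM$, where $\hat\MMM_2$ and $\hat\MMM$ are complete linear systems on $S$ with $\dim\hat\MMM_2=1$ and $\dim\hat\MMM=2$.

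Next I would pin down the multiples. Because $\Cl(S)=\ZZ A_S$, I can write $\hat\MMM_2=|a_2A_S|$ and $\hat\MMM=|a_3A_S|$ with $a_2,a_3\in\ZZ_{>0}$. The constraints are then purely numerical: I need some $a_2$ with $\dim|a_2A_S|=1$ and some $a_3$ with $\dim|a_3A_S|=2$. Reading off Table~\ref{tab3} (and using Lemma~\ref{lemma:DP} to control $\dim|kA_S|$ for the relevant small $k$), the surfaces $\PP^2$ and $\PP(1^2,2)$ are excluded outright, since on the former no multiple of $A_S$ gives a one‑dimensional system, while on the latter no multiple gives a two‑dimensional one. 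For the remaining surfaces $\PP(1,2,3)$ and $S_{\mathrm{DP}_5}$ the value $\dim|kA_S|$ equals $1$ only at $k=2$ and equals $2$ only at $k=3$. Hence $a_2=2$ and $a_3=3$, giving $\bar\MMM_2=\bar f^*|2A_{\hat X}|$ and $\bar\MMM_3=\bar f^*|3A_{\hat X}|$.

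It then remains to treat $\bar M_1$. Here $M_1$ is the unique member of $|A_X|$ (note $\dim|A_X|=0$ by~\eqref{eq:QFanoq=5:h}), so $3M_1\in|3A_X|=\MMM_3$, and taking proper transforms shows that $3\bar M_1$ is a member of $\bar\MMM_3=\bar f^*|3A_{\hat X}|$. Thus $3\bar M_1=\bar f^*D$ for some $D\in|3A_{\hat X}|$, whence $3\bar M_1\sim 3\,\bar f^*A_{\hat X}$ in $\Cl(\bar X)$. Since $\Cl(\bar X)$ is torsion free I may divide by $3$ to obtain $\bar M_1\sim\bar f^*A_{\hat X}$, which completes all three assertions.

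The main obstacle I expect is the numerical matching in the second paragraph: everything hinges on showing that the dimensions $1$ and $2$ single out the multiples $2A_S$ and $3A_S$ and simultaneously rule out two of the four candidate bases. This is where Table~\ref{tab3} together with the equality case of Lemma~\ref{lemma:DP} does the real work; once the base and the multiples are fixed, the passage to $\bar M_1$ via torsion‑freeness of $\Cl(\bar X)$ is formal.
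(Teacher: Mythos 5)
Your argument is correct and is essentially the one the paper intends (the corollary is stated there without a separate proof, as a direct consequence of the proof of Proposition~\ref{prop:q=5:link2}): the key point is exactly the numerical matching of the complete systems of dimensions $1$ and $2$ against Table~\ref{tab3} via Lemma~\ref{lemma:DP}, which simultaneously pins down $a_2=2$, $a_3=3$ and eliminates $\PP^2$ and $\PP(1^2,2)$ as bases. The only imprecise step is ``taking proper transforms shows that $3\bar M_1$ is a member of $\bar\MMM_3$'': a special member such as $3M_1$ may a priori have multiplicity $3\beta_1>\beta_3$ along $E$, so that one only gets $3\bar M_1\qq\bar f^*(3A_{\hat X})-(3\beta_1-\beta_3)\bar E$ with $3\beta_1-\beta_3\ge 0$; however, restricting this relation to a general fiber of $\bar f$ (on which $\bar E$ is ample with $e=1$ and $\bar M_1$ restricts to $s_1\Xi$ with $s_1\ge 0$) forces $s_1=0$ and $3\beta_1=\beta_3$, so your conclusion $\bar M_1\sim\bar f^*A_{\hat X}$ stands; alternatively this follows from~\eqref{eq:main1} with $k=1$.
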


\begin{scorollary}
\label{cor:q=5:Bs4}
$P_4\notin\Bs|4A_X|$.
\end{scorollary}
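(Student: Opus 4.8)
The plan is to exploit the Sarkisov link already produced in the proof of Proposition~\ref{prop:q=5:link2}: starting from $\MMM=|3A_X|$ one gets $\hat q=1$, $s=0$, $e=1$, and a $\QQ$-conic bundle $\bar f\colon\bar X\to S$ over a del Pezzo surface $S$. Writing $\tilde\MMM_4\qq f^*\MMM_4-\beta_4 E$ as in~\eqref{equation-1}, the goal is to prove $\beta_4=0$. Since $\Cl(X,P_4)\simeq\ZZ/4\ZZ$ is generated by $K_X$ and $-K_X\qq 5A_X$, the class $4A_X$ is Cartier at $P_4$; hence for $D\in|4A_X|$ one has $\ord_E(f^*D)>0$ whenever $P_4\in D$. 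Thus $\beta_4=0$ is exactly the statement that a general member of $|4A_X|$ avoids $P_4$, i.e. $P_4\notin\Bs|4A_X|$.

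To set up the numerics I would first fix $\alpha$. As $P_4$ is either a cyclic quotient singularity or of type~\typeA{cAx}{4}, it is not of type $\mathrm{cA}/r$, $\mathrm{cD}/2$ or $\mathrm{cE}/2$, so Lemma~\ref{lemma-discrepancies} gives $\alpha=1/4$. Substituting $n=3$, $q=5$, $\hat q=1$, $s=0$, $e=1$ into~\eqref{eq:main} yields $\beta=\beta_3=3/4$, and then~\eqref{eq:main1} with $k=4$ reads $4=5s_4+5\beta_4-1$, i.e. $s_4+\beta_4=1$. Since $s_4\in\ZZ_{\ge0}$ and $\beta_4\ge0$, the only options are $(s_4,\beta_4)=(1,0)$ and $(0,1)$, so it remains to exclude the second.

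The main work is ruling out $\beta_4=1$, equivalently $s_4=0$. In that case $\bar\MMM_4$ is $\bar f$-vertical, so $\bar\MMM_4=\bar f^*\hat\MMM_4$ for a linear system $\hat\MMM_4$ on $S$ with $\dim\hat\MMM_4=\dim|4A_X|=4$. Because $4\bar M_1\in\bar\MMM_4$ and $\bar M_1\qq\bar f^*A_S$ by Corollary~\ref{cor:q=5:P(1,2,3)}, the member $4\bar M_1$ pushes down to $4A_S$, whence $\hat\MMM_4\subseteq|4A_S|$ and $\dim|4A_S|\ge4$. On the other hand, by Corollary~\ref{cor:base-surface} the base $S$ is one of the four surfaces of Table~\ref{tab3}, and the values $\dim|kA_S|=\dim|kA_X|=k-1$ for $k=1,2,3$ (via Corollary~\ref{cor:q=5:P(1,2,3)} and~\eqref{eq:QFanoq=5:h}) single out $S\cong\PP(1,2,3)$ or $S\cong S_{\mathrm{DP}_5}$. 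For both of these $\dim|4A_S|=3<4$, a contradiction. Hence $\beta_4=0$ and $P_4\notin\Bs|4A_X|$.

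I expect the comparison of $\dim|4A_S|$ with $\dim|4A_X|=4$ to be the crux: the whole argument hinges on first pinning down $S$ (using the $k\le3$ data to discard $\PP^2$ and $\PP(1^2,2)$, whose fourth plurigenera are too large) and then reading off $\dim|4A_S|=3$ from Table~\ref{tab3}. The earlier identifications from the link — that $\bar f$ is a conic bundle and that $A_X$, $2A_X$, $3A_X$ descend to $A_S$, $|2A_S|$, $|3A_S|$ — do the heavy lifting and are supplied by Proposition~\ref{prop:q=5:link2} and Corollary~\ref{cor:q=5:P(1,2,3)}.
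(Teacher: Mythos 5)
Your proof is correct and takes essentially the same route as the paper's: from the link of Proposition~\ref{prop:q=5:link2} one gets $s_4+\beta_4=1$, and the case $s_4=0$ is excluded because $\bar\MMM_4$ would then be $\bar f$-vertical, forcing a $4$-dimensional subsystem of $|4A_{\hat X}|$ on the base surface, which is incompatible with Table~\ref{tab3}. The only cosmetic difference is that the paper reads off the contradiction from the exact equality $\dim|4A_{\hat X}|=4$, which fails for all four surfaces of Corollary~\ref{cor:base-surface}, whereas you first pin $S$ down to $\PP(1,2,3)$ or $S_{\mathrm{DP}_5}$ using the $k\le 3$ data and then use $\dim|4A_S|=3$.
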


\begin{proof}
Since $\dim|4A_X|>\dim|3A_X|$, the linear system $|4A_X|$ has no fixed 
components.
Assume that $P_4\in\Bs|4A_X|$. Then $\beta_4>0$.
From~\eqref{eq:main1} for $k=4$ we obtain $s_4 =0$.
Then as above $\bar\MMM_4=\bar f^* |m A_{\hat X}|$ for some $m$.
Pushing down this relation to $X$ we obtain $m=4$ and so
$\dim|4 A_{\hat X}|=\dim\bar\MMM_4=4$. This contradicts 
Corollary~\ref{cor:base-surface}.
Therefore, $s_4>0$ and $\beta_4 =0$.
\end{proof}

\begin{proposition}
\label{prop:q=5o:main}
Let $X$ be a $\QQ$-Fano threefold satisfying~\xref{setup:q=5}.
Let $P_3\in X$ be the point of index~$3$ and let $f:\tilde X\to X$
be the Kawamata blowup of $P_3$. Then $f$ can be completed to
the following Sarkisov link
\begin{equation}
\label{eq:sl:q=5}
\vcenter{
\xymatrix@R=1em{
&\tilde X\ar@/_0.4em/[dl]_{f}\ar@/^0.4em/[dr]^{\bar f}&
\\
X&&\hat X
}}
\end{equation}
where $\hat X$ is a $\QQ$-Fano hypersurface of degree $6$ in $\PP(1^2,2^2,3)$ 
and $\bar f$ contracts
a divisor $\tilde M_1$ to a smooth rational curve $\hat\Upsilon\subset\hat X$
such that $A_{\hat X}\cdot\hat\Upsilon=1/2$.
\end{proposition}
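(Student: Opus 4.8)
The plan is to exhibit $f$ as the starting extremal extraction of a Sarkisov link~\eqref{diagram-main} attached to the system $\MMM:=|4A_X|$, and then to recover $\hat X$ and $\hat\Upsilon$ from the numerical relations~\eqref{eq:main}--\eqref{eq:main1} and the classification behind Example~\ref{ex:X6}. First I would record the local behaviour at $P_3$: since $\Cl(X,P_3)\simeq\ZZ/3\ZZ$ is generated by $-K_X$ (Lemma~\ref{lemma:K-index}) and $-K_X\sim 5A_X$, we get $A_X\overset{P_3}{\sim}2(-K_X)$, hence $4A_X\overset{P_3}{\sim}2(-K_X)$, while $4A_X$ is Cartier at $P_4$ and at every index-$2$ point. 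Thus $\MMM=|4A_X|$ is \emph{non-Cartier only at} $P_3$, where $m=2$; it has no fixed component and $\dim\MMM=4$ by~\eqref{eq:QFanoq=5:h}, and $P_4\notin\Bs\MMM$ by Corollary~\ref{cor:q=5:Bs4}. By Lemma~\ref{lemma:ct} we have $\ct(X,\MMM)\le1/2$; I would check that equality holds and is realised precisely by the Kawamata blowup $f$ of $P_3$ (discrepancy $\alpha=1/3$, Lemma~\ref{lemma-discrepancies}), which then forces $\beta=\alpha/c=2/3$ and $c=1/2$. This is exactly what pins the extraction at $P_3$ rather than at $P_4$ (the latter being the link of Proposition~\ref{prop:q=5:link2} coming from $|3A_X|$).

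Next I would run the $(K_{\tilde X}+\tfrac12\tilde\MMM)$-MMP and read off $\bar f$ from~\eqref{eq:main}, which with $q=5$, $n=4$, $\alpha=\tfrac13$, $\beta=\tfrac23$ becomes $4\hat q=5s+2e$. Because $X$, and hence $\hat X$, is nonrational, a fibration output would give $\hat q=1$ (Remark~\ref{rem:case-nonbir}) and $(s,e)=(0,2)$; I would rule this out as in Proposition~\ref{prop:q=5:link2}, the vertical transform of $\MMM$ violating the dimension bounds of Corollary~\ref{cor:base-surface}. In the birational case $s\ge1$; the parity of $4\hat q=5s+2e$ forces $s$ even, and Theorem~\ref{thm0} applied to $\hat X$, together with $\dim|sA_{\hat X}|\ge\dim\hat\MMM=4$, $\hat q\le7$ and the classification of higher-index $\QQ$-Fano threefolds (Proposition~\ref{prop:6-7}), leaves only $s=2$, $\hat q=3$, $e=1$. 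Putting $k=1$ in~\eqref{eq:main1} gives $3=5s_1+(5\beta_1-\tfrac13)$, whence $s_1=0$, $\beta_1=\tfrac23$; since $\dim|A_X|=0$, Remark~\ref{rem:case-bir} identifies the unique member $M_1\in|A_X|$ with the $\bar f$-exceptional divisor, i.e. $\bar f$ contracts $\tilde M_1$ and $F\qq A_X$ ($d=1$). With $\Clt{X}=0$ and $e=1$, Lemma~\ref{lemma:sl:torsion} gives $\Clt{\hat X}=0$.

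It then remains to identify the $\QQ$-Fano threefold $\hat X$ of index $\hat q=3$ with torsion-free class group and $\dim|2A_{\hat X}|\ge4$: the classification underlying Example~\ref{ex:X6} pins it down as $X_6\subset\PP(1,1,2,2,3)$, with $A_{\hat X}^3=1/2$ and $\B(\hat X)=(2^3)$. In this model $|A_{\hat X}|=\langle x_0,x_1\rangle$ has base curve $\hat\Upsilon=\hat X\cap\{x_0=x_1=0\}\subset\PP(2,2,3)$, a smooth rational curve, and $\bar f$ is the blowup of $\hat\Upsilon$; since $\hat\Upsilon\in|\OOO_{\PP(2,2,3)}(6)|$ one computes $A_{\hat X}\cdot\hat\Upsilon=6\cdot\OOO_{\PP(2,2,3)}(1)^2=6/12=1/2$. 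As a cross-check I would redo this intrinsically on $\tilde X$: from $E^3=9/2$ (via $E\cong\PP(1,1,2)$, $K_E^2=8$ and $E|_E=\tfrac34K_E$) and $\tilde M_1=f^*A_X-\tfrac23E$ one gets $E^2\tilde M_1=-3$, $E\tilde M_1^2=2$, $\tilde M_1^3=-\tfrac54$; writing $\bar f^*A_{\hat X}=E+\delta\tilde M_1$, the value $A_{\hat X}^3=1/2$ forces $\delta=2$ (and $b=3\delta-5=1$ by~\eqref{eq:-b-gamma-delta-1}), whence $A_{\hat X}\cdot\hat\Upsilon=-\bar f^*A_{\hat X}\cdot\tilde M_1^2=\tfrac54\delta-2=\tfrac12$.

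The hard part will be the two-ray game in the middle: one must show the link genuinely has the displayed shape --- that $\bar f$ is birational (not a fibration), a divisorial contraction to a \emph{curve}, with no intervening flip, so that it is $\tilde M_1$ itself that gets contracted. Concretely, the technical heart is verifying $\ct(X,|4A_X|)=1/2$ with the Kawamata blowup of $P_3$ as the unique crepant extraction, i.e. that $\Bs|4A_X|$ contributes no competing maximal centre; once the link is known to have this shape, the numerical bookkeeping above and the index-$3$ identification of $\hat X$ are routine.
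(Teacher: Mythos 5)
Your route is genuinely different from the paper's and, as written, has a gap at exactly the point you yourself flag as ``the hard part''. The construction~\eqref{diagram-main} starts from \emph{the} crepant extraction of the pair $(X,c\MMM)$, so to make $f$ the Kawamata blowup of $P_3$ you must prove that $\ct(X,|4A_X|)=1/2$ and that $P_3$ is the unique maximal centre; Lemma~\ref{lemma:ct} only gives $\ct\le 1/2$, and a curve in $\Bs|4A_X|$ of high multiplicity (or a non-Kawamata valuation over $P_3$) would force $c<1/2$, after which all of your numerics ($\alpha=1/3$, $\beta=2/3$, $4\hat q=5s+2e$) collapse. You likewise do not exclude flips, so you cannot conclude that it is $\tilde M_1$ itself that is contracted, nor that the link has the flip-free shape~\eqref{eq:sl:q=5}. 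The paper never computes a canonical threshold here: it takes the Kawamata blowup of $P_3$ as the starting datum, proves that $\tilde\MMM_4$ is \emph{nef} (using $P_4\notin\Bs|4A_X|$, Corollary~\ref{cor:q=5:Bs4}), and combines nefness with $E^3=9/2$ and the congruences $\beta_1\equiv\beta_4\equiv 2/3\pmod{\ZZ}$ to force $\beta_1=\beta_4=2/3$, hence $\tilde\MMM_4^2\cdot\tilde M_1=0$ and $\tilde\MMM_4\cdot\tilde M_1^2=-1$; then $-K_{\tilde X}\qq\frac54\tilde\MMM_4+\frac12E$ is ample, so $\tilde X$ is a Fano with $\uprho=2$ and the second extremal ray, supported by the nef Cartier divisor $\tilde\MMM_4$, gives $\bar f$ directly with no two-ray game. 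Any repair of your argument essentially has to reprove these nefness statements, at which point the threshold computation becomes superfluous.

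Two further concrete problems. First, the identification of $\hat X$: there is no ``classification underlying Example~\ref{ex:X6}'' available in the paper that pins down an index-$3$, torsion-free, nonrational $\QQ$-Fano with $\p_2\ge 5$ as $X_6\subset\PP(1^2,2^2,3)$ (and even your reduction to $s=2$, $\hat q=3$, $e=1$ leaves cases such as $\hat q=4$, $e=3$ unaddressed, since Theorem~\ref{thm0} needs $\p_1\ge 2$ there, not $\p_2\ge 5$). The paper instead shows that $\tilde\MMM_4$ is Cartier (via $3\MMM_4$ and $2E$ being Cartier and the indices of $\tilde X$ being prime to $3$), deduces that $\hat\MMM_4\qq 2A_{\hat X}$ is an ample Cartier divisor, and invokes Campana--Flenner; this Cartier verification is absent from your argument. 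Second, your description of $\hat\Upsilon$ is wrong: $\bar f(\tilde M_1)$ is \emph{not} the base curve $\hat X\cap\{x_0=x_1=0\}$ of $|A_{\hat X}|$ --- by Corollary~\ref{cor:q5:s-beta}\ref{cor:q5:s-beta:bbb} these are distinct curves (indeed $\gamma_2=0$, so $\hat\Upsilon\not\subset\Bs|A_{\hat X}|$). They happen to have the same degree $1/2$ against $A_{\hat X}$, so your number survives, but your smoothness and rationality argument is about the wrong curve; the paper proves them for the actual $\hat\Upsilon$ by noting that the degree-$2$ morphism $\hat X\to Q'\subset\PP^4$ defined by $|2A_{\hat X}|$ maps $\hat\Upsilon$ isomorphically to a line.
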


\begin{proof}
First, we claim that  the linear system 
$\tilde\MMM_4$ is nef.
Indeed, assume that $\tilde\MMM_4\cdot\tilde \Gamma<0$ for some irreducible curve $\tilde \Gamma$.
Then $\tilde \Gamma\subset\Bs\tilde\MMM_4$ and $\tilde \Gamma\cap E\neq\varnothing$, hence
$E\cap\Bs\tilde\MMM_4\neq\varnothing$ and $P_4\in\Bs\MMM_4$. This contradicts 
Corollary~\ref{cor:q=5:Bs4}.
Therefore, $\tilde \Gamma\subset E$. On the other hand, $\uprho (\tilde X/X)=1$, 
hence
$\tilde\MMM_4$ is $f$-ample, a contradiction.
Thus $\tilde\MMM_4$ is nef.

Since $E^3=9/2$, we have
\begin{align*}
\tilde\MMM_4^2\cdot\tilde M_1&=\MMM_4^2\cdot M_1-\beta_4^2\beta_1 
E^3=
\textstyle
\frac 43 -\frac 92\beta_4^2\beta_1,
\\\textstyle
\tilde\MMM_4\cdot\tilde M_1^2&=\MMM_4\cdot M_1^2-\beta_4\beta_1^2 
E^3=
\textstyle
\frac 13 -\frac 92\beta_4\beta_1^2.
\end{align*}
Since $\MMM_4 \overset{P_3}\sim M_1\overset{P_3}\sim -2K_X$, we have 
$\beta_4\equiv\beta_1\equiv 2\alpha=2/3\mod 3$.
Therefore, $\beta_4,\,\beta_1\ge 2/3$.
Since the linear system $\tilde\MMM_4$ is nef, we have 
$\tilde\MMM_4^2\cdot\tilde M_1\ge 0$, hence
$\beta_4=\beta_1=2/3$.
Thus
\[\textstyle
\tilde\MMM_4\qq f^*\MMM_4-\frac 23 E,\qquad\tilde M_1\qq f^*M_1-\frac 23 E.
\]
This implies
\begin{equation}
\label{eq:q=5:MMM}
\tilde\MMM_4^2\cdot\tilde M_1=0,\qquad
\tilde\MMM_4\cdot\tilde M_1^2=-1.
\end{equation}
Note that $3\MMM_4$ is Cartier (see Lemma~\ref{lemma:K-index}).
Hence so is
\[
3\tilde\MMM_4+2E\sim f^*(3\MMM_4).
\]
Since $f$ is the Kawamata blowup of the point $P_3$ of type $\frac13 (1,1,2)$, 
the variety
$\tilde X$ has on $E$ a unique singularity that is of type $\frac12(1,1,1)$.
Hence $2E$ and $3\tilde\MMM_4$ are Cartier.
Since the indices of points of $\tilde X$ are coprime to $3$,
$\tilde\MMM_4$ is also Cartier.
Furthermore,
\[\textstyle
-K_{\tilde X}\qq f^*(-K_X)-\frac 13 E\qq 5f^*A_X-\frac 13E=\frac 
54\tilde\MMM_4+\frac 12 E,
\]
hence $-K_{\tilde X}$ is ample.
Since $\uprho(\tilde X)=2$, there exists an extremal Mori contraction $\bar 
f:\tilde X\to\hat X$ other than $f$.
Then~\eqref{eq:q=5:MMM} implies that $\tilde\MMM_4$ is not ample and is trivial 
on the fibers of $\bar f$.
By the base point free theorem we can write 
\begin{equation}
\label{eq:q=5:M4}
\tilde\MMM_4=\bar f^*\hat\MMM_4, 
\end{equation} 
where $\hat\MMM_4$
is a linear system of Cartier divisors on $\hat X$.
Moreover, the contraction $\bar f$ is birational and contracts $\tilde M_1$ to 
a curve
$\hat\Upsilon\subset\hat X$ such that 
$\hat\MMM_4\cdot\hat\Upsilon=-\tilde\MMM_4\cdot\tilde M_1^2=1$.
In particular, $\hat X$ is a $\QQ$-Fano threefold. By 
Lemma~\ref{lemma:sl:torsion}
the group $\Cl(\hat X)$ is torsion free and $\bar E=\bar f(E)$ is a generator 
of $\Cl(\hat X)$.
Since $-K_{\tilde X}\qq 5\tilde M_1+3E$, we have $\qW(\hat X)=3$ and 
\begin{equation}
\label{eq:q=5:M4E}
\hat\MMM_4\qq 2\hat E. 
\end{equation}
Since $\hat\MMM_4$ is Cartier, $\hat X$ is isomorphic to a hypersurface of 
degree $6$ in $\PP(1^2,2^2,3)$
by \cite[Theorem~1.5]{CampanaFlenner} (see also 
\cite[Theorem~2.2.5]{P:QFano-rat1}).
Then the linear system $\hat\MMM_4=|2A_{\hat X}|$ is base point free and defines
a finite degree $2$ morphism $\psi:\hat X\to Q'\subset\PP^4$, where 
$Q'\simeq\PP(1,1,2,2)$
is a quadric of corank $2$. Since $\hat\MMM_4\cdot\hat\Upsilon=1$,\ 
$\psi(\hat\Upsilon)$ is a line on $Q'$.
Therefore, $\hat\Upsilon$ is a smooth rational curve.
\end{proof}

\begin{sremark}
The restriction $\bar f_E: E\to\hat E$ coincides with the normalization.
Moreover, $\hat\Upsilon\subset\hat E$ is the nonnormal locus
and $\bar f_E^{-1}(\hat\Upsilon)= E\cap\tilde M_1$.
\end{sremark}

\begin{scorollary}
\label{cor:q=5:Bs3}
$P_3\notin\Bs|3A_X|$.
\end{scorollary}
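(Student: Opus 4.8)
The plan is to argue by contradiction, exactly in the spirit of Corollary~\ref{cor:q=5:Bs4}, using the divisorial contraction $\bar f$ of the link~\eqref{eq:sl:q=5} together with the explicit model $\hat X=X_6\subset\PP(1^2,2^2,3)$ produced by Proposition~\ref{prop:q=5o:main}. First I would record two local facts at $P_3$. Since $\dim|3A_X|=2>\dim|2A_X|$, the system $\MMM_3=|3A_X|$ has no fixed components. By Lemma~\ref{lemma:K-index} the group $\Cl(X,P_3)\simeq\ZZ/3\ZZ$ is generated by $K_X$, and $A_X\overset{P_3}{\sim}-2K_X$, so $3A_X\overset{P_3}{\sim}-6K_X$ is Cartier at $P_3$. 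Consequently, writing $\tilde\MMM_3\qq f^*\MMM_3-\beta_3E$ for the proper transform under the Kawamata blowup $f$, the coefficient $\beta_3$ is a nonnegative integer, and $P_3\in\Bs|3A_X|$ is equivalent to $\beta_3\ge1$. I therefore assume $\beta_3\ge1$ and aim for a contradiction.

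The main computation uses the relation $\tilde M_1\qq f^*A_X-\frac23E$ established in the proof of Proposition~\ref{prop:q=5o:main}. Rewriting $3f^*A_X\qq3\tilde M_1+2E$, I obtain
\[
\tilde\MMM_3\qq 3\tilde M_1+(2-\beta_3)E.
\]
Pushing this forward along $\bar f$, which contracts $\tilde M_1$ and carries $E$ onto $\hat E=A_{\hat X}$, gives a linear system $\hat\MMM_3:=\bar f_*\tilde\MMM_3$ whose general member is $\qq(2-\beta_3)A_{\hat X}$; thus $\hat\MMM_3\subseteq|(2-\beta_3)A_{\hat X}|$. Since $\bar f$ is birational, $\dim\hat\MMM_3=\dim|3A_X|=2$.

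The final step contradicts a dimension count on $\hat X$. Reading off weighted monomials of low degree on $X_6\subset\PP(1^2,2^2,3)$ gives $\dim|A_{\hat X}|=1$ and $\dim|mA_{\hat X}|\le0$ for $m\le0$; hence $\dim|mA_{\hat X}|\ge2$ forces $m\ge2$. Applied to $\hat\MMM_3\subseteq|(2-\beta_3)A_{\hat X}|$ with $\dim\hat\MMM_3=2$, this yields $2-\beta_3\ge2$, i.e.\ $\beta_3\le0$, contradicting $\beta_3\ge1$. Therefore $\beta_3=0$, that is, $P_3\notin\Bs|3A_X|$.

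The step I expect to require the most care is the push-forward bookkeeping: I must confirm that $\bar f_*$ neither lowers the projective dimension of $\tilde\MMM_3$ nor moves its class out of $\ZZ\cdot A_{\hat X}$. Both hold because $\bar f$ restricts to an isomorphism away from the contracted divisor $\tilde M_1$ and its image $\hat\Upsilon$ (which has codimension two), and because $\Cl(\hat X)\simeq\ZZ$ is generated by $A_{\hat X}$ by Lemma~\ref{lemma:sl:torsion}; in particular $2-\beta_3$ is a genuine nonnegative integer and any occurrence of $\tilde M_1$ inside members of $\tilde\MMM_3$ is simply absorbed by $\bar f_*$ without affecting the dimension. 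The remaining input is the routine monomial count on the weighted hypersurface.
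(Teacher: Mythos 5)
Your proof is correct and follows essentially the same route as the paper: both arguments rest on the link of Proposition~\ref{prop:q=5o:main} and reduce to the identity $s_3+\beta_3=2$ combined with the observation that $\dim\MMM_3=2>1=\dim|A_{\hat X}|$ forces $s_3\ge 2$. The only cosmetic difference is that you derive $s_3=2-\beta_3$ by hand from $\tilde M_1\qq f^*A_X-\frac23E$ and $\hat E\qq A_{\hat X}$, whereas the paper quotes the ready-made relation~\eqref{eq:main1} for $k=3$, which encodes exactly that push-forward computation.
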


\begin{proof}
Since $\dim\MMM_3=2$ and $\dim|A_{\hat X}|=1$, we have $s_3\ge 2$.
Then from~\eqref{eq:main1} for $k=3$ we obtain $s_3 =2$ and $\beta_3 =0$.
\end{proof}

\begin{scorollary}
\label{cor:q=5:P_2-P_3-P_4}
For general members $M_2\in \MMM_2$, $M_3\in \MMM_3$, and $M_4\in \MMM_4$ we 
have
\[
M_1\cap M_2\cap M_3 =\{P_4\}\quad \text{and}\quad M_1\cap M_2\cap M_3 \cap M_4 
=\varnothing.
\]
\end{scorollary}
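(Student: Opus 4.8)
The plan is to reduce both assertions to the single statement that, for general $M_2\in\MMM_2$ and $M_3\in\MMM_3$, the intersection $M_1\cap M_2\cap M_3$ equals the one point $P_4$. The second assertion then follows immediately: by Corollary~\ref{cor:q=5:Bs4} we have $P_4\notin\Bs|4A_X|$, and $|4A_X|$ has no fixed components, so a general $M_4$ misses $P_4$, whence $M_1\cap M_2\cap M_3\cap M_4=\{P_4\}\cap M_4=\varnothing$. For the first assertion I would start from two numerical inputs. The global intersection number is
\[
M_1\cdot M_2\cdot M_3=A_X\cdot 2A_X\cdot 3A_X=6A_X^3=\tfrac12,
\]
and the local class of each divisor at $P_4$ is controlled: since $A_X\overset{P_4}{\sim}-K_X$ (as recorded in the proof of Proposition~\ref{prop:q=5:link2}), the divisors $M_1,M_2,M_3$ have classes $1,2,3\in\ZZ/4\simeq\Cl^{\mathrm{sc}}(X,P_4)$, all nonzero. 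Being non-Cartier at $P_4$, each contains $P_4$, so $P_4\in M_1\cap M_2\cap M_3$.

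The key geometric step is to prove that $M_1\cap M_2\cap M_3$ is finite. Here I would invoke the conic bundle link of Proposition~\ref{prop:q=5:link2} and Corollary~\ref{cor:q=5:P(1,2,3)}. On the base surface $\hat X\cong\PP(1,2,3)$ the base loci $\Bs|2A_{\hat X}|$ and $\Bs|3A_{\hat X}|$ are the two \emph{distinct} singular points (the $\mathrm{A}_2$- and the $\mathrm{A}_1$-point). As $\bar\MMM_2=\bar f^*|2A_{\hat X}|$ and $\bar\MMM_3=\bar f^*|3A_{\hat X}|$, the loci $\Bs\bar\MMM_2=\bar f^{-1}(\Bs|2A_{\hat X}|)$ and $\Bs\bar\MMM_3=\bar f^{-1}(\Bs|3A_{\hat X}|)$ are fibres over different points, in particular disjoint. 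Transporting through the birational map $\bar X\dashrightarrow X$ (an isomorphism in codimension one followed by the blowup of $P_4$), their one-dimensional parts become curves $\Gamma_2\subseteq\Bs\MMM_2$ and $\Gamma_3\subseteq\Bs\MMM_3$ sharing no component. For general $M_2,M_3$ a component of $M_1\cap M_2$ is either the fixed curve $\Gamma_2$ or a moving curve; since $\Gamma_2\neq\Gamma_3$ and the moving curve varies, no such component lies in $\Bs\MMM_3$, and a general $M_3$ cuts $M_1\cap M_2$ in finitely many points.

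Once finiteness is known I would localise the intersection number, writing $M_1\cdot M_2\cdot M_3=\sum_Q(M_1\cdot M_2\cdot M_3)_Q=\tfrac12$ with each local term positive at an intersection point. At a Gorenstein point the three locally Cartier surfaces contribute an integer $\ge 1$; at an index-$2$ point $P_2^{(i)}\simeq\frac12(1,1,1)$ the canonical double cover computation (where $M_2$ is Cartier, hence cut by an invariant function vanishing to order $\ge 2$, while $M_1,M_3$ are smooth) again gives a local term $\ge 1$. Either would push the total past $\tfrac12$, so no such point occurs. Finally $P_3$ is excluded because $3A_X$ is Cartier at $P_3$ and $P_3\notin\Bs|3A_X|$ by Corollary~\ref{cor:q=5:Bs3}, so a general $M_3$ avoids it. Hence $M_1\cap M_2\cap M_3\subseteq\{P_4\}$, and together with $P_4\in M_1\cap M_2\cap M_3$ this gives $M_1\cap M_2\cap M_3=\{P_4\}$.

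The main obstacle is precisely this finiteness step. A priori both $|2A_X|$ and $|3A_X|$ carry one-dimensional base loci lying on $M_1$ (note $2M_1\in|2A_X|$ and $3M_1\in|3A_X|$, so $\Bs|kA_X|\subseteq M_1$ for $k=2,3$), and one must genuinely rule out a common base curve before the smallness $\tfrac12<1$ of the intersection number can be used to locate the single point. The comparison on $\PP(1,2,3)$ above is the cleanest way I see to separate the two base curves; everything after that is forced numerically.
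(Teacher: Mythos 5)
Your overall architecture matches the paper's: reduce everything to $M_1\cap M_2\cap M_3=\{P_4\}$, note that $P_4$ lies in the intersection because all three divisors are non-Cartier there, use $M_1\cdot M_2\cdot M_3=1/2$ together with local multiplicity bounds to rule out any further intersection point, exclude $P_3$ via Corollary~\ref{cor:q=5:Bs3}, and deduce the second assertion from Corollary~\ref{cor:q=5:Bs4}. Where you diverge is the crucial finiteness step, and there your argument has a gap. You assert that $\hat X\cong\PP(1,2,3)$, so that $\Bs|2A_{\hat X}|$ and $\Bs|3A_{\hat X}|$ are the two \emph{distinct} singular points; but neither Proposition~\ref{prop:q=5:link2} nor Corollary~\ref{cor:q=5:P(1,2,3)} identifies $\hat X$. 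At this point Corollary~\ref{cor:base-surface} together with $\dim|2A_{\hat X}|=1$ and $\dim|3A_{\hat X}|=2$ only narrows $\hat X$ down to $\PP(1,2,3)$ or $S_{\mathrm{DP}_5}$, and for $S_{\mathrm{DP}_5}=S_6\subset\PP(1,2,3,5)$ the base loci $\Bs|2A_S|$ and $\Bs|3A_S|$ are the \emph{same} point (the $\mathrm{A}_4$-point), which moreover lies on the unique member of $|A_S|$; in that case your curves $\Gamma_2$ and $\Gamma_3$ would both come from a single fibre lying on $\bar M_1$, and the separation argument collapses. The identification $\hat X\cong\PP(1,2,3)$ is only established in Proposition~\ref{prop:q=5:link-cb}, which appears later in the paper (its proof does not use the present corollary, so you could import it without circularity, but as written your step is unsupported).

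The paper's finiteness argument is much shorter and avoids the link entirely: if $C\subset M_1\cap M_2\cap M_3$ were a curve, then $C\not\ni P_3$ by Corollary~\ref{cor:q=5:Bs3}, and $4A_X$ is Cartier away from $P_3$, so $M_4\cdot C$ is a positive integer; on the other hand $C$ is a component of the $1$-cycle $M_1\cdot M_2$ and $M_4$ is ample, whence $M_4\cdot C\le M_1\cdot M_2\cdot M_4=2/3<1$, a contradiction. I would replace your base-locus comparison with this two-line computation. The rest of your write-up (the local multiplicity estimates at Gorenstein and index-$2$ points, and the deduction of the second assertion from the first) is correct and in fact fills in details the paper leaves implicit.
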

\begin{proof}
By Corollary~\ref{cor:q=5:Bs3} we have $M_1\cap M_2\cap M_3\not\ni P_3$.
Assume that $M_1\cap M_2\cap M_3$ contains a curve, say $C$. Then
$M_4\cdot C\in \ZZ $ because $M_4$ is Cartier along $C$. On the other hand,
$M_4\cdot C\le M_1\cdot M_2\cdot M_4=2/3$, a contradiction.
Therefore, the set $M_1\cap M_2\cap M_3$ is zero-dimensional. Since $M_1\cdot 
M_2\cdot M_3=1/2$ and $M_1\cap M_2\cap M_3\ni P_4$,
we have $M_1\cap M_2\cap M_3 =\{P_4\}$.
\end{proof}

\begin{scorollary}
\label{cor:q5:s-beta}
In the above notation the following assertions hold.
\begin{enumerate}
\item
\label{cor:q5:s-beta:b}
$\hat\MMM_2=|A_{\hat X}|$ and $\hat\MMM_4=|2A_{\hat X}|$;

\item
\label{cor:q5:s-beta:bb}
$\hat\MMM_3\subset |2A_{\hat X}|$ and 
$\hat\MMM_5\subset |3A_{\hat X}|$ are subsystems of codimension two
consisting all the members passing through $\hat\Upsilon$;

\item
\label{cor:q5:s-beta:bbb}
$\Bs\hat\MMM_4=\varnothing$ and $\Bs\hat\MMM_2$ is an 
irreducible smooth rational curve that is different from $\hat\Upsilon$;

\item
\label{cor:q5:s-beta:a}
$\hat E\in |A_{\hat X}|$ and $\hat E$ is singular along $\hat\Upsilon$.

\end{enumerate}
\end{scorollary}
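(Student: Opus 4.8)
The plan is to transport the systems $\hat\MMM_k=\bar f_*\tilde\MMM_k$ to the explicit model $\hat X\cong X_6\subset\PP(1,1,2,2,3)$ of Proposition~\ref{prop:q=5o:main}, to pin down the classes $s_k$ by a numerical argument, and then to read the geometry off the pencil $|A_{\hat X}|$ and the morphism attached to $|2A_{\hat X}|$. First I would collect the two numerical inputs. By \eqref{eq:QFanoq=5:h} we have $\dim\MMM_k=\dim|kA_X|=0,1,2,4,6$ for $k=1,\dots,5$, and these equal $\dim\hat\MMM_k$ since $f$ and $\bar f$ are birational. On $\hat X$, counting weighted monomials (there are no relations in degrees $<6$) gives $\dim|A_{\hat X}|=1$, $\dim|2A_{\hat X}|=4$, $\dim|3A_{\hat X}|=8$. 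Writing $\tilde\MMM_k\qq kf^*A_X-\beta_kE$ with $\beta_k\ge0$ and recalling from the proof of Proposition~\ref{prop:q=5o:main} that $f^*A_X\qq\tilde M_1+\tfrac23E$, that $\bar f_*\tilde M_1=0$, and that $\bar f_*E=\hat E\sim A_{\hat X}$, I would push forward to get $\hat\MMM_k\qq s_kA_{\hat X}$ with $s_k=\tfrac23k-\beta_k$. Since $\Cl(\hat X)\cong\ZZ$ (Lemma~\ref{lemma:sl:torsion}), each $s_k$ is a non-negative integer, so in particular $s_k\le\tfrac23k$.

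Next, the integers $s_k$ are forced by combining $s_k\le\tfrac23k$ with $\dim|kA_X|\le\dim|s_kA_{\hat X}|$, which holds because $\hat\MMM_k\subseteq|s_kA_{\hat X}|$. This yields $s_2=1$, $s_3=2$, $s_4=2$, $s_5=3$, hence $\beta_2=\tfrac13$, $\beta_3=0$, $\beta_4=\tfrac23$, $\beta_5=\tfrac13$; comparing the coefficients of $f^*A_X$ in the relation $\tilde\MMM_k\qq\bar f^*\hat\MMM_k-\gamma_k\tilde M_1$ (with $\bar f^*A_{\hat X}\qq 2f^*A_X-\tfrac13E$) then gives the multiplicities $\gamma_k=2s_k-k$, i.e. $\gamma_2=\gamma_4=0$ and $\gamma_3=\gamma_5=1$. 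Assertion~\ref{cor:q5:s-beta:b} is now immediate, for $\hat\MMM_2\subseteq|A_{\hat X}|$ and $\hat\MMM_4\subseteq|2A_{\hat X}|$ are equalities once the dimensions agree ($1=1$ and $4=4$). For the dimension part of~\ref{cor:q5:s-beta:bb}, the inclusions $\hat\MMM_3\subseteq|2A_{\hat X}|$ and $\hat\MMM_5\subseteq|3A_{\hat X}|$ have codimension $4-2=2$ and $8-6=2$ respectively.

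It remains to identify these systems and the base loci. Because $\beta_k$ is the minimal order of vanishing along $E$, the proper transform $\tilde\MMM_k=|f^*(kA_X)-\beta_kE|$ is a complete linear system on $\tilde X$; pushing it down by $\bar f$ and using $\bar f_*\OOO_{\tilde X}(-\tilde M_1)=I_{\hat\Upsilon}$ (legitimate since $\hat X$ is smooth at the generic point of $\hat\Upsilon$) identifies $H^0(\hat\MMM_k)$ with the sections of $\OOO_{\hat X}(s_kA_{\hat X})$ vanishing along $\hat\Upsilon$ whenever $\gamma_k=1$. Hence $\hat\MMM_3$ and $\hat\MMM_5$ are exactly the subsystems of $|2A_{\hat X}|$ and $|3A_{\hat X}|$ consisting of members through $\hat\Upsilon$, which settles~\ref{cor:q5:s-beta:bb}; for $\hat\MMM_3$ this can be confirmed through the finite degree-$2$ morphism $\psi\colon\hat X\to Q'\subset\PP^4$ of Proposition~\ref{prop:q=5o:main}, under which $\hat\MMM_3$ corresponds to the hyperplanes containing the line $\psi(\hat\Upsilon)$. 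For~\ref{cor:q5:s-beta:bbb} I would note $\Bs\hat\MMM_4=\Bs|2A_{\hat X}|=\varnothing$ since $|2A_{\hat X}|$ is base point free, while $\Bs\hat\MMM_2=\Bs|A_{\hat X}|=\{x_0=x_1=0\}\cap\hat X$. The restriction of the defining equation to $\PP(2,2,3)_{y_0,y_1,z}$ has the form $\lambda z^2+c_3(y_0,y_1)$ with $\lambda\ne0$, so the projection $[y_0\!:\!y_1\!:\!z]\mapsto[y_0\!:\!y_1]$ is an isomorphism onto $\PP^1$ (its inverse picks either square root of $-c_3/\lambda$, the two being identified by the $\mumu_2$-action of $\PP(2,2,3)$); thus $\Bs\hat\MMM_2$ is an irreducible smooth rational curve. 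That it is distinct from $\hat\Upsilon$ is forced by $\gamma_2=0$: every member of $|A_{\hat X}|$ contains this base curve, whereas the general member avoids $\hat\Upsilon$. Finally~\ref{cor:q5:s-beta:a} follows: $\hat E\in|A_{\hat X}|$ because $\hat E\sim A_{\hat X}$ and $\Cl(\hat X)$ is torsion free, and $\hat E$ is singular along $\hat\Upsilon$ because $\hat\Upsilon$ is its non-normal locus by the Remark following Proposition~\ref{prop:q=5o:main}.

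The purely numerical determination of the $s_k$ and $\gamma_k$ is mechanical, so the real difficulty lies in the last geometric step. The main obstacle is to match the abstractly defined centre $\hat\Upsilon$ of the contraction $\bar f$ against the concrete base curve of $|A_{\hat X}|$, to verify that the latter is a smooth rational curve, and to justify the completeness and pushforward identity $\bar f_*\OOO_{\tilde X}(-\tilde M_1)=I_{\hat\Upsilon}$ near the finitely many singular points of $\hat X$ lying on $\hat\Upsilon$; the distinctness $\Bs\hat\MMM_2\ne\hat\Upsilon$, though cheaply obtained from $\gamma_2=0$, is what makes the clean statement possible.
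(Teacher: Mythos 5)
Your proposal is correct and follows essentially the same route as the paper: the relation $s_k+\beta_k=\tfrac{2k}{3}$ (your pushforward computation is exactly \eqref{eq:main1} with $\hat q=3$, $e=1$, $\alpha=1/3$), the determination of $\gamma_k$ via $b=1$ and $\delta=2$, the explicit equation of $\Bs|A_{\hat X}|$ in $\PP(2,2,3)$, and the nonnormal-locus description of $\hat E$. The only differences are cosmetic — you pin down the $s_k$ by dimension comparison on the explicit model rather than by movability of $\MMM_2$, and you are somewhat more explicit than the paper about why $\hat\MMM_3$ and $\hat\MMM_5$ consist of \emph{all} members through $\hat\Upsilon$ (via the hyperplanes through the line $\psi(\hat\Upsilon)$ on the quadric $Q'$).
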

\begin{proof}
It follows from the proof of Proposition~\xref{prop:q=5o:main} that $e=1$ and 
$s_4=2$.
From~\eqref{eq:main1} we obtain $s_2\le 1$. Since $\MMM_2$ is movable, $s_2 >0$,
hence $s_1=1$ and $\hat\MMM_2\subset |A_{\hat X}|$.
Since $\dim|2A_X|=\dim|A_{\hat X}|=1$, we obtain $\hat\MMM_2=|A_{\hat X}|$. 
The equality $\hat\MMM_4=|2A_{\hat X}|$ follows from~\eqref {eq:q=5:M4} 
and~\eqref{eq:q=5:M4E}.
This proves~\ref{cor:q5:s-beta:b}.
For~\ref{cor:q5:s-beta:bb} we note that $ s_3\le 2$ and $ s_5\le 3$ 
by~\eqref{eq:main1}.
Since
$\dim|3A_X|=\dim|2A_{\hat X}|-2$ and 
$\dim|5A_X|=\dim|3A_{\hat X}|-2$, one can see that $\hat\MMM_3\subset |2A_{\hat 
X}|$ and 
$\hat\MMM_5\subset |3A_{\hat X}|$ are subsystems of codimension two.
Further, the relations~\eqref{eq:-b-gamma-delta-1} in our case have the form
\begin{equation*}
\begin{array}{lll}
b &=& 3\delta-5,
\\[2pt]
\gamma_k &=& s_k\delta-k,\qquad k=2,3,4,5.
\end{array}
\end{equation*}
Since $\bar f$ is generically a blowup of a curve, $b=1$.
Then $\delta=2$, $\gamma_2=\gamma_4=0$, and $\gamma_3=\gamma_5=1$.
The last equality means that
$\hat\Upsilon\subset\Bs\hat\MMM_3$ and $\hat\Upsilon\subset\Bs\hat\MMM_5$.
This proves~\ref{cor:q5:s-beta:bb}.
Since $\gamma_2=0$, we have $\hat\Upsilon\not\subset\Bs\hat\MMM_2$.
Denote $\hat\Gamma:=\Bs |A_{\hat X}|$.
It follows from the explicit equations (see \cite[Proposition~5.2]{P:P11223}) 
that
$\hat\Gamma$ can be given in $\PP(2,2,3)\simeq\PP(1,1,3)$ by one of the 
following equations
\[
x_3^2+x_2y_2(x_2+y_2)=0,\quad x_3^2+x_2^2y_2=0,\quad x_3^2+x_2^3=0.
\]
Then it is easy to see that $\hat\Gamma$ is smooth.
This proves~\ref{cor:q5:s-beta:bbb}.
The assertion~\ref{cor:q5:s-beta:a} follows from equalities $e=1$ and 
$\delta=2$.
\end{proof}

\begin{scorollary}
The collection of non-Gorenstein points of $\hat X$ is either
one index-$2$ point $\hat P_4$ with $\aw(\hat X,\hat P_4)=3$ or
two index-$2$ points $\hat P_4$ and $\hat P_2$ with $\aw(\hat X,\hat P_4)=2$ 
and $\aw(\hat X,\hat P_2)=1$.
In particular, $\hat X$ is not quasi-smooth.
The points $\hat P_2^{(i)}=\bar f(f^{-1}(P_2^{(i)}))$
are distinct and they are Gorenstein \textup(terminal\textup) singularities of 
$\bar X$.
\end{scorollary}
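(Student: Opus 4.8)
The plan is to combine the explicit realisation of $\hat X$ as a sextic $X_6\subset\PP(1^2,2^2,3)$ from Proposition~\ref{prop:q=5o:main} with a local analysis of the divisorial contraction $\bar f$ along $\hat\Upsilon$. First I would locate the non-Gorenstein points. Writing $x_0,x_1$ (weight $1$), $y_0,y_1$ (weight $2$), $z$ (weight $3$) for the coordinates, the singular locus of the ambient $\PP(1^2,2^2,3)$ is the index-$2$ line $\ell=\{x_0=x_1=z=0\}$ together with the index-$3$ point $P_z$. I would first rule out $P_z\in\hat X$: by Corollary~\ref{cor:q5:s-beta} the base curve $\hat\Gamma=\Bs|A_{\hat X}|=\hat X\cap\{x_0=x_1=0\}$ is cut out by $z^2+g(y_0,y_1)$ with $g$ a binary cubic, and the coefficient of $z^2$ is nonzero, so $P_z\notin\hat\Gamma$ and hence $P_z\notin\hat X$. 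Consequently every non-Gorenstein point of $\hat X$ lies on $\ell$, is of index $2$, and these points are exactly the roots of $g$, with $\aw$ at each equal to the multiplicity of the root. Since the Hilbert series (equivalently $A_{\hat X}^3=1/2$) forces $\B(\hat X)=(2^3)$, the total axial weight is $3$, so the configuration is $(1,1,1)$, $(2,1)$ or $(3)$ according to the partition of the roots.

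Next I would pin down the image $\hat P_4$ of the index-$4$ point. By Lemma~\ref{lemma:K-index} the class $A_X$ is non-Cartier at $P_4$ and at each $P_2^{(i)}$ (there $5A_X\qq-K_X$ generates the local class group), so the unique member $M_1\in|A_X|$ passes through all of them; their proper transforms $\tilde P_4,\tilde P_2^{(i)}$ therefore lie on the contracted divisor $\tilde M_1$, and their $\bar f$-images lie on $\hat\Upsilon$. Put $\hat P_4:=\bar f(\tilde P_4)$. The $\bar f$-fibre over $\hat P_4$ contains the index-$4$ point $\tilde P_4$; since over the generic point of $\hat\Upsilon$ the map $\bar f$ is the blowup of a smooth curve ($b=1$) and $\PP(1^2,2^2,3)$ has no index-$4$ stratum, the point $\hat P_4$ must be non-Gorenstein, hence of index $2$, and the presence of the index-$4$ point in its fibre forces $\aw(\hat X,\hat P_4)\ge 2$ by the discrepancy bookkeeping of Lemma~\ref{lemma-discrepancies} together with the classification of divisorial contractions to a curve. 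Thus the root of $g$ at $\hat P_4$ is multiple: the case $(1,1,1)$ is excluded and $\hat X$ is not quasi-smooth.

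To finish I would compare baskets and treat the $\hat P_2^{(i)}$. The Kawamata blowup $f$ replaces the $\frac13(1,1,2)$-point $P_3$ by a $\frac12(1,1,1)$-point $Q$ and leaves the other points untouched, so $\B(\tilde X)=(2^3,4)$. The index-$2$ points $\tilde P_2^{(i)}\subset\tilde M_1$ map to points of $\hat\Upsilon$, and the local model of the blowup of a smooth curve through a Gorenstein point shows these images $\hat P_2^{(i)}$ are Gorenstein terminal (node or cusp) points of $\hat X$, distinct because the $\tilde P_2^{(i)}$ lie over distinct points of $\hat\Upsilon$. The only index-$2$ point of $\hat X$ that may lie off $\hat\Upsilon$ is $\bar f(Q)$, arising when $Q\notin\tilde M_1$. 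Matching $3=\aw(\hat X,\hat P_4)+\aw(\hat X,\bar f(Q))$ then yields exactly the two asserted possibilities, namely $\aw(\hat X,\hat P_4)=3$ with no further non-Gorenstein point, or $\aw(\hat X,\hat P_4)=2$ together with one more index-$2$ point $\hat P_2=\bar f(Q)$ of axial weight $1$.

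The hard part will be the local analysis along $\hat\Upsilon$: proving that an index-$4$ point in the $\bar f$-fibre forces $\aw\ge 2$ at its image, and that an index-$2$ point on $\tilde M_1$ descends to a Gorenstein point. This is not controlled by the global numerology alone and rests on the classification of three-dimensional divisorial contractions to a curve. I expect to make it rigorous either through the explicit weighted-blowup models underlying Theorem~\ref{thm:SL-CB}, or, more computationally, by writing $\hat\Upsilon$ and the singular surface $\hat E\in|A_{\hat X}|$ in coordinates on $\PP(1^2,2^2,3)$ (using that $\hat E$ is singular along $\hat\Upsilon$, Corollary~\ref{cor:q5:s-beta}) and reading off the multiplicity of the root of $g$ at $\hat P_4$ directly.
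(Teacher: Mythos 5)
Your overall strategy is sound in outline and partly parallels the paper's (the basket count $\B(\tilde X)=(2^3,4)$, the dichotomy according to whether the new $\frac12(1,1,1)$-point on $E$ lies on $\tilde M_1$), and the idea of reading the index-$2$ points of $\hat X$ off the binary cubic $g$ on the index-$2$ line of $\PP(1^2,2^2,3)$ is a genuinely different ingredient -- the paper never works on the $\hat X$ side at all. But the substance of the corollary sits precisely in the two steps you defer, and as written there are real gaps. First, your distinctness claim is circular: you assert the images $\hat P_2^{(i)}$ are distinct ``because the $\tilde P_2^{(i)}$ lie over distinct points of $\hat\Upsilon$,'' which is the statement to be proved. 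The paper's argument is to observe that $\tilde P_4$ and all $\tilde P_2^{(i)}$ lie on the curve $\Lambda=\tilde M_3\cap\tilde M_1$ (since $4A_X$ and $A_X$ are non-Cartier at these points), to compute $2A_{\hat X}\cdot\bar f(\Lambda)=\tilde\MMM_4\cdot\tilde M_3\cdot\tilde M_1=1$, and to deduce that $\Lambda$ is irreducible and maps isomorphically onto $\hat\Upsilon$; hence the points sit on pairwise distinct fibres of $\bar f|_{\tilde M_1}$.

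Second, the two local facts you label ``the hard part'' are exactly where the content is, and your proposed justifications would not go through as stated: Lemma\xref{lemma-discrepancies} concerns extremal blowups of a \emph{point}, whereas $\bar f$ contracts $\tilde M_1$ onto a curve, so no ``discrepancy bookkeeping'' from that lemma applies; and the identification of each local axial weight with the multiplicity of the corresponding root of $g$ is itself an unproved local computation (and is not needed). The paper disposes of both points with two precise citations: $\bar f(\tilde P_2^{(i)})$ is Gorenstein by \cite[Theorem~4.7]{KM:92}, and $\hat P_4$ cannot be a cyclic quotient by \cite{Kawamata:Div-contr}, since a divisorial contraction to a terminal cyclic quotient point must have that point (not a curve through it) as its center; an index-$2$ terminal point that is not a cyclic quotient has $\aw\ge 2$, which excludes the configuration $(1,1,1)$ and yields the stated dichotomy once matched against $\B(\hat X)=(2^3)$. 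If you want to keep your coordinate approach on $\PP(1^2,2^2,3)$, you would still need these two inputs (or a full local analysis of $\bar f$ along $\hat\Upsilon$) to connect the roots of $g$ to the points $\hat P_4$, $\hat P_2^{(i)}$ and $\bar f(Q)$.
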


\begin{proof}
Since $\B(X)=(2^2,3,4)$ and $f$ is a Kawamata blowup of 
$P_3$, we have $\B(\tilde X)=(2^3,4)$. 
Thus the non-Gorenstein points of $\tilde X$ are as follows:
the index-$4$ point $\tilde P_4=f^{-1}(P_4)$ that is of the same type as 
$P_4\in X$,
either one or two index-$2$ points $\tilde P_2^{(i)}=f^{-1}(P_2^{(i)})$, 
$i=1,\dots, m$
that are also of the same type as $\tilde P_2^{(i)}\in X$, and the ``new'' 
index-$2$ point $\tilde P_2\in E$.
We distinguish two possibilities:
\begin{enumerate}
\renewcommand{\theenumi}{\rm\alph{enumi})}
\renewcommand{\labelenumi}{\rm\alph{enumi})}
\item
\label{case:q=5:P_2:n}
$\tilde P_2\notin\tilde M_1$,
\item 
\label{case:q=5:P_2:i}
$\tilde P_2\in\tilde M_1$.
\end{enumerate}
Let $\tilde M_3\in\tilde\MMM_3$ be a general member and let $\Lambda:=\tilde 
M_3\cap\tilde{M}_1$.
Note that the points $P_4$, $P_2^{(1)}$,\dots, $P_2^{(m)}$ lie on the curve 
$f(\Lambda)$
and $f(\Lambda)\not\ni P_3$ because $\beta_3=0$ by~\eqref{eq:main1}. Hence we 
have
$\tilde{P}_4$, $\tilde {P}_2^{(1)},\dots,\tilde {P}_2^{(m)}\in\Lambda$, 
\[
2A_{\hat{X}}\cdot\bar{f}(\Lambda)=\tilde\MMM_4\cdot\Lambda=\tilde\MMM_4\cdot 
M_3\cdot\tilde{M}_1=(4A_X)\cdot (3 A_X)\cdot A_X=1.
\]
Since $2A_{\hat{X}}$ is an ample Cartier divisor, $\bar{f}(\Lambda)$ is 
irreducible.
Since $f(\Lambda)$ does not pass through $P_3$, the divisor
$4A_X$ is Cartier along $f(\Lambda)$ and $f(\Lambda)$ is irreducible because 
$f^*(4A_X)\cdot\Lambda=1$.
Therefore, $\Lambda$ is irreducible as well and 
$\bar{f}$ induces an isomorphism between $\Lambda$ and 
$\hat\Upsilon$.
This implies, that the points 
$P_4$, $P_2^{(1)},\dots,P_2^{(m)}$ lie on different fibers of 
$\bar{f}_{\tilde{M}_1}:\tilde{M}_1\to\hat\Upsilon$.
In particular, the points 
$\bar{f}(\tilde{P}_4)$, $\bar{f}(\tilde {P}_2^{(1)}),\dots,\bar{f}(\tilde 
{P}_2^{(m)})$ are distinct.
Since the curve $\hat\Upsilon$ is smooth, for any singular point $\tilde 
P\in\tilde X$ the variety $\hat X$ 
must be singular at $\bar f(\tilde P)$. 
In particular, the points 
$\bar{f}(\tilde{P}_4)$, $\bar{f}(\tilde {P}_2^{(1)}),\dots,\bar{f}(\tilde 
{P}_2^{(m)})$ are
singular.
By \cite[Theorem~4.7]{KM:92} the points
$\bar{f}(\tilde {P}_2^{(1)}),\dots,\bar{f}(\tilde {P}_2^{(m)})$ are Gorenstein.
On the other hand, the point $\hat P_4$ must be of index~$2$ because $\B(\hat 
X)=(2^3)$.
This point is not a cyclic quotient by \cite{Kawamata:Div-contr}.
Moreover, $\aw(\hat X,\hat P_4)=2$ in the case~\ref{case:q=5:P_2:n} and
$\aw(\hat X,\hat P_4)=3$ in the case~\ref{case:q=5:P_2:i}.
\end{proof}

\begin{proposition}
\label{prop:q=5:link-cb}
In the notation of Proposition~\xref{prop:q=5:link2} the link has the following 
form
\begin{equation}
\label{eq:sl:q=5-2}
\vcenter{
\xymatrix@R=1em{
&\tilde X\ar@/_0.4em/[dl]_{f}\ar@/^0.4em/[dr]^{\bar f}&
\\
X&&\PP(1,2,3)
}}
\end{equation}
where $\bar f$ is a $\QQ$-conic bundle with discriminant curve $\Delta_{\bar 
f}\in |-2K_{\PP(1,2,3)}|$.
\end{proposition}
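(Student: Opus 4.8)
The plan is to first identify the base surface $S=\hat X$ of the $\QQ$-conic bundle $\bar f\colon\tilde X\to S$ produced in Proposition~\ref{prop:q=5:link2}, and then to compute its discriminant class by means of the numerical relations of Lemma~\ref{lemma:cb}. Since $\Cl(X)$ is torsion free of rank one and $f$ is a blowup of a single point, $\Cl(\tilde X)\simeq\ZZ\oplus\ZZ$, so Corollary~\ref{cor:base-surface} applies: $\Cl(S)\simeq\ZZ$ and $S$ is one of the four surfaces of Table~\ref{tab3}. By Corollary~\ref{cor:q=5:P(1,2,3)} we have $\bar\MMM_2=\bar f^*|2A_S|$; as $\chi$ is the identity here and $\bar f$ is a fibration, taking dimensions gives $\dim|2A_S|=\dim\MMM_2=\dim|2A_X|=1$ by~\eqref{eq:QFanoq=5:h}. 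Consulting Table~\ref{tab3}, the value $\dim|2A_S|=1$ excludes $\PP^2$ and $\PP(1^2,2)$, leaving $S\in\{\PP(1,2,3),\,S_{\mathrm{DP}_5}\}$.

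These two surfaces have the same values of $\dim|kA_S|$ for all $k\le 4$, and the only linear systems accessible as pullbacks from $\tilde X$ are those for $k\le 3$ (the system $\MMM_4$ is horizontal, since $s_4=1$ by Corollary~\ref{cor:q=5:Bs4}); hence dimension counting does not separate them, and I would distinguish them by their singularities. The surface $S_{\mathrm{DP}_5}$ carries an $A_4$ point, whereas Proposition~\ref{prop:cb-index2} forbids base singularities worse than $A_2$ for a $\QQ$-conic bundle whose total space has moderate singularities of index $\le 3$.

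The main obstacle is therefore to verify that $\tilde X$ has only moderate singularities of index $\le 3$, so that Proposition~\ref{prop:cb-index2} applies. Away from $E$ the singularities of $\tilde X$ are those of $X$ other than $P_4$, namely the index-$3$ cyclic quotient $P_3$ and the index-$2$ points, all of index $\le 3$. Along $E$ one must inspect the extremal blowup of $P_4$, which has discrepancy $1/4$ by Lemma~\ref{lemma-discrepancies}: if $P_4$ is the cyclic quotient $\frac14(1,1,3)$ the blowup introduces only a cyclic point of index $3$ together with a smooth point, while if $P_4$ is of type $\mathrm{cAx}/4$ the classification of three-fold divisorial contractions (Kawakita, cited in Lemma~\ref{lemma-discrepancies}) produces new points of index $\le 2$. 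In either case $\tilde X$ has moderate singularities of index $\le 3$, so Proposition~\ref{prop:cb-index2} forces $S$ to have at worst $A_2$ points; this rules out $S_{\mathrm{DP}_5}$ and yields $S\simeq\PP(1,2,3)$.

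It remains to compute $\Delta_{\bar f}$. Put $L:=f^*A_X$ and $H:=A_S$, so $L^3=A_X^3=1/12$, $L^2\cdot E=L\cdot E^2=0$, while $A_S^2=1/6$ and $-K_S\sim 6A_S$ on $\PP(1,2,3)$. By Corollary~\ref{cor:q=5:P(1,2,3)} the divisor $F:=\bar f^*H$ equals $\tilde M_1$; from~\eqref{eq:main1} with $k=1$, $s_1=0$, $\hat q=1$, $e=1$ one gets $\beta_1=1/4$, whence $\tilde M_1\qq L-\frac14 E$ and $K_{\tilde X}\qq -5L+\frac14 E$. Expanding, the two triple products
\[
K_{\tilde X}\cdot\tilde M_1^2=-\frac{5}{12}+\frac{1}{64}E^3,\qquad
K_{\tilde X}^2\cdot\tilde M_1=\frac{25}{12}-\frac{1}{64}E^3
\]
have sum $5/3$, independent of the unknown $E^3$. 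Feeding $K_{\tilde X}\cdot\tilde M_1^2=-2H^2=-1/3$ from~\eqref{eq:cb:a} into this sum gives $K_{\tilde X}^2\cdot\tilde M_1=2$, and then~\eqref{eq:cb:b} reads $2=-4K_S\cdot A_S-A_S\cdot\Delta_{\bar f}=4-A_S\cdot\Delta_{\bar f}$. Thus $A_S\cdot\Delta_{\bar f}=2$; writing $\Delta_{\bar f}\sim mA_S$ (legitimate since $\Cl(S)\simeq\ZZ A_S$) we obtain $m/6=2$, i.e. $m=12$ and $\Delta_{\bar f}\sim 12A_S\sim-2K_S$, as claimed. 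The device of adding the two triple products to cancel $E^3$ makes the discriminant computation uniform in the type of $P_4$, so that no case analysis of the blowup geometry is needed at this last step.
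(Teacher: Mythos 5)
Your overall strategy (identify the base from Table~\ref{tab3}, then extract $\Delta_{\bar f}$ from Lemma~\ref{lemma:cb}) is reasonable, and the observation that $K_{\tilde X}\cdot\tilde M_1^2+K_{\tilde X}^2\cdot\tilde M_1$ is independent of $E^3$ is a nice way to avoid computing $E^3$. But there are two genuine gaps.

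First, you never prove that $\chi$ is an isomorphism --- you simply write ``as $\chi$ is the identity here.'' This is not automatic: in the construction~\eqref{diagram-main} the map $\chi$ is a priori a nontrivial composition of log flips, and establishing $\chi=\mathrm{id}$ is the main content of the paper's proof (it shows $\tilde\MMM_3$ is nef, deduces that $-K_{\tilde X}$ is ample and that $\tilde\MMM_3$ is nef but not big, hence supports a second extremal contraction on $\tilde X$ itself). The claim $\chi=\mathrm{id}$ is part of the statement (the diagram~\eqref{eq:sl:q=5-2} has $\bar f$ emanating from $\tilde X$), and your argument depends on it essentially: Lemma~\ref{lemma:cb} applies to the conic bundle total space $\bar X$, and triple intersection numbers of divisors such as $K\cdot F^2$ and $K^2\cdot F$ are \emph{not} preserved under the flips making up $\chi$; likewise your singularity analysis is carried out on $\tilde X$ but would be needed on $\bar X$. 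Without $\chi=\mathrm{id}$ the whole second half of your argument is computed on the wrong model.

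Second, your route to ruling out $S_{\mathrm{DP}_5}$ via Proposition~\ref{prop:cb-index2} requires the singularities of the total space (at least on the germ over the putative $\mathrm{A}_4$ point) to be \emph{moderate} of index $\le 3$, and this hypothesis is not verified. At this stage of the argument $X$ is only known to satisfy Set-up~\ref{setup:q=5}: an index-$2$ point with $\aw=2$ is permitted, and such a point need not be of type $\mathrm{cA}/2$ (hence need not be moderate); types such as $\mathrm{cAx}/2$ or $\mathrm{cD}/2$ are excluded only in Corollary~\ref{cor:q5:ind2sing}, which is proved \emph{after} Theorem~\ref{thm:q5} from the explicit equation of $X_{10}$ and so cannot be used here. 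Your description of the singularities created by the extremal blowup of a $\mathrm{cAx}/4$ point is also asserted rather than checked. The paper avoids all of this: once $\chi=\mathrm{id}$ is known, $(\tilde\MMM_3)^3=0$ gives $E^3=16/3$, and then \eqref{eq:cb:a} yields $A_{\hat X}^2=1/6$, which singles out $\PP(1,2,3)$ from Table~\ref{tab3} purely numerically (note $A_S^2=1/5$ for $S_{\mathrm{DP}_5}$), with no singularity analysis at all. I would recommend restructuring your proof around a proof that $\chi$ is an isomorphism and then using $A_{\hat X}^2$ rather than Proposition~\ref{prop:cb-index2} to identify the base.
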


\begin{proof}
First, we claim that 
the linear system $\tilde\MMM_3$ is nef.
Indeed, assume that $\tilde\MMM_3\cdot\tilde C<0$ for some irreducible curve 
$\tilde C$.
Then $\tilde C\subset\Bs\tilde\MMM_3$ and $\tilde C\cap E\neq\varnothing$, hence
$E\cap\Bs\tilde\MMM_3\neq\varnothing$ and $P_3\in\Bs\MMM_3$. 
This contradicts Corollary~\ref{cor:q=5:Bs3}. Thus $\tilde\MMM_3$ is nef.
Now it follows from~\eqref{eq:q=5:m} that $\beta_3=3/4$, i.e. $\tilde\MMM_3\qq 
f^*\MMM_3-\frac{3}{4}E$. Then
the relation
\[\textstyle
-K_{\tilde X}\qq f^*(5A_X)-\frac14 E\qq\frac 53\tilde\MMM_3+E
\]
implies that $-K_{\tilde X}$ is ample because $\tilde\MMM_3$ is nef. Since 
$\bar\MMM_3=\bar f^* |3A_{\hat X}|$,
the linear system is not big and neither is $\tilde\MMM_3$. Hence 
$\tilde\MMM_3$ is a supporting linear system for
an extremal nonbirational Mori contraction $\tilde X\to\check X$.
This implies that in the diagram~\eqref{diagram-main} the map $\chi$ is an 
isomorphism and $\check X=\hat X$.
Further, 
\[\textstyle
0=(\tilde\MMM_3)^3=(3A_X)^3-(\frac{3}{4}E)^3=\frac 94-\frac{27}{64}E^3,
\]
which implies $E^3=16/3$. Then by~\eqref{eq:cb:a} and~\eqref{eq:cb:b} we have
\begin{eqnarray*}
A_{\hat X}^2&=&
\textstyle
-\frac12 K_{\tilde X}\cdot\tilde M_1^2=\frac12(5f^*A_X-\frac14 E)\cdot 
(f^*A_X-\frac14 E)^2=5A_X^3-\frac 1{64}E^3=\frac 16,
\\
A_{\hat X}\cdot\Delta_{\bar f}&=&
\textstyle
-K_{\tilde X}^2\cdot\tilde M_1-4K_{\hat X}\cdot A_{\hat X}=
-(5f^*A_X-\frac14 E)^2\cdot (f^*A_X-\frac14 E)+24 A_{\hat X}^2=2.
\end{eqnarray*}
Hence $\hat X\simeq\PP(1,2,3)$ (see Corollary~\ref{cor:base-surface})
and $\Delta_{\bar f}\sim 12A_{\hat X}\sim -2 K_{\hat X}$.
\end{proof}

\begin{scorollary}
\label{cor:q=5:non-rat}
Assume that every non-Gorenstein singularity of $X$ is a cyclic quotient 
and every Gorenstein singularity is either node or cusp. 
Furthermore, assume that the number of Gorenstein singularities is at most two.
Then $X$ is not rational.
\end{scorollary}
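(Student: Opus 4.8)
The plan is to realize $X$ as birational to a non-rational $\QQ$-conic bundle and then apply the Prym-theoretic criterion of Theorem~\ref{thm:cb}. Concretely, I would invoke Proposition~\ref{prop:q=5:link-cb} to pass to the link~\eqref{eq:sl:q=5-2}, in which $f\colon\tilde X\to X$ is the extremal blowup of the index-$4$ point $P_4$ and $\bar f\colon\tilde X\to S:=\PP(1,2,3)$ is a $\QQ$-conic bundle with $\Delta_{\bar f}\sim-2K_S$. By Table~\ref{tab3} we have $d:=K_S^2=6$, so that $d-4=2$, which matches exactly the bound on the number of Gorenstein singularities in the hypothesis. The relation $\Delta_{\bar f}\sim-2K_S$ is precisely condition~\eqref{eq:K-Delta1}, and since $f$ is birational, non-rationality of $\tilde X$ will imply non-rationality of $X$. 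Thus the whole argument reduces to checking that $\bar f$ satisfies the remaining hypotheses of Theorem~\ref{thm:cb}.

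The next step is the singularity bookkeeping on $\tilde X$. Under the standing assumptions $P_4$ is a cyclic quotient, hence of type $\frac14(1,1,3)$, and $f$ is the corresponding Kawamata blowup with discrepancy $1/4$; this resolves the index-$4$ point and introduces on $E$ a single new cyclic quotient point of index $3$, and no Gorenstein point, since the weighted blowup of a cyclic quotient carries only cyclic quotient singularities along its exceptional divisor. The point $P_3$ and the two index-$2$ cyclic quotients $P_2^{(1)},P_2^{(2)}$ (there are exactly two, because $\aw(X,P_4)+\sum\aw(X,P_2^{(i)})=3$ and each cyclic quotient has axial weight $1$) survive unchanged, while the at most two Gorenstein singularities of $X$, which are nodes or cusps by hypothesis, are carried isomorphically to nodes or cusps of $\tilde X$. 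Hence every non-Gorenstein singularity of $\tilde X$ is a moderate cyclic quotient of index $\le3$, as Theorem~\ref{thm:cb} requires.

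It remains to locate the Gorenstein points relative to $\Sing(S)$. By Proposition~\ref{prop:cb-index2} the germ of $\bar f$ over the $\mathrm A_2$-point of $S$ is of type~\typeci{T}{3} or~\typeci{k2A}{3} and accounts for the two index-$3$ points $P_3$ and the new point on $E$, while the germ over the $\mathrm A_1$-point is of type~\typeci{T}{2} and accounts for the two index-$2$ points. Therefore all the node/cusp singularities of $\tilde X$ lie over smooth points of $S$, i.e.\ on $\tilde X\setminus\bar f^{-1}(\Sing(S))$, and their number is at most $2=d-4$. With~\eqref{eq:K-Delta1} already in hand, every hypothesis of Theorem~\ref{thm:cb} is met, so $\tilde X$ is not rational and consequently neither is $X$.

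The step I expect to be the main obstacle is precisely this bookkeeping. One must verify that the index-$4$ blowup contributes only an index-$3$ point and no spurious Gorenstein singularity, and---most delicately---that all the node/cusp points genuinely land over the smooth locus of $\PP(1,2,3)$, so that they are counted against the bound $d-4=2$ rather than being swallowed by the special fibres over $\Sing(S)$. It is the numerical coincidence $d-4=2$ that dictates the hypothesis ``at most two Gorenstein singularities'', and confirming that the non-Gorenstein points distribute exactly into the \typeci{T}{3}/\typeci{k2A}{3} germ over the $\mathrm A_2$-point and the \typeci{T}{2} germ over the $\mathrm A_1$-point is the crux of the verification.
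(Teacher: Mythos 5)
Your proposal follows the paper's own proof essentially verbatim: the paper likewise feeds the $\QQ$-conic bundle $\bar f\colon\tilde X\to\PP(1,2,3)$ of Proposition~\ref{prop:q=5:link-cb} into Theorem~\ref{thm:cb}, using that $X\setminus\{P_4\}\simeq\tilde X\setminus E$ gives $\B(\tilde X)=(2^2,3^2)$ with only cyclic quotient non-Gorenstein points, and that Proposition~\ref{prop:cb-index2} then places the germs over the $\mathrm{A}_1$- and $\mathrm{A}_2$-points of the base in types \typeci{T}{2} and \typeci{T}{3}, so the nodes and cusps lie over the smooth locus and number at most $d-4=2$. Your extra bookkeeping of the Kawamata blowup of $P_4$ is correct and consistent with the paper's argument.
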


\begin{proof}
We have an isomorphism $X\setminus \{P_4\}\simeq \tilde X\setminus E$. 
Hence every non-Gorenstein singularity of $\tilde X$ is a cyclic quotient 
and $\B(\tilde X)=(2^2,3^2)$.
By Proposition~\ref{prop:cb-index2} the germ of $\bar f$ over the 
type~\type{A_1} point $o_1\in \PP(1,2,3)$
has type~\typeci{T}{2} and over the type~\type{A_2} point $o_2\in \PP(1,2,3)$ 
it
has type~\typeci{T}{3}. In particular, Gorenstein singularities of $\tilde X$ 
are not contained in the fibers over 
$o_1$ nor $o_2$.
By Theorem~\ref{thm:cb} the variety $\tilde X$ is not rational.
\end{proof}

\begin{theorem}
\label{thm:q5}
Let $X$ be a $\QQ$-Fano threefold with $\qQ(X)=5$.
Assume that $X$ is not rational and at least one of the following holds:
\begin{enumerate}
\item 
\label{thm:q5-1}
$\B(X)=(2^2, 3, 4)$, 
\item 
\label{thm:q5-2}
$A_X^3=1/12$ and $\g(X)\ge 5$,
\item 
\label{thm:q5-3}
$\p_2(X)\ge 2$.
\end{enumerate}
Then $X$ is isomorphic to a hypersurface $X_{10}\subset\PP(1,2,3,4,5)$.
In particular, the group $\Cl(X)$ is torsion free.
\end{theorem}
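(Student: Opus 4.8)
The plan is to show that each of the three hypotheses forces $X$ into the numerical situation of Set-up~\ref{setup:q=5}, and then to reconstruct $X$ from the graded ring $R(X,A_X):=\bigoplus_{m\ge 0}H^0(X,\OOO_X(mA_X))$. Since $\qQ(X)=5$ and $X$ is nonrational, Proposition~\ref{prop:t} gives $\Clt{X}=0$; hence $\qW(X)=\qQ(X)=5$ and the final assertion of the statement is already proved.

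To reduce to Set-up~\ref{setup:q=5} I must establish $A_X^3=1/12$, $\B(X)=(2^2,3,4)$, and the dimensions~\eqref{eq:QFanoq=5:h}. In case~\ref{thm:q5-1} the basket is given, and orbifold Riemann--Roch (equivalently the entry \#41422 of \cite{GRD}) then determines $A_X^3=1/12$ and~\eqref{eq:QFanoq=5:h} from torsion-freeness. In case~\ref{thm:q5-3}, the conditions $\qQ(X)=5$, $\p_2(X)\ge 2$ and nonrationality force $A_X^3=1/12$ by Theorem~\ref{thm0}\ref{thm0:5}. In case~\ref{thm:q5-2} the equality $A_X^3=1/12$ is assumed outright. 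In the two latter cases I would then run the (finite) orbifold Riemann--Roch classification of $\QQ$-Fano threefolds with $\qQ=5$, $A_X^3=1/12$ and torsion-free class group, and check that the extra inequality --- $\p_2(X)\ge 2$, respectively $\g(X)\ge 5$ --- together with nonrationality (used to discard the remaining candidates via Theorem~\ref{thm0}) is compatible only with the basket $(2^2,3,4)$; the same classification yields~\eqref{eq:QFanoq=5:h}. Thus in all three cases we land in Set-up~\ref{setup:q=5}.

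For the reconstruction, note that $-K_X\qq 5A_X$ is ample, so $A_X$ is an ample $\QQ$-Cartier divisor and $X\simeq\Pro R(X,A_X)$. From~\eqref{eq:QFanoq=5:h} we have $(\p_1,\dots,\p_5)=(1,2,3,5,7)$, and a degree-by-degree count of monomials shows that $R(X,A_X)$ acquires exactly one new minimal generator in each of the degrees $1,2,3,4,5$; denote them $x_1,\dots,x_5$. They define a morphism $X\to\PP(1,2,3,4,5)$. Computing the full Hilbert series $\h_X(t)$ by orbifold Riemann--Roch, I expect it to equal $\tfrac{1-t^{10}}{(1-t)(1-t^2)(1-t^3)(1-t^4)(1-t^5)}$, the Hilbert series of a quasi-smooth $X_{10}$. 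Hence $R(X,A_X)$ has the numerics of the graded ring of a weighted hypersurface of degree~$10$, and I would invoke the structural result \cite[Theorem~1.5]{CampanaFlenner} (see also \cite[Theorem~2.2.5]{P:QFano-rat1}), exactly as in Proposition~\ref{prop:q=5o:main}, to conclude that $X\simeq X_{10}\subset\PP(1,2,3,4,5)$ as in Example~\ref{ex:X10}.

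The main obstacle is this last step: matching Hilbert series by itself does not guarantee that $x_1,\dots,x_5$ generate $R(X,A_X)$ with a principal ideal of relations, since a priori extra generators in higher degrees, or a higher-codimension model, could occur. The real work is to verify the hypotheses of the cited structural theorem, namely that $x_1,\dots,x_5$ give an embedding whose image is a quasi-smooth hypersurface, so that the codimension is forced to be one and the single degree-$10$ relation is genuine. By comparison, the basket elimination in cases~\ref{thm:q5-2} and~\ref{thm:q5-3} is routine.
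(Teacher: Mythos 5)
Your reduction of all three hypotheses to the numerical situation of Set-up~\ref{setup:q=5} is essentially what the paper does (it quotes \cite[Proposition~7.4]{P:QFano-rat1} and a computer search to show the three conditions are equivalent, and Proposition~\ref{prop:t} for torsion-freeness), so that part is fine as a sketch. The problem is the reconstruction step, where you yourself flag the gap but do not close it: knowing that $(\p_1,\dots,\p_5)=(1,2,3,5,7)$ and that $\h_X(t)$ matches the Hilbert series of an $X_{10}$ does \emph{not} show that $R(X,A_X)$ is generated in degrees $1,\dots,5$ with a single relation in degree $10$ --- and \cite[Theorem~1.5]{CampanaFlenner} cannot be invoked here. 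In the paper that theorem is used (in Proposition~\ref{prop:q=5o:main}) for a Fano threefold on which a suitable multiple of the fundamental divisor is an ample \emph{Cartier} divisor of the right degree; no such Cartier class is available on a candidate $X_{10}\subset\PP(1,2,3,4,5)$, whose fundamental divisor fails to be Cartier at all four non-Gorenstein points. So the ``real work'' you defer is precisely the content of the theorem, and your proposal contains no mechanism for doing it.

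The paper's route around this is different and is the missing idea: it does not present the full graded ring at all. Instead it uses only general sections $\varsigma_1,\dots,\varsigma_4$ of $|A_X|,\dots,|4A_X|$ to define a map $\Psi:X\to\PP(1,2,3,4)$, shows $\Psi$ is a \emph{morphism}, observes that $A_X^3=1/12=2\cdot A_{\PP}^3$ forces $\Psi$ to be finite of degree $2$, and then the Hurwitz formula gives branch divisor $R\sim 10A_{\PP}$, whence $X$ is the degree-$10$ double cover, i.e. a hypersurface $X_{10}\subset\PP(1,2,3,4,5)$. The genuinely hard input is that $\Psi$ has no base points, i.e. $M_1\cap M_2\cap M_3\cap M_4=\varnothing$ for general members (Corollary~\ref{cor:q=5:P_2-P_3-P_4}); this rests on $P_4\notin\Bs|4A_X|$ and $P_3\notin\Bs|3A_X|$ (Corollaries~\ref{cor:q=5:Bs4} and~\ref{cor:q=5:Bs3}), which are extracted from the two Sarkisov links of Propositions~\ref{prop:q=5:link2} and~\ref{prop:q=5o:main} together with the intersection-number computations there. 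None of this base-locus analysis appears in your proposal, and without it (or an equally effective substitute) the argument does not go through.
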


\begin{proof}
By \cite[Proposition~7.4]{P:QFano-rat1} for a nonrational $\QQ$-Fano threefold
$X$ with $\qQ(X)=5$ the condition~\ref{thm:q5-3} implies both
~\ref{thm:q5-1} and~\ref{thm:q5-2}. Conversely, the computer search shows that 
either~\ref{thm:q5-1} or~\ref{thm:q5-2} implies~\ref{thm:q5-3}.
Therefore, in the assumptions of Theorem~\ref{thm:q5} the 
conditions~\ref{thm:q5-1},~\ref{thm:q5-2}, 
~\ref{thm:q5-3} are equivalent.
Thus for the proof of Theorem~\ref{thm:q5} we may assume that $X$ is a 
$\QQ$-Fano threefold 
satisfying the conditions~\xref{setup:q=5}.

For $m=1,\dots,5$, let $\varsigma_m$ be a general element of $ H^0 (X, mA_X )$.
By Corollary \ref{cor:q=5:P_2-P_3-P_4} the map
\[
\Psi: X \dashrightarrow \PP(1,2,3,4),\qquad P \longmapsto 
(\varsigma_1(P),\varsigma_2(P),\varsigma_3(P),\varsigma_4(P)).
\]
is a morphism. For short, let $\PP:=\PP(1,2,3,4)$ and let $A_{\PP}$ be the 
positive generator of $\Cl(\PP)$.
By the construction, $A_X=\Psi^* A_{\PP}$.
Since $A_X^3=1/12$
and $A_{\PP}^3=1/24$, the morphism $\Psi$ is finite of degree $2$.
By the Hurwitz formula we have
\[
\textstyle
\Psi^* (5A_{\PP})=5A_X = -K_X=\Psi^* \left(-K_{\PP} +\frac 12 R\right)
=\Psi^* \left(10 A_{\PP} -\frac 12 R\right),
\]
where $R$ is the branch divisor. This gives us $R\sim 10 A_{\PP}$. Thus
$X\to \PP(1,2,3,4)$ is a double cover branched over a divisor $R\sim 10 
A_{\PP}$.
Then $X$ must be a hypersurface of degree~$10$ in $\PP(1,2,3,4,5)$.
\end{proof}

\begin{proposition}
\label{prop:q5-eq-1}
Let $X$ be 
a hypersurface of degree $10$ in $\PP(1,2,3,4,5)$.
Assume that the singularities of $X$ 
are terminal. Then in some coordinate system $x_1,x_2,x_3,x_4,x_5$ the equation 
of $X$ can be written in one of following forms:
\begin{align}
\label{eq:thm:q5-1:a}
&&x_5^2+x_4^2x_2+x_4\phi_6(x_1,x_3)+\phi_{10}(x_1,x_2,x_3) =0,
\\
\label{eq:thm:q5-1:b}
&&x_5^2+x_4x_3^2+\lambda x_4^2x_1^2+x_4\phi_6(x_1,x_2)+\phi_{10}(x_1,x_2,x_3) 
=0,
\end{align}
where $\lambda$ is a constant, $x_i$ is a variable of degree $i$
and, in the case~\eqref{eq:thm:q5-1:a}, either $\phi_6\ni x_3^2$ or 
$\phi_{10}\ni x_3^3x_1$.
The index-$4$ point is a cyclic quotient in the case~\eqref{eq:thm:q5-1:a}
and has type~\typeA{cAx}{4} in the case~\eqref{eq:thm:q5-1:b}.
\end{proposition}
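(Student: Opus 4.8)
The plan is to start from the universal degree-$10$ equation and normalise the coordinates one weight at a time, then read off the two normal forms from the local analysis at the index-$5$, index-$4$ and index-$3$ points. Write $F$ for the defining weighted-homogeneous polynomial. Since $5\cdot 2=10$ is the only way to realise degree $10$ as a multiple of a single weight among $1,\dots,5$, the point $P_5=(0{:}0{:}0{:}0{:}1)$ always lies on $X$, and quasi-smoothness (hence terminality) there forces the coefficient of $x_5^2$ to be nonzero. After rescaling and completing the square $x_5\mapsto x_5-\tfrac12 L$, where $L$ is the degree-$5$ cofactor of $x_5$, I may assume $F=x_5^2+G(x_1,x_2,x_3,x_4)$ with $\deg G=10$. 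As $x_4^3$ has degree $12$, the variable $x_4$ occurs in $G$ to degree at most two, so
\[
G=(a x_2+b x_1^2)\,x_4^2+x_4\,\phi_6(x_1,x_2,x_3)+\phi_{10}(x_1,x_2,x_3),
\]
with $a,b\in\CC$ and $\phi_6,\phi_{10}$ of the indicated weighted degrees.

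Next I would analyse the index-$4$ point $P_4$. In the orbifold chart $x_4=1$ the germ is $\{x_5^2+a x_2+b x_1^2+\phi_6+\phi_{10}=0\}$ inside $\CC^4/\mumu_4$ with the action $\tfrac14(1,1,2,3)$ on $(x_5,x_1,x_2,x_3)$; this action has an isolated fixed point, which rules out type~\typeA{cA}{4} (whose ambient quotient is non-isolated). Hence a terminal $P_4$ is either a cyclic quotient or of type~\typeA{cAx}{4}. The only admissible linear term of the germ has weight $2$, namely $a x_2$, so $X$ is quasi-smooth — i.e.\ $P_4$ is the cyclic quotient $\tfrac14(1,1,3)$ — precisely when $a\neq0$; this is case~\eqref{eq:thm:q5-1:a}. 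If $a=0$ the germ is not quasi-smooth and must therefore be of type~\typeA{cAx}{4}; its weight-$1$ and weight-$3$ squares are $x_5^2$ and $x_3^2$, so terminality forces the coefficient of $x_3^2$ in $\phi_6$ (equivalently of $x_4x_3^2$ in $F$) to be nonzero. This is case~\eqref{eq:thm:q5-1:b}.

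The two normal forms then follow by weighted coordinate changes. In case~\eqref{eq:thm:q5-1:a}, the linear substitution $x_2\mapsto a^{-1}(x_2-b x_1^2)$ turns $(a x_2+b x_1^2)x_4^2$ into $x_4^2 x_2$; then a shift $x_4\mapsto x_4+\psi$ by a general degree-$4$ form $\psi(x_1,x_2,x_3)$ produces the cross term $2x_2\psi\,x_4$, and since $2x_2\psi$ sweeps out exactly the four degree-$6$ monomials divisible by $x_2$, I can cancel every such monomial from $\phi_6$, leaving $\phi_6=\phi_6(x_1,x_3)$ (the $x_4$-free remainders $x_2\psi^2$ and $\psi\phi_6$ are absorbed into $\phi_{10}$). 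This gives~\eqref{eq:thm:q5-1:a}. The stated alternative $\phi_6\ni x_3^2$ or $\phi_{10}\ni x_3^3 x_1$ is the quasi-smoothness condition at the index-$3$ point $P_3$ (which always lies on $X$, since $3\nmid 10$): it requires a monomial $x_3^m x_i$ of degree $10$, and the only candidates are $x_3^2 x_4$ and $x_3^3 x_1$. In case~\eqref{eq:thm:q5-1:b}, where $a=0$ and the coefficient of $x_4x_3^2$ is rescaled to $1$ with $\lambda$ the coefficient of $x_4^2 x_1^2$, a shift $x_3\mapsto x_3+e_1 x_2 x_1+e_2 x_1^3$ contributes the cross term $2x_4 x_3(e_1 x_2 x_1+e_2 x_1^3)$, cancelling the two $x_3$-divisible monomials $x_4x_3x_2x_1$ and $x_4x_3x_1^3$ of $x_4\phi_6$ and leaving $\phi_6=\phi_6(x_1,x_2)$; this gives~\eqref{eq:thm:q5-1:b}.

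The main obstacle is the careful bookkeeping of these weighted substitutions: one must check that each change is homogeneous and invertible, that it reintroduces none of the monomials already eliminated (for instance that the shift of $x_4$ keeps the coefficient of $x_4^2 x_2$ equal to $1$ and creates no $x_4^2 x_1^2$ term), and that the terminality conditions at $P_3$ and $P_4$ are genuinely forced rather than merely sufficient. Once the forms~\eqref{eq:thm:q5-1:a} and~\eqref{eq:thm:q5-1:b} are in place, the identification of the index-$4$ point is immediate: solving for $x_2$ in case~\eqref{eq:thm:q5-1:a} exhibits the cyclic quotient, while completing the square in $x_3$ in case~\eqref{eq:thm:q5-1:b} brings the germ to the model $\{y_1^2+y_3^2+\phi(y_2,y_4)=0\}/\mumu_4(1,1,3,2)$, i.e.\ type~\typeA{cAx}{4}.
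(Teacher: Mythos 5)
Your argument follows the paper's proof essentially verbatim --- terminality at $P_5$, $P_3$ and $P_4$ forces the monomials $x_5^2$, then $x_1x_3^3$ or $x_4x_3^2$, then $x_4^2x_2$ (cyclic quotient) or $x_4x_3^2$ (type \typeA{cAx}{4}) --- and your explicit bookkeeping of the weighted substitutions merely fills in what the paper compresses into ``the equation can be reduced to''. One wording slip at the start: $P_5$ lies on $X$ precisely when the coefficient of $x_5^2$ \emph{vanishes}, so the correct formulation is that such vanishing would make $P_5$ a point of $X$ given as a singular hypersurface germ modulo $\mumu_5(1,2,3,4)$, which cannot be terminal --- not that quasi-smoothness at a point of $X$ forces the coefficient to be nonzero.
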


\begin{proof}
Let $\phi=0$ be an equation of $X$. If $\phi$ does not contain $x_5^2$, then 
the point $P_5$ lies on $X$ and
it is the quotient of a hypersurface singularity by $\mumu_5(1,2,3,4)$. Such a 
point cannot be terminal.
Thus $\phi\ni x_5^2$. Completing the square we may assume that $\phi$ does not 
contain other terms that depend on $x_5$.
The point $P_3$ lies on $X$ and must be a cyclic quotient singularity, hence 
$\phi$ contains either $x_1x_3^3$ or $x_4x_3^2$. 
Further, $P_4\in X$ and if it is a cyclic quotient, then 
$\phi\ni x_4^2x_2$ and we are done with~\eqref{eq:thm:q5-1:a}. 
Otherwise $P_4\in X$ is the quotient of a hypersurface singularity by 
$\mumu_4(1,2,3,1)$.
Such a point must be of type~\typeA{cAx}{4} and so $\phi\ni x_4x_3^2$. 
Then the equation can be reduced to~\eqref{eq:thm:q5-1:b}.
\end{proof}

\begin{scorollary}
\label{cor:q5:ind2sing}
 Let $X$ be a $\QQ$-Fano threefold with $\qQ(X)=5$ and $\B(X)=(2^2, 3, 4)$
\textup(see Theorem~\xref{thm:q5}\textup). If $X$ has an index-$2$ point $P\in X$ that not a cyclic 
quotient singularity, then $P\in X$ is of type \typeA{cA}{2} or \typeA{cAx}{2}.
In this case the index-$4$ point is a cyclic 
quotient singularity.
\end{scorollary}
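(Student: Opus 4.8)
The plan is to combine the axial-weight bookkeeping of the Set-up~\ref{setup:q=5} with the explicit equations of Proposition~\ref{prop:q5-eq-1}. Write the non-Gorenstein points of $X$ as $P_3$, $P_4$ and $P_2^{(1)},\dots,P_2^{(m)}$, so that $\aw(X,P_4)+\sum_i\aw(X,P_2^{(i)})=3$, and let $P$ be the given index-$2$ point that is not a cyclic quotient. Since a non-cyclic terminal point has axial weight at least $2$, we have $\aw(X,P)\ge 2$; as $\aw(X,P_4)\ge 1$ and the remaining summands are nonnegative, the balance forces $\aw(X,P_4)=1$, $\aw(X,P)=2$ and $m=1$. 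A point of type \typeA{cAx}{4} has basket $(4,2,\dots,2)$ and hence axial weight $\ge 2$, so the equality $\aw(X,P_4)=1$ already shows that $P_4$ is a cyclic quotient. This proves the second assertion.

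Next I would pass to the hypersurface model. By Theorem~\ref{thm:q5} we may take $X=X_{10}\subset\PP(1,2,3,4,5)$, and since $P_4$ is a cyclic quotient, Proposition~\ref{prop:q5-eq-1} puts the equation into the form~\eqref{eq:thm:q5-1:a}. The index-$2$ points of $X$ are exactly the points of $X\cap\{x_1=x_3=x_5=0\}$ other than $P_4=[0:0:0:1:0]$, and restricting~\eqref{eq:thm:q5-1:a} to this locus gives $x_2(x_4^2+cx_2^4)=0$, where $c$ is the coefficient of $x_2^5$ in $\phi_{10}$. For $c\neq 0$ this cuts out two points, at each of which the index-one cover acquires a nonzero linear term and so is smooth; these are therefore cyclic quotients. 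Hence the unique non-cyclic index-$2$ point $P$ can occur only for $c=0$, and then $P$ is the vertex $[0:1:0:0:0]$.

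Finally I would determine the type of $P$ by a local computation in the chart $\{x_2\neq 0\}$, which realises a neighbourhood of $P$ as $\{g=0\}/\mumu_2(1,1,0,1)$ on $\CC^4_{x_1,x_3,x_4,x_5}$, where
\[
g=x_5^2+x_4^2+x_4\,\phi_6(x_1,x_3)+\phi_{10}(x_1,1,x_3).
\]
The canonical double cover of $P$ is the hypersurface $\{g=0\}$, whose quadratic part is $x_5^2+x_4^2+q(x_1,x_3)$ with $q$ the quadratic part of $\phi_{10}(x_1,1,x_3)$; in particular its tangent cone has rank $\ge 2$. A three-dimensional compound Du Val point whose tangent cone has rank $\ge 2$ is of type \type{cA} (a general hyperplane section then has a rank-$2$ quadratic part, hence is an $A_n$-singularity). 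Thus the cover of $P$ is of type \type{cA}, and the terminal index-$2$ quotients with a \type{cA} cover are precisely \typeA{cA}{2} and \typeA{cAx}{2} (the types \typeA{cD}{2} and \typeA{cE}{2} have \type{cD}- respectively \type{cE}-covers, i.e.\ tangent cones of rank $\le 1$). Therefore $P$ is of type \typeA{cA}{2} or \typeA{cAx}{2}.

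The main obstacle is this last step: reading off the singularity type of $P$ from its cover. The crucial input is that the monomial $x_4^2$ of the cover survives, which is exactly the consequence of $P_4$ being a cyclic quotient (so that we are in form~\eqref{eq:thm:q5-1:a} rather than~\eqref{eq:thm:q5-1:b}); this guarantees that the tangent cone of the cover has rank $\ge 2$ and hence that the cover is of type \type{cA}. Excluding \typeA{cD}{2} and \typeA{cE}{2} is precisely where the rank-$\ge 2$ criterion is needed, so the first and third parts of the argument are genuinely linked through the shape of the equation at $P$.
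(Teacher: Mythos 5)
Your argument is correct and is essentially the one the paper leaves implicit: the corollary is stated without proof right after Proposition~\ref{prop:q5-eq-1}, and is meant to follow from the basket identity $\aw(X,P_4)+\sum_i\aw(X,P_2^{(i)})=3$ (which, exactly as you argue, forces $\aw(X,P_4)=1$ once some index-$2$ point has $\aw\ge 2$, so that $P_4$ is a cyclic quotient and the equation takes the form~\eqref{eq:thm:q5-1:a}) together with a local analysis along $\{x_1=x_3=x_5=0\}$. Your computation of the index-one cover at the vertex $[0:1:0:0:0]$ — no linear terms by parity, quadratic part $x_5^2+x_4^2+q(x_1,x_3)$ of rank $\ge 2$ — correctly rules out the types $\mathrm{cD}/2$ and $\mathrm{cE}/2$, whose covers have tangent cone of rank one, and the reduction to the hypersurface model via Theorem~\ref{thm:q5} is consistent with the corollary's intended scope.
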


\begin{scorollary}
\label{cor:q5:nonrat}
Let $X$ be a $\QQ$-Fano threefold with $\qQ(X)=5$ and $\B(X)=(2^2, 3, 4)$
\textup(see Theorem~\xref{thm:q5}\textup).
If the index-$4$ point is not a cyclic 
quotient singularity, then $X$ is rational.
\end{scorollary}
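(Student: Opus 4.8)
The plan is to argue by contradiction. Suppose $X$ is not rational. Since $\B(X)=(2^2,3,4)$, condition~\ref{thm:q5-1} of Theorem~\ref{thm:q5} holds, so $X$ is isomorphic to a hypersurface $X_{10}\subset\PP(1,2,3,4,5)$ with terminal singularities. As the index-$4$ point $P_4$ is assumed \emph{not} to be a cyclic quotient, Proposition~\ref{prop:q5-eq-1} puts the equation of $X$ into the form~\eqref{eq:thm:q5-1:b},
\[
x_5^2+x_4x_3^2+\lambda x_4^2x_1^2+x_4\phi_6(x_1,x_2)+\phi_{10}(x_1,x_2,x_3)=0 .
\]
I would then show directly that this hypersurface is rational, contradicting the assumption and thereby proving the corollary. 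The mechanism is a conic bundle with a rational section.

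The tool is the projection $p\colon X\dashrightarrow\PP(1,2,3)$, $(x_1\!:\!\cdots\!:\!x_5)\mapsto(x_1\!:\!x_2\!:\!x_3)$, whose center $\{x_1=x_2=x_3=0\}$ meets $X$ only at $P_4$ (substituting $x_1=x_2=x_3=0$ leaves $x_5^2=0$); after blowing up $P_4$ this is exactly the rational map underlying the $\QQ$-conic bundle $\bar f$ of Proposition~\ref{prop:q=5:link-cb}. The key structural observation is that in~\eqref{eq:thm:q5-1:b} the variable $x_5$ occurs only through $x_5^2$ and $x_4$ occurs to degree at most two, so the generic fibre of $p$ is the conic
\[
C\colon\quad \lambda x_1^2x_4^2+x_5^2+\bigl(x_3^2+\phi_6(x_1,x_2)\bigr)x_4+\phi_{10}(x_1,x_2,x_3)=0
\]
in the variables $x_4,x_5$ over the field $K:=\CC\bigl(\PP(1,2,3)\bigr)$; equivalently $\CC(X)$ is the fraction field of $K[x_4,x_5]$ modulo this single relation.

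The heart of the matter is that $C$ has a $K$-rational point, and here is where the hypothesis on $P_4$ enters. Working in the chart $x_1=1$, the leading binary form $\lambda x_4^2+x_5^2$ is isotropic over $\CC$ — hence over $K$ — because $x_5=\sqrt{-\lambda}\,x_4$ with $\sqrt{-\lambda}\in\CC$; concretely $[x_4\!:\!x_5\!:\!x_0]=[1:\sqrt{-\lambda}:0]$ lies on the projective closure of $C$ (and if $\lambda=0$ the conic is already linear in $x_4$, hence manifestly rational). Since $C$ is a smooth conic with a $K$-point for the generic base point, it is $K$-isomorphic to $\PP^1_K$, so $\CC(X)=K(t)$ is purely transcendental over $K$; as $K=\CC(\PP(1,2,3))$ is itself purely transcendental over $\CC$, the variety $X$ is rational, the desired contradiction. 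The genuine obstacle is precisely locating this section: it exists only because the coefficient of $x_4^2$ in~\eqref{eq:thm:q5-1:b} is the square $\lambda x_1^2$, whereas in case~\eqref{eq:thm:q5-1:a} that coefficient is $x_2$, which is not a square in $K$, so no such point appears — this is the arithmetic shadow of $P_4$ being of type~\typeA{cAx}{4} rather than a cyclic quotient. The remaining points to check are routine: that the projective closure of $C$ is a smooth conic for generic base point, and that the abstract generic fibre of $p$ really is $C$.
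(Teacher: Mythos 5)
Your proof is correct and follows essentially the same route as the paper: both reduce via Theorem~\ref{thm:q5} and Proposition~\ref{prop:q5-eq-1} to the normal form~\eqref{eq:thm:q5-1:b} and then exploit that the degree-two part $\lambda x_1^2x_4^2+x_5^2$ in $(x_4,x_5)$ splits over $\CC$. The paper phrases this as the explicit coordinate change $x_5'=x_5-x_4$, $x_4'=x_5+x_4$ (after normalizing $\lambda=-1$, with the case $\lambda=0$ handled by projecting away $x_4$), which makes the equation linear in $x_5'$ --- exactly the projection from your $K$-rational point $[1:\sqrt{-\lambda}:0]$ of the generic conic.
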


\begin{proof}
Assume that $X$ is not rational.
By Theorem~\ref{thm:q5}\ $X$ is a hypersurface $X_{10}\subset\PP(1,2,3,4,5)$ 
and we may assume that its equation has the form~\eqref{eq:thm:q5-1:b} (see 
Proposition~\xref{prop:q5-eq-1}). 
If $\lambda=0$, then the projection $X \dashrightarrow \PP(1,2,3,5)$ is a 
birational map, which proves the rationality 
of $X$. Thus we may assume that $\lambda=-1$ and then in the affine chart 
$x_1\neq 0$ this equation 
can be written as follows:
\[
(x_5-x_4)(x_5+x_4)+x_4x_3^2+x_4\phi_6(1,x_2)+\phi_{10}(1,x_2,x_3) =0.
\]
Applying the linear coordinate change $x_5'=x_5-x_4$, $x_4'=x_5+x_4$ one can 
see that $X$ is rational.
\end{proof}

\section{$\QQ$-Fano threefolds of index~$6$ and $7$: Rationality}
\label{sect:67}
In this section we generalize Theorem~\ref{thm0} in the cases $\qQ(X)=7$ and $6$.
\begin{proposition}
\label{prop:q7a}
Let $X$ be a $\QQ$-Fano threefold with $\qQ(X)=7$.
If $\p_1(X)>0$, then $X$ is rational.
\end{proposition}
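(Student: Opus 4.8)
The plan is to argue by contradiction, reducing to a single numerical type by means of the results already proved and then exhibiting a Mori fibre space structure through one Sarkisov link.

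So suppose $X$ is not rational. If $\Clt{X}\neq 0$ then $X$ is rational by Proposition~\ref{prop:t}, a contradiction; hence $\Cl(X)$ is torsion free and $\p_n(X)=\dim|nA_X|+1$. If $\p_3(X)\ge 2$ then $X$ is rational by Theorem~\ref{thm0} (the case $\qQ(X)\ge 6$, $\p_3(X)\ge 2$); so $\dim|3A_X|\le 0$, Proposition~\ref{prop:6-7} applies, and the invariants of $X$ form one of the $\qQ(X)=7$ rows of Table~\ref{tab:1}. Imposing $\p_1(X)>0$, i.e. $\dim|A_X|\ge 0$, leaves only cases~\ref{Case:q=7:B=2-6-10} and~\ref{Case:q=7:B=2-313}; the first ($A_X^3=1/30$) is rational by Lemma~\ref{prop:q=7:tor}. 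Thus the whole proposition reduces to case~\ref{Case:q=7:B=2-313}: $A_X^3=1/78$, $\B(X)=(2,3,13)$, with cyclic quotient points $P_2,P_3,P_{13}$, $\dim|kA_X|=0$ for $1\le k\le 5$ and $\dim|6A_X|=1$.

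Here I would apply the construction~\eqref{diagram-main} to the movable pencil $\MMM=|6A_X|$, which has no fixed component because $\dim|5A_X|\le 0<\dim|6A_X|$. A computation in the local class groups shows that $\MMM$ is Cartier at $P_2$ and $P_3$, while $A_X\overset{P_{13}}{\sim}2(-K_X)$ gives $\MMM\overset{P_{13}}{\sim}12(-K_X)$; by Lemma~\ref{lemma:ct} the canonical threshold is $1/12$, and one verifies that the crepant extraction $f$ is the Kawamata blowup of $P_{13}$, so $\alpha=1/13$, $\beta=12/13$ and $q\beta-n\alpha=7\cdot\tfrac{12}{13}-6\cdot\tfrac1{13}=6$. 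Then relation~\eqref{eq:main} reads $6\hat q=7s+6e$, so $s=\tfrac67(\hat q-e)$ and $7\mid(\hat q-e)$ with $\hat q\ge e$. If $\hat q>e$ then $s\ge 6$, the contraction $\bar f$ is birational onto a $\QQ$-Fano threefold $\hat X$ with $\qQ(\hat X)=\hat q\ge 8$, which is rational by Theorem~\ref{thm0}; hence $X$ is rational, a contradiction. Therefore $\hat q=e$ and $s=0$, so $\bar f$ is a fibration (Remark~\ref{rem:case-bir} gives $s>0$ in the birational case), and $\hat q=1=e$ by Remark~\ref{rem:case-nonbir}.

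It remains to identify this Mori fibre space and to prove that it is rational, and this is the step I expect to be the main obstacle. Following the pattern of Proposition~\ref{prop:q=5:link-cb}, I would compute $E^3$ for the Kawamata blowup of $P_{13}$ and then use~\eqref{eq:cb:a}--\eqref{eq:cb:b} to decide whether $\hat X$ is one of the del Pezzo surfaces of Corollary~\ref{cor:base-surface} (so that $\bar f$ is a $\QQ$-conic bundle) or $\hat X\simeq\PP^1$ (a del Pezzo fibration), and in the conic bundle case to compute the class of the discriminant $\Delta_{\bar f}$. The expected conclusion is that these invariants are too small for the nonrationality mechanism of Theorem~\ref{thm:cb} to operate — concretely, that the associated double cover violates Shokurov's condition~(S), or that $\bar f$ carries a rational multisection of the appropriate degree — so that $\bar X$, and hence $X$, is rational. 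Extracting this explicit rationality statement from the intersection numbers forced above, rather than the numerology, is where the real work lies; it contradicts the assumption and completes the proof.
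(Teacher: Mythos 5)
Your reduction to case~\ref{Case:q=7:B=2-313} of Table~\ref{tab:1} (torsion-freeness via Proposition~\ref{prop:t}, $\p_3(X)\le 1$ via Theorem~\ref{thm0}, elimination of case~\ref{Case:q=7:B=2-6-10} via Lemma~\ref{prop:q=7:tor}) matches the paper exactly, as does the choice $\MMM=|6A_X|$. But from that point on there are two genuine gaps. First, you assert that $\ct(X,\MMM)=1/12$, that the crepant extraction is the Kawamata blowup of $P_{13}$, and that $\beta=12/13$ exactly. Lemma~\ref{lemma:ct} only gives the inequality $\ct(X,\MMM)\le 1/12$, the center of the crepant extraction is not a priori $P_{13}$ (it could be a curve or another point), and $\beta$ is only bounded below: one knows $\beta\equiv 12\alpha\pmod{\ZZ}$, so $\beta$ could equal $25/13$, etc. Your clean identity $6\hat q=7s+6e$ and the conclusion $\hat q=e$, $s=0$ all hinge on these unproved exact values. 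The paper avoids this entirely: it works only with the inequalities $\beta\ge 12\alpha$ and $\beta_1\ge\frac16\beta\ge 2\alpha$, deduces $s_1=0$ from the $k=1$ relation, and in the birational case pins down $e=1$, $\alpha=1/13$ and then kills it by the integrality of $13\beta_1=(13\hat q+1)/7$.

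Second, and more seriously, the step you yourself flag as "the main obstacle" — proving rationality of the resulting Mori fibre space — is exactly the content of the proposition, and your proposed mechanism points the wrong way. The paper does not invoke Theorem~\ref{thm:cb} or Shokurov's condition (S) here. Instead it first excludes a surface base: if $\hat X$ is a surface then $s=s_1=0$ forces $\bar\MMM=\bar f^*\NNN$ and $\bar M_1=\bar f^*N_1$ with $\dim\NNN=1$, $\dim|N_1|=0$, so $\hat X$ is $\PP(1,2,3)$ or $S_{\mathrm{DP}_5}$ by Corollary~\ref{cor:base-surface}, whence $\NNN\sim 2N_1$ and pushing down gives $6A_X\sim 2A_X$, a contradiction. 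Hence $\hat X\simeq\PP^1$ and $\bar M_1$ is a fiber of multiplicity $6$; by the Mori--Prokhorov classification of multiple fibers of del Pezzo fibrations the generic fiber is then a smooth del Pezzo surface of degree $6$ over the $\mathrm{C}_1$-field $\CC(t)$, which has a rational point and is therefore rational, contradicting nonrationality of $X$. This arithmetic-of-del-Pezzo argument is the missing idea; without it (and without the exclusion of the surface base) your proof does not close.
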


\begin{proof}
Let $X$ be a $\QQ$-Fano threefold with $\qQ(X)=7$ and $\p_1(X)>0$. Assume that 
$X$ is not rational. 
By Proposition~\ref{prop:t} the 
group $\Cl(X)$ is torsion free,
hence $|A_X|\neq\varnothing$.
Recall that by 
Theorem~\ref{thm0} we have $\p_3(X)\le 1$, hence by
Proposition~\xref{prop:6-7}
there are only two numerical possibilities:~\ref{Case:q=7:B=2-6-10} and
~\ref{Case:q=7:B=2-313} in Table~\ref{tab:1}.
The former case is impossible by Lemma~\ref{prop:q=7:tor}.
Consider the latter one.

\subsection*{Case~\ref{Case:q=7:B=2-313}}
Then $\B(X)=(2,3,13)$. By Proposition~\xref{prop:6-7} we have 
$\dim|A_X|=\dim|4A_X|=0$ and $\dim|6A_X|=1$.
So, the linear system $|6A_X|$ is movable and has no fixed components.
Apply the construction~\eqref{diagram-main} with $\MMM=|6A_X|$.
If $P$ is the point of index~$13$, then $\MMM 
\overset{P}{\sim} 12(-K_X)$ and so $\beta\ge 12\alpha$
by~\eqref{eq:ct:beta-alpha}.
Since $6M_1\in\MMM$, we have $6\beta_1\ge\beta$.
Hence $\beta_1\ge\frac 16\beta\ge 2\alpha$. 
The relation~\eqref{eq:main1} for $k=1$ and~\eqref{eq:main} have the form
\begin{align}
\label{eq:main:q=7a:1}
\hat q&=7s_1+(7\beta_1-\alpha)e\ge 7s_1+ 13\alpha e\ge 7s_1+ e,
\\
\label{eq:main:q=7a:6}
6\hat{q}&=7 s+(7\beta-6\alpha) e\ge 7 s+78\alpha e\ge 7 s+6 e.
\end{align}
Since $\hat q\le 7$, from~\eqref{eq:main:q=7a:1} we obtain $s_1=0$.

Assume that $\hat q=1$. Then $s=s_1=0$ by~\eqref{eq:main:q=7a:6}. Hence, 
$\bar f$ is a fibration.
If $\hat X$ is a surface, then
$\bar\MMM=\bar f^*\NNN$ and $\bar M_1=\bar f^* N_1$ for some complete linear 
system $\NNN$
and Weil divisor $N_1$
on $\hat X$ such that $\dim\NNN=1$ and $\dim|N_1|=0$.
Therefore, $\hat X$ is either
$\PP(1,2,3)$ or $S_{\mathrm{DP}_5}$ (see Corollary~\ref{cor:base-surface}).
But then $\NNN\sim 2N_1$ and so $\bar\MMM\sim 2\bar M_1$. Pushing this relation 
down to $X$
we obtain $6A_X\sim\MMM\sim 2 M_1\sim A_X$, a contradiction.
Therefore, $\hat X\simeq\PP^1$ and $\bar f$ is a del Pezzo fibration.
Then $\bar M_1$ is a multiple fiber of multiplicity $6$. In this case
by \cite{MP:DP-e}
the general fiber $\bar X_\eta$ of $\bar f$ is a smooth del Pezzo surface of 
degree $6$ over the field
$\CC(\hat X)$. Since $\CC(\hat X)$ is a $\mathrm{c}_1$-field, the surface $\bar 
X_\eta$ has a $\CC(\hat X)$-point.
Hence $\bar X_\eta$ is rational over $\CC(\hat X)$ (see e.~g. 
\cite[Theorem~29.4]{Manin:book:74}) and so is $\bar X$ over $\CC$, a 
contradiction.

Thus $\hat q>1$. Then $\bar f$ is birational and $s>0$ 
\cite[Lemma~5.1]{P:QFano-rat1}.
Since $s_1=0$, the group $\Cl(\hat X)$ is torsion free and $e=1$ (see 
Lemma~\ref{lemma:sl:torsion}).
If $\alpha\ge 1/3$, then
$\hat q\ge 6$ and $s\le 2$. This contradicts by Theorem~\ref{thm0}.
Thus $f(E)=P$ and $\alpha=1/13$.
Then~\eqref{eq:main:q=7a:1} gives us $13\beta_1=(13\hat{q}+1)/7$.
Since $13\beta_1$ is an integer and $\hat q\le 7$, the only possibility is 
$\hat q=1$, a contradiction.
\end{proof}

\begin{proposition}
\label{prop:q6}
Let $X$ be a $\QQ$-Fano threefold with $\qQ(X)=6$.
Then $X$ is rational.
\end{proposition}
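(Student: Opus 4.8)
The plan is to argue by contradiction, assuming $X$ is not rational, following the template of Proposition~\ref{prop:q7a}. Since a nontrivial torsion subgroup forces $\qQ(X)\in\{5,7\}$ by Proposition~\ref{prop:tor}, here $\Clt{X}=0$, so $\qW(X)=\qQ(X)=6$ and every $|kA_X|$ is a complete linear system. Non-rationality together with Theorem~\ref{thm0} (the cases $\qQ\ge 4$, $\p_1\ge 2$ and $\qQ\ge 6$, $\p_3\ge 2$) gives $\p_1(X)\le 1$ and $\dim|3A_X|\le 0$. Hence Proposition~\ref{prop:6-7} applies and $X$ is one of the three numerical types~\ref{Case:q=6:B=5-17}, \ref{Case:q=6:B=7-11}, \ref{Case:q=6:B=5-11} of Table~\ref{tab:1}. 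In each of them $\dim|5A_X|=1$, and in type~\ref{Case:q=6:B=7-11} also $\dim|4A_X|=1$; since $\dim|kA_X|\le 0$ for every smaller $k$ carrying an effective divisor, the systems $|5A_X|$ (resp.\ $|4A_X|$ in type~\ref{Case:q=6:B=7-11}) have no fixed component and are movable pencils.

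I would then run the Sarkisov link~\eqref{diagram-main} with $\MMM=|nA_X|$, taking $n=5$ in types~\ref{Case:q=6:B=5-17} and~\ref{Case:q=6:B=5-11} and $n=4$ in type~\ref{Case:q=6:B=7-11}; the point is that $5A_X\sim 5M_1$ with $M_1\in|A_X|$ in the first two, and $4A_X\sim 2M_2$ with $M_2\in|2A_X|$ in the third, which makes the fibration analysis clean. Let $P$ be the point of largest index $r$ (namely $17$, $11$, $11$) and write $\MMM\overset{P}{\sim}m(-K_X)$; a direct check gives $6m-n=nr$, so that $6\beta-n\alpha\ge(6m-n)\alpha=nr\alpha\ge n$ by Lemma~\ref{lemma:ct} and $\alpha\ge 1/r$. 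Substituting into~\eqref{eq:main} yields
\[
n\hat q=6s+(6\beta-n\alpha)e\ge 6s+ne,\qquad\text{with }\hat q\le 7
\]
(the bound $\hat q\le 7$ since $\hat q\ge 8$ would make $\hat X$, hence $X$, rational by Theorem~\ref{thm0}). If $\bar f$ is a fibration then $\hat q=1$ by Remark~\ref{rem:case-nonbir}, and the inequality forces $e=1$, $s=0$, $6\beta-n\alpha=n$; with $\beta\ge m\alpha$ this pins down $f(E)=P$, $\alpha=1/r$ and $\beta$, and the relations~\eqref{eq:main1} then give $s_1=0$ (resp.\ $s_2=0$), so the relevant $\bar M_j$ is $\bar f$-vertical.

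In the birational case $\hat q>1$ and $s>0$, and $\hat X$ is a $\QQ$-Fano threefold birational to $X$, hence also non-rational. I would exclude this exactly as in Proposition~\ref{prop:q7a}: the relation~\eqref{eq:main1} for $k=1$ (resp.\ $k=2$), combined with the integrality of $\beta_1$ (resp.\ $\beta_2$) in $\tfrac1r\ZZ$, forces $\hat q\equiv 1\bmod 6$, so $\hat q=7$; then $\dim\hat\MMM=1$ gives $\p_s(\hat X)\ge 2$, and feeding the admissible values of $s$ into Theorem~\ref{thm0} (using that no $\QQ$-Fano of index $7$ has $A^3=1/12$) forces $s\ge 4$, after which the remaining relations~\eqref{eq:main1} together with Lemma~\ref{lemma:sl:torsion} contradict the Hilbert data of the index-$7$ varieties of Table~\ref{tab:1}. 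This is a finite, if tedious, elimination.

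The heart of the argument is the fibration case. If the base $\hat X$ is a surface, then by Corollary~\ref{cor:base-surface} it is one of the four surfaces of Table~\ref{tab3} and $\bar f$ is a $\QQ$-conic bundle; pushing $\bar\MMM\sim n'\bar M_j$ down to the base gives $\NNN\sim n'N_j$ with $\dim|N_j|=0$, $\dim\NNN=1$. In types~\ref{Case:q=6:B=5-17} and~\ref{Case:q=6:B=5-11} ($n'=5$) no surface of Table~\ref{tab3} has $\dim|A_S|=0$ and $\dim|5A_S|=1$, so the surface base is impossible and $\hat X\simeq\PP^1$; in type~\ref{Case:q=6:B=7-11} ($n'=2$) it forces $\hat X\in\{\PP(1,2,3),S_{\mathrm{DP}_5}\}$, and computing $A_{\hat X}^2$ and $A_{\hat X}\cdot\Delta_{\bar f}$ from Lemma~\ref{lemma:cb} (as in Proposition~\ref{prop:q=5:link-cb}) identifies $\hat X$ and shows $\Delta_{\bar f}$ is small enough that the conic bundle is rational. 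When $\hat X\simeq\PP^1$, $\bar f$ is a del Pezzo fibration with a multiple fibre $\bar M_j$ of multiplicity $n'$, and an intersection computation on $\tilde X$ (via $E^3$ and the Kawamata blow-up) shows the generic fibre is a del Pezzo surface of degree $\ge 5$ over the $C_1$-field $\CC(t)$; it therefore has a rational point and is rational (cf.\ \cite{MP:DP-e}, \cite[Theorem~29.4]{Manin:book:74}), so $X$ is rational. Either way we reach a contradiction. The main obstacle is precisely this last paragraph: verifying that the conic bundle over $\PP(1,2,3)$ or $S_{\mathrm{DP}_5}$ in type~\ref{Case:q=6:B=7-11} is rational (i.e.\ that $\Delta_{\bar f}$ is small), and pinning down the del Pezzo degree $\ge 5$ in the $\PP^1$-base subcase, as both rest on explicit intersection computations on $\tilde X$ rather than on the numerical bookkeeping of the earlier steps.
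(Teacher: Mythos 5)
Your reduction to the three numerical types of Table~\ref{tab:1} and your treatment of types~\ref{Case:q=6:B=5-17} and~\ref{Case:q=6:B=5-11} (link from $|5A_X|$, exclusion of a surface base via Corollary~\ref{cor:base-surface} and Lemma~\ref{lemma:DP}, multiplicity-$5$ fibre forcing a degree-$5$ del Pezzo fibration via \cite{MP:DP-e}) match the paper. The gap is in type~\ref{Case:q=6:B=7-11}, where you switch to $\MMM=|4A_X|$. With $4A_X\sim 2M_2$ the fibration case produces only a single vertical divisor $\bar M_2$ with $2\bar M_2\sim\bar\MMM$, and neither ensuing subcase is actually excluded by what you write: over a surface base the relation $\NNN\sim 2N_2$ is perfectly consistent with $\hat X=\PP(1,2,3)$ or $S_{\mathrm{DP}_5}$ (both have $\dim|2A_S|=1$), and your plan to ``show $\Delta_{\bar f}$ is small enough that the conic bundle is rational'' is exactly the hard point --- $\QQ$-conic bundles over $\PP(1,2,3)$ with $\Delta\sim-2K_S$ are precisely the nonrational ones produced in Proposition~\ref{prop:q=5:link-cb}, so rationality here is not a formality and you give no computation ruling that situation out. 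Likewise over $\PP^1$ a multiplicity-$2$ fibre does not force the generic fibre to be a del Pezzo surface of degree $\ge 5$ (unlike multiplicity $5$ or $6$), and degree $\le 4$ del Pezzo fibrations need not be rational, so no contradiction results.

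The paper avoids all of this by keeping $\MMM=|5A_X|$ in type~\ref{Case:q=6:B=7-11} as well: then \eqref{eq:main1} gives $s_2=s_3=0$, so both $\bar M_2$ and $\bar M_3$ are vertical; over a surface base Lemma~\ref{lemma:DP} forces $N_2\sim N_3\sim A_{\hat X}$, hence $2A_X\sim M_2\sim M_3\sim 3A_X$, absurd; over $\PP^1$ they are multiple fibres with $m_2M_2\sim m_3M_3\sim 5A_X$, i.e.\ $2m_2=3m_3=5$, absurd. You should adopt this choice. Your birational case is also rougher than necessary: the congruence you invoke does not by itself force $\hat q\equiv 1\bmod 6$ (the case $\hat q=5$, arising from blowing up the index-$7$ point, survives it), and the paper disposes of the two surviving possibilities more directly --- $\hat q=7$, $e=1$ gives $\p_1(\hat X)>0$, contradicting Proposition~\ref{prop:q7a}, while $\hat q=5$ is excluded via Theorem~\ref{thm:q5} and Lemma~\ref{lemma:sl:torsion} --- but that part of your sketch is a finite check and is fixable.
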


\begin{proof}
Let $X$ be a $\QQ$-Fano threefold with $\qQ(X)=6$.
Assume that $X$ is not rational. Then $\p_3(X)\le 1$ by Theorem~\ref{thm0} and 
we have one of the cases~\ref{Case:q=6:B=5-17}, \ref{Case:q=6:B=7-11}, 
\ref{Case:q=6:B=5-11} of Table~\ref{tab:1}. 
In all these cases the group $\Cl(X)$ is torsion free and $|5A_X|$ is a pencil 
without fixed components.
Apply the construction~\eqref{diagram-main} with $\MMM=|5A_X|$.
If $P$ is the point of index~$11$ (resp. index~$17$), then $\MMM 
\overset{P}{\sim} 10(-K_X)$ (resp. $\MMM 
\overset{P}{\sim} 15(-K_X)$) and so $\beta_5\ge 10\alpha$
by~\eqref{eq:ct:beta-alpha}.
The relation~\eqref{eq:main} has the form
\begin{equation}
\label{eq:q=6}
5\hat{q}=6 s_5+(6\beta_5-5\alpha) e\ge 6 s_5+55\alpha e.
\end{equation}

First, consider the cases where $s_5=0$. Then $\bar f$ is a fibration, $\hat 
q=1$, 
$e=1$, and $\alpha\le 1/11$.
In this case $\bar\MMM=\bar f^*\NNN$ for some complete linear system with 
$\dim\NNN=1$.
Assume that $|A_X|\neq\varnothing$.
Then $\dim|A_X|=\dim|2A_X|=0$.
From~\eqref{eq:main} we obtain 
$s_1=0$, hence $\bar M_1$ is $\bar f$-vertical and $5\bar M_1\sim\bar\MMM$.
If $\hat X$ is a surface, then $\bar M_1=\bar f^* N_1$, where $N_1$ is an 
effective divisor such that $\dim|N_1|=0$.
In this case $\NNN\sim 5N_1$. This contradicts 
Corollary~\ref{cor:base-surface}. Thus $\hat X\simeq\PP^1$ and 
$\NNN=|\OOO_{\PP^1}(1)|$.
Since $5\bar M_1\sim\bar\MMM$,\ $\bar M_1$ is
a fiber of multiplicity $5$. 
By \cite{MP:DP-e} the general fiber $\bar X_\eta$ of $\bar f$ is a smooth del 
Pezzo surface of degree $5$ over the field
$\CC(\hat X)$. Then $\bar X_\eta$ is rational over $\CC(\hat X)$ (see e.~g. 
\cite{Shepherd-Barron1992}) and so is $\bar X$ over $\CC$, a 
contradiction.
Therefore, $|A_X|=\varnothing$. Then $\dim|2A_X|=\dim|3A_X|=0$ and so 
the linear system $|2A_X|$ (resp. $|3A_X|$) consists of a unique irreducible
divisor $M_2$ (resp. $M_3$). From~\eqref{eq:main} we obtain 
$s_2=s_3=0$, hence
the divisors $\bar M_2$ and $\bar M_3$ are 
vertical with respect to
$\bar f$, that is, they do not dominate $\bar f$.
If $\hat X$ is a surface, then
$\bar M_2=\bar f^* N_2$ and $\bar M_3=\bar f^*N_3$ for some effective 
Weil divisors $N_2,\, N_3$
such that $\dim|N_1|=\dim|N_2|=0$. 
Since $\Cl(\hat X)\simeq\ZZ$, we have $N_2\sim N_3$, a contradiction.
Hence $\hat X\simeq\PP^1$ and $\NNN=|\OOO_{\PP^1}(1)|$.
Then $\bar M_2$ and $\bar M_3$ are contained in fibers.
Since the divisors $\bar M_2$ and $\bar M_3$ are 
reduced and irreducible, they must be multiple fibers: $m_2\bar M_2\sim 
m_3\bar M_3\sim\bar\MMM$.
Pushing down this relations to $X$ we obtain $m_2 M_2\sim m_3 M_3\sim\MMM\sim 5 
A_X$,
a contradiction. 

Thus $s_5>0$, $\hat q >1$, and $\bar f$ is birational (see 
Remark~\ref{rem:case-nonbir}).
If $\alpha\ge 1$, then $\hat q>7$ by~\eqref{eq:q=6}. The contradiction 
shows that $\alpha=1/r$, where $r\in\{5,\, 7,\, 11,\, 17\}$ because the 
singularities of $X$ are cyclic quotients. 
Then from~\eqref{eq:q=6} one obtains two possibilities:
\begin{itemize}
\item 
$\hat q=7$, $e=1$;
\item 
$\hat q=5$, $e=1$, $r=7$, and $s_5 \le 2$.
\end{itemize}
In the former case we have $\p_1(\hat X)>0$. This contradicts 
Proposition~\ref{prop:q7a}. 
In the latter case,
we are in the situation of~\ref{Case:q=6:B=7-11} in 
Table~\ref{tab:1} and $\p_2(\hat X)\ge 2$. Hence
$|A_X|=\varnothing$ and
$\Clt{X}\neq 0$ by Lemma~\ref{lemma:sl:torsion}. 
This contradicts 
Theorem~\ref{thm:q5}.
\end{proof}

\section{Nonrational $\QQ$-Fano threefolds of index~$7$}
\label{sect:q=7a}
In this section we study $\QQ$-Fano threefolds of index~$7$ that can potentially be nonrational.
According to our results in the previous section and  Proposition~\xref{prop:6-7}
there are only three ``numerical candidates''.
\begin{theorem}
\label{thm:q7b}
Let $X$ be a $\QQ$-Fano threefold with $\qQ(X)=7$. 
Assume that $X$ is not rational. Then $X$ is birationally equivalent to
a $\QQ$-Fano hypersurface of degree $10$ in $\PP(1,2,3,4,5)$.
\end{theorem}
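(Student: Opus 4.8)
The plan is to reduce the problem to a short explicit list of numerical types, run the Sarkisov-link construction of Section~\ref{sect:sl} on each type, and then recognise the target of the link as a variety already classified by Theorem~\ref{thm:q5}.

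\emph{Reduction.} Suppose $X$ is not rational. By Proposition~\ref{prop:t} the group $\Cl(X)$ is torsion free, so $\qQ(X)=\qW(X)$ and each $\p_n(X)$ equals $\dim|nA_X|+1$. By Proposition~\ref{prop:q7a} we must have $\p_1(X)=0$, i.e. $|A_X|=\varnothing$, and by Theorem~\ref{thm0} nonrationality together with $\qQ(X)=7\ge 6$ forces $\p_3(X)\le 1$, that is $\dim|3A_X|\le 0$. Hence Proposition~\ref{prop:6-7} applies: $X$ is described by one of the rows of Table~\ref{tab:1} with $\qQ(X)=7$. The condition $\dim|A_X|=-1$ eliminates the two rows \ref{Case:q=7:B=2-6-10} and \ref{Case:q=7:B=2-313} (which are rational anyway, by Lemma~\ref{prop:q=7:tor} and by the computation in Proposition~\ref{prop:q7a}), leaving exactly the three candidates \ref{Case:q=7:B=2-2-2-3-4-5}, \ref{Case:q=7:B=2-2-2-5-8}, and \ref{Case:q=7:B=3-8-9}.

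\emph{Link construction.} In each of the three cases I would take the smallest complete movable linear system $\MMM=|nA_X|$ visible in Table~\ref{tab:1} (so that $\MMM$ has no fixed components) and feed it into the construction~\eqref{diagram-main}, extracting the point of largest index. The identities~\eqref{eq:main} and~\eqref{eq:main1}, combined with the threshold bound~\eqref{eq:ct:beta-alpha} and the discrepancy list of Lemma~\ref{lemma-discrepancies}, confine the output data $\hat q$, $s$, $s_k$, $e$, $\alpha$, $\beta$ to finitely many values, in the style of the proofs of Propositions~\ref{prop:q6} and~\ref{prop:q7a}. Since $X$ is not rational, Remark~\ref{rem:case-nonbir} excludes the del Pezzo fibration outcome, so $\bar f$ is birational and $\hat X$ is again a $\QQ$-Fano threefold; Lemma~\ref{lemma:sl:torsion} moreover controls $\Clt{\hat X}$. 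Pushing the resulting inequalities, I expect each case to yield either an immediate contradiction (forcing rationality) or a $\QQ$-Fano threefold $\hat X$ with $\qQ(\hat X)=5$, $\B(\hat X)=(2^2,3,4)$ and $A_{\hat X}^3=1/12$, possibly after one further application of~\eqref{diagram-main}.

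\emph{Recognition and obstacle.} Such an $\hat X$ satisfies the numerical hypotheses of Set-up~\ref{setup:q=5}, and being birational to the nonrational $X$ it is itself nonrational; Theorem~\ref{thm:q5} then identifies $\hat X$ with a hypersurface $X_{10}\subset\PP(1,2,3,4,5)$ as in Example~\ref{ex:X10}. As $f$, $\chi$ and $\bar f$ are all birational, $X$ is birationally equivalent to this $X_{10}$, which is the assertion. The main obstacle is the bookkeeping inside the link step: for each candidate one must check that no divisorial or fibration output other than the one landing on the $\qQ=5$ family survives the nonrationality hypothesis. The delicate instances are \ref{Case:q=7:B=2-2-2-5-8} and \ref{Case:q=7:B=3-8-9}, whose index-$8$ and index-$9$ points need not be cyclic quotients and whose link target is least transparent; there the fine analysis of admissible discrepancies via Lemma~\ref{lemma-discrepancies} and of torsion via Lemma~\ref{lemma:sl:torsion} is essential.
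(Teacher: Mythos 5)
Your proposal follows the paper's proof exactly: the paper likewise reduces, via Proposition~\ref{prop:q7a}, Theorem~\ref{thm0} and Proposition~\ref{prop:6-7}, to the three numerical cases \ref{Case:q=7:B=2-2-2-3-4-5}, \ref{Case:q=7:B=2-2-2-5-8}, \ref{Case:q=7:B=3-8-9}, and then disposes of each by a Sarkisov link whose target is forced by the relations \eqref{eq:main}, \eqref{eq:main1} to satisfy $\hat q=5$ and $\p_2(\hat X)\ge 2$, whence Theorem~\ref{thm:q5} identifies $\hat X$ with $X_{10}\subset\PP(1,2,3,4,5)$ (Propositions~\ref{prop:q=7}, \ref{prop:q=7:B=3-8-9}, \ref{prop:q=7:B=2-2-2-5-8}). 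The case-by-case bookkeeping you defer is indeed where the substance lies, but your choice of linear systems ($|4A_X|$ in the first two cases, a pencil of degree $6$ rather than $5$ in the third) and of auxiliary tools matches the paper's.
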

The rest of this section is devoted to the proof of this theorem.
Let $X$ be a $\QQ$-Fano threefold with $\qQ(X)=7$. 
Assume that $X$ is not rational. Then $\p_3(X)\le 1$ and $\p_1(X)=0$ 
by Theorem~\ref{thm0} and Proposition~\xref{prop:q7a}. By
Proposition~\xref{prop:6-7} we have only three numerical possibilities:
~\ref{Case:q=7:B=2-2-2-3-4-5}, \ref{Case:q=7:B=2-2-2-5-8}, 
\ref{Case:q=7:B=3-8-9}
in Table~\ref{tab:1}. 
Thus, Theorem~\ref{thm:q7b} is a consequence of Propositions~\ref{prop:q=7},
\ref{prop:q=7:B=3-8-9}, and~\ref{prop:q=7:B=2-2-2-5-8} below.

\subsection{Case~\xref{Case:q=7:B=2-2-2-3-4-5}.}
Then $\B(X)=(2^3, 3, 4, 5)$.
Denote by $P_3$, $P_4$, and $P_5$ points of index~$3$, $4$, and $5$, 
respectively.
\begin{sproposition}
\label{prop:q=7}
Let $X$ be a $\QQ$-Fano threefold of type~\xref{Case:q=7:B=2-2-2-3-4-5} in Table~\xref{tab:1}.
Assume that $X$ is not rational. 
Then there exists a Sarkisov link of the form~\eqref{diagram-main}, where 
$\hat{X}$ is a hypersurface of degree $10$ in $\PP(1,2,3,4,5)$, $f$ is the 
Kawamata blowup of 
the point of index~$5$, and $\bar{f}$ contracts
the proper transform of the unique element $M_3\in |3A_X|$ to a curve.
\end{sproposition}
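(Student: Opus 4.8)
The plan is to realise the link as the instance of the construction~\eqref{diagram-main} attached to the movable pencil $\MMM=|4A_X|$, i.e. to produce $f$ as the Kawamata blowup of $P_5$ and then read off $\bar f$ and $\hat X$. First I would fix the local classes of $A_X$: since $-K_X\qq7A_X$ and each $\Cl(X,P)$ is cyclic of order $\ind(X,P)$ generated by $K_X$ (Lemma~\ref{lemma:K-index}), one gets $A_X\overset{P_5}{\sim}3(-K_X)$, $A_X\overset{P_4}{\sim}3(-K_X)$ and $A_X\overset{P_3}{\sim}-K_X$. Hence $|4A_X|$ is Cartier at $P_4$ and at the index-$2$ points and non-Cartier only at $P_3$ and at $P_5$, where $4A_X\overset{P_5}{\sim}2(-K_X)$ gives $\ct(X,|4A_X|)\le\frac12$ by Lemma~\ref{lemma:ct}; the pencil has no fixed part since $\dim|4A_X|=1>\dim|3A_X|=0$. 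The first and main task is to show that the crepant blowup $f$ is precisely the Kawamata blowup of $P_5$, so that $\alpha=\frac15$, $\beta=\beta_4=\frac25$ and $c=\frac12$. Equivalently, one shows that $\tilde\MMM_4\qq f^*\MMM_4-\frac25E$ is nef, whence $-K_{\tilde X}\qq\frac74\tilde\MMM_4+\frac12E$ is ample, $\tilde X$ is a $\QQ$-factorial Fano with $\uprho=2$, and $\chi$ is an isomorphism.

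Granting this, I would determine $\bar f$ from the numerical relations. Equation~\eqref{eq:main} reads $4\hat q=7s+(7\beta-4\alpha)e=7s+2e$. If $\bar f$ were a fibration, then $\hat q=1$ by Remark~\ref{rem:case-nonbir}, forcing $s=0$, $e=2$; arguing on the $\bar f$-vertical transforms of $M_2$ and $M_3$ as in Propositions~\ref{prop:q6} and~\ref{prop:q7a} one reaches a contradiction, either with Corollary~\ref{cor:base-surface} when the base is a surface or with rationality of the generic fibre of a del Pezzo fibration over the $\mathrm{c}_1$-field $\CC(\PP^1)$. So $\bar f$ is birational, $s>0$ and $\hat q\ge3$; the equation $4\hat q=7s+2e$, combined with nonrationality of $\hat X$ (which is birational to $X$) and the results on smaller index — Theorem~\ref{thm0}, Corollary~\ref{cor:q=3:tor}, Propositions~\ref{prop:q6} and~\ref{prop:t} — then leaves only $\hat q=5$, $s=2$, $e=3$ (for instance $\hat q=7$ is impossible because the birational image of the pencil would be a $1$-dimensional subsystem of $|2A_{\hat X}|$, whereas $\dim|2A_{\hat X}|\le0$ for $\qQ=7$ by Table~\ref{tab:1}). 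Since $\dim|3A_X|=0$, Remark~\ref{rem:case-bir} allows $s_3=0$; feeding this into~\eqref{eq:main1} gives $\beta_3=\frac45$, and~\eqref{eq:-b-gamma-delta-1} yields $b=1$, $\delta=2$. Thus the exceptional divisor $\bar F$ is the proper transform $\tilde M_3$ of the unique $M_3\in|3A_X|$, with $F\qq3A_X$ and $\Clt{\hat X}=0$ by Lemma~\ref{lemma:sl:torsion}.

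It remains to recognise $\hat X$. As $\chi$ is an isomorphism, I would compute on $\tilde X$ with $\tilde\MMM_4\qq f^*\MMM_4-\frac25E$ and $\tilde M_3\qq f^*M_3-\frac45E$. The equalities $\tilde M_3=\bar F$ and $\tilde\MMM_4=\bar f^*\hat\MMM_4$ force $\tilde\MMM_4^2\cdot\tilde M_3=0$; expanding with $A_X^3=\frac1{60}$ gives $\frac45-\frac{16}{125}E^3=0$, hence $E^3=\frac{25}{4}$. This shows $P_5$ has type $\frac15(1,1,4)$ and, through $\tilde\MMM_4\cdot\tilde M_3^2=-1<0$, that $\bar f$ contracts $\tilde M_3$ to a curve $\hat\Upsilon$ with $A_{\hat X}\cdot\hat\Upsilon=\frac12$; moreover $A_{\hat X}^3=\frac18\tilde\MMM_4^3=\frac1{12}$ and, pushing $-K_{\tilde X}\qq\frac74\tilde\MMM_4+\frac12E$ down, $-K_{\hat X}\qq5A_{\hat X}$. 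Finally the pencil $\hat\MMM_4\subseteq|2A_{\hat X}|$ gives $\p_2(\hat X)\ge2$, so since $\qQ(\hat X)=5$ and $\hat X$ is nonrational (being birational to $X$), Theorem~\ref{thm:q5}, case~\ref{thm:q5-3}, identifies it with a hypersurface $X_{10}\subset\PP(1,2,3,4,5)$.

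I expect the principal difficulty to lie in the first step: proving that the canonical centre of $(X,\frac12|4A_X|)$ is exactly the Kawamata blowup of $P_5$, equivalently that $\tilde\MMM_4$ is nef. This demands precise control of $\Bs|4A_X|$ near $P_5$ — an analogue of Corollaries~\ref{cor:q=5:Bs4} and~\ref{cor:q=5:Bs3} — together with the exclusion of infinitely near centres and of any low-discrepancy divisor over the index-$4$ point when it is of type~\typeA{cAx}{4}, using Lemma~\ref{lemma-discrepancies}. Once this is secured, the numerical elimination and the intersection-theoretic identification of $\hat X$ proceed as above, closely following Proposition~\ref{prop:q=5o:main}.
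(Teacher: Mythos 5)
Your overall strategy coincides with the paper's: run the construction~\eqref{diagram-main} on the pencil $\MMM=|4A_X|$, eliminate the numerical possibilities for $(\hat q,s,e)$, and identify $\hat X$ via $\p_2(\hat X)\ge 2$ and Theorem~\ref{thm:q5}. But there is a genuine gap, and it sits exactly where you place "the principal difficulty": you never establish your Step~1, namely that $f$ is the Kawamata blowup of $P_5$ with $\alpha=\frac15$, $\beta=\frac25$ and $\tilde\MMM_4$ nef, yet the whole of your second paragraph depends on it. Your elimination starts from $4\hat q=7s+(7\beta-4\alpha)e=7s+2e$ \emph{with equality}, which presupposes $7\beta-4\alpha=2$, i.e.\ the unproven Step~1; without it one only has the inequality $7\beta-4\alpha\ge 10\alpha\ge 2$ from Lemma~\ref{lemma:ct}, and the center of $f$ could a priori be $P_3$, $P_4$, an index-$2$ point, or a Gorenstein point. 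The paper's proof deliberately avoids needing Step~1 as an input: it works only with $\beta\ge 2\alpha$, $\alpha\ge 1/5$, the integrality $\beta\in\frac1r\ZZ$ at a center of index $r$, and the torsion bookkeeping (Lemma~\ref{lemma:sl:torsion}, Corollary~\ref{cor:q=3:tor}, Proposition~\ref{prop:t}), and the conclusion that $f(E)=P_5$ with $\alpha=1/5$, $\beta=2/5$, $e=3$ comes out at the end of the case analysis rather than going in at the beginning. Note also that the base-locus facts you propose to use to prove nefness of $\tilde\MMM_4$ (analogues of Corollaries~\ref{cor:q=5:Bs4} and~\ref{cor:q=5:Bs3}) are, in the paper, only available as \emph{consequences} of Proposition~\ref{prop:q=7} and of the later links in Section~\ref{sect:q=7} (Corollaries~\ref{cor:q=7:P_5}, \ref{cor:q=7:P_4}, \ref{cor:q=7:P_3}), so your plan is circular as organized.

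Two smaller points. First, the nefness of $\tilde\MMM_4$ and the conclusion that $\chi$ is an isomorphism belong to the refinement in Proposition~\ref{prop:q=7m}, not to the statement at hand; the paper gets "$\bar f$ contracts $\bar M_3$ to a curve" from $s_3=0$ together with Remark~\ref{rem:case-bir} and from the relations~\eqref{eq:-b-gamma-delta-1}, where $\delta>2$ is excluded by observing that otherwise $\bar f(\bar F)$ would lie in $\Bs|2A_{\hat X}|\cap\Bs|3A_{\hat X}|$, a single index-$4$ point, forcing $b=\frac14$ against $3b=5\delta-7$. Your assertion that~\eqref{eq:-b-gamma-delta-1} "yields $b=1$, $\delta=2$" skips this exclusion. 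Second, "Remark~\ref{rem:case-bir} allows $s_3=0$" is not an argument; one must \emph{derive} $s_3=0$ from~\eqref{eq:main1} and the congruence $\beta_3\equiv 4\alpha\pmod\ZZ$ (which does work out, but needs to be said).
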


More precise description of this link will be given in Proposition~\ref{prop:q=7m}.

\begin{proof}
In this case the linear system $|4A_X|$ is a pencil without fixed components.
Apply the construction~\eqref{diagram-main} with $\MMM=|4A_X|$.
If $P$ is the point of index~$5$, then $\MMM\overset{P}{\sim} 2(-K_X)$ and so 
$\beta\ge 2\alpha$
by~\eqref{eq:ct:beta-alpha}. The relation~\eqref{eq:main} has the form
\begin{equation}
\label{eq:q=7:mainn}
4\hat{q}=7 s+(7\beta-4\alpha) e\ge 7 s+10\alpha e\ge 7 s+2e.
\end{equation}

Assume that $\hat q=1$. 
Then $s=0$, $\alpha\le 1/3$, and $e\le 2$ because $\alpha\ge 1/5$.
Therefore, $f(E)$ is a point of index $r\in\{3,4,5\}$ and
$\alpha=1/r$, hence $\beta\in\frac 1r\ZZ$. From~\eqref{eq:q=7:mainn} we 
obtain 
$7\beta=4/e+4/r\ge 14/r$ and $2r\ge 5 e$.
If $e=1$, then $\beta=4(r+1)/(7 r)\notin\frac 1r\ZZ$, a contradiction.
Therefore, $e=2$ and $r=5$. Then from~\eqref{eq:main1} for $k=3$ we obtain
\begin{equation*} 
3=3\hat{q}=7 s_3+(7\beta_3-3\alpha) e=7 s_3+14\beta_3-6/5,\quad s_3+2 
\beta_3=3/5.
\end{equation*}
The only possibility is $s_3=0$ and then $\beta_3=3/10\notin\frac1r\ZZ$, a 
contradiction.

Thus $\hat q>1$ and $s>0$. By~\eqref{eq:q=7:mainn} we have $\hat q\ge 3$.
If $\hat q=3$, then $s=1$, $\p_1(\hat X)\ge 2$, and then the group $\Cl(X)$ 
is torsion free by Corollary~\ref{cor:q=3:tor}.
Hence $e>1$ by Lemma~\ref{lemma:sl:torsion}.
As above, $7\beta=4/r+5/e\ge 14/r$ and $r\ge 2e$.
Since $\beta\in\frac1r\ZZ$, we get a contradiction.

Therefore, $\hat q\ge 4$, $\p_1(\hat X)\le 1$, and $s\ge 2$ by 
Theorem~\ref{thm0}. 
If $e=1$, then $\Clt{\hat X}\neq 0$ by Lemma~\ref{lemma:sl:torsion} because $|A_X|=\varnothing$.
In this situation, $\hat q=4$ by Proposition~\xref{prop:t} and $\Clt{\hat X}\simeq \ZZ/2\ZZ$ by
Corollary~\ref{cor:q=3:tor}. 
Hence, $\qW(\hat X)\neq\qQ(\hat X)$ (see Lemma~\ref{lemma:qQ=qW}).
Further, \eqref{eq:q=7:mainn}
implies $s=2$ and $\alpha=1/5$. 
By \cite[Proposition~3.2]{P:QFano-rat1} there exists only one effective 
divisor $D$ such that $D\qq A_{\hat X}$.
Since $e=1$, $D=\hat E$. On the other hand, from~\eqref{eq:main1} we obtain $s_2,\, s_3\in\{0,\, 1\}$, so 
either $D=\hat M_2$ or $\hat M_3$, a contradiction.

Thus $e>1$  and $\hat q\ge 5$. 
If $\hat q\ge 6$, then $s\ge 4$ by 
Theorem~\ref{thm0}. This contradicts~\eqref{eq:q=7:mainn}.
It remains to consider the case $\hat q=5$. Then $s=2$, hence $\p_2(\hat 
X)\ge 2$.
By Theorem~\ref{thm:q5}
the only possible case is where $\hat X$ is isomorphic to a hypersurface 
$X_{10}\subset\PP(1,2,3,4,5)$.
From~\eqref {eq:main1} we obtain 
$e=3$, $s_2 =1 $, $s_3 =0$,   $s_6 =3$, that is,
$\hat E\sim 3A_{\hat X}$, $M_2\sim A_{\hat X}$, $\hat\MMM\subset |2A_{\hat 
X}|$,
$\hat\MMM_6\subset |3A_{\hat X}|$.
Comparing the dimensions of linear systems we see that
$\hat\MMM=|2A_{\hat X}|$ and $\hat\MMM_6=|3A_{\hat X}|$.
The relations~\eqref{eq:-b-gamma-delta-1} have the form
\begin{align*}
3b&=5\delta -7,
\\
3\gamma_2=\gamma_6&=\delta-2,
\\
3\gamma_4&=2\delta-4.
\end{align*}
Therefore, $\delta\ge 2$ and $b\ge 1$. If $\delta> 2$, then 
$\gamma_2,\gamma_4,\gamma_6>0$.
This means that $\bar f(\bar F)$ is contained in the base loci of linear 
systems 
$|2A_{\hat X}|$ and $|3A_{\hat X}|$.
On the other hand, it can be seen from the equation~\eqref{eq:thm:q5-1:a}
that $\Bs |2A_{\hat X}|\cap\Bs |3A_{\hat X}|$ is a single point that has index~$4$.
But in this case $b=1/4$ by Lemma~\ref{lemma-discrepancies}, a contradiction.
Hence $\delta=2$, $b=1$, $\gamma_2=\gamma_4=\gamma_6=0$, and $\gamma_5=1$.
This shows that $\bar f(\bar F)$ is a curve that is  not contained in $\Bs |2A_{\hat X}|$ nor in $\Bs |3A_{\hat X}|$.
\end{proof}

\begin{scorollary}
\label{cor:q=7:beta}
In the notation of Proposition~\xref{prop:q=7} we have
$\beta_k<1$ for $k=2,\dots,6$.
\end{scorollary}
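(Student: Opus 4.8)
The plan is to extract the $\beta_k$ directly from the numerical relation \eqref{eq:main1}, specialized to the data produced in Proposition~\ref{prop:q=7}. There the link is birational with $q=\qQ(X)=7$, $\hat q=\qQ(\hat X)=5$, the blowup $f$ of the index-$5$ point giving $\alpha=1/5$, and $e=3$. Substituting these into \eqref{eq:main1} and clearing denominators collapses everything to the single clean formula
\[
\beta_k=\frac{4k-5s_k}{15},\qquad k=2,\dots,6 .
\]
Hence the desired inequality $\beta_k<1$ is equivalent to the lower bound $s_k>(4k-15)/5$ on the nonnegative integer $s_k$, so the whole statement reduces to bounding $s_k$ from below for each $k$.

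For $k\in\{2,3,4,6\}$ the value of $s_k$ was already pinned down while proving Proposition~\ref{prop:q=7}, namely $s_2=1$, $s_3=0$, $s_4=s=2$ and $s_6=3$. Plugging these into the formula gives $\beta_2=1/5$, $\beta_3=4/5$, $\beta_4=2/5$ and $\beta_6=3/5$, each visibly $<1$; so these four cases are immediate bookkeeping.

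The only case not settled by the proposition is $k=5$, and this is where the (modest) real work sits. Here I would argue by a dimension count. The proper transform $\hat\MMM_5\subseteq|s_5A_{\hat X}|$ of the \emph{complete} system $\MMM_5=|5A_X|$ has the same dimension as $|5A_X|$, which equals $1$ by Table~\ref{tab:1}, Case~\ref{Case:q=7:B=2-2-2-3-4-5}. Since $\dim|A_{\hat X}|=0$ on the hypersurface $\hat X=X_{10}\subset\PP(1,2,3,4,5)$ (Example~\ref{ex:X10}), a one-dimensional subsystem of $|s_5A_{\hat X}|$ is possible only when $s_5\ge 2$. Feeding this back into the formula yields $\beta_5=(4-s_5)/3\le 2/3<1$, which closes the last case.

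The main obstacle, such as it is, is precisely the $k=5$ case: because Proposition~\ref{prop:q=7} does not record $s_5$, one must supply the extra input that a birational map preserves the dimension of the complete linear system $|5A_X|$ and combine it with $\dim|A_{\hat X}|=0$ to force $s_5\ge 2$. Everything else is a direct evaluation of \eqref{eq:main1} at the parameters already computed.
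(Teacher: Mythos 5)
Your proposal is correct and follows essentially the route the paper intends: the corollary is stated without a separate proof precisely because, once Proposition~\ref{prop:q=7} has fixed $\hat q=5$, $\alpha=1/5$, $e=3$ and the values $s_2=1$, $s_3=0$, $s_4=2$, $s_6=3$, the relation \eqref{eq:main1} yields $\beta_k=(4k-5s_k)/15$ and the bounds are immediate. Your dimension count forcing $s_5\ge 2$ is a legitimate way to settle the one value the proposition's proof does not record explicitly (the paper implicitly gets $s_5=4$, $\beta_5=0$ from $\gamma_5\ge0$ in \eqref{eq:-b-gamma-delta-1} together with $s_5+3\beta_5=4$), so the argument is complete as written.
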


\begin{scorollary}
\label{cor:q=7:P_5}
$P_5\notin\Bs|5A_X|$.
\end{scorollary}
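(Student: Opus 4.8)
The plan is to show that the coefficient $\beta_5$ appearing in the expansion $\tilde\MMM_5\qq f^*\MMM_5-\beta_5 E$, with $\MMM_5:=|5A_X|$, vanishes. Since $\beta_5\ge 0$ always (being the generic vanishing order of $f^*\MMM_5$ along $E$), and since $\beta_5=0$ is equivalent to the general member of $|5A_X|$ avoiding the center $f(E)=P_5$, the equality $\beta_5=0$ is precisely the assertion $P_5\notin\Bs|5A_X|$. By Proposition~\ref{prop:q=7} the morphism $f$ is the Kawamata blowup of the index-$5$ point $P_5$ and $\alpha=1/5$, so this is the relevant setting.

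First I would establish the integrality $\beta_5\in\ZZ$. Since $P_5$ is a point of index~$5$, Lemma~\ref{lemma:K-index} gives $\Cl^{\mathrm{sc}}(X,P_5)\simeq\ZZ/5\ZZ$; in particular the class of $A_X$ satisfies $5[A_X]=0$ in this group, i.e.\ $5A_X$ is Cartier at $P_5$. Consequently a general member $M_5\in|5A_X|$ is Cartier at $P_5$, and pulling it back by the Kawamata (weighted) blowup $f$ of the cyclic quotient singularity $P_5$ produces a multiplicity $\mult_E f^*M_5$ that is a nonnegative integer: every monomial occurring in an invariant local equation of $M_5$ has weighted order divisible by the index~$5$, so dividing by $5$ yields an integer. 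Hence $\beta_5\in\ZZ_{\ge0}$.

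The remaining step is to bound $\beta_5$ from above, for which I would invoke Corollary~\ref{cor:q=7:beta}, giving $\beta_k<1$ for $k=2,\dots,6$ and in particular $\beta_5<1$. Combining $\beta_5\in\ZZ_{\ge0}$ with $\beta_5<1$ forces $\beta_5=0$, which is the desired conclusion $P_5\notin\Bs|5A_X|$.

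The only genuinely nontrivial point is the integrality of $\beta_5$: one must notice that $5A_X$ is Cartier at $P_5$ and that the Kawamata blowup of a terminal cyclic quotient assigns integral multiplicities to pullbacks of divisors Cartier at its center; everything else is bookkeeping resting on the numerical data of Proposition~\ref{prop:q=7} and the bound of Corollary~\ref{cor:q=7:beta}. As a consistency check, relation~\eqref{eq:main1} with $k=5$, $\hat q=5$, $e=3$, $\alpha=1/5$ reads $4=s_5+3\beta_5$, which indeed admits the solution $\beta_5=0$ (then $s_5=4$).
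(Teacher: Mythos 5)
Your proof is correct and follows the paper's route: the paper's own proof is simply ``follows from the equality $\beta_5=0$,'' and you supply a valid derivation of that equality by combining the bound $\beta_5<1$ of Corollary~\ref{cor:q=7:beta} with the integrality of $\beta_5$, which holds because $5A_X$ is Cartier at $P_5$ (Lemma~\ref{lemma:K-index}) and the Kawamata blowup of the cyclic quotient $\frac15(1,1,4)$ assigns integral multiplicities to pullbacks of divisors Cartier at its center. This matches the intended argument (equivalently one can read $s_5=4$ off the relation $e\gamma_5=s_5\delta-5$ with $\gamma_5=1$, $\delta=2$, $e=3$ from Proposition~\ref{prop:q=7} and then get $\beta_5=0$ from $4=s_5+3\beta_5$), so no gap remains.
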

\begin{proof}
Follows from the equality $\beta_5=0$.
\end{proof}

\subsection{Case~\xref{Case:q=7:B=3-8-9}.}
Then $\B(X)=(3,8,9)$, $|A_X|=|2A_X|=\varnothing$,  $\dim|3A_X|=\dim|4A_X|=0$, and  $\dim|5A_X|=\dim|6A_X|=1$.
\begin{sproposition}
\label{prop:q=7:B=3-8-9}
Let $X$ be a $\QQ$-Fano threefold of type~\xref{Case:q=7:B=3-8-9} in Table~\xref{tab:1}. 
Assume that $X$ is not rational. Then there exists 
a Sarkisov link of the form~\eqref{diagram-main}, where $f$ is the Kawamata 
blowup of 
the point of index~$9$, the variety $\hat X$ is isomorphic to a hypersurface 
$\hat X_{10}\subset\PP(1,2,3,4,5)$, and
$\bar f$
contracts a divisor to a Gorenstein or smooth point.
\end{sproposition}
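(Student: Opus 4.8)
The plan is to run the Sarkisov-link construction \eqref{diagram-main} on the pencil $\MMM:=|6A_X|$, following the strategy of Proposition~\ref{prop:q=7}, and to pin down the output with the numerical relations \eqref{eq:main}, \eqref{eq:main1}. First I would record the local data. All three non-Gorenstein points $P_3,P_8,P_9$ are cyclic quotients (each index occurs with axial weight $1$ in $\B(X)=(3,8,9)$), and $A_X\overset{P_9}{\sim}4(-K_X)$, so $\MMM\overset{P_9}{\sim}6(-K_X)$. Since $|A_X|=|2A_X|=\varnothing$ and $\dim|kA_X|\le 0$ for $k\le 4$, the pencil $|6A_X|$ has no fixed component and the construction applies with $n=6$, $q=7$. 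Lemma~\ref{lemma:ct} gives $\ct(X,\MMM)\le 1/6$, i.e. $\beta\ge 6\alpha$; as every point of $X$ has index $\le 9$ we also have $\alpha\ge 1/9$, so \eqref{eq:main} reads
\[
6\hat q=7s+(7\beta-6\alpha)e\ge 7s+36\alpha e\ge 7s+4e .
\]

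Next I would dispose of the fibration case $\hat q=1$, as in Proposition~\ref{prop:q7a}. Here $s=0$. If $\hat X$ is a surface, then $\bar\MMM$ and $\bar\MMM_5$ are $\bar f$-vertical; by Corollary~\ref{cor:base-surface} each of the four possible del Pezzo bases carries a class with $\dim|kA|=1$ for a \emph{unique} $k$, forcing $\bar f_*\bar\MMM=\bar f_*\bar\MMM_5$ and hence $6A_X\qq 5A_X$, absurd. If $\hat X\simeq\PP^1$, then $\bar f$ is a del Pezzo fibration of degree $6$ with a multiple fibre, whose generic fibre has a rational point over the $\mathrm c_1$-field $\CC(\hat X)$ and is therefore rational, contradicting nonrationality of $X$. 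Thus $\hat q>1$, so $\bar f$ is birational (Remark~\ref{rem:case-nonbir}), $s>0$, and $\hat X$ is a $\QQ$-Fano threefold with $\dim\hat\MMM=1$, i.e. $\p_s(\hat X)\ge 2$. Using $6\hat q\ge 7s+4e$ and $\hat q\le 7$, together with Proposition~\ref{prop:q6} (which kills $\hat q=6$ outright), Propositions~\ref{prop:q7a} and~\ref{prop:t} (for $\hat q=7$), Corollary~\ref{cor:q=3:tor} and Theorem~\ref{thm0} (for $\hat q=3,4$), and the integrality of $\beta$ at the candidate centres (for $\hat q=2$), I would eliminate every value $\hat q\in\{2,3,4,6,7\}$, just as in Proposition~\ref{prop:q=7}. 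This leaves $\hat q=5$, whence $s=2$, $\p_2(\hat X)\ge 2$, and Theorem~\ref{thm:q5} identifies $\hat X$ with a hypersurface $\hat X_{10}\subset\PP(1,2,3,4,5)$.

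Finally I would extract the geometry of the link. The congruence $\beta\equiv 6\alpha\pmod{\ZZ}$ at the centre, together with $30=7s+(7\beta-6\alpha)e$, forces $\alpha=1/9$, $\beta=2/3$, $s=2$, $e=4$; in particular $f$ is the Kawamata blowup of the unique index-$9$ point $P_9$. Lemma~\ref{lemma:sl:torsion} then gives $d=e=4$, so $\Cl(\hat X)$ is torsion free. Running \eqref{eq:main1} for $k=3,4,5$ yields $s_3=1$, $s_4=0$, $s_5=3$; the vanishing $s_4=0$ shows, via Remark~\ref{rem:case-bir}, that the contracted divisor $\bar F$ is the proper transform of the unique $M_4\in|4A_X|$. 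The relations \eqref{eq:-b-gamma-delta-1}, namely $4b=5\delta-7$ and $4\gamma_k=s_k\delta-k$, then give $\delta=3$, $b=2$, and $\gamma_3=\gamma_6=0$, $\gamma_5=1$. In contrast to Proposition~\ref{prop:q=7}, here the discrepancy $b=2$ is a positive integer, so by Lemma~\ref{lemma-discrepancies} the centre $\bar f(\bar F)$ has Gorenstein index $1$; a short check that it is $0$-dimensional (the discrepancy exceeds the value forced along a contracted curve) shows that $\bar f$ contracts $\bar F$ to a Gorenstein or smooth point, as claimed.

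The main obstacle will be the elimination of the intermediate indices $\hat q\in\{2,3,4,6,7\}$: as in Proposition~\ref{prop:q=7}, this rests on delicate integrality and congruence bookkeeping for $\beta,\beta_3,\beta_5$ at $P_3,P_8,P_9$, needed both to rule out the competing blow-up centres and to force $\alpha=1/9$. The fibration case $\hat q=1$ and the final identification of $\bar f$ as a contraction to a point rather than a curve (via \eqref{eq:-b-gamma-delta-1}) are the remaining delicate points.
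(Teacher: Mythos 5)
Your overall strategy is the paper's: run the link on $\MMM=|6A_X|$, use \eqref{eq:main} and \eqref{eq:main1} to force $\hat q=5$ and $s=2$, invoke Theorem~\ref{thm:q5} to identify $\hat X$ with $X_{10}\subset\PP(1,2,3,4,5)$, and then read off from \eqref{eq:-b-gamma-delta-1} that $b\ge 2$, so that Lemma~\ref{lemma-discrepancies} together with Corollary~\ref{cor:q5:ind2sing} rules out both a curve and a non-Gorenstein point as the centre of $\bar f$. Your endgame data ($r=9$, $\alpha=1/9$, $e=4$, $s_3=1$, $s_4=0$, $s_5=3$, $\bar F=\bar M_4$) agree with the paper; the paper only derives $\delta\ge 3$ and $b\ge 2$ rather than your exact $\delta=3$, $b=2$, but that is all that is needed.

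Two steps, however, are genuinely defective as written. First, your treatment of $\hat q=1$ with $\hat X\simeq\PP^1$ is transplanted from Proposition~\ref{prop:q7a} and does not apply here: since $|A_X|=\varnothing$ there is no multiplicity-$6$ fibre, and the natural multiple fibre coming from $2M_3\in|6A_X|$ has multiplicity $2$; a degree-$2$ del Pezzo surface over a $\mathrm{c}_1$-field need not be rational, so the rationality contradiction evaporates. The paper instead kills $\hat q=1$ purely arithmetically, from $6(r+1)\equiv 0\pmod 7$ failing for $r\in\{3,8,9\}$; alternatively, since $s_3=s_4=0$ here, both $\bar M_3$ and $\bar M_4$ are multiple fibres, giving $m_3M_3\sim m_4M_4\sim 6A_X$ and the impossible $3m_3=4m_4=6$. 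Second, at $\hat q=5$ the inequality $30\ge 7s+4e$ also permits $s=3$, which you pass over in silence; the paper devotes a separate parity-and-congruence argument to excluding it, and this subcase cannot simply be absorbed into ``$s=2$''. More broadly, the elimination of $\hat q\in\{2,3,4,6,7\}$ is only announced in your proposal; the paper's mechanism there is not mere integrality of $\beta$ but the congruence $6(\hat qr+e)\equiv 0\pmod 7$ combined with the lower bound $\lambda=\beta r\ge 6$, plus the torsion argument of Corollary~\ref{cor:q=3:tor} for $\hat q=3$ (where $|A_X|=|2A_X|=\varnothing$ forces $|\Clt{\hat X}|\ge 3$ when $e=1$). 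Until those cases are actually carried out and $s=3$ is excluded, the identification $\hat q=5$, $s=2$ is not established.
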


\begin{proof}
In this case the linear system $|6A_X|$ is a pencil without fixed components.
Apply the construction~\eqref{diagram-main} with $\MMM=|6A_X|$.
If $P$ is the point of index~$9$, then $\MMM\overset{P}{\sim} 6(-K_X)$ and so 
$\beta\ge 6\alpha$
by~\eqref{eq:ct:beta-alpha}. The relation~\eqref{eq:main} has the form
\begin{equation}
\label{eq:7:1/72}
6\hat{q}=7 s+(7\beta-6\alpha) e\ge 7 s+36\alpha e\ge 7 s+4e.
\end{equation}
If $\alpha\ge 1$, then $\hat q\ge 6$, $s\ge 2$, and $\hat q>7$, a 
contradiction.
Thus $f(E)$ is a cyclic quotient singularity of index $r\in\{3,8,9\}$ and 
$\alpha=1/r$.
The number $\lambda:=\beta r$ is integral, $\lambda\ge 6$ 
and~\eqref{eq:7:1/72} can be rewritten in the following form
\begin{equation*}
6\hat{q}r=7 sr+(7\lambda-6 ) e\ge 7 sr+36 e,
\qquad
6 (\hat{q}r+e)=7 (sr+\lambda e).
\end{equation*}
If $\hat q=1$, then $s=0$, $e=1$, and $6 (r+1)\equiv 0\mod 7$, a 
contradiction.
Hence, $\hat q>1$, $\bar f$ is birational and $s\ge 1$.
If $\hat q=2$, then again $s=e=1$, and $6 (2r+1)\equiv 0\mod 7$, hence $r=3$ 
and 
$\lambda=3<6$, a contradiction.
Hence, $\hat q\ge 3$. 
Assume that $\hat q=3$. If $e\ge 2$, then $s=1$, $e=2$.
In this case $r$ must be even, so $r=8$ and $\lambda$ is not an integer, a 
contradiction.
Hence, $e=1$. Then $s\le 2$.
Since $|A_X|=|2A_{X}|=\varnothing$, $|\Clt{\hat X}|\ge 3$ by 
Lemma~\ref{lemma:sl:torsion}.
Then by Corollary~\ref{cor:q=3:tor} the variety $\hat X$ is rational, a 
contradiction.
Thus $\hat q\ge 4$ and by Theorem~\ref{thm0} we may assume that $s\ge 2$

If $\hat q=4$. 
Then $s=2$, $e\le 2$, and $4r+e\equiv 0\mod 7$.
The only possibility is $r=3$, $e=2$, and $\lambda=3<6$, a contradiction.
If $\hat q=6$, then $\hat X$ is rational by Proposition~\xref{prop:q6}.
If $\hat q=7$, then $e$ must be divisible by $7$ and $s\ge 4$ 
by Theorem~\ref{thm0}. This contradicts~\eqref{eq:7:1/72}.

Finally, assume that 
$\hat q=5$. Then $s\le 3$.
If $s=3$, then $e\le 2$ and $e=1$ because $sr+\lambda e\equiv 0\mod 2$.
But then $\hat{q}r+e=5r+1\mod 7$, which is impossible for $r\in\{3,8,9\}$.
Thus $s=2$
and so $\p_2(\hat X)\ge 2$. 
By Theorem~\ref{thm:q5}
the variety $\hat X$ is isomorphic to a hypersurface 
$X_{10}\subset\PP(1,2,3,4,5)$.
Hence $\Cl(\hat X)=0$ and $e>2$ by Lemma~\ref{lemma:sl:torsion}.
From~\eqref{eq:7:1/72} and~\eqref{eq:main} with $k=3$, $4$, $5$
we obtain $r=9$, $\alpha=1/9$, $e=4$, $s_3 =1$, $s_4 =0$, $s_5 =3$, $s =2$, 
that is,
$\hat E\sim 4 A_{\hat X}$, $M_3\sim A_{\hat X}$, $\hat\MMM_5\subset |3A_{\hat 
X}|$,
$\hat\MMM\subset |2A_{\hat X}|$.
Comparing the dimensions of linear systems we see that
$\hat\MMM=|2A_{\hat X}|$.
The relations~\eqref{eq:-b-gamma-delta-1} have the form
\begin{align*}
4b&=5\delta -7,
\\
4\gamma_3&=\delta-3.
\end{align*}
We obtain $\delta\ge 3$ and $b\ge 2$. 
Therefore, $\bar f(\bar F)$ is a Gorenstein or smooth point   by Lemma~\ref{lemma-discrepancies} and Corollary~\ref{cor:q5:ind2sing}.
\end{proof}

\subsection{Case~\xref{Case:q=7:B=2-2-2-5-8}.}
Then $\B(X)=(2^3, 5, 8)$, $|A_X|=\varnothing$,  $\dim|2A_X|=\dim|3A_X|=0$, and  $\dim|4A_X|=1$.
\begin{sproposition}
\label{prop:q=7:B=2-2-2-5-8}
Let $X$ be a $\QQ$-Fano threefold 
of type~\xref{Case:q=7:B=2-2-2-5-8} in Table~\xref{tab:1}.
Assume that $X$ is not rational. Then there exists a Sarkisov link of the 
form~\eqref{diagram-main}, where $f$ is the Kawamata blowup of 
the point of index~$8$, the variety $\hat X$ is isomorphic to a hypersurface 
$\hat X_{10}\subset\PP(1,2,3,4,5)$, and $\bar f$
contracts a divisor to a Gorenstein or smooth point.
\end{sproposition}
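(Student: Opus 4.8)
The plan is to imitate the proofs of Propositions~\ref{prop:q=7} and~\ref{prop:q=7:B=3-8-9} and to apply the construction~\eqref{diagram-main} to the linear system $\MMM:=|4A_X|$, which is a pencil without fixed components (here $\dim|4A_X|=1$). The decisive input is the local behaviour at the index-$8$ point $P_8$: since $A_X\overset{P_8}{\sim}7(-K_X)$ one has $\MMM\overset{P_8}{\sim}4(-K_X)$, so Lemma~\ref{lemma:ct} gives $\ct(X,\MMM)\le 1/4$ and hence $\beta\ge 4\alpha$. As $\Cl(X)$ is torsion free (Proposition~\ref{prop:6-7}), $\qQ(X)=\qW(X)=7$ and $7\beta-4\alpha$ is a positive integer. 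Substituting into~\eqref{eq:main} and using $\alpha\ge 1/8$ yields
\[
4\hat q=7s+(7\beta-4\alpha)e\ge 7s+24\alpha e\ge 7s+3e ,
\]
which drives the whole analysis.

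Next I would eliminate every value of $\hat q$ except $5$. The value $\hat q=6$ is impossible by Proposition~\ref{prop:q6}; for $\hat q=7$ the admissible $s\in\{1,2,3\}$ force $\p_1(\hat X)\ge 2$, $\p_2(\hat X)\ge 2$, or $\p_3(\hat X)\ge 2$, each excluded by Proposition~\ref{prop:q7a}, Proposition~\ref{prop:6-7}, and Theorem~\ref{thm0}. For $\hat q\le 3$ one checks, separating the fibration case $\hat q=1$ from the birational case, that the displayed bound together with the residue of $\beta$ modulo $\ZZ$ at the possible centres (of index $2,5,8$) leaves no admissible $(\alpha,\beta,e)$; and $\hat q=4$, as well as the subcase $s=1$ when $\hat q=5$, forces $\p_1(\hat X)\ge 2$ and hence rationality by Theorem~\ref{thm0}. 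This leaves $\hat q=5$, $s=2$; then $\p_2(\hat X)\ge 2$, and Theorem~\ref{thm:q5} identifies $\hat X$ with a hypersurface $X_{10}\subset\PP(1,2,3,4,5)$. Re-examining $(7\beta-4\alpha)e=6$ under $\beta\ge 4\alpha$ and $\beta\equiv m\alpha\pmod{\ZZ}$ rules out the centres of index $2$ and $5$ and forces $\alpha=1/8$, $\beta=1/2$, $e=2$; thus $f$ is the Kawamata blowup of $P_8$, and $\Cl(\hat X)$ is torsion free by Lemma~\ref{lemma:sl:torsion}.

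Finally I would describe $\bar f$ through the relations~\eqref{eq:-b-gamma-delta-1}, which here read $2b=5\delta-7$ and $2\gamma_k=s_k\delta-k$. Solving~\eqref{eq:main1} for $k=2,3$ gives $s_2=0$ and $s_3=1$; by Remark~\ref{rem:case-bir} the equality $s_2=0$ means that the proper transform of the unique divisor $M_2\in|2A_X|$ is the $\bar f$-exceptional divisor $\bar F$. The condition $\gamma_3=(\delta-3)/2\in\ZZ_{\ge 0}$ then forces $\delta\ge 3$, whence $b\ge 4$. Since $b\ge 2$, Lemma~\ref{lemma-discrepancies} together with Corollary~\ref{cor:q5:ind2sing} shows that $\bar f$ contracts $\bar F$ to a Gorenstein or smooth point of $\hat X$, as claimed.

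I expect the main obstacle to be the combinatorial bookkeeping in the second step: for each candidate $\hat q$ and each centre one must juggle the integrality of $7\beta-4\alpha$, the residue of $\beta$ modulo $\ZZ$, and the lower bounds on $s$ coming from Theorem~\ref{thm0}, and verify that index $8$ is the unique survivor. By contrast the final blowdown step is short once $e=2$ is in hand; its only delicate points are the identification of $\bar F$ with the proper transform of $M_2$ via $s_2=0$ and the exclusion of an index-$2$ contraction centre through Corollary~\ref{cor:q5:ind2sing}.
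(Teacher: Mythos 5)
Your proposal is correct and follows essentially the same route as the paper: the same choice $\MMM=|4A_X|$, the same key inequality $4\hat q\ge 7s+3e$ from $\beta\ge4\alpha$, the same elimination of all $\hat q\neq 5$ via Theorem~\ref{thm0} and the integrality of $\beta r$, the identification of $\hat X$ via Theorem~\ref{thm:q5}, and the same final computation $2b=5\delta-7$, $\delta\ge3$, $b\ge4$ combined with Lemma~\ref{lemma-discrepancies} and Corollary~\ref{cor:q5:ind2sing}. The only cosmetic difference is that the paper rules out $\alpha\ge1/2$ early and derives $e\ge2$ from Lemma~\ref{lemma:sl:torsion} before pinning down $\alpha=1/8$, whereas you postpone the exclusion of the index-$2$ and index-$5$ centres to the residue analysis at the end; both orderings work.
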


\begin{proof}
In this case the linear system $|4A_X|$ is a pencil without fixed components.
Apply the construction~\eqref{diagram-main} with $\MMM=|4A_X|$.
If $P$ is the point of index~$8$, then $\MMM\overset{P}{\sim} 4(-K_X)$ and so 
$\beta\ge 4\alpha$
by~\eqref{eq:ct:beta-alpha}. The relation~\eqref{eq:main} has the form
\begin{equation}
\label{eq:q=7:B=2-2-2-5-8}
4\hat{q}=7 s+(7\beta-4\alpha) e\ge 7 s+24\alpha e\ge 7 s+3e.
\end{equation}

If $\hat q=1$, then $s=0$, $e=1$, $\alpha=1/8$, and 
$\beta=9/14\notin\frac18\ZZ$, a contradiction.
Thus $\hat q>1$, hence $s\ge 1$ and $\hat q\ge 3$.
If $\alpha\ge 1/2$, then $\hat q\ge 5$.
In this case $s\ge 2$ by Theorem~\ref{thm0}, then $\hat q\ge 6$, $s\ge 4$, 
and
$\hat q>7$, a contradiction. 

Therefore, $\hat q\ge 3$, $f(E)$ is a point of index~$r>1$, and 
$\alpha=1/r$, where $r\in\{5,\, 8\}$.
If $\hat q=3$, then $s=e=1$ and $\beta=(5r+4)/7\notin\frac1r\ZZ$, a 
contradiction.
Therefore, $\hat q\ge 4$ and $s\ge 2$ by Theorem~\ref{thm0}.
In this case $\hat q\ge 5$ by~\eqref{eq:q=7:B=2-2-2-5-8}.
Assume that $\hat q\ge 6$. In this case $s\ge 4$ by Theorem~\ref{thm0} and so 
$\hat q>7$. Again 
we get a contradiction. Therefore, $\hat q=5$ and so $s=2$. 
By Theorem~\ref{thm:q5}
the variety $\hat X$ is isomorphic to a hypersurface $\hat 
X_{10}\subset\PP(1,2,3,4,5)$.
Hence $\Cl(\hat X)=0$ and $e\ge 2$ by Lemma~\ref{lemma:sl:torsion} because 
$|A_X|=\varnothing$.
From~\eqref {eq:q=7:B=2-2-2-5-8} we obtain $e=2$, $\alpha=1/8$, and $r=8$.

From~\eqref{eq:main} 
we also obtain
$s_k =k-2$ for $k=2,\dots,6$,
that is,
$\hat E\sim 2A_{\hat X}$ and
$\hat\MMM_k\subset |(k-2)A_{\hat X}|$ for $k=3,\dots,6$.
Comparing the dimensions of linear systems we see that
$\hat\MMM_4=|2A_{\hat X}|$,
$\hat\MMM_5=|3A_{\hat X}|$, and
$\hat\MMM_6$ is a codimension 1 subsystem in $|4A_{\hat X}|$.
Finally, the relations~\eqref{eq:-b-gamma-delta-1} in our situation have the form
\begin{align*}
2b &=5\delta-7,
\\[2pt]
2\gamma_k &=(k-2)\delta-k\quad\text{for $k=3,\dots,6$}.
\end{align*}
We obtain $\delta\ge 3$ and $b\ge 4$.
Hence, $\bar f(\bar F)$ is a Gorenstein (or smooth) point by Lemma~\ref{lemma-discrepancies} and Corollary~\ref{cor:q5:ind2sing}.
\end{proof}

\begin{scorollary}
$\beta_k<1$ for  $k=2,\dots,6$.
\end{scorollary}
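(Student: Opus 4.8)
The plan is to read off the value of $\beta_k$ directly from the numerical relation~\eqref{eq:main1}, using the data pinned down in the proof of Proposition~\ref{prop:q=7:B=2-2-2-5-8}. That proof fixes $q=\qQ(X)=7$, $\hat q=5$, $e=2$, and $\alpha=1/8$ (the discrepancy of the Kawamata blowup of the index-$8$ point $P$), and moreover establishes $s_k=k-2$ for $k=2,\dots,6$. These are precisely the quantities entering~\eqref{eq:main1}, so the corollary should reduce to a single substitution, exactly as Corollary~\ref{cor:q=7:beta} did in Case~\ref{Case:q=7:B=2-2-2-3-4-5}.

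Concretely, I would substitute the above values into
\[
k\hat q = q s_k + (q\beta_k - k\alpha)\,e,
\]
obtaining $5k = 7(k-2) + (7\beta_k - k/8)\cdot 2$, and then solve for $\beta_k$. This gives $14\beta_k = 14 - 7k/4$, hence $\beta_k = 1 - k/8$. Since $1\le k\le 6$, each such value lies strictly between $0$ and $1$; in particular $\beta_k<1$, which is the assertion.

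There is no genuine obstacle here: the entire content of the statement is carried by the numerical identification already completed in Proposition~\ref{prop:q=7:B=2-2-2-5-8}, and what remains is routine arithmetic. The only point worth verifying is that the listed values $s_k=k-2$ indeed govern the proper transforms $\tilde\MMM_k$ across the full range $k=2,\dots,6$ (not merely for $k=4$, where $\MMM=\MMM_4$), but this is exactly what the cited computation supplies. As an internal consistency check, I note that the value $\beta_4 = 1/2$ recovered from the formula $\beta_k = 1-k/8$ agrees with the deduction $\beta=1/2$ already forced by~\eqref{eq:q=7:B=2-2-2-5-8} in the proof of the proposition.
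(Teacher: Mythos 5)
Your computation is correct and is exactly the argument the paper leaves implicit: with $\hat q=5$, $e=2$, $\alpha=1/8$ and $s_k=k-2$ pinned down in the proof of Proposition~\ref{prop:q=7:B=2-2-2-5-8}, relation~\eqref{eq:main1} yields $\beta_k=1-k/8<1$ for $k=2,\dots,6$, consistent with the congruence $\beta_k\equiv 7k\alpha\bmod\ZZ$ at the index-$8$ point and with $\beta=\beta_4=1/2$ from~\eqref{eq:q=7:B=2-2-2-5-8}. This matches how the analogous Corollary~\ref{cor:q=7:beta} is obtained, so there is nothing to add.
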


\section{A family of nonrational $\QQ$-Fano threefolds of index~$7$}
\label{sect:q=7}
In this section we consider $\QQ$-Fano threefolds as in  \xref{Case:q=7:B=2-2-2-3-4-5} of Table~\xref{tab:1}
in detail.
Thus we assume that 
\[\textstyle
\qQ(X)=7,\quad  \B(X)=(2^3, 3, 4, 5),\quad   A_X^3=\frac{1}{60}
\]
and then
\[
|A_X|=\varnothing, \quad \dim|2A_X|=\dim|3A_X|=0,\quad\dim|4A_X|= \dim|5A_X|= 1,\quad \dim|6A_X|=2.
\]
The collection of non-Gorenstein singularities of $X$ is as follows:
\begin{itemize}
\item
a unique point $P_5$ of index~$5$ that is a cyclic quotient,
\item
a unique point $P_3$ of index~$3$ that is a cyclic quotient,
\item
a unique point $P_4$ of index~$4$ that is either a cyclic quotient or a
singularity of type~\typeA{cAx}{4},
\item
index~$2$ singularities $P_2^{(1)}$,\dots,$P_2^{(m)}$, where $0\le m\le 3$ and 
$\aw (X,P_4)+\sum\aw(X,P_2^{(i)})=4$.
\end{itemize}

We will show in Theorem~\xref{thm:X14} that a nonrational variety of this type is  isomorphic to a hypersurface $X_{14}\subset\PP(2,3,4,5,7)$.
Moreover, we explicitly describe a birational transformation of this variety  
to a special hypersurface of degree $10$ in $\PP(1,2,3,4,5)$ (see Sect.~\ref{sect:q=5}).
First, we need  auxiliary facts (Lemmas~\ref{lemma:q=7:sl1} and~\ref{lemma:q=7:bl3}) describing two Sarkisov links 
that allow us to make conclusion on some linear systems on $X$ (see Corollaries~\ref{cor:q=7:P_4}, \ref{cor:q=7:P_3}, and~\ref{cor:q=7:P_3-P_4-P_5}). 

\begin{lemma}
\label{lemma:q=7:sl1}
Let $X$ be a $\QQ$-Fano threefold of type~\xref{Case:q=7:B=2-2-2-3-4-5} in Table~\xref{tab:1}. 
Assume that $X$ is not rational. 
Then there exists a Sarkisov link of the form~\eqref{diagram-main}, where $f$ 
is extremal blowup of 
the point of index~$4$, $\bar f$ is birational and  contracts the divisor $\bar 
M_2$
to a point. In this situation, $\hat{X}$ is a $\QQ$-Fano threefold with $\qQ(\hat X)= 3$, $\Clt{\hat X}=0$,
$\p_1(\hat X)\ge 2$,  and $\p_2(\hat X)\ge 3$.
Moreover, we have
\[
s_2 = 0, \quad
s_3 = s_5 = 1,  \quad
e= s_4 =   s_6 = 2,\quad
\text{and}\ \beta_k<1
\ \text{for $k=2,\dots,6$}. 
\]
\end{lemma}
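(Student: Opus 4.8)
The plan is to obtain the link as the output of the construction \eqref{diagram-main} applied to the movable pencil $\MMM:=|5A_X|$, reading off all the invariants from the relations \eqref{eq:main} and \eqref{eq:main1}. First I would note that $|5A_X|$ has no fixed component: as $|A_X|=\varnothing$, a fixed component would be $\qq cA_X$ with $c\ge 2$, forcing $\dim|5A_X|\le\dim|3A_X|=0$, contrary to $\dim|5A_X|=1$. Writing $A_X\overset{P}{\sim}a_P(-K_X)$ with $7a_P\equiv1$ modulo the index gives $a_{P_4}=a_{P_5}=3$ and $a_{P_3}=a_{P_2}=1$, so $\MMM$ is Cartier at $P_5$ and fails to be Cartier exactly at $P_4,P_3,P_2^{(i)}$, with $m=3,2,1$ respectively. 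Applying Lemma~\ref{lemma:ct} at $P_4$ then gives $\ct(X,\MMM)\le1/3$, i.e. the global bound $\beta\ge3\alpha$, whence $7\beta-5\alpha\ge16\alpha\ge16/5$.

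Next I would determine the link. The fibration case is excluded directly: there $\hat q=1$ and $s=0$, so \eqref{eq:main} forces $e=1$ and $\alpha\le5/16$, while \eqref{eq:main1} with $k=4$ gives $s_4=0$ and $\beta_4=4(r+1)/(7r)$ for the index $r$ of $f(E)$, a number never lying in $\tfrac1r\ZZ$. Hence $\bar f$ is birational, $\hat X$ is a nonrational $\QQ$-Fano threefold (being birational to $X$), and $s\ge1$. The key further input is that $e\ge2$: in any configuration with $e=1$, Lemma~\ref{lemma:sl:torsion} forces $\Clt{\hat X}\ne0$ (the value $d=1$ is impossible since $|A_X|=\varnothing$ leaves no effective divisor $\qq A_X$), and then Proposition~\ref{prop:t} (when $\hat q\ge5$) or Corollary~\ref{cor:q=3:tor} (when $\hat q\le4$, using that $s=1$ there, so $\p_1(\hat X)\ge2$) makes $\hat X$ rational. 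With $e\ge2$ the relation reads $5\hat q=7s+(7\beta-5\alpha)e\ge7s+32/5$, so $\hat q\ge3$; since $\hat q\le7$ with $\hat q\ne6,7$ by Propositions~\ref{prop:q6} and~\ref{prop:q7a}, and since $\hat q=4,5$ are excluded ($s=1$ yields $\p_1(\hat X)\ge2$ hence rationality by Theorem~\ref{thm0}, while $s\ge2$ contradicts the inequality or the integrality of $\beta$), one is left with $\hat q=3$ and $s=1$. Then $(7\beta-5\alpha)e=8$ together with $e\ge2$, $\beta\in\tfrac1r\ZZ$ and $\alpha\ge1/5$ pins down $e=2$, $\alpha=1/4$ and $\beta=3/4$; that is, $f$ is the extremal blowup of $P_4$ with discrepancy $1/4$.

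Feeding these values into \eqref{eq:main1} for $k=2,\dots,6$, with the residues $kA_X\overset{P_4}{\sim}3k(-K_X)$, I would obtain $s_2=0$, $s_3=s_5=1$, $s_4=s_6=2$ and $\beta_2=\tfrac12,\beta_3=\tfrac14,\beta_4=0,\beta_5=\tfrac34,\beta_6=\tfrac12$, all $<1$. Since $\dim|2A_X|=0$ and $s_2=0$, Remark~\ref{rem:case-bir} identifies $\bar M_2$ with the $\bar f$-exceptional divisor $\bar F$; thus $F=M_2\qq2A_X$, so $d=e=2$ and $\Clt{\hat X}=0$ by Lemma~\ref{lemma:sl:torsion}, while $s_5=1$ and $s_6=2$ yield $\p_1(\hat X)\ge2$ and $\p_2(\hat X)\ge3$ (the pencil $\MMM$ and the net $|6A_X|$ map to subsystems of $|A_{\hat X}|$ and $|2A_{\hat X}|$). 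Finally, to see that $\bar f$ contracts $\bar F$ to a point, I would use \eqref{eq:-b-gamma-delta-1}: from $2\gamma_5=s_5\delta-5=\delta-5\ge0$ we get $\delta\ge5$, hence $2b=3\delta-7\ge8$ and $b\ge4$; as a divisorial contraction to a curve has discrepancy $1$, the centre must be a point.

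I expect the main obstacle to be the second paragraph: simultaneously pinning down $\hat q$, the blown-up point, and the integer $e$. The crude bound $\beta\ge3\alpha$ leaves several numerical competitors — notably an index-$2$ blowup and configurations with $\hat q=5$ and $\hat X\cong X_{10}$ — all of which pass the integrality tests and disappear only through the interplay of the torsion dichotomy of Lemma~\ref{lemma:sl:torsion} with the rationality results (Theorem~\ref{thm0}, Corollary~\ref{cor:q=3:tor}, Propositions~\ref{prop:t},~\ref{prop:q6},~\ref{prop:q7a}); one must in addition verify that the baskets produced never match the single exceptional case of Corollary~\ref{cor:q=3:tor}.
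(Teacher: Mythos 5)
Your proposal is correct and follows essentially the same route as the paper: apply the construction to the pencil $\MMM=|5A_X|$, use the bound $\beta\ge 3\alpha$ coming from $P_4$ together with the relations \eqref{eq:main}, \eqref{eq:main1}, Lemma~\ref{lemma:sl:torsion} and the rationality results (Theorem~\ref{thm0}, Corollary~\ref{cor:q=3:tor}, Propositions~\ref{prop:t}, \ref{prop:q6}, \ref{prop:q7a}) to force $\hat q=3$, $e=2$, $\alpha=1/4$, then read off the $s_k$, $\beta_k$ and use \eqref{eq:-b-gamma-delta-1} to get $\delta\ge5$, $b\ge4$ and hence a point as the centre of $\bar f$. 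The only divergences are cosmetic (you kill the fibration case via the $k=4$ integrality rather than $k=5$, and you deduce $\Clt{\hat X}=0$ at the end from $d=e=2$ rather than upfront), and the one delicate point you flag --- the exceptional case of Corollary~\ref{cor:q=3:tor} --- is passed over in the paper's proof as well.
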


\begin{proof}
Apply the construction~\eqref{diagram-main} with $\MMM=|5A_X|$.
Then $\beta_5\ge 3\alpha$ by~\eqref{eq:ct:beta-alpha}. In particular, $f(E)\neq P_5$ by Corollary~\ref{cor:q=7:P_5}. Taking Theorem~\ref{thm0} into account
we obtain the following possibilities:
\begin{enumerate}
\renewcommand{\theenumi}{\rm (\arabic{enumi})}
\renewcommand{\labelenumi}{\rm (\arabic{enumi})}
\item 
\label{case:q=7=>3:a}
$\alpha= 1/2$, $\hat q= 3$, $ e= 1$, $s_5 = 1$, 

\item 
\label{case:q=7=>3:b}
$\alpha= 1/4$, $\hat q= 3$, $ e= 2$, $ s_5 = 1$, 
\item 
\label{case:q=7=>3:c}
$\alpha= 1/4$, $\hat q= 5$, $e= 1$, $s_5\le 3 $.
\end{enumerate}
Thus $\p_1(\hat X)\ge 2$ if $\hat q= 3$.
Then the group $\Cl(\hat X)$ is torsion free by Corollary~\ref{cor:q=3:tor}. 
Recall that $|A_X|=\varnothing$. Therefore,  $e>1$ by Lemma~\ref{lemma:sl:torsion}  and 
we are in the case~\ref{case:q=7=>3:b}.
Then   from~\eqref{eq:main} we obtain
$s_2 = 0$, $s_3 = 1$, $s_4 =   s_6 = 2$, and   $\beta_k<1$
for $k=2,\dots,6$. In particular, $\beta_4=0$.
Finally, 
\eqref{eq:-b-gamma-delta-1}
has the form
\begin{equation*}
\begin{array}{lll}
2b &=&3\delta-7,
\\[2pt]
2\gamma_5 &=& \delta-5.
\end{array}
\end{equation*}
This implies that $\delta\ge 5$ and $b\ge 4$.
Hence, $\bar f(\bar F)$ is a point.
\end{proof}

\begin{scorollary}
\label{cor:q=7:P_4}
$P_4\notin\Bs|4A_X|$.
\end{scorollary}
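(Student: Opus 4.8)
The plan is to read the statement off directly from the numerical data of Lemma~\ref{lemma:q=7:sl1}, just as Corollary~\ref{cor:q=7:P_5} was obtained from the equality $\beta_5=0$. In that lemma $f$ is the extremal blowup of the index-$4$ point $P_4$, so its exceptional divisor $E$ has $f(E)=P_4$, and the proper transform of $\MMM_4=|4A_X|$ is written $\tilde\MMM_4\qq f^*\MMM_4-\beta_4 E$ with $\beta_4=0$.

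For completeness I would first recall why $\beta_4$ vanishes. The numerical analysis of Lemma~\ref{lemma:q=7:sl1} already gives $0\le\beta_4<1$, so it is enough to see $\beta_4\in\ZZ$. By Lemma~\ref{lemma:K-index} the group $\Cl^{\mathrm{sc}}(X,P_4)$ is cyclic of order $4$ generated by $K_X$; since $\Cl(X)$ is torsion free (Proposition~\ref{prop:6-7}) we have $-K_X\sim 7A_X$, and restricting to $P_4$ gives $A_X\overset{P_4}{\sim}K_X$, hence $4A_X\overset{P_4}{\sim}4K_X\overset{P_4}{\sim}0$, i.e.\ $4A_X$ is Cartier at $P_4$. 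A local equation of a general member $M_4\in|4A_X|$ then has integral order along the divisorial valuation $\ord_E$, so $\beta_4=\ord_E(M_4)\in\ZZ$; with $0\le\beta_4<1$ this forces $\beta_4=0$.

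It remains to convert $\beta_4=0$ into the assertion about the base locus. The number $\beta_4$ is the order of the general member $M_4$ along $\ord_E$, whose center on $X$ is exactly $P_4$; thus $\beta_4>0$ precisely when every member of $|4A_X|$ passes through $P_4$, that is, precisely when $P_4\in\Bs|4A_X|$. Since $\beta_4=0$, the general member avoids $P_4$, and therefore $P_4\notin\Bs|4A_X|$. There is no real obstacle here: the substance is contained in Lemma~\ref{lemma:q=7:sl1}, and the only points to record are the integrality of $\beta_4$ (via $4A_X$ being Cartier at the index-$4$ point) and the standard equivalence between a base point of a movable system and a positive vanishing order along the exceptional valuation of the blowup of its center.
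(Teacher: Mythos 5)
Your proposal matches the paper's argument: Corollary~\ref{cor:q=7:P_4} is read off from Lemma~\ref{lemma:q=7:sl1}, whose proof already records $\beta_4=0$ for the blowup of $P_4$, exactly as Corollary~\ref{cor:q=7:P_5} is read off from $\beta_5=0$. The extra details you supply (integrality of $\beta_4$ because $4A_X\overset{P_4}{\sim}4K_X$ is Cartier at the index-$4$ point, and the equivalence between $\beta_4>0$ and $P_4\in\Bs|4A_X|$) are correct and simply make explicit what the paper leaves implicit.
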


\begin{lemma}
\label{lemma:q=7:bl3}
Let $X$ be a $\QQ$-Fano threefold of type~\xref{Case:q=7:B=2-2-2-3-4-5} in Table~\xref{tab:1}. 
Assume that $X$ is not rational. 
Then there exists a Sarkisov link of the form~\eqref{diagram-main}, where $f$ 
is 
the Kawamata blowup of 
the point of index~$3$, $\bar f$ is birational and contracts the divisor $\bar 
M_2$.
Moreover, one of the following holds:
\begin{enumerate}
\item 
\label{lemma:q=7:bl3:a}
$\hat q= 2$, $e= 1$, $\Clt{\hat X}\simeq\ZZ/2\ZZ$;

\item 
\label{lemma:q=7:bl3:b}
$\hat q= 4$, $e= 2$, $\Clt{\hat X}=0$.
\end{enumerate}
In both cases  $s_2=0$ and $s_3=e$.
\end{lemma}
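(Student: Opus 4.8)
The plan is to take $f\colon\tilde X\to X$ to be the Kawamata blowup of $P_3$, so that $\alpha=1/3$, $E^3=9/2$ and $\uprho(\tilde X)=2$, and to complete it to a Sarkisov link of the form~\eqref{diagram-main} by the two-ray game, exactly as in the proof of Proposition~\ref{prop:q=5o:main}. Since $X$ is a $\QQ$-Fano threefold, $\tilde X$ is of Fano type, and one checks $(-K_{\tilde X})^3=343A_X^3-\tfrac1{27}E^3>0$; following the ray of $\overline{NE}(\tilde X)$ other than the one contracted by $f$ then yields $\bar f\colon\bar X\to\hat X$. Before analysing $\bar f$ I would record the local behaviour at $P_3$: since $\Cl(X,P_3)\simeq\ZZ/3\ZZ$ is generated by $K_X$ (Lemma~\ref{lemma:K-index}) and $A_X\overset{P_3}{\sim}-K_X$, every effective divisor $D\qq kA_X$ satisfies $\beta_D:=\ord_E(D)\equiv \tfrac{k}3\pmod{\ZZ}$. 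In particular $\beta_3\in\ZZ_{\ge0}$, while $\beta_2,\beta_5\equiv\tfrac23$; moreover $M_2$ (the unique member of $|2A_X|$) and every member of $|5A_X|$ are non-Cartier, hence singular, at $P_3$, so $\beta_2\ge\tfrac23$ and $\beta_5\ge\tfrac23$.

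Next I would rule out that $\bar f$ is a fibration. If it were, then $\hat q=1$ by Remark~\ref{rem:case-nonbir}, and relation~\eqref{eq:main} applied to $\MMM=|5A_X|$ reads
\[
5=7s+\left(7\beta_5-\tfrac53\right)e\ge 7s+3e ,
\]
forcing $s=0$, $e=1$ and $\beta_5=20/21$, which contradicts $\beta_5\equiv\tfrac23\pmod{\ZZ}$. Hence $\bar f$ is birational and $\hat X$ is a non-rational $\QQ$-Fano threefold.

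Now write $\beta_2=\tfrac23+j$ with $j\in\ZZ_{\ge0}$, so that relation~\eqref{eq:main1} with $k=2$ becomes
\[
2\hat q=7s_2+(4+7j)e .
\]
Using $\hat q\le7$ (Theorem~\ref{thm0}) and $\hat q\ne6$ (Proposition~\ref{prop:q6}), a short analysis of this Diophantine equation forces $s_2=0$, $j=0$ (so $\beta_2=\tfrac23$), and $(\hat q,e)\in\{(2,1),(4,2)\}$. As $\dim|2A_X|=0$, the vanishing $s_2=0$ means, by Remark~\ref{rem:case-bir}, that $M_2$ coincides with the $\bar f$-exceptional divisor, i.e.\ $\bar F=\bar M_2$ and $d=2$. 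By Lemma~\ref{lemma:sl:torsion} the group $\Clt{\hat X}$ is then cyclic of order $d/e=2/e$: for $(\hat q,e)=(2,1)$ this gives $\Clt{\hat X}\simeq\ZZ/2\ZZ$, which is case~\ref{lemma:q=7:bl3:a}, and for $(\hat q,e)=(4,2)$ it gives $\Clt{\hat X}=0$, which is case~\ref{lemma:q=7:bl3:b}.

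Finally, in both cases $\hat q=2e$, so relation~\eqref{eq:main1} with $k=3$ gives
\[
6e=7s_3+(7\beta_3-1)e,\qquad\text{that is}\qquad s_3=e(1-\beta_3).
\]
Since $\bar f$ contracts the single prime divisor $\bar F=\bar M_2\ne\bar M_3$, the divisor $\bar M_3$ is not contracted, whence $s_3>0$; as $\beta_3\in\ZZ_{\ge0}$ this forces $\beta_3=0$ and $s_3=e$, giving $s_2=0$ and $s_3=e$ as required. The step I expect to be the main obstacle is the very first one, namely verifying that the Kawamata blowup of $P_3$ indeed completes to the link~\eqref{diagram-main} — equivalently, that $-K_{\tilde X}$ (which is big) is nef, so that the second extremal contraction exists. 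I would establish this by the nefness argument used in §\ref{sect:q=5}, exhibiting a nef linear system such as $\tilde\MMM_5$ or $\tilde\MMM_4$ and writing $-K_{\tilde X}$ as a positive combination of it and $E$; the delicate point there is controlling $\Bs$ of the relevant system along the exceptional divisor over $P_3$.
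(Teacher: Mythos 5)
Your numerical analysis of the link, once it exists, is correct and in fact cleaner than the paper's: the congruence $\beta_2\equiv\beta_5\equiv\tfrac23$, $\beta_3\in\ZZ_{\ge0}$ at the $\tfrac13(1,1,2)$-point, the exclusion of the fibration case via $|5A_X|$, the Diophantine reduction of $2\hat q=7s_2+(4+7j)e$ to $(\hat q,e)\in\{(2,1),(4,2)\}$ with $\bar F=\bar M_2$, the torsion computation via Lemma~\ref{lemma:sl:torsion}, and the derivation of $s_3=e$, $\beta_3=0$ from \eqref{eq:main1} with $k=3$ all check out and reach the same conclusions as the paper.

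However, the step you yourself flag as the main obstacle is a genuine gap, and the strategy you sketch for closing it is circular. The construction~\eqref{diagram-main} does not start from an arbitrary extremal blowup: it requires a mobile $\MMM$ with $c=\ct(X,\MMM)\le1$ and an extremal blowup \emph{crepant with respect to} $K_X+c\MMM$; only then is $-(K_{\tilde X}+c\tilde\MMM)$ nef and big and the two-ray game guaranteed to run $K$-negatively to a Mori contraction. Starting instead from the Kawamata blowup of $P_3$, the second extremal ray need not be $K_{\tilde X}$-negative (bigness of $-K_{\tilde X}$ is not enough), so the link may simply fail to exist unless one first exhibits a mobile system for which $P_3$ is a centre of canonical singularities at the canonical threshold. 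Your proposed fix --- proving nefness of $\tilde\MMM_4$ or $\tilde\MMM_5$ as in Section~\ref{sect:q=5} --- requires controlling $\Bs|4A_X|$ or $\Bs|6A_X|$ near $P_3$; but $P_3\notin\Bs|6A_X|$ is precisely Corollary~\ref{cor:q=7:P_3}, which is deduced \emph{from} this lemma, and the analogous nefness arguments in Section~\ref{sect:q=5} (and in Proposition~\ref{prop:q=7m}) all rest on base-point statements established by a \emph{prior} link. The paper closes this gap by a different device: it runs~\eqref{diagram-main} with the non-complete system $\MMM\subset|6A_X|$ of members through $P_3$, shows by elimination that the resulting link forces $\alpha=1/5$, $e=s=3$, $\beta=3/5$, hence $\ct(X,\MMM)=1/3$, and observes that since every member of $\MMM$ passes through $P_3$ the point $P_3$ is also a centre of canonical singularities of $(X,\tfrac13\MMM)$; only then is the Kawamata blowup of $P_3$ crepant for $K_X+\tfrac13\MMM$ and the link with centre $P_3$ guaranteed. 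Some argument of this kind (or an independent proof that $-K_{\tilde X}$ is nef) is indispensable and is missing from your proposal.
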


\begin{proof}[Sketch of the proof]
Apply the construction~\eqref{diagram-main} with noncomplete linear system
$\MMM\subset |6A_X|$ consisting of all divisors passing 
through $P_3$. 
Clearly, $\dim\MMM\ge 1$ and $\MMM$ has no fixed components.
Apply the construction~\eqref{diagram-main} with $\MMM$.
Then $\beta\ge 3\alpha$ by~\eqref{eq:ct:beta-alpha}.
By Theorem~\ref{thm0} we have $s > 1$ if $\hat q\ge 4$.
Assume that $e=1$. Then $\Clt{\hat X}\neq 0$ by Lemma~\ref{lemma:sl:torsion} 
because $|A_X|=\varnothing$.
In the case $\hat q=3$ we have $s>1$ by Corollary~\ref{cor:q=3:tor}.
In the case $\hat q=5$ we have $s>2$ by Theorem~\ref{thm:q5}.
Taking these facts into account in addition to~\ref{lemma:q=7:bl3:a} and~\ref{lemma:q=7:bl3:b}  we obtain the following possibilities:
\begin{enumerate}
\renewcommand{\theenumi}{\rm (\arabic{enumi})}
\renewcommand{\labelenumi}{\rm (\arabic{enumi})}

\item
\label{case:q=7:bl:q=5:1/4}
$\alpha= 1/4$,
$\hat q= 5$, $e= 1$, 

\item
\label{case:q=7:bl:q=1}
$\alpha= 1/5$,
$\hat q= 1$, $e= 2$, 

\item
\label{case:q=7:bl:q=4}
$\alpha= 1/5$, $\hat q= 4$, $e= 1$, 

\item
\label{case:q=7:bl:q=5:1/5}
$\alpha= 1/5$,
$\hat q= 5$, $e= s = 3$, $\beta = 3/5$,
\end{enumerate}

In the case~\ref{case:q=7:bl:q=1} we get a 
contradiction by~\eqref{eq:main1} with $k=3$.
In the case~\ref {case:q=7:bl:q=4} by Corollary~\ref{cor:q=7:beta}
we have $\beta_2<1$ and then $s_2>0$ again  by~\eqref{eq:main1}. This implies that 
$|\Clt{\hat X}|\ge 3$
and then $\hat X$ is rational by Corollary~\ref{cor:q=3:tor}, a contradiction.
Similar arguments work in the case~\ref{case:q=7:bl:q=5:1/4} (here we can use Proposition~\ref{prop:t}).
Thus we are left with the 
case~\ref{case:q=7:bl:q=5:1/5} and then $\ct(X,\MMM)=1/3$.
Moreover, both $P_5$ and $P_3$ is a center of canonical singularities for $(X,\frac13\MMM)$.
Then the Kawamata blowup of $P_3$ is crepant with respect to $K_X+\frac13\MMM$, hence 
a Sarkisov link of the form~\eqref{diagram-main} with center $P_3$ 
exists and it must satisfy either~\ref{lemma:q=7:bl3:a} or~\ref{lemma:q=7:bl3:b}.
Finally,  from  \eqref{eq:main1} with $k=2$ and $3$ we obtain $s_2=0$ and $s_3=e$.
\end{proof}

\begin{scorollary}
\label{cor:q=7:P_3}
$P_3\notin M_3$ and $P_3\notin\Bs|6A_X|$.
\end{scorollary}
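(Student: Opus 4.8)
The plan is to extract both assertions directly from the Sarkisov link constructed in Lemma~\ref{lemma:q=7:bl3}, whose center is exactly the index-$3$ point: there $f$ is the Kawamata blowup of $P_3$, so that $\alpha=1/3$ (the point $P_3$ is a cyclic quotient, whence $\alpha=1/3$ by Lemma~\ref{lemma-discrepancies}). In both cases \ref{lemma:q=7:bl3:a} and \ref{lemma:q=7:bl3:b} of that lemma one has $\hat q=2e$ and $s_3=e$. First I would feed these into the relation \eqref{eq:main1} with $k=3$:
\[
3\hat q = 7 s_3 + (7\beta_3 - 3\alpha)e = 7e + (7\beta_3 - 1)e = 6e + 7\beta_3 e .
\]
Since the left-hand side equals $3\hat q = 6e$, this forces $7\beta_3 e=0$, i.e. $\beta_3 = 0$, so that $\tilde M_3 \qq f^* M_3$.

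Next I would translate $\beta_3 = 0$ into the geometric statement $P_3\notin M_3$. The key observation is that $M_3$, the unique member of $|3A_X|$, is Cartier at $P_3$: by Lemma~\ref{lemma:K-index} the local class group $\Cl^{\mathrm{sc}}(X,P_3)$ is cyclic of order $3$ generated by $K_X$, so $A_X\overset{P_3}{\sim}aK_X$ for some $a$, and hence $3A_X\overset{P_3}{\sim}3aK_X$ is Cartier at $P_3$. For the Kawamata blowup $f$, a divisor that is Cartier at the blown-up point and passes through it acquires strictly positive multiplicity along the exceptional divisor $E$; since $\beta_3=0$, the divisor $M_3$ cannot contain $P_3$. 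This gives the first assertion, $P_3\notin M_3$.

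Finally, the inclusion $P_3\notin\Bs|6A_X|$ will be immediate: the effective divisor $2M_3$ lies in $|6A_X|$, and $\Supp(2M_3)=\Supp(M_3)=M_3$ does not contain $P_3$, so this single member already avoids $P_3$. I expect no serious obstacle here, since the link of Lemma~\ref{lemma:q=7:bl3} does all the real work; the only point demanding care is the implication $\beta_3=0\Rightarrow P_3\notin M_3$, which rests on the Cartierness of $3A_X$ at the index-$3$ point established via Lemma~\ref{lemma:K-index}.
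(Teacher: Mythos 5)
Your proposal is correct and follows essentially the same route as the paper: both extract $\beta_3=0$ from the link of Lemma~\ref{lemma:q=7:bl3} via \eqref{eq:main1} with $k=3$, and then deduce the statement about $|6A_X|$ from the member $2M_3\in|6A_X|$. The only (harmless) difference is that you invoke Cartierness of $3A_X$ at $P_3$ to pass from $\beta_3=0$ to $P_3\notin M_3$, which is not needed — for the blowup of the point $P_3$, the total transform of any effective $\QQ$-Cartier divisor through $P_3$ already contains $E$ with positive multiplicity.
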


\begin{proof}
In both cases~\ref{lemma:q=7:bl3:a} and~\ref{lemma:q=7:bl3:b} by \eqref{eq:main1}
we have $\beta_3=0$, that is, $\tilde M_3=f^* M_3$. Since $3M_3\in |6A_X|$,
for the linear system $\tilde \MMM_6=|6A_X|$ we also have $\tilde \MMM_6=f^* \MMM_6$. 
\end{proof}

\begin{scorollary}
\label{cor:q=7:P_3-P_4-P_5}
For general members $M_4\in \MMM_4$ and $M_5\in \MMM_5$ we have
\[
M_2\cap M_3\cap M_4 =\{P_5\}\quad \text{and}\quad M_2\cap M_3\cap M_4 \cap M_5 
=\varnothing.
\]
\end{scorollary}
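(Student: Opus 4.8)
The plan is to mirror the proof of Corollary~\ref{cor:q=5:P_2-P_3-P_4}, using the pencil $\MMM_5$ (or bare line‑bundle degrees) in place of the cutting divisor and keeping careful track of the index‑$2$ points, which is where the index‑$7$ situation is genuinely harder. Since $X$ is not rational, $\Clt{X}=0$ by Proposition~\ref{prop:t}, so $\MMM_2=\{M_2\}$ and $\MMM_3=\{M_3\}$ are single members ($M_2\qq 2A_X$, $M_3\qq 3A_X$), while $\MMM_4,\MMM_5$ are pencils. From $A_X^3=1/60$ I record $M_2\cdot M_3\cdot M_4=24A_X^3=2/5$, $\ M_2\cdot M_3\cdot M_5=30A_X^3=1/2$, and $A_X\cdot M_2\cdot M_3=6A_X^3=1/10$. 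The first point is that $P_5$ lies on every member of $\MMM_2,\MMM_3,\MMM_4$: at the index‑$5$ point $A_X\overset{P_5}{\sim}3(-K_X)$, so $2A_X,3A_X,4A_X$ have classes $1,4,2\neq0$ in $\Cl(X,P_5)\simeq\ZZ/5\ZZ$, i.e.\ they are non‑Cartier at $P_5$; a Weil divisor whose class at $P_5$ is non‑Cartier must contain $P_5$ (otherwise $\OOO_X$‑trivial nearby). Hence $P_5\in M_2\cap M_3\cap M_4$ for every $M_4$.

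Next I would prove $Z:=M_2\cap M_3\cap M_4$ is zero‑dimensional for general $M_4$. A one‑dimensional component $C$ would satisfy $C\subset M_2\cap M_3$ and (as $M_4$ is general) $C\subset\Bs\MMM_4$; thus $C\not\ni P_3$ by Corollary~\ref{cor:q=7:P_3} ($P_3\notin M_3$) and $C\not\ni P_4$ by Corollary~\ref{cor:q=7:P_4} ($P_4\notin\Bs\MMM_4$), while $A_X\cdot C\le A_X\cdot M_2\cdot M_3=1/10$. Now depending on which of the points $P_2^{(i)},P_5$ lie on $C$, one of the line bundles $\OOO_X(kA_X)$ with $k\in\{7,2,5,10\}$ is Cartier along $C$ (recall $kA_X$ is Cartier at $P_2^{(i)}$ iff $2\mid k$ and at $P_5$ iff $5\mid k$); then $0<kA_X\cdot C=k(A_X\cdot C)\le k/10$ is a positive integer. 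For $k=7,2,5$ this is $<1$, a contradiction, so $C$ must pass through some $P_2^{(i)}$ \emph{and} through $P_5$; and for $k=10$ the bound $10A_X\cdot C\le1$ forces the borderline equality $A_X\cdot C=1/10$, whence $C=M_2\cap M_3$ is irreducible. Granting zero‑dimensionality, the identity $M_2\cdot M_3\cdot M_4=2/5$ decomposes into local contributions — a non‑negative integer at each Gorenstein point, an element of $\tfrac12\ZZ_{\ge0}$ at $P_2^{(i)}$, an element of $\tfrac15\ZZ_{\ge0}$ at $P_5$ — and the only solution with $P_5$ present is the one where $P_5$ contributes $2/5$ and nothing else occurs; that is, $Z=\{P_5\}$.

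The single configuration the above does not eliminate, and the \textbf{main obstacle}, is that the whole irreducible curve $C=M_2\cap M_3$ runs through some index‑$2$ point and through $P_5$ and is contained in $\Bs\MMM_4$. I expect to rule this out by showing that $M_2$ avoids the index‑$2$ points, so that $C\not\ni P_2^{(i)}$ and the $k=5$ case of the preceding paragraph applies (note $C\ni P_5$ gives $C\not\subset\Bs\MMM_5$ by Corollary~\ref{cor:q=7:P_5}, so $M_5\cdot C\le1/2$ is an honest positive integer, impossible). That $P_2^{(i)}\notin M_2$ should come from the finer structure of $M_2,M_3$ furnished by the Sarkisov links of Lemmas~\ref{lemma:q=7:sl1} and~\ref{lemma:q=7:bl3} (in the latter the proper transform of $M_2$ is the $\bar f$‑contracted divisor), or ultimately from the explicit equation of $X$, where $M_2=\{x_2=0\}$ misses the index‑$2$ coordinate point. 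Finally, the second equality is immediate: by Corollary~\ref{cor:q=7:P_5} we have $P_5\notin\Bs\MMM_5$, so a general $M_5$ does not contain $P_5$, and therefore $M_2\cap M_3\cap M_4\cap M_5=\{P_5\}\cap M_5=\varnothing$.
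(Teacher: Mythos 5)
Your overall strategy (intersection-number bookkeeping at the singular points, using Corollaries\xref{cor:q=7:P_5}, \xref{cor:q=7:P_4}, \xref{cor:q=7:P_3} to keep a hypothetical curve $C\subset M_2\cap M_3\cap\Bs\MMM_4$ away from $P_3$ and $P_4$) is the same as the paper's, and the reduction to the single configuration ``$C$ passes through an index-$2$ point and through $P_5$, with $10A_X\cdot C=1$ forcing $C=M_2\cap M_3$'' is correct. But that configuration is exactly where your proof stops: you only ``expect'' to rule it out by showing $P_2^{(i)}\notin M_2$, and neither of your two proposed routes works as stated. The Sarkisov links of Lemmas\xref{lemma:q=7:sl1} and\xref{lemma:q=7:bl3} give no control over whether $M_2$ meets the index-$2$ points, and appealing to the explicit equation $M_2=\{x_2=0\}$ in $\PP(2,3,4,5,7)$ is circular, since the hypersurface presentation of $X$ (Theorem\xref{thm:X14}) is deduced \emph{from} this corollary. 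So there is a genuine gap.

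The missing observation is much simpler and is what the paper uses: once the borderline equality forces $C=M_2\cap M_3$ (the paper gets there via $M_5\cdot C\in\frac12\ZZ_{>0}$ together with $M_2\cdot M_3\cdot M_5=1/2$; your $10A_X\cdot C=1$ argument gives the same conclusion), one notes that $2A_X\overset{P_4}{\sim}2(-K_X)$ and $3A_X\overset{P_4}{\sim}(-K_X)$ are both non-Cartier at the index-$4$ point, so \emph{every} member of $|2A_X|$ and of $|3A_X|$ passes through $P_4$, whence $P_4\in M_2\cap M_3=C$. This contradicts $P_4\notin C$ (Corollary\xref{cor:q=7:P_4}) and kills the remaining case with no information about the index-$2$ points at all. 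With that one line inserted in place of the unproved claim $P_2^{(i)}\notin M_2$, your argument closes; the rest (the local-contribution count showing a zero-dimensional $M_2\cap M_3\cap M_4$ of total degree $2/5<1/2$ can only be $\{P_5\}$, and $M_5\not\ni P_5$ giving the second equality) agrees with the paper. Minor point: the ``$k=7$'' in your list of Cartier multiples should be $k=1$, since $7A_X\qq -K_X$ is not Cartier at any non-Gorenstein point.
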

\begin{proof}
By Corollaries~\ref{cor:q=7:P_4} and~\ref{cor:q=7:P_3} we have $M_2\cap M_3\cap 
M_4\not\ni P_4$
and $M_2\cap M_3\cap M_4\not\ni P_3$. Assume that $M_2\cap M_3\cap M_4$ 
contains a curve, say $C$. Then
$M_5\cdot C\in \frac12 \ZZ$ by Corollary~\ref{cor:q=7:P_5}. On the other hand, 
$M_2\cdot M_3\cdot M_5=1/2$, hence $C=M_2\cap M_3\ni P_4$, a contradiction.
Therefore, the set $M_2\cap M_3\cap M_4$ is zero-dimensional. Since $M_2\cdot 
M_3\cdot M_4=2/5<1/2$, this set does not contain
points of index~$\le 2$, hence $ M_2\cap M_3\cap M_4 =\{P_5\}$.
\end{proof}

\begin{proposition}
\label{prop:q=7m}
In the notation of Proposition~\xref{prop:q=7} the map $\chi$ is an 
isomorphism, i.e. 
the link~\eqref{diagram-main} has the form 
\begin{equation}
\label{eq:sl:q=7}
\vcenter{
\xymatrix@C=3em{
&\tilde{X}\ar@/_0.4em/[dl]_{f}\ar@/^0.4em/[dr]^{\bar{f}}
\\
X\ar@{-->}[rr]^{g}&&\hat{X}
} }
\end{equation} 
and $\bar f$ contracts
a divisor $\tilde M_3$ to a smooth rational curve $\hat\Upsilon\subset\hat X$
such that $A_{\hat X}\cdot\hat\Upsilon=1/2$.
The image of the $f$-exceptional divisor $E\simeq \PP(1,1,4)$ is a member of $|3A_{\hat X}|$ that is singular along $\hat\Upsilon$.
The non-Gorenstein singularities of $\hat X$ are as follows:
\begin{itemize}
\item
a cyclic quotient singularity $\hat Q_4$ of index~$4$;
\item
a cyclic quotient singularity $\hat P_3=g( P_3)$ of index~$3$;
\item
an index-$2$ singularity $\hat P_4=g( P_4)$ that is not a cyclic quotient.
\end{itemize}
In particular, $\hat X$ is not quasi-smooth. 
\end{proposition}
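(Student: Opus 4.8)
The plan is to run the argument in close parallel to the proof of Proposition~\ref{prop:q=5o:main}, the only structural difference being that here $f$ is the Kawamata blowup of the index-$5$ point $P_5$ rather than of an index-$3$ point. First I would record the numerical data already extracted in Proposition~\ref{prop:q=7}: the blowup $f$ has $\alpha=1/5$, and the link has $\hat q=5$, $e=3$, $\hat X\simeq X_{10}\subset\PP(1,2,3,4,5)$ with $\hat\MMM_4=|2A_{\hat X}|$, while $s_2=1$, $s_3=0$ (so the $\bar f$-exceptional divisor is $\tilde M_3$), $s_6=3$, $\delta=2$, $b=1$, and $\gamma_4=0$. From~\eqref{eq:main} one gets $\beta_4=2/5$, and the local class computation at $P_5$ (where $A_X\overset{P_5}\sim 3(-K_X)$, hence $M_3\overset{P_5}\sim 4(-K_X)$) forces $\tilde M_3=f^*M_3-\beta_3 E$ with $\beta_3\equiv 4/5\pmod{\ZZ}$. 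The goal is to show that the second extremal contraction of $\tilde X$ is already divisorial and collapses $\tilde M_3$ onto $\hat\Upsilon$, so that $\chi$ is an isomorphism.

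The central step, and the one I expect to be the main obstacle, is to prove that $\tilde\MMM_4$ is nef on $\tilde X$. As in Proposition~\ref{prop:q=5o:main} I would argue that any curve $\tilde\Gamma$ with $\tilde\MMM_4\cdot\tilde\Gamma<0$ lies in $\Bs\tilde\MMM_4$ and meets $E$ (otherwise $\tilde\MMM_4\cdot\tilde\Gamma=4A_X\cdot f_*\tilde\Gamma>0$ since $A_X$ is ample); curves contained in $E$ are excluded because $\tilde\MMM_4$ is $f$-ample, and a curve meeting $E$ but not contained in it would descend to a base curve of the pencil $|4A_X|$ through $P_5$, incompatible with the incidence relations of Corollary~\ref{cor:q=7:P_3-P_4-P_5} together with Corollaries~\ref{cor:q=7:P_4} and~\ref{cor:q=7:P_3}. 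Granting nefness, the intersection numbers
\[
\tilde\MMM_4^2\cdot\tilde M_3=\frac45-\beta_4^2\beta_3 E^3,\qquad
\tilde\MMM_4\cdot\tilde M_3^2=\frac35-\beta_4\beta_3^2 E^3
\]
pin down the remaining discrete data: nefness gives $\tilde\MMM_4^2\cdot\tilde M_3\ge 0$, while $\tilde\MMM_4\cdot\tilde M_3^2$ is a negative integer since $\tilde\MMM_4$ is Cartier near the contracted curves (which avoid the index-$3$ and index-$4$ points). Of the terminal types $\frac15(1,a,-a)$ only $a=1$ is consistent, giving $E^3=25/4$, $\beta_3=4/5$, $\tilde\MMM_4^2\cdot\tilde M_3=0$, $\tilde\MMM_4\cdot\tilde M_3^2=-1$, and $E\simeq\PP(1,1,4)$. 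As $\tilde\MMM_4$ is then nef and big with $\tilde\MMM_4^2\cdot\tilde M_3=0$, it is the supporting system of a divisorial contraction of $\tilde X$ collapsing $\tilde M_3$ to a curve, which must coincide with $\bar f$; hence $\chi$ is an isomorphism.

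With $\chi$ an isomorphism the geometric assertions follow as in Proposition~\ref{prop:q=5o:main}. The equalities $\hat\MMM_4\cdot\hat\Upsilon=-\tilde\MMM_4\cdot\tilde M_3^2=1$ and $\hat\MMM_4=|2A_{\hat X}|$ give $A_{\hat X}\cdot\hat\Upsilon=1/2$. Smoothness and rationality of $\hat\Upsilon$ I would read off from the explicit equation of $\hat X=X_{10}$ in Proposition~\ref{prop:q5-eq-1}, identifying $\hat\Upsilon$ with the singular locus of $\hat E$ exactly as the base curve was handled in Proposition~\ref{prop:q=5o:main}. Finally, $e=3$ and the torsion-freeness of $\Cl(\hat X)$ (Lemma~\ref{lemma:sl:torsion}) show $\hat E:=\bar f_*\bar E$ lies in $|3A_{\hat X}|$, and $\delta=2$ forces multiplicity $\ge 2$ of $\hat E$ along $\hat\Upsilon$, i.e. $\hat E$ is singular there.

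Finally I would settle the singularities of $\hat X$ by bookkeeping along the birational map $g=\bar f\circ f^{-1}$. Since $P_5$ has type $\frac15(1,1,-1)$, the blowup $f$ replaces it by a single index-$4$ cyclic quotient on $E\simeq\PP(1,1,4)$, so $\B(\tilde X)=(2^3,3,4^2)$; this new index-$4$ point lies off $\tilde M_3$ and descends to the cyclic quotient $\hat Q_4$. By Corollary~\ref{cor:q=7:P_3} the point $P_3$ lies off $M_3$, hence $f^{-1}(P_3)$ lies off $\tilde M_3$ and maps isomorphically to the index-$3$ cyclic quotient $\hat P_3=g(P_3)$. The remaining non-Gorenstein point $\hat P_4=g(P_4)$ has index $2$, since $\B(\hat X)=(2^2,3,4)$ with the index-$4$ and index-$3$ contributions already accounted for; as $\hat X$ is nonrational (being birational to $X$), Corollary~\ref{cor:q5:nonrat} shows its index-$4$ point is a cyclic quotient, and then Corollary~\ref{cor:q5:ind2sing} identifies $\hat P_4$ as a non-cyclic-quotient point of type~\typeA{cA}{2} or~\typeA{cAx}{2}. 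In particular $\hat X$ is not quasi-smooth. The one point demanding genuine care is the nefness of $\tilde\MMM_4$, equivalently the exclusion of flips in $\chi$; everything downstream is determined by the numerical and incidence data already in hand.
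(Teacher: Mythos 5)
Your overall plan — transplanting the proof of Proposition~\ref{prop:q=5o:main} — is indeed the paper's strategy, and the downstream computations ($\hat\MMM_4\cdot\hat\Upsilon=-\tilde\MMM_4\cdot\tilde M_3^2=1$, $E^3=25/4$, $\hat E\in|3A_{\hat X}|$ singular along $\hat\Upsilon$ from $e=3$, $\delta=2$) agree with the paper. But the step you yourself flag as the crux, the nefness of $\tilde\MMM_4$, is exactly where your argument fails. You dismiss a $\tilde\MMM_4$-negative curve $\tilde\Gamma$ meeting $E$ on the grounds that it would descend to a base curve of $|4A_X|$ through $P_5$, which you claim is incompatible with Corollaries~\ref{cor:q=7:P_3-P_4-P_5}, \ref{cor:q=7:P_4}, \ref{cor:q=7:P_3}. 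There is no such incompatibility: since $\beta_4=2/5>0$, the point $P_5$ genuinely lies in $\Bs|4A_X|$ (this is the essential way the situation differs from the $q=5$ case, where $P_4\notin\Bs|4A_X|$ by Corollary~\ref{cor:q=5:Bs4}), and the only a priori containment is $\Bs|4A_X|\subset M_2$ (because $2M_2\in|4A_X|$). Corollary~\ref{cor:q=7:P_3-P_4-P_5} only controls $M_2\cap M_3\cap M_4$, so it says nothing about a one-dimensional component of $\Bs|4A_X|$ through $P_5$ that is not contained in $M_3$. As it stands you have not excluded flips in $\chi$.

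The paper closes this gap with two additional ingredients. First, a preliminary claim that $\dim\bigl(M_3\cap\Bs|4A_X|\bigr)\le 0$: a putative curve $\Gamma\subset M_2\cap M_3\cap\Bs|4A_X|$ avoids $P_3,P_4$, so $M_5\cdot\Gamma\in\frac12\ZZ$, while $0<M_5\cdot\Gamma\le M_5\cdot M_2\cdot M_3=1/2$ forces $\Gamma=M_2\cap M_3\ni P_4$, a contradiction. Second, the decomposition $\tilde\MMM_4\qq 2\tilde M_2\qq\frac43\tilde M_3+\frac23 E$ coming from $\beta_3=4/5$ and $\beta_4=2\beta_2=2/5$: for $\tilde\Gamma\not\subset E$ with $E\cdot\tilde\Gamma\ge0$ and $\tilde\MMM_4\cdot\tilde\Gamma<0$ this forces $\tilde M_3\cdot\tilde\Gamma<0$, hence $\tilde\Gamma\subset\tilde M_3$, and then $f(\tilde\Gamma)\subset M_3\cap\Bs|4A_X|$ contradicts the first claim. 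You need some version of this argument. Two secondary points: your assertion that the new index-$4$ point $\tilde Q_4\in E$ lies off $\tilde M_3$ is not free — the paper deduces it a posteriori from $\B(\hat X)=(2^2,3,4)$, since points on the contracted divisor drop in index (so the index-$4$ point of $\hat X$ can only be $\bar f(\tilde Q_4)$, forcing $\bar f$ to be an isomorphism near it); and Corollary~\ref{cor:q5:ind2sing} cannot show that $\hat P_4$ is \emph{not} a cyclic quotient — that follows from \cite{Kawamata:Div-contr}, because no divisorial contraction maps a divisor onto a curve through a terminal cyclic quotient point.
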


\begin{proof}
We use the notation of the proof of Proposition~\xref{prop:q=7}.
First, we claim that $\dim (M_3\cap\Bs |4A|)\le 0$.
Indeed, assume that there exists an irreducible curve $\Gamma$ contained in $M_3\cap\Bs |4A|$.
By Corollaries~\ref{cor:q=7:P_4} and~\ref{cor:q=7:P_3} we have $P_3,\, 
P_4\notin \Gamma$.
Let $M_5\in \MMM_5$ be a general member. 
By Corollary~\ref{cor:q=7:P_5} \ $P_5\notin M_5$.
Therefore, $M_5\cdot \Gamma\in \frac 12 \ZZ$.
On the other hand, $\Gamma\subset M_2\cap M_3$ and so $0<M_5\cdot \Gamma\le M_5 
\cdot M_2\cdot M_3=1/2$.
Hence $\Gamma= M_2\cap M_3\ni P_4$, a contradiction.
Thus, $\dim (M_3\cap\Bs |4A|)\le 0$.

Now we claim that $\tilde\MMM_{4}$ is nef.
Assume that $\tilde\MMM_{4}\cdot \tilde{\Gamma}<0$ for some irreducible curve $\tilde{\Gamma}$.
Since $\tilde\MMM_{4}$ has no fixed components and $\uprho(E)=1$, we have 
$E\cdot \tilde{\Gamma}\ge 0$ and $\tilde{\Gamma}\notin E$.
On the other hand, it follows from~\eqref{eq:main1} that $\beta_3 = 4/5$ and 
$\beta_4 = 2\beta_2 = 2/5$.
Therefore,
\begin{equation*}
\textstyle
\tilde\MMM_4\qq f^*(4A_X)-\frac 25 E\qq 2\tilde M_2\qq\frac 43\tilde{F}+\frac 
23 E.
\end{equation*}
Hence $\tilde{F}\cdot \tilde{\Gamma}<0$. Thus the curve $\Gamma:=f(\tilde{\Gamma})$ is contained in $\Bs |4 A_X|$ 
and in the surface $M_3$.
In other words, $\Gamma$ is contained in the set $\{x_2=x_4=x_3=0\}\cap X$ that 
is zero-dimensional,
a contradiction. This proves that $\tilde\MMM_{4}$ is nef.

Since $\gamma_4=0$, we have $\bar\MMM_4\qq\bar f^*\hat\MMM_4$.
In particular, $\bar\MMM_4$ is nef. Since the varieties $\tilde X$ and $\bar X$ 
are isomorphic in codimension one, we see that for $n\gg 0$ the linear systems 
$|n\tilde\MMM_{4}|$ and $|n\bar\MMM_4|$ define morphisms 
$\Phi_{|n\tilde\MMM_{4}|}:\tilde X\to\PP^N$
and $\Phi_{|n\bar\MMM_{4}|}:\bar X\to\PP^N$ with the same image
that must coincide with~$\hat X$.
This implies that $\Phi_{|n\tilde\MMM_{4}|}$ contracts a divisor and so $\chi$
is an isomorphism. Let $\hat \Upsilon:=\bar f(\tilde M_3)$. Since 
$\OOO_E(E)=\OOO_{\PP(1,1,4)}(5)$, we have $E^3=25/4$. 
Taking this fact into account, we obtain
\[
\textstyle
2A_{\hat X} \cdot \hat \Upsilon=- \tilde\MMM_{4}\cdot (\tilde 
M_3)^2=-\left(f^*(4A_X)-\frac 25 E\right)\cdot \left(f^*(3A_X)-\frac 45 
E\right)^2=1.
\]

The index-$5$ point of $X$ is of type $\frac15(1,1,4)$. Hence the 
non-Gorenstein singularities of $\tilde X$ are as follows:
\begin{itemize}
\item
a cyclic quotient singularity $\tilde Q_4$ of index~$4$, where $\tilde Q_4\in 
E$;
\item
index-$2$ singularities $\tilde P_2^{(i)}=f^{-1}( P_2^{(i)})$, where $\tilde 
P_2^{(i)}\in \tilde M_3$, $\tilde P_2^{(i)}\notin E$;
\item
an index-$4$ singularity $\tilde P_4=f^{-1}( P_4)$, where $\tilde P_4\in \tilde 
M_3$, $\tilde P_4\notin E$;
\item
a cyclic quotient singularity $\tilde P_3=f^{-1}( P_3)$ of index~$3$, where 
$\tilde P_3\notin \tilde M_3$, $\tilde P_3\notin E$.
\end{itemize}

Then $\hat P_3:=\bar f(\tilde P_3)$ is a cyclic quotient singularity of index~$3$, 
$\bar f(\tilde P_4)$ is a point of index~$<4$, and $\bar f(\tilde P_2^{(i}))$ 
are Gorenstein points.
Since $\B(\hat X)=(2^2,3,4)$, we see that $\hat Q_4:=\bar f(\tilde Q_4)$ must 
be a point of index~$4$, hence 
$\bar f$ is an isomorphism near $\tilde Q_4$. Hence $\tilde{Q_4}\notin \tilde{M_3}$ and 
$\hat{Q_4}\notin \hat{\Upsilon}$. Then $\bar f(\tilde P_4)$ must be 
a point of index~$2$ that is a not a cyclic quotient by \cite{Kawamata:Div-contr}.
The curve $\hat \Upsilon$ does not pass through $\hat P_3$ nor through $\hat 
Q_4$. Hence the divisor $2A_{\hat X}$ is Cartier near $\hat \Upsilon$.
It can be seen from \eqref{eq:thm:q5-1:a} and \eqref{eq:thm:q5-1:b} that the 
set $\hat E\cap \Bs |2A_{\hat X}|$ is given by 
$\{x_1=x_2=x_3=0\}$, hence $\hat E\cap\Bs |2A_{\hat X}|=\{\hat Q_4\}$.
Since $\hat Q_4\notin \hat \Upsilon$, the restriction of $|2A_{\hat X}|$ to 
$\hat\Upsilon$
is a linear system of positive dimension (and degree $1$). This implies that 
$\hat \Upsilon$ is a smooth rational curve. 
\end{proof}

\begin{theorem}
\label{thm:X14}
Let $X$ be a $\QQ$-Fano threefold with $\qQ(X)=7$, $\B(X)=(2^3, 3, 4, 5)$, and 
$A_X^3=1/60$.
Assume that $X$ is not rational.
Then $X$ is isomorphic to a hypersurface $X_{14}\subset\PP(2,3,4,5,7)$.
\end{theorem}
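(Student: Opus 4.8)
The plan is to follow the strategy of the proof of Theorem~\ref{thm:q5} and realize $X$ as a double cover of a weighted projective space. Recall that in Case~\ref{Case:q=7:B=2-2-2-3-4-5} we have $|A_X|=\varnothing$, the systems $|2A_X|$ and $|3A_X|$ consist of single members $M_2$ and $M_3$, and $|4A_X|$, $|5A_X|$ are pencils without fixed components. For $m=2,3,4,5$ choose a general $\varsigma_m\in H^0(X,mA_X)$ and consider
\[
\Psi\colon X\dashrightarrow\PP:=\PP(2,3,4,5),\qquad
P\longmapsto\bigl(\varsigma_2(P),\varsigma_3(P),\varsigma_4(P),\varsigma_5(P)\bigr).
\]
Write $A_\PP$ for the positive generator of $\Cl(\PP)$. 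The first step is to check that $\Psi$ is a morphism: its indeterminacy locus is the common zero set $M_2\cap M_3\cap M_4\cap M_5$, which is empty by Corollary~\ref{cor:q=7:P_3-P_4-P_5}. By construction $A_X=\Psi^*A_\PP$.

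The second step is to show that $\Psi$ is finite of degree~$2$. Finiteness holds because $A_X=\Psi^*A_\PP$ is ample, so $\Psi$, a morphism between threefolds, contracts no curve. For the degree, the projection formula gives $A_X^3=\deg(\Psi)\cdot A_\PP^3$; since $A_X^3=1/60$ and $A_\PP^3=\tfrac{1}{2\cdot 3\cdot 4\cdot 5}=1/120$, we get $\deg(\Psi)=2$.

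The third step extracts the branch divisor and identifies $X$. We have $-K_X=7A_X=\Psi^*(7A_\PP)$, and since $\PP(2,3,4,5)$ is well-formed, $-K_\PP=(2+3+4+5)A_\PP=14A_\PP$. Applying the Hurwitz formula $K_X=\Psi^*\bigl(K_\PP+\tfrac12 R\bigr)$, where $R$ is the branch divisor, we obtain
\[
\Psi^*(7A_\PP)=7A_X=-K_X=\Psi^*\bigl(-K_\PP-\tfrac12 R\bigr)=\Psi^*\bigl(14A_\PP-\tfrac12 R\bigr),
\]
so that $R\sim 14A_\PP$. Hence $X\to\PP(2,3,4,5)$ is the double cover branched over a divisor of class $14A_\PP$; introducing a coordinate $x_7$ of weight~$7$ with $x_7^2=b_{14}(x_2,x_3,x_4,x_5)$ (the equation $b_{14}=0$ defining the branch divisor, after completing the square) exhibits $X$ as a hypersurface of degree~$14$ in $\PP(2,3,4,5,7)$.

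Almost all of the geometry needed here is already contained in Corollary~\ref{cor:q=7:P_3-P_4-P_5}, so the argument is essentially formal. The step demanding the most care is confirming that $\Psi$ is a finite degree-$2$ morphism realizing $X$ as the \emph{standard} double cover in the weighted (and $\QQ$-factorial, singular) setting, so that the Hurwitz computation of the branch class legitimately produces the hypersurface equation; this mirrors the corresponding verification in Theorem~\ref{thm:q5} and follows from the emptiness of $M_2\cap M_3\cap M_4\cap M_5$ together with the ampleness of $A_X$.
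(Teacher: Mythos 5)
Your proposal is correct and follows essentially the same route as the paper's proof: the morphism $\Psi\colon X\to\PP(2,3,4,5)$ defined by general sections of $|mA_X|$ for $m=2,3,4,5$, with indeterminacy ruled out by Corollary~\ref{cor:q=7:P_3-P_4-P_5}, the degree computation $A_X^3=\deg(\Psi)\cdot A_\PP^3$, and the Hurwitz formula yielding $R\sim 14A_\PP$. The extra details you supply (finiteness via ampleness of $A_X=\Psi^*A_\PP$, and the explicit equation $x_7^2=b_{14}$) are consistent with, and slightly more explicit than, the paper's argument.
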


\begin{proof}
For $m=3,\dots,7$, let $\varsigma_m$ be a general element of $ H^0 (X, mA_X )$.
By Corollary \ref{cor:q=7:P_3-P_4-P_5} the map
\[
\Psi: X \dashrightarrow \PP(2,3,4,5),\qquad P \longmapsto 
(\varsigma_2(P),\varsigma_3(P),\varsigma_4(P),\varsigma_5(P))
\]
is a morphism. For short, let $\PP:=\PP(2,3,4,5)$ and let $A_{\PP}$ be the 
positive generator of $\Cl(\PP)$.
By the construction, $A_X=\Psi^* A_{\PP}$.
Since $A_X^3=1/60$
and $A_{\PP}^3=1/120$, the morphism $\Psi$ is finite of degree $2$.
By the Hurwitz formula we have
\[
\textstyle
\Psi^*(7 A_{\PP})=7A_X = -K_X=\Psi^* \left(-K_{\PP} +\frac 12 R\right)
=\Psi^* \left(14 A_{\PP} -\frac 12 R\right),
\]
where $R$ is the branch divisor. This gives us $R\sim 14 A_{\PP}$. Therefore, 
$X$ is a hypersurface of degree~$14$ in $\PP(2,3,4,5,7)$.
\end{proof}
\newcommand{\etalchar}[1]{$^{#1}$}

\end{document}